\pdfoutput=1
\documentclass[11pt,a4paper]{article}

\usepackage[margin=1.05in]{geometry}
\usepackage{amsmath,amssymb,amsthm,mathtools}
\usepackage{aliascnt}
\usepackage{mathrsfs}
\usepackage{xcolor}
\usepackage[colorlinks=true,linkcolor=blue!60!black,citecolor=blue!60!black,urlcolor=blue!60!black]{hyperref}
\hypersetup{pdftitle={A Compressible Miles Instability for Wind-Wave Generation in a Neutral Atmosphere},
            pdfauthor={Tianyi Li}}
\usepackage{microtype}
\usepackage{enumitem}
\usepackage[capitalize,nameinlink]{cleveref}

\newtheorem{theorem}{Theorem}[section]
\newcommand{\newsharedtheorem}[3]{%
  \newaliascnt{#1}{theorem}%
  \newtheorem{#1}[#1]{#2}%
  \aliascntresetthe{#1}%
  \crefname{#1}{#2}{#3}}
\newsharedtheorem{proposition}{Proposition}{Propositions}
\newsharedtheorem{lemma}{Lemma}{Lemmas}
\newsharedtheorem{corollary}{Corollary}{Corollaries}
\theoremstyle{definition}
\newsharedtheorem{definition}{Definition}{Definitions}
\newsharedtheorem{example}{Example}{Examples}
\newsharedtheorem{hypothesis}{Hypothesis}{Hypotheses}
\theoremstyle{remark}
\newsharedtheorem{remark}{Remark}{Remarks}

\newcommand{\R}{\mathbb{R}}
\newcommand{\C}{\mathbb{C}}
\newcommand{\T}{\mathbb{T}}
\newcommand{\Z}{\mathbb{Z}}
\newcommand{\eps}{\varepsilon}
\newcommand{\rhobar}{\bar{\rho}_0}
\newcommand{\Abar}{\bar{A}}
\newcommand{\qbar}{\bar{q}}
\newcommand{\chibar}{\bar{\chi}}
\newcommand{\Imc}{\operatorname{Im}}
\newcommand{\Rec}{\operatorname{Re}}
\newcommand{\dd}{\,\mathrm{d}}
\newcommand{\im}{\mathrm{i}}
\newcommand{\Za}{Z_{\mathrm{a}}}
\newcommand{\Dw}{D_{\mathrm{w}}}
\newcommand{\ha}{h_{\mathrm{a}}}
\newcommand{\hw}{h_{\mathrm{w}}}
\newcommand{\rhow}{\rho_{\mathrm{w}}}
\newcommand{\Wc}{\mathcal{W}}
\newcommand{\Jf}{\mathcal{J}}
\newcommand{\Edet}{\mathfrak{E}}
\newcommand{\Lop}{\mathbb{L}}

\title{\bfseries A Compressible Miles Instability for Wind-Wave Generation in a Neutral Atmosphere}
\author{Tianyi Li\thanks{Lawrence Livermore National Laboratory.
Corresponding author: \texttt{li100@llnl.gov}.}}
\date{September 2026}

\begin{document}
\maketitle

\begin{abstract}
We study the normal-mode stability of a flat interface between compressible Euler flow in the air and incompressible Euler flow in the water, including gravity and surface tension.
The base state is quiescent water beneath a hydrostatic, constant-entropy shear flow $U(z)$ with $U(0)=0$, so the atmosphere is neutrally stratified.
For a small ratio $\varepsilon$ of the air density at the interface to the water density, and for uniformly subsonic wave-relative flow, we establish a compressible analog of the critical-layer instability of Miles, extending the rigorous incompressible theory of B\"uhler, Shatah, Walsh, and Zeng.
For a real phase speed $c_r$, the singular coefficient at a regular critical level $U(z_c)=c_r$ has a residue proportional to $Q=U''+gU'/a^2=\bar\rho_0(U'/\bar\rho_0)'$, with gravitational acceleration $g$, sound speed $a$, and normalized base-state air density $\bar\rho_0$.
Thus the gradient of specific vorticity replaces the curvature $U''$ of the incompressible theory.

We prove a limiting absorption principle at regular, non-resonant phase speeds and a residue formula for the imaginary part of the boundary value $Z_{\mathrm a}^+(k,c_r)$ of the air-side pressure response at wavenumber $k>0$.
Let $c_k>0$ be the phase speed of the free gravity--capillary wave.
If $\mathrm{Im}\,Z_{\mathrm a}^+(k,c_k)>0$, then, for sufficiently small $\varepsilon$, the dispersion relation has exactly one unstable root $c_\varepsilon$ in a neighborhood of $c_k$ that does not depend on $\varepsilon$.
A sufficient condition is $Q\le0$ at every critical level and $Q<0$ at the highest one.

As the Mach number $M\to0$, $Z_{\mathrm a}^+$ converges to its incompressible counterpart at rate $O(M^2)$.
Under the corresponding incompressible sign condition, the unstable root satisfies $|c_\varepsilon(M)-c_\varepsilon(0)|\le C\varepsilon M^2$ with $C$ independent of $\varepsilon$ and $M$.
\end{abstract}

\medskip
\noindent\textbf{Keywords:} wind-wave generation, compressible Euler equations,
free-boundary problems, spectral instability.

\smallskip
\noindent\textbf{MSC 2020:} 76E17, 76E19, 76E05, 76B15, 35Q31, 35R35.

\setcounter{tocdepth}{2}
\tableofcontents

\section{Introduction}\label{sec:intro}

Wind waves are the surface waves of the ocean that the wind generates.
When wind blows over the sea, it transfers momentum and energy to the water, and the surface waves grow.
Understanding this transfer is a classical problem of geophysical fluid dynamics, and it matters in practice.
The wind input is the source term that drives operational wave forecasting models \cite{Komen1994,Janssen2004}, the wave field sets the roughness and the drag of the sea surface felt by the atmosphere \cite{SullivanMcWilliams2010}, and wave breaking controls the exchange of gases, including carbon dioxide, between the ocean and the atmosphere \cite{Deike2022}.

The generation process has several stages, and the classical theories treat them separately.
Phillips \cite{Phillips1957} described the initial excitation of a calm surface by the turbulent pressure fluctuations of the wind.
Miles \cite{Miles1957} described the subsequent exponential growth of a wave through its coupling to the sheared mean wind.
Benjamin \cite{Benjamin1959} computed the normal and tangential stresses that a laminar shear flow exerts on a wavy boundary.
Lighthill \cite{Lighthill1962} interpreted Miles's mechanism physically through the vortex force at the critical layer.
Townsend \cite{Townsend1972} included the variation of the turbulent Reynolds stresses in a deep turbulent boundary layer over waves, and Belcher and Hunt \cite{BelcherHunt1993} derived an asymptotic theory with four layers for turbulent shear flow over slowly moving waves, using rapid-distortion theory for the outer flow.
These theories rest on formal linearization and asymptotic matching.
A rigorous mathematical foundation for the incompressible Miles problem was given only recently \cite{BSWZ2016}.

In Phillips's theory a pressure component traveling at the phase speed of a free surface wave excites that wave resonantly.
The theory distinguishes an initial stage from a principal stage of linear growth of the mean-square displacement.
Li and Shen \cite{LiShen2022Principal,LiShen2023Phillips} confirmed both stages by direct numerical simulation, developed a random-sweeping pressure--wave model for the principal stage, and obtained direct evidence of the initial-stage resonance.

Miles's theory describes what happens once the wave feeds back on the air.
Because the ratio $\eps\approx1.2\times10^{-3}$ of the air density at the sea surface to the water density is small, a water wave of horizontal wavenumber $k$ travels, to leading order, at the phase speed $c_k$ of the free gravity--capillary wave, that is, of the wave that the water would carry if the air density were zero.
The wave interacts most strongly with the wind at the height $z_c$ where the wind speed $U(z)$ equals the phase speed, $U(z_c)=c_k$.
This height is the critical level, and the thin region around it is the critical layer.
Miles's formula gives an $O(\eps)$ growth rate whose sign, for incompressible air, is determined by the curvature $U''(z_c)$ of the wind profile at the critical level, with growth when $U''(z_c)<0$.
Here $O(\eps)$ denotes a quantity whose absolute value is at most a constant multiple of $\eps$, with the constant independent of $\eps$, and we use $O(\cdot)$ in the same way for the other small quantities below.
The mechanism has been confirmed by field and laboratory measurements \cite{Hristov2003,Carpenter2022,Plant1982} and enters the wind-input source terms of operational wave models \cite{Janssen2004,Komen1994}.
The present paper treats this mechanism and asks how atmospheric compressibility modifies it.

B\"uhler, Shatah, Walsh, and Zeng \cite{BSWZ2016} (BSWZ) gave the first rigorous proof of Miles's criterion.
They derived the linearization of the two-phase Euler equations about an arbitrary shear flow and proved the criterion in the presence of surface tension and of an interfacial vortex sheet, a jump of the tangential velocity across the interface.
Their result therefore covers both the Kelvin--Helmholtz theory, in which such a jump drives the growth of the waves, and the quasi-laminar theory.
Here quasi-laminar means that turbulence sets the mean wind profile, while the wave-induced turbulent stresses and the instantaneous turbulent fluctuations are neglected.
Subsequent rigorous work treats the full spectral picture of the two-phase problem \cite{Liu2024TwoPhase}, the linearized dynamics of one-phase free-surface shear flows \cite{LiuZeng2025CMP,LiuZeng2025Memoirs}, and circular geometries \cite{GuiLiu2025}.

All of these rigorous results take the air to be incompressible with constant density.
At the Mach numbers of winds over the sea this is a natural approximation, but the size of the correction it neglects has not been quantified within the wind-wave eigenvalue problem.
We are not aware of a rigorous normal-mode theory that combines a compressible gas, an incompressible liquid, a sheared wind, and a Miles critical layer.

We consider the full compressible Euler equations in the air and the incompressible Euler equations in water of finite depth, coupled through a free interface, with gravitational acceleration $g>0$ and surface tension coefficient $\sigma\ge0$.
The horizontal coordinate is $x$ and the vertical coordinate is $z$, measured upward from the undisturbed interface $z=0$.
The air has a general equation of state $p=P(\rho,S)$, which gives the pressure $p$ as a function of the density $\rho$ and the specific entropy $S$.
In an inviscid gas without heat conduction, each fluid parcel carries its specific entropy unchanged.
Below, entropy means specific entropy.
The undisturbed air and water depths are $\ha>0$ and $\hw>0$, and the top and bottom boundaries $z=\ha$ and $z=-\hw$ are flat and impermeable, so the vertical velocity vanishes there.
The base state, the undisturbed flow about which we linearize, consists of water of constant density $\rhow$ at rest beneath a flat interface and a wind $U(z)$ in hydrostatic air with $U(0)=0$ and constant entropy.
Its air density is $\rho_0(z)$, and the small parameter of the problem is the ratio $\eps=\rho_0(0)/\rhow$ of the air density at the interface to the water density.

Constant entropy is the main physical restriction of this paper, and we explain what it means.
A parcel of air displaced vertically expands or contracts adiabatically.
If it arrives lighter or heavier than its new surroundings, buoyancy pushes it back and it oscillates.
The frequency of this oscillation is the Brunt--V\"ais\"al\"a frequency $N$, and such oscillations propagating through the air are internal gravity waves.
When the entropy of the base state is constant with height, a displaced parcel always has the density of its surroundings, so there is no restoring force, $N$ vanishes, and the atmosphere is neutrally stratified.
In a hydrostatic atmosphere, neutrality is equivalent to a density profile that decreases at the rate set by the sound speed,
\begin{equation}\label{eq:neutral-intro}
N^2 \;=\; -g\Big(\frac{\rho_0'}{\rho_0}+\frac{g}{a^2}\Big)\;=\;0
\qquad\Longleftrightarrow\qquad
\frac{\rho_0'}{\rho_0}=-\frac{g}{a^2},
\end{equation}
where $a(z)$ is the sound speed of the base state and the prime denotes $\dd/\dd z$.
The restriction has a mathematical and a physical justification.
A neutral atmosphere carries no internal gravity waves, so the only singular behavior of the normal-mode equation at a critical level is the one already present in Rayleigh's equation for incompressible shear flow, a simple pole.
In a stratified atmosphere, the normal modes instead satisfy an equation of Taylor--Goldstein type, whose solutions near a critical level behave like powers of the distance to it, with exponents that depend on the Richardson number $N^2/U'^2$ at the critical level (\cref{rem:general-strat}).
Physically, the logarithmic wind profile that wave models use for the wind input is the profile of a neutral surface layer \cite{Janssen2004}, so neutral stratification is the standard description of the lowest tens of meters over the sea, where the critical levels of wind waves lie.
Compared with the incompressible theory, two effects then remain in the air equations.
The sound speed is finite, and the base-state density decreases with height at the hydrostatic rate $\rho_0'/\rho_0=-g/a^2$.

\subsection{Main results}\label{subsec:mainresults}

The central question of the present study is how Miles's critical-layer instability and its criterion change when the air is a compressible gas.
We answer it for uniformly subsonic wave-relative flow, which includes the low-Mach-number regime of winds over the sea without being restricted to it.
Uniformly subsonic means that the speed of the wind relative to the wave stays below a constant fraction, smaller than one, of the sound speed (Hypothesis \ref{hyp:subsonic}).
The instability persists, with the curvature $U''(z_c)$ replaced by $Q(z_c)=U''(z_c)+(g/a^{2}(z_c))\,U'(z_c)$, the density-weighted gradient of specific vorticity given in Equation~\eqref{eq:Q-intro}.
The incompressible theory is recovered as the Mach number $M$ tends to zero, where $M$ is the ratio of a reference wind speed to the sound speed at the interface.
This convergence holds for the unstable root itself, at the rate $M^2$ (\S\ref{sec:lowmach}).
In a model regime of strong compressibility, $Q(z_c)>0$ although $U''<0$, and the Miles bifurcation is absent, that is, no growing mode emerges from the free water wave as the density ratio $\eps$ increases from zero.

A normal mode is a solution of the linearized equations in which every perturbation field is the product of a function of the height $z$ and the factor $e^{\im k(x-ct)}$, with horizontal wavenumber $k>0$ and complex phase speed $c$.
The mode grows exponentially in time exactly when $\Imc c>0$, at the rate $k\Imc c$.
Here $\Rec$ and $\Imc$ denote the real and imaginary parts of a complex number.
The proof reduces each normal mode to an ordinary differential equation for the vertical-velocity amplitude, which becomes singular at critical levels as the imaginary part of $c$ tends to zero.
It then defines the solution at a real phase speed as the limit of the solutions with $\Imc c>0$, which is a limiting absorption principle, and it locates the unstable root near the free wave speed $c_k$ with Rouch\'e's theorem.
This theorem states that two functions that are holomorphic, that is, complex differentiable, on an open set containing a simple closed curve and its interior have equally many zeros inside the curve when their difference is smaller in modulus than one of them on the curve.
Here zeros are counted with multiplicity, so that a simple zero, at which the derivative does not vanish, counts once.

Write $\rhobar(z)=\rho_0(z)/(\eps\rhow)=\rho_0(z)/\rho_0(0)$ for the base-state air density normalized by its value at the interface, so that $\rhobar(0)=1$.
For a horizontal wavenumber $k>0$ let $c_k$ be the positive free gravity--capillary wave speed defined in Equation~\eqref{eq:ck}, which depends on $g$, $\sigma$, $\rhow$, $k$, and the water depth $\hw$.
A critical level $z_c$ is regular when $U'(z_c)\neq0$, so that the wind speed crosses the phase speed with nonzero shear.
The main theorem treats a monotone wind with one regular critical level.

\begin{theorem}
\label{thm:headline}
Let $U\in C^4([0,\ha])$ with $U(0)=0$ and $U'>0$. Let the neutrally
stratified atmosphere $(\rhobar,a)$ satisfy the regularity and
hydrostatic assumptions of \S\ref{subsec:hypotheses}. Let $k>0$ be a horizontal wavenumber such that the free wave speed $c_k$ at this wavenumber
lies in $U((0,\ha))$, so that there is exactly one critical level
$z_c\in(0,\ha)$ with $U(z_c)=c_k$, and such that the wave-relative
flow $U-c$ is uniformly subsonic for $c$ near $c_k$. Assume
\begin{equation}
Q(z_c)=U''(z_c)+\frac{g}{a^{2}(z_c)}\,U'(z_c)<0 .
\end{equation}
Then there are $\eps_0>0$ and $r_0>0$ with the following properties
for every air--water density ratio $\eps\in(0,\eps_0)$. The linearized
problem has exactly one unstable phase-speed root $c_\eps$ in the set $\{c\in\C:\ |c-c_k|<r_0,\ \Imc c>0\}$, and the space of normal
modes at this root is one-dimensional. The root satisfies
$c_\eps=c_k+\eps c_1+O(\eps^{2}(1+\log(1/\eps)))$ with an
explicit $c_1$, and the growth rate is
$\gamma_\eps=k\Imc c_\eps>0$. The number
$\lambda_\eps=-\im kc_\eps$ is an isolated, algebraically simple
eigenvalue of the closed, densely defined operator $\Lop_k$ of
\cref{thm:operator}, the linearized evolution operator at wavenumber
$k$.
If, in addition, the equation of state satisfies the thermodynamic regularity in Equation~\eqref{eq:thermo-domain}, then $\Lop_k$ is the linearization of the nonlinear two-phase problem in the sense of \cref{thm:operator}.
\end{theorem}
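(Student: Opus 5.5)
The plan is to reduce the statement to a scalar dispersion relation and then apply Rouché's theorem to a one-parameter perturbation in $\eps$. Following the reduction sketched in the introduction, I write the normal-mode problem at wavenumber $k$ as a second-order ODE for the vertical-velocity amplitude in the air. It is of Rayleigh type, with density weight $\rhobar$ and a coefficient with a simple pole where $U=c$. I impose the impermeability condition at $z=\ha$ and normalize at $z=0$. The air-side pressure response $\Za(k,c)$ is then a holomorphic function of $c$ on $\{\Imc c>0\}$ near $c_k$. Eliminating the water side, which is explicit since the water is irrotational, incompressible and at rest, gives a dispersion function of the form
\begin{equation*}
\Edet(c,\eps)=\Dw(k,c)-\eps\,\Za(k,c),
\end{equation*}
where $\Dw(k,\cdot)$ is the free-wave dispersion function with a simple real zero at $c_k$. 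The air terms enter with the factor $\eps$ because $\rho_0=\eps\rhow\rhobar$. Uniform subsonicity keeps the ODE nondegenerate away from the critical level, since the coefficient $1-(U-c)^2/a^2$ stays bounded below. Regularity of $U$ and of $(\rhobar,a)$ then gives bounds on $\Za$ that are uniform in $c$.

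The key analytic step is the limiting absorption principle at $c_r$ near $c_k$. I take the Frobenius solutions near $z_c(c)$ for the regular singular point. Because $U'(z_c)\neq0$, one solution is analytic, $\phi_1\sim (z-z_c)$, and the other has the form $\phi_2=1+\frac{Q(z_c)}{U'(z_c)}\,\phi_1\log(z-z_c)+\dots$. Here the residue is computed from the neutral relation $\rhobar'/\rhobar=-g/a^2$, which turns $(\rhobar U')'/\rhobar$ into $Q$. Letting $\Imc c\downarrow0$ selects the branch $\log(z-z_c)=\log|z-z_c|-\im\pi$ for $z<z_c$. This shows that $\Za(k,c)$ extends continuously, in fact Hölder or log-Lipschitz, to $\Imc c\ge0$, with
\begin{equation*}
\Imc \Za^+(k,c_r)=\pi\,\frac{-Q(z_c)}{|U'(z_c)|}\,\rhobar(z_c)\,|\phi(z_c)|^2\times(\text{positive factor}).
\end{equation*}
So $Q(z_c)<0$ gives $\Imc\Za^+(k,c_k)>0$. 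Non-resonance is needed here, meaning that the homogeneous regular problem has no real-$c$ solution, so that the normalization at $z=0$ is possible. I would take this from the earlier limiting absorption results in the paper, or prove it from a Wronskian identity whose imaginary part forces $Q$ to vanish at the critical level.

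Next I locate the root. On a small half-disc $\{|c-c_k|<r_0,\ \Imc c\ge0\}$, I compare $\Edet$ with its linearization $\Dw'(c_k)(c-c_k)-\eps\Za^+(k,c_k)$, whose unique zero is $c_k+\eps c_1$ with $c_1=\Za^+(k,c_k)/\Dw'(c_k)$. The sign convention makes $\Imc c_1>0$. Rouché's theorem is applied on a circle of radius about $C\eps$ around $c_k+\eps c_1$. That circle lies in the open upper half-plane for small $\eps$, and $\Za$ is holomorphic there. The difference is bounded by $O(|c-c_k|^2)+\eps|\Za(c)-\Za^+(c_k)|$. The continuity modulus of $\Za$ near the real axis is log-Lipschitz, $O(\eps\log(1/\eps))$ at distance $\eps$, because of the logarithmic critical-layer term. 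This gives exactly one root $c_\eps$ and the error $O(\eps^2(1+\log(1/\eps)))$.

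To exclude other unstable roots in the half-disc, I use $|\Dw(c)|\gtrsim|c-c_k|$ together with the uniform boundedness of $\Za$ on the closed half-disc, which comes from the limiting absorption principle. Any root outside the small circle is then ruled out by $|\Dw|>\eps|\Za|$, and a root near the real axis is ruled out by the sign of $\Imc$.

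The remaining claims follow from the general theory. A one-dimensional mode space follows because the ODE with the boundary condition at the top has a one-dimensional solution space. By \cref{thm:operator}, $\lambda_\eps=-\im kc_\eps$ is an eigenvalue of $\Lop_k$. It is isolated because $\Edet$ is holomorphic and not identically zero. Algebraic simplicity comes from identifying the resolvent of $\Lop_k$ near $\lambda_\eps$ with $\Edet^{-1}$ times holomorphic factors, together with $\partial_c\Edet(c_\eps)\neq0$, which follows from the Rouché comparison via Cauchy estimates. The final sentence of the theorem is exactly \cref{thm:operator} under \eqref{eq:thermo-domain}.

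I expect the main obstacle to be the uniform limiting absorption estimate: proving that $\Za$ extends continuously up to the real axis with an explicit modulus, uniformly in $c$ near $c_k$. This requires careful Frobenius and Volterra estimates that are uniform as the critical point moves with $\Imc c\to0$, with the compressible coefficients entering only through smooth perturbations.
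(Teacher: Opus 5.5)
Your overall route is the paper's: the quotient relation $\Dw(k,c)=\eps\Za(k,c)$, limiting absorption with the $-\im\pi$ branch below $z_c$, the sign of $\Imc\Za^+$ from the critical-layer flux jump, and Rouch\'e on an $O(\eps)$ scale using the log-Lipschitz modulus of $\Za$. Your interior circle about $c_k+\eps c_1$ plus localization is a legitimate substitute for the paper's boundary Rouch\'e lemma on the half-disc. Between that circle and the $O(\eps)$ localization disc you need $|D-\ell|<|\ell|$ with $\ell(c)=\Dw'(c_k)(c-c_k-\eps c_1)$, not $|\Dw|>\eps|\Za|$, but that is a one-line fix.

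\textbf{The gap} is the step ``$Q(z_c)<0$ gives $\Imc\Za^+(k,c_k)>0$''. The jump formula gives $\Imc\Za^+(k,c_k)$ as $-\pi\rhobar(z_c)Q(z_c)/|U'(z_c)|$ times a positive multiple of $|w_\ast(z_c)|^2$, so it is only $\ge0$. Strict positivity requires $w_\ast(z_c)\neq0$, and so does non-resonance. Non-resonance is \emph{not} a hypothesis of this theorem and must be proved. Without it $\Za$ need not be bounded near $c_k$, so your localization and Rouch\'e steps have no input.
\begin{itemize}
\item Taking non-resonance from the limiting-absorption results is circular, since \cref{thm:limabs} assumes Hypothesis \ref{hyp:nonres}.
\item Your Wronskian alternative does not close it. If $w_\ast(0)=0$, then $\Wc$ vanishes on $(0,z_c)$ and on $(z_c,\ha)$. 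The jump formula then yields $Q(z_c)|w_\ast(z_c)|^2=0$, i.e.\ $w_\ast(z_c)=0$, not $Q(z_c)=0$.
\end{itemize}
The missing idea is a proof that the top-normalized limiting solution cannot vanish at the highest critical level. The paper does this in \cref{lem:positivity}. If $w_\ast(z_c)=0$, the solution is locally a multiple of the real regular solution. The functional $H=u_2-\Abar(U'+bV)u_1/V$ of \cref{lem:H-monotone} has derivative at least $\rhobar k^2|w|^2$, after an exact cancellation of the singular and hydrostatic terms. It also satisfies $H(z_c)=H(\ha)=0$, which is a contradiction. An energy identity for $(FX')'=k^2V^2X/\rhobar$ on $(z_c,\ha)$ would serve equally well: there $F>0$, and $FX'X^{\ast}$ vanishes at both ends. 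Only once $w_\ast(z_c)\neq0$ is known does the flux give $\Wc=-\pi\rhobar(z_c)Q(z_c)|w_\ast(z_c)|^2/|U'(z_c)|\neq0$ on $(0,z_c)$. Hence $w_\ast\neq0$ on $[0,z_c)$, which is non-resonance at $c_k$ and, by continuity, nearby.

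\textbf{A second error} is in your derivation of the residue. Under neutrality, $(\rhobar U')'/\rhobar=U''+(\rhobar'/\rhobar)U'=U''-(g/a^2)U'$. That is the Lees--Lin quantity, with the opposite hydrostatic correction. By contrast, $Q=U''+(g/a^2)U'=\rhobar(U'/\rhobar)'$. So a density-weighted Rayleigh equation with singular coefficient $(\rhobar U')'/V$ is the wrong model. The correct coefficient is $G/V$ with $G=(\Abar(U'+bV))'+b\Abar(U'+bV)$ and $G(z_c)=\rhobar(z_c)Q(z_c)$. It appears only if you keep the buoyancy term $-gr$ in the vertical momentum equation and the resulting $-b\chibar$ in the pressure equation (\cref{prop:Q,rem:leeslin}). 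Since the theorem's hypothesis is exactly the sign of $Q$, this is not cosmetic.

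Finally, a literal Frobenius expansion is unavailable, because the coefficients are only finitely differentiable. The paper obtains the logarithmic solution from the pressure variable $\chibar$, whose equation is regular at $V=0$. It uses one integration by parts and a contraction that is uniform as $\Imc c\downarrow0$ (\cref{lem:fixedpoint,prop:local}).
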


\begin{proof}
This is \cref{cor:monotone}.
\end{proof}

The last part of the theorem addresses a point that formal linearization does not settle.
In a free-boundary problem the fluid domains move with the solution, so the term ``linearized operator'' is not well defined until the problem is posed on fixed function spaces.
BSWZ \cite[\S2]{BSWZ2016} raised this point for the incompressible problem.
The dynamically correct object is the Fr\'echet derivative of the equations after they have been transported to fixed reference domains.
Here the Fr\'echet derivative at a state is the linear map that approximates how the expressions in the equations change under a small perturbation of the state, with an error that is small compared with the perturbation.
Appendix~\ref{app:operator} carries out this transport, shows that the derivative exists under the thermodynamic regularity in Equation~\eqref{eq:thermo-domain}, and shows that, after a change of variables which removes the apparent perturbation produced by moving the coordinate system, this derivative coincides with the linearized system obtained by differentiating the equations at fixed points.
The new unknown after this change of variables is called the good unknown \cite{Alinhac1989}.
The operator $\Lop_k$ is then the closure of this linear system in $L^2$, which is the standard way to turn a differential operator into a closed operator with a well-defined spectrum.
Here $L^2$ denotes the space of square-integrable functions of the height, defined precisely in \cref{rem:spaces}.
Densely defined means that the domain of $\Lop_k$, the subspace of states on which $\Lop_k$ is defined, is dense in the $L^2$-based state space, that is, every state is a limit, in the norm of this space, of states in the domain.
Section~\ref{subsec:operator} defines closed operators, the closure, the spectrum, the resolvent, the Riesz projection, and algebraic simplicity.
The theorem therefore says that the growing mode is an eigenvalue of the actual derivative of the nonlinear problem.
A formally linearized system alone would not give this.
The eigenvalue is isolated, so no other point of the spectrum lies near it, and algebraically simple, that is, the Riesz projection onto it has a one-dimensional range, so the mode is unique up to a constant factor.
\cref{thm:operator} and \cref{prop:resolvent} also give an explicit bound on the resolvent $(\lambda-\Lop_k)^{-1}$ near it.

We now describe the objects in the theorem.
Consider a normal mode with $\Imc c\neq0$ and write the perturbation of the vertical air velocity as $w(z)e^{\im k(x-ct)}$, so that $w$ is the vertical-velocity amplitude, a function of height alone.
Because the base entropy is constant and each parcel conserves its entropy, the entropy perturbation of such a mode vanishes, and the remaining air amplitudes, the horizontal velocity, the density, and the pressure, are recovered from $w$ (\cref{lem:entropy,prop:SLform}).
The amplitude $w$ satisfies
\begin{equation}\label{eq:SL-intro}
\big(\Abar\,w'\big)' \;=\;\Big(\rhobar k^{2}+\frac{G}{V}\Big) w,
\qquad
V=U-c,\quad
\Abar=\frac{\rhobar\,a^{2}}{a^{2}-V^{2}},
\end{equation}
where $G=\bigl(\Abar(U'+bV)\bigr)'+b\Abar(U'+bV)$ and $b=g/a^{2}$.
For real $c$ away from critical levels and from sonic levels, the heights where $V^2=a^2$, Equation~\eqref{eq:SL-intro} is a formally self-adjoint equation in divergence form.
Divergence form means that the derivatives of $w$ appear only in $(\Abar w')'$.
The coefficients are then real, and the equation is formally self-adjoint, that is, two integrations by parts over an interval free of critical and sonic levels move the differential operator from one function to the other when both functions vanish at the ends of the interval.
It is not a Sturm--Liouville eigenvalue problem in the usual sense, in which the eigenvalue multiplies a given weight function, because the phase speed $c$ enters the coefficients $\Abar$ and $G/V$ nonlinearly through $V=U-c$.
At a regular critical level, one has
\begin{equation}\label{eq:Q-intro}
G(z_c)\;=\;\rhobar(z_c)\,Q(z_c),
\qquad
Q\;=\;U''+\frac{g}{a^{2}}\,U'
\;=\;\rhobar\Big(\frac{U'}{\rhobar}\Big)' .
\end{equation}
Thus the coefficient $G/V$ has at most a simple pole in $z$ at the critical level.
Its residue at $z_c$, the coefficient of $1/(z-z_c)$ in its expansion near $z_c$, is $\rhobar(z_c)Q(z_c)/U'(z_c)$.
The identity $Q=\rhobar(U'/\rhobar)'$ identifies the critical-layer coefficient with $\rhobar$ times the vertical gradient of the specific vorticity $U'/\rhobar$, the vorticity per unit mass.
In compressible boundary-layer stability theory the corresponding coefficient is $(\rho_0U')'$, the quantity in the generalized inflection-point criterion of Lees and Lin \cite{LeesLin1946,Mack1984}.
The two density weightings differ because gravity enters the present problem twice, and \cref{rem:leeslin} explains the relation.
The same specific-vorticity gradient controls the momentum that a wave deposits at a critical level in compressible wave--mean-flow theory \cite{EliassenPalm1961,AndrewsMcIntyre1978,Buhler2014}.

The air acts on the interface through a single complex number for each pair $(k,c)$.
Suppose the interface is displaced by $\hat\zeta\,e^{\im k(x-ct)}$, so that $\hat\zeta$ is the interface displacement amplitude.
The air-pressure perturbation evaluated on the displaced interface includes the change $-\rho_0(0)g\hat\zeta$ of the base-state pressure over the displacement.
This is the Lagrangian pressure perturbation, and it is proportional to $\hat\zeta$.
The air-side pressure response $\Za(k,c)$ of Equation~\eqref{eq:Za-def} is this Lagrangian pressure perturbation divided by the surface air density $\rho_0(0)=\eps\rhow$ and by the displacement amplitude $\hat\zeta$.
It is the pressure that the air exerts on the interface per unit of interface displacement, divided by the surface air density, and it depends on the normalized profiles $(\rhobar,a,U)$ but not on $\eps$.
The water side enters through the function $\Dw(k,c)=kc^{2}\coth(k\hw)-g-\sigma k^{2}/\rhow$ of Equation~\eqref{eq:Dw-def}, whose positive zero is the free wave speed $c_k$.
For $\Imc c>0$, a normal mode exists exactly when the water-wave dynamics at phase speed $c$ is balanced by the air pressure,
$\Dw(k,c)=\eps\Za(k,c)$ (\cref{prop:reduction}).
Since the air is light, the right-hand side is $O(\eps)$, so roots near $\pm c_k$ are $O(\eps)$ perturbations of $\pm c_k$, and the growth rate of the root near $c_k$ is set by the imaginary part of $\Za$ there.

For $\Imc c>0$, Equation~\eqref{eq:SL-intro} has no singularity, because $V=U-c$ does not vanish, and $\Za(k,\cdot)$ is a holomorphic function of $c$ except at the values of $c$ for which the air solution vanishing at the top boundary also vanishes at the interface, $w(0;k,c)=0$.
At such values the normalization by the interface displacement is impossible and $\Za$ has a pole, so $\Za(k,\cdot)$ is meromorphic on the upper half-plane, that is, holomorphic except at isolated poles.
The unstable root we seek has a small positive imaginary part, so we need the behavior of $\Za(k,c)$ as $c=c_r+\im\delta$ tends to the real phase speed $c_r$, with $\delta>0$ decreasing to zero.
At real phase speeds, the equation for $w$ is singular at every critical level, where $V=0$, and its solution is not defined directly.
The limiting absorption principle of scattering theory resolves this.
It defines the boundary value $\Za^{+}(k,c_r)$ as the limit of $\Za(k,c_r+\im\delta)$ as $\delta\downarrow0$.
Physically, the small imaginary part $\delta$ is a weak exponential growth in time, and the limit selects the solution that a slowly growing wave sees.
In the classical theory this is the rule of Lin and Miles for the branch of the logarithm at the critical level \cite{Miles1957,DrazinReid2004}.
A real phase speed $c_r$ is regular when every zero of $U-c_r$ is simple, and it is non-resonant, in the sense of Hypothesis \ref{hyp:nonres}, when the limiting air solution does not vanish at the interface.
Resonance would mean that $c_r$ is also the phase speed of a free oscillation of the air layer alone, between a flat interface held at rest and the top boundary, and then $\Za$ is singular there.
For a regular, non-resonant $c_r$ satisfying uniform subsonicity, \cref{thm:limabs} proves that the limit exists, at the rate $O\bigl(\delta(1+|\log(\delta/V_\ast)|)\bigr)$, where $V_\ast$ is the reference speed of \S\ref{subsec:local}.
The limiting solution $w_\ast$ is continuous, and near each critical level $z_j$ it has the expansion $w_\ast(z_j)\bigl[1+\tfrac{Q(z_j)}{U'(z_j)}(z-z_j)\log|z-z_j|\bigr]$ up to a $C^1$ remainder, with the logarithm acquiring the imaginary part $-\im\pi$ on the side where $U<c_r$.
The imaginary part of the boundary value is a sum of critical-level contributions,
\begin{equation}\label{eq:ImZ-intro}
\Imc \Za^{+}(k,c_r)
\;=\;-\frac{\pi}{k^{2}}\sum_{j=1}^{m}
\frac{\rhobar(z_j)\,Q(z_j)}{|U'(z_j)|}\,|w(z_j)|^{2},
\end{equation}
where $z_1<\dots<z_m$ are the critical levels, the solutions of $U(z_j)=c_r$, and $w$ is normalized so that the interface displacement amplitude equals one.

Equation~\eqref{eq:ImZ-intro} has a physical meaning.
Write $\xi$ for the vertical displacement amplitude of the air parcels, $\qbar$ for the normalized pressure amplitude, and an asterisk for complex conjugation.
The quantity $\Jf=\Imc(\qbar\xi^{\ast})$, the correlation between the pressure and the slope of the displaced material surfaces, is the vertical flux of horizontal wave momentum, called the wave-induced form stress in the wind-wave literature and the Eliassen--Palm flux in the wave--mean-flow literature \cite{EliassenPalm1961}.
Here a material surface is a surface that consists of the same air parcels at all times, such as the surface formed by the parcels that lie at one height in the base state.
For a neutral mode, one with real phase speed $c$ that neither grows nor decays, this flux is constant in height except at the critical levels, where it jumps.
The $j$th summand in Equation~\eqref{eq:ImZ-intro} is the jump of $\Jf$ across $z_j$ (\cref{lem:flux,thm:limabs}), that is, the momentum that the wave deposits in the mean wind at that critical level.
The balance has the same form as in the incompressible theory, with $U''$ replaced by $Q$.
For the criterion to be effective, the limiting solution must not vanish at a critical level, and a monotonicity argument shows that it does not vanish at the highest one (\cref{lem:positivity}).

The instability statement follows from the dispersion relation by a perturbation argument in the complex $c$-plane.
Dividing by the interface trace $w(0;k,c)$ introduces the poles of $\Za$ in $c$, so we work with the equivalent holomorphic form $\Edet(k,c,\eps)=0$ of \cref{def:determinant}, where the function $\Edet$ is the relation multiplied through by that trace.
Here the interface trace $w(0;k,c)$ is the value at $z=0$ of the air solution with $w(\ha)=0$ and $w'(\ha)=1$.
If $\Imc\Za^+(k,c_k)>0$, \cref{thm:main} gives a unique root $c_\eps$ in the set $\{|c-c_k|<r_0,\ \Imc c>0\}$ with $r_0$ independent of $\eps$, and every root in that set lies within $O(\eps)$ of $c_k$.
Moreover,
\begin{equation}\label{eq:expansion-intro}
c_\eps=c_k+\eps\,c_1+O\bigl(\eps^{2}(1+\log(1/\eps))\bigr),
\qquad
c_1=\frac{\Za^{+}(k,c_k)}{2kc_k\coth(k\hw)} ,
\end{equation}
and the growth rate is
$\gamma_\eps=k\Imc c_\eps
=\eps\,\tfrac{\tanh(k\hw)}{2c_k}\,\Imc\Za^{+}(k,c_k)
+O(\eps^{2}(1+\log(1/\eps)))$.
The sign condition $c_kQ(z_j)\le0$ at every critical level, with strict inequality at the highest one, is sufficient for instability (\cref{cor:sufficient}).
For a monotone increasing wind with one critical level this becomes $Q(z_c)<0$, as stated in \cref{thm:headline}.
If $c_k$ lies outside the range of $U$, so that there is no critical level, the root near $c_k$ is instead real and simple, and it is given by a convergent power series in $\eps$ (\cref{thm:necessity}).
Hence the critical layer is the only source of growth near $c_k$.

To quantify the incompressible approximation, Section~\ref{sec:lowmach} considers neutral atmospheres parametrized by the Mach number $M$, with sound speed $a_M=a_\ast/M$, where $a_\ast$ is a reference speed of the order of the wind and wave speeds.
\cref{thm:lowmach} proves
$|\Za^{+,M}(k,c_r)-\Za^{+,\mathrm{inc}}(k,c_r)|\le CM^2$
uniformly for $c_r$ in compact sets of regular, non-resonant phase speeds and on a closed neighborhood of $c_k$ in the upper half-plane that does not depend on $M$.
Here $C$ is independent of $M$, and the superscripts $M$ and $\mathrm{inc}$ mark the atmosphere with Mach number $M$ and the incompressible problem.
A compact set of real phase speeds is a closed and bounded set.
Hence the leading coefficient $c_1(M)$ of the compressible problem satisfies $c_1(M)=c_1(0)+O(M^2)$, with $c_1(0)$ its incompressible value.
Under the incompressible sign condition, the unstable roots of the compressible and incompressible problems satisfy
$|c_\eps(M)-c_\eps(0)|\le C\eps M^2$ at each wavenumber $k$, with $C$ independent of $\eps$ and $M$.
This is the quantitative form of the statement that the incompressible theory is accurate for wind waves.

For wind profiles that increase with height with shear decreasing upward, $U'>0$ and $U''<0$, as for the logarithmic profile, the hydrostatic term $(g/a^2)U'$ in $Q$ is positive and shifts the criterion from $U''<0$ to $Q<0$.
For critical levels at the heights typical of wind waves over the sea, meters to tens of meters above the surface, the shift is negligible (\S\ref{subsec:polytropic-wind}).
For the bounded shear-layer profile $U=U_\infty(1-e^{-z/L})$, with limiting speed $U_\infty$ and shear depth $L$, in the neutral constant-sound-speed atmosphere, the condition $a^2<gL$ gives $Q>0$ although $U''<0$.
At each admissible wavenumber the local $O(\eps)$ Miles bifurcation from $c_k$ is then absent (\cref{prop:suppression}).
This regime requires a sound speed far below that of air.
The example shows that the sign criterion of the incompressible theory does not extend to a compressible atmosphere in general.

\subsection{Methods and relation to prior work}\label{subsec:literature}

The critical-layer argument is built around the variable $\chibar=\im k\qbar$, the horizontal pressure-gradient amplitude scaled by the surface air density.
Unlike the equation for $w$, its first-order equation is regular at $U=c_r$.
Variation of constants then reconstructs $w$ from $\chibar$ and isolates a single logarithmic term with coefficient $Q/U'$.
A contraction argument for a single integral equation gives the local expansion near each critical level and estimates that are uniform as $\Imc c\downarrow0$.
Such an argument writes the solution as the unique fixed point of a map that multiplies every distance by at most a constant factor smaller than one, the contraction constant.
The compressible theory rests on two exact cancellations.
The hydrostatic terms cancel in the flux identity of \cref{lem:flux}, and the singular terms cancel in the derivative of the monotonicity functional of \cref{lem:H-monotone}.
These identities yield the residue formula and the sign criterion.
Rouch\'e's theorem on a neighborhood of $c_k$ in the upper half-plane, applied after an estimate that places all nearby roots within $O(\eps)$ of $c_k$, proves existence, uniqueness, and simplicity of the unstable root.
For $\Abar=\rhobar=1$ and $b=0$, the same argument recovers the incompressible critical-layer theory of \cite[\S4]{BSWZ2016}.
Appendix~\ref{app:operator} supplies the formulation on fixed reference domains, its linearization, and the construction of the operator $\Lop_k$.

Maleewong and Grimshaw \cite{MaleewongGrimshaw2024} study internal-wave radiation in a density-stratified incompressible atmosphere, a mechanism absent when $N^2=0$.
Compressible vortex-sheet results \cite{Miles1958VortexSheet,CoulombelSecchi2008,XieZhao2024} treat piecewise-constant flows without critical layers.
Lees and Lin \cite{LeesLin1946} and Mack \cite{Mack1984} derive the local compressible critical-layer algebra for boundary layers over solid walls, with no free surface or gravity.
Other related results include compressible Rayleigh--Taylor normal modes \cite{GuoTice2011}, compressible Miles--Howard and semicircle theorems \cite{Chimonas1970,Eckart1963}, and viscous compressible--incompressible free-boundary theory \cite{Denisova2000}.
The incompressible necessity and positivity arguments have precedents in \cite{BSWZ2016,Liu2024TwoPhase}, while the low-Mach result is related to the incompressible-limit theory \cite{KlainermanMajda1981,MetivierSchochet2001,LindbladLuo2018,DisconziLuo2020,LuoZhang2026}.

A complementary numerical approach keeps atmospheric compressibility in wave-phase-resolved simulations.
Zhu et al.\ \cite{ZhuEtAl2023WRF} introduced a moving lower boundary in WRF--LES and validated the method for laminar and turbulent flow over prescribed monochromatic waves.
Li et al.\ \cite{LiEtAl2025WRF} extended the method to prescribed broadband wave fields with different propagation directions and directional spreading.
These calculations solve the compressible atmospheric equations with turbulence modeling while prescribing the wave surface.
The present paper treats the inviscid two-phase free-boundary eigenvalue problem, in which the interface motion and the air pressure are coupled.

\subsection{Modeling assumptions and scope}

Neutral stratification removes internal gravity waves (\cref{rem:general-strat}).
The impermeable top boundary replaces the outgoing acoustic radiation condition, which lets sound waves leave the air layer upward, by a reflecting boundary, so the free acoustic oscillations of the air layer occur only at a discrete set of frequencies.
In the uniform-wind example its effect is exponentially small when the top boundary lies several decay lengths above the interface, and this decay length grows as the speed of the wind relative to the wave approaches the sound speed (\S\ref{subsec:KH}).
Uniform subsonicity excludes sonic levels and hence the higher Mack modes of compressible boundary layers, which are acoustic waves trapped in a region where the flow relative to the wave is supersonic \cite{Mack1984}.
The condition $U(0)=0$ excludes the vortex-sheet Kelvin--Helmholtz mechanism at the interface \cite{BSWZ2016}.
Surface tension is included and enters the analysis at each wavenumber through $c_k$.
Viscosity, turbulence closures, oblique three-dimensional modes, and nonlinear instability are not considered.

\subsection{Outline}

We begin in Section~\ref{sec:formulation} with the nonlinear two-phase problem and its laminar states.
Section~\ref{sec:linearization} derives the linearized equations and reduces them to a normal-mode problem.
In Section~\ref{sec:structure}, we obtain the displacement--pressure formulation, the flux identities, and the holomorphic form of the dispersion relation used in the instability analysis.

Section~\ref{sec:criticallayer} establishes limiting absorption at the critical layers and derives the residue formula for the boundary value of the pressure response.
Section~\ref{sec:instability} first proves the necessary condition for instability and then uses these critical-layer results to prove the main instability theorem.
Section~\ref{sec:lowmach} recovers the incompressible theory in the low-Mach limit.
Section~\ref{sec:examples} discusses uniform wind, logarithmic and shear-layer profiles, and a regime of strong compressibility in which the Miles bifurcation does not occur.

Appendix~\ref{app:thermo} contains the thermodynamic identities and the general stratified equations.
Appendix~\ref{app:proofs} gives the technical estimates for the critical-layer analysis and the low-Mach limit.
Appendix~\ref{app:operator} develops the fixed-domain formulation and establishes the operator interpretation of the normal-mode results.
\section{The nonlinear two-phase system and its laminar states}
\label{sec:formulation}

This section formulates the problem studied in this paper.
We write the nonlinear equations for compressible air over incompressible water, identify the steady laminar states about which we linearize, introduce the air--water density ratio as the small parameter, and list the hypotheses used in the rest of the paper.

\subsection{Geometry and variables}

Let $\ha>0$ and $\hw>0$ denote the undisturbed air and water depths.
We work in two spatial dimensions $(x,z)\in\T_{k_0}\times\R$, where $\T_{k_0}=\R/(2\pi/k_0)\Z$ and $k_0>0$.
In the normal-mode analysis below, $k>0$ denotes the horizontal wavenumber of the surface wave at the air--water interface and the associated perturbations in the air and water.
In the present study, we assume that the horizontal domain is periodic, and hence we consider a single wavenumber $k=nk_0$ with a positive integer $n$.
The same analysis may apply when the horizontal domain lies in $x\in\R$, in which case $k$ can be any positive real number.
Gravity is $-g\mathbf e_z$, where $g>0$ and $\mathbf e_z=(0,1)$ is the upward unit vector.
At time $t$ the interface is the graph
\begin{equation}
\Gamma(t)=\{(x,z):z=\eta(x,t)\},
\end{equation}
with air in $\Omega_{\mathrm a}(t)=\{\eta(x,t)<z<\ha\}$ and water in $\Omega_{\mathrm w}(t)=\{-\hw<z<\eta(x,t)\}$.
The boundaries at $z=\ha$ and $z=-\hw$ are flat and impermeable, so the vertical velocity vanishes there, see Equation~\eqref{eq:rigid} below.
We call the upper boundary $z=\ha$ the top boundary and $z=-\hw$ the bottom boundary.
The air velocity is denoted by $\mathbf u_{\mathrm a}=(u_{\mathrm a},w_{\mathrm a})$.
The thermodynamic state of the air is described by the density $\rho_{\mathrm a}>0$ and specific entropy $S_{\mathrm a}$, which determine the pressure $p_{\mathrm a}$ through the equation of state.
The water is assumed to be incompressible and has constant density $\rhow>0$, velocity $\mathbf u_{\mathrm w}=(u_{\mathrm w},w_{\mathrm w})$, and pressure $p_{\mathrm w}$.
The interface elevation is $\eta$.

\subsection{Equations of motion}

In the air phase $\Omega_{\mathrm a}(t)$, we assume that air is compressible and its motion satisfies the full compressible Euler equation system:
\begin{subequations}\label{eq:air-nonlinear}
\begin{align}
&\partial_t\rho_{\mathrm a}+\nabla\cdot(\rho_{\mathrm a}\mathbf u_{\mathrm a})=0,\\
&\rho_{\mathrm a}\bigl(\partial_t\mathbf u_{\mathrm a}
 +\mathbf u_{\mathrm a}\cdot\nabla\mathbf u_{\mathrm a}\bigr)
 +\nabla p_{\mathrm a}=-\rho_{\mathrm a}g\,\mathbf e_z,\\
&\partial_tS_{\mathrm a}+\mathbf u_{\mathrm a}\cdot\nabla S_{\mathrm a}=0,\label{eq:entropy-transport}\\
&p_{\mathrm a}=P(\rho_{\mathrm a},S_{\mathrm a}),
\end{align}
\end{subequations}
where the equation of state $P(\rho,S)$ satisfies the thermodynamic stability condition $P_\rho(\rho,S)>0$, where $P_\rho=\partial_\rho P$, on an open set containing the thermodynamic states of the laminar flow defined below.
On this set, we also assume that $P$ is four times continuously differentiable with respect to $\rho$ and $S$.
Equation~\eqref{eq:entropy-transport} describes entropy transport, and the adiabatic sound speed is defined by $a^2=P_\rho$.
We write $C^k$ for the class of $k$ times continuously differentiable functions, and $C^k([0,\ha])$ for such functions on $[0,\ha]$.
For a $C^3$ laminar density profile, this regularity of $P$ gives a $C^3$ sound-speed profile, as required in the critical-layer analysis.

In the water phase $\Omega_{\mathrm w}(t)$, the flow is assumed to be incompressible and inviscid, so the motion satisfies the incompressible Euler equations:
\begin{equation}\label{eq:water-nonlinear}
\rhow\bigl(\partial_t\mathbf u_{\mathrm w}
+\mathbf u_{\mathrm w}\cdot\nabla\mathbf u_{\mathrm w}\bigr)
+\nabla p_{\mathrm w}=-\rhow g\,\mathbf e_z,
\qquad
\nabla\cdot\mathbf u_{\mathrm w}=0 .
\end{equation}
The interface $\eta$ is material for both air and water phases, that is, fluid parcels on the interface remain on it,
\begin{equation}\label{eq:kinematic-nonlinear}
\partial_t\eta+u_{\mathrm a}\,\partial_x\eta=w_{\mathrm a},
\qquad
\partial_t\eta+u_{\mathrm w}\,\partial_x\eta=w_{\mathrm w}
\qquad\text{on }\Gamma(t),
\end{equation}
and the normal stress balance is applied at the interface,
\begin{equation}\label{eq:dynamic-nonlinear}
p_{\mathrm w}-p_{\mathrm a}=\sigma\kappa
\qquad\text{on }\Gamma(t),
\qquad
\kappa:=\nabla\cdot\mathbf n,
\qquad
\mathbf n:=\frac{(-\partial_x\eta,1)}{\sqrt{1+(\partial_x\eta)^2}},
\end{equation}
where $\sigma\ge0$ is the surface tension coefficient and the normal vector $\mathbf n$ points from water into air.
With this convention $\kappa=-\eta_{xx}(1+\eta_x^2)^{-3/2}$, so at linear order about a flat interface $\kappa\simeq-\eta_{xx}$.
At the impermeable top and bottom boundaries the normal velocity vanishes, and we impose the no-penetration condition:
\begin{equation}\label{eq:rigid}
w_{\mathrm a}=0\ \text{ at } z=\ha,
\qquad
w_{\mathrm w}=0\ \text{ at } z=-\hw .
\end{equation}
Inviscid theory imposes no condition on the tangential velocity, so the tangential velocity may jump across $\Gamma(t)$, which then forms a vortex sheet.
In Hypothesis~\ref{hyp:noslip} below, we assume that the interfacial velocity is continuous in the laminar states.

\subsection{Laminar states}\label{subsec:laminar}

We consider a steady parallel wind over water at rest.
Let $U\in C^4([0,\ha])$ and $\rho_0\in C^3([0,\ha])$, with $\rho_0>0$, and let $S_\ast$ be a constant specific entropy.
The candidate base state is
\begin{equation}\label{eq:laminar}
\eta\equiv0,\qquad
\mathbf u_{\mathrm w}\equiv0,\qquad
\mathbf u_{\mathrm a}=(U(z),0),\qquad
\rho_{\mathrm a}=\rho_0(z),\qquad
S_{\mathrm a}=S_\ast,
\end{equation}
with pressures $p_0(z)=P(\rho_0(z),S_\ast)$ in the air and $p_{\mathrm w,0}(z)=p_0(0)-\rhow g z$ in the water.
The water pressure satisfies hydrostatic balance, and its value $p_0(0)$ at $z=0$ equals the air pressure, as the stress balance requires at the flat interface, where $\kappa=0$.

Substituting Equation~\eqref{eq:laminar} into Equations~\eqref{eq:air-nonlinear}--\eqref{eq:rigid}, we find that all equations and boundary conditions are satisfied except possibly the vertical momentum equation in the air.
Thus Equation~\eqref{eq:laminar} is an exact steady solution if and only if the air is also in hydrostatic balance,
\begin{equation}\label{eq:hydrostatic}
p_0'(z)=-\rho_0(z)\,g
\qquad\Longleftrightarrow\qquad
a^2(z)\,\rho_0'(z)=-\rho_0(z)\,g,
\end{equation}
where $a^2(z):=P_\rho(\rho_0(z),S_\ast)$ and primes denote differentiation with respect to $z$.
The equivalence follows by differentiating the equation of state at constant entropy, which gives $p_0'=a^2\rho_0'$.
For a prescribed equation of state and surface density $\rho_0(0)$, Equation~\eqref{eq:hydrostatic} determines the density profile $\rho_0$.
It places no further restriction on the wind profile $U$.

The stratification is characterized by the Brunt--V\"ais\"al\"a frequency $N$, the frequency of buoyancy oscillations of a vertically displaced air parcel.
Using Equation~\eqref{eq:hydrostatic}, we obtain
\begin{equation}\label{eq:neutral}
\frac{\rho_0'}{\rho_0}=-\frac{g}{a^2}=:-b(z),
\qquad
N^2:=-g\Big(\frac{\rho_0'}{\rho_0}+\frac{g}{a^2}\Big)\equiv0 .
\end{equation}
Thus the constant-entropy hydrostatic atmosphere is neutrally stratified.

\begin{example}[Polytropic atmosphere]\label{ex:polytropic}
Consider an ideal polytropic gas with equation of state $P(\rho,S)=K(S)\rho^\gamma$, where $\gamma>1$ is the adiabatic exponent and $K(S_\ast)>0$.
At constant specific entropy $S_\ast$, hydrostatic balance gives
\begin{equation}\label{eq:polytropic}
a^2(z)=a_0^2-(\gamma-1)gz,
\qquad
\rho_0(z)=\rho_0(0)\Bigl(\frac{a^2(z)}{a_0^2}\Bigr)^{\frac{1}{\gamma-1}},
\end{equation}
where $a_0:=a(0)$.
We require $\ha<a_0^2/((\gamma-1)g)$ so that the sound speed and density are positive on $[0,\ha]$.

Both the sound speed and density decrease with height.
For air with $a_0\approx340\,\mathrm{m\,s^{-1}}$ and $\gamma=1.4$, the limiting height is approximately $29.5\,\mathrm{km}$.
Over the lowest $100\,\mathrm m$, the sound speed decreases by approximately $0.2\,\%$.
\end{example}

\begin{example}[Constant-sound-speed atmosphere]\label{ex:constant-sound-speed}
Consider the equation of state $P(\rho,S)=a_{\mathrm e}^2\rho+p_{\mathrm e}(S)$, where $a_{\mathrm e}>0$ is constant and $p_{\mathrm e}$ is smooth.
At constant specific entropy $S_\ast$, hydrostatic balance gives
\begin{equation}
a^2(z)=a_{\mathrm e}^2,
\qquad
\rho_0(z)=\rho_0(0)e^{-gz/a_{\mathrm e}^2}.
\end{equation}
The sound speed and density are positive on $[0,\ha]$ for every finite $\ha>0$.

The sound speed is constant, while the density decreases exponentially with scale height $a_{\mathrm e}^2/g$, the height over which it decreases by the factor $e$.
Thus varying $a_{\mathrm e}$ changes both the sound speed and the density scale height.
This equation of state admits a neutrally stratified atmosphere with constant sound speed.
For an ideal gas with constant adiabatic exponent, constant sound speed implies an isothermal atmosphere, which is stably stratified.
For uniform wind, the present model leads to a normal-mode equation with constant coefficients, as shown in \S\ref{sec:examples}.
\end{example}

\subsection{The small parameter and scaling of the equation of state}
\label{subsec:eps-family}

We study wind-induced instability in the regime where the air density is small compared with the water density.
The corresponding small parameter is the surface density ratio
\begin{equation}
\eps:=\frac{\rho_0(0)}{\rhow}\ll1 .
\end{equation}
This ratio measures the atmospheric contribution to the water-wave dispersion relation.
We use its smallness to study how the air-side pressure response perturbs a free gravity--capillary wave and under what conditions this perturbation produces growth.

To examine the dependence on $\eps$, we let the wind profile $U$, the normalized density profile $\rhobar$, and the sound-speed profile $a$ be given functions that do not depend on $\eps$.
For each $\eps>0$, we define the physical air density by
\begin{equation}
\rho_0(z)=\eps\rhow\,\rhobar(z),
\qquad
\rhobar(0)=1,
\end{equation}
and scale the equation of state according to
\begin{equation}\label{eq:eos-family}
P_\eps(\rho,S):=\eps\rhow\,\bar P\bigl(\rho/(\eps\rhow),\,S\bigr),
\end{equation}
where $\bar P$ is a function of the normalized density and specific entropy that does not depend on $\eps$.
Evaluating its density derivative at the base state gives
\begin{equation}
a^2(z)
=\partial_\rho P_\eps\bigl(\eps\rhow\rhobar(z),S_\ast\bigr)
=\partial_1\bar P\bigl(\rhobar(z),S_\ast\bigr),
\end{equation}
where $\partial_1$ denotes differentiation with respect to the first argument.
Thus the sound-speed profile is independent of $\eps$.
Likewise, substituting the scaled density into Equation~\eqref{eq:hydrostatic} gives
\begin{equation}
a^2(z)\rhobar'(z)=-g\rhobar(z),
\end{equation}
which is also independent of $\eps$.
Thus changing $\eps$ changes the overall air density without changing its normalized vertical distribution or the wind and sound-speed profiles.
In particular, $b=g/a^2$ is independent of $\eps$, while the total air mass is proportional to $\eps$.

In the following analysis, we normalize the air-side pressure perturbation and momentum flux by the surface air density $\eps\rhow$.
The normalized momentum flux and the air-side pressure response $\Za$ are therefore expressed in terms of $\rhobar$.
This normalization leaves the formulation dimensional, as in \cite{BSWZ2016}.

\subsection{Hypotheses}\label{subsec:hypotheses}

We introduce the hypotheses under which we study perturbations of a water wave at a given horizontal wavenumber $k>0$.
The dispersion relation for a free water wave uncoupled from the air is $\Dw(k,c)=0$, where
\begin{equation}\label{eq:Dw-def}
\Dw(k,c):=kc^2\coth(k\hw)-g-\sigma k^2/\rhow.
\end{equation}
Its positive root,
\begin{equation}\label{eq:ck}
c_k:=\Bigl[\Bigl(\frac gk+\frac{\sigma k}{\rhow}\Bigr)\tanh(k\hw)\Bigr]^{1/2},
\end{equation}
serves as the reference phase speed for the following instability analysis.

Hypothesis~\ref{hyp:all} below summarizes the assumptions used in the present study.
In Hypothesis~\ref{hyp:all}, Hypotheses \ref{hyp:reg}--\ref{hyp:noslip} specify the regularity and interface velocity of the base state.
Hypotheses \ref{hyp:critical}--\ref{hyp:nonres} are used to analyze the air-side pressure response at real phase speeds near $c_k$.
The surface-tension condition in Hypothesis \ref{hyp:sigma} allows both gravity waves and gravity--capillary waves in the current analysis.

\begin{hypothesis}\label{hyp:all}
\leavevmode
\begin{enumerate}[label=\textup{(H\arabic*)},leftmargin=3.2em]
\item\label{hyp:reg} The base-state wind velocity $U$, normalized air density $\rhobar$, and sound speed $a$ satisfy
\begin{equation}
U\in C^4([0,\ha]),
\qquad
\rhobar,a\in C^3([0,\ha]).
\end{equation}
The density and sound speed are positive on $[0,\ha]$ and satisfy the neutrality relation in Equation~\eqref{eq:neutral}.
\item\label{hyp:noslip} The horizontal velocity of the base wind vanishes at the interface,
\begin{equation}
U(0)=0.
\end{equation}
Because the water is at rest in the base state, this condition makes the base velocity continuous across the air--water interface.
\item\label{hyp:critical} The critical levels associated with the reference phase speed $c_k$ are the heights at which the base wind velocity satisfies $U(z)=c_k$.
Every such level in $[0,\ha]$ lies in $(0,\ha)$ and satisfies $U'(z)\neq0$.
We call these \emph{regular critical levels} and, when present, denote them by $z_1<\dots<z_m$.
\item\label{hyp:subsonic} The wind velocity relative to a wave of real phase speed $c$ is uniformly subsonic near $c_k$.
More precisely, there exists a constant $\delta_0>0$ such that
\begin{equation}
a^2(z)-(U(z)-c)^2\ge\delta_0\,a^2(z)
\end{equation}
for every height $z\in[0,\ha]$ and every real phase speed $c$ in a neighborhood of $c_k$.
We call this condition \emph{uniform subsonicity}.
\item\label{hyp:nonres} Let $w_\ast(z;c_r)$ denote the limiting vertical air-velocity amplitude constructed in \cref{prop:limit-solutions} as the complex phase speed $c=c_r+\im\delta$ approaches $c_r$ with $\delta\downarrow0$.
The solution is normalized at the upper boundary by
\begin{equation}
w_\ast(\ha;c_r)=0,
\qquad
\partial_z w_\ast(\ha;c_r)=1.
\end{equation}
We require its value at the air--water interface to satisfy
\begin{equation}
w_\ast(0;c_r)\neq0
\end{equation}
for every real phase speed $c_r$ in a neighborhood of $c_k$.
We call this condition \emph{non-resonance of the air boundary-value problem}.
It excludes a nontrivial limiting air solution, that is, one that is not identically zero, with both the interface and the upper boundary held fixed at the phase speeds under consideration.
\item\label{hyp:sigma} The surface tension coefficient at the air--water interface satisfies
\begin{equation}
\sigma\ge0.
\end{equation}
\end{enumerate}
\end{hypothesis}

\begin{remark}
These hypotheses allow us to determine the air-side pressure response near the free water-wave speed $c_k$ and use it to study the effect of wind on the wave.
The regularity of $U$, $\rhobar$, and $a$ in Hypothesis \ref{hyp:reg} is needed to differentiate the coefficients of the air equations and to estimate how their solutions vary with the wave speed.
The neutrality relation ensures hydrostatic balance and gives $N^2=0$.
Since the water is at rest in the base state, the condition $U(0)=0$ in Hypothesis \ref{hyp:noslip} makes the horizontal air and water velocities equal at the undisturbed interface, excluding a jump in tangential velocity.
The critical levels in Hypothesis \ref{hyp:critical} are the heights where the wind speed matches the reference wave speed, $U(z_j)=c_k$.
The requirement $U'(z_j)\neq0$ ensures that these heights move smoothly as the wave speed varies.
It also ensures that the coefficient $\rhobar k^2+G/V$ in Equation~\eqref{eq:SL-intro} has at most a simple pole at each critical level, with residue proportional to $Q(z_j)$, which may be zero.
Hypothesis \ref{hyp:subsonic} prevents the denominator $a^2-(U-c)^2$ in the air equations from vanishing for real phase speeds near $c_k$.
Hypothesis \ref{hyp:nonres} concerns the limiting vertical velocity amplitude $w_\ast$, obtained by normalizing the air solution at the upper boundary.
Since $w_\ast(0;c_r)\neq0$, the solution can be rescaled to satisfy the kinematic condition at the moving water surface, and the resulting pressure response is finite.
If $w_\ast(0;c_r)=0$, the limiting air problem instead admits a nontrivial solution with the interface held fixed.
When this zero occurs at $c_r=c_k$, the uncoupled air and water problems share the phase speed $c_k$ at $\eps=0$, which is the resonance that the present perturbation argument excludes (\cref{rem:determinant}).
Non-resonance still allows critical levels where $U(z)=c_r$.
Finally, Hypothesis \ref{hyp:sigma} includes both $\sigma=0$, corresponding to gravity waves, and $\sigma>0$, corresponding to gravity--capillary waves.
\end{remark}

\begin{remark}\label{rem:hyp-axis}
For a complex phase speed $c$ with $\Imc c\neq0$, the quantities $V=U-c$ and $a^2-V^2$ are nonzero at every height, so the air normal-mode equation has no singular coefficients (\cref{lem:regular-solve}).
Hypotheses \ref{hyp:reg}--\ref{hyp:noslip} are therefore sufficient to construct the holomorphic dispersion function $\Edet$ of \cref{def:determinant} and identify its zeros with the eigenvalues $\lambda=-\im kc$ of the operator $\Lop_k$ of \cref{thm:operator} with $\Rec\lambda\neq0$.
The additional Hypotheses \ref{hyp:critical}--\ref{hyp:nonres} are used to control the air solution as $c$ approaches a real phase speed near $c_k$.
This limiting analysis gives the estimates needed for the application of Rouch\'e's theorem that proves instability in \S\ref{sec:instability}.
In some cases, the non-resonance condition in Hypothesis \ref{hyp:nonres} follows from the other assumptions and need not be checked separately.
Specifically, if the critical-level values $Q(z_j)$ are all nonnegative or all nonpositive, with $Q(z_m)\neq0$ at the highest critical level, then \cref{lem:positivity} implies Hypothesis \ref{hyp:nonres}.
This criterion is used in \cref{cor:sufficient,cor:monotone,prop:suppression}.
Hypotheses \ref{hyp:subsonic} and \ref{hyp:nonres} need only be checked at $c_k$.
Since $[0,\ha]$ is compact and $U$ and $a$ are continuous, Hypothesis \ref{hyp:subsonic} holds exactly when $(U(z)-c_k)^2<a^2(z)$ for every $z\in[0,\ha]$.
Hypothesis \ref{hyp:nonres} holds when $w_\ast(0;c_k)\neq0$, because $w_\ast(0;c_r)$ depends continuously on $c_r$ near $c_k$ by \cref{prop:limit-solutions}, which does not use Hypothesis \ref{hyp:nonres}.
\end{remark}
\section{Linearization and normal-mode reduction}
\label{sec:linearization}

We linearize Equations~\eqref{eq:air-nonlinear}--\eqref{eq:rigid} about the laminar state in Equation~\eqref{eq:laminar} and then seek normal modes of the resulting equations.
This reduces the coupled air--water problem to a boundary-value problem in the air and a dispersion relation for the phase speed.
Following \cite[\S2]{BSWZ2016}, we derive the linearized equations by taking Eulerian variations along a one-parameter family of solutions.
These variations describe changes at fixed spatial points.
Vertical advection of the base-state density contributes a term involving $\rho_0'$ to the linearized mass conservation equation.
At the air--water interface, the pressure balance is evaluated at the displaced surface, so its linearization also includes the change in base pressure caused by the interface displacement.
This change equals the base-state pressure gradient times the displacement, and adding it to the Eulerian pressure perturbation gives the Lagrangian pressure variation on each side of the interface.

By Hypothesis \ref{hyp:noslip}, the base air and water velocities agree at the undisturbed interface.
The linearized kinematic conditions therefore require the vertical velocity perturbations on both sides of the interface to agree.
For a base state with an interfacial velocity jump, the treatment of the velocity variations involves the Hodge projection described in \cite[Remark 2.5]{BSWZ2016}, which removes the gradient part of a vector field and keeps its divergence-free part.
In the present study, the water velocity perturbation must satisfy incompressibility, whereas the air velocity perturbation has no such constraint.

We also seek to identify the growing normal modes obtained from the dispersion relation with eigenfunctions of the linearized fluid equations.
For this purpose, we introduce the operator $\Lop_k$, which expresses the time derivatives of the perturbation amplitudes at wavenumber $k$ in terms of those amplitudes.
A normal mode with temporal factor $e^{-\im kct}$ then corresponds to an eigenfunction with eigenvalue $\lambda=-\im kc$, and exponential growth corresponds to $\Rec\lambda>0$.
Establishing this correspondence requires a precise choice of function spaces and operator domain that incorporates the boundary conditions.
Appendix~\ref{app:operator} provides this construction on fixed reference domains and, under the additional thermodynamic regularity stated there, identifies the linearized equations with the Fr\'echet derivative of the nonlinear formulation.
The theorem at the end of this section states the resulting correspondence between the dispersion relation and the eigenvalues of $\Lop_k$.

\subsection{The linearized system}

We derive the first-order equations for perturbations of the laminar state, including the displacement of the air--water interface.
We differentiate the governing equations in the air and water at fixed spatial points.
When differentiating the interface conditions, we also account for the displacement of the air--water interface.

Let $\vartheta$ be a real perturbation parameter, and consider a family of solutions of Equations~\eqref{eq:air-nonlinear}--\eqref{eq:rigid} that are defined on a common time interval, whose air and water fields are twice continuously differentiable as functions of $(\vartheta,t,x,z)$ up to the boundary of each phase, and whose interface elevation $\eta$ is three times continuously differentiable as a function of $(\vartheta,t,x)$.
This regularity allows us to exchange the derivative with respect to $\vartheta$ with the derivatives with respect to $t$, $x$, and $z$, including the second derivative $\partial_x^2\eta$ in the curvature term of the dynamic condition.
At $\vartheta=0$, the solution is the laminar state in Equation~\eqref{eq:laminar}.
Differentiating with respect to $\vartheta$ at $\vartheta=0$ gives the first-order perturbations of the interface and the fluid variables.
We denote the first variation of the interface elevation by
\begin{equation}
\zeta:=\left.\partial_\vartheta\eta\right|_{\vartheta=0}.
\end{equation}
For the fluid variables, we take variations at fixed interior points that remain in their respective phases for sufficiently small $\vartheta$, and define
\begin{align}
\begin{aligned}
r&:=\left.\partial_\vartheta\rho_{\mathrm a}\right|_{\vartheta=0},
&\qquad
s&:=\left.\partial_\vartheta S_{\mathrm a}\right|_{\vartheta=0},
\\
(u,w)&:=\left.\partial_\vartheta\mathbf u_{\mathrm a}\right|_{\vartheta=0},
&\qquad
\mathbf v&:=\left.\partial_\vartheta\mathbf u_{\mathrm w}\right|_{\vartheta=0}.
\end{aligned}
\end{align}
Here $r$ and $s$ are the density and entropy perturbations in the air, and $(u,w)$ and $\mathbf v$ are the air and water velocity perturbations, respectively.
The Eulerian pressure variations are
\begin{equation}
q:=\left.\partial_\vartheta p_{\mathrm a}\right|_{\vartheta=0}
=a^2r+P_Ss,
\qquad
q_{\mathrm w}:=\left.\partial_\vartheta p_{\mathrm w}\right|_{\vartheta=0},
\end{equation}
where $P_S(z):=\partial_S P(\rho_0(z),S_\ast)$, which is of class $C^3$ on $[0,\ha]$ because $P$ is four times continuously differentiable and $\rho_0\in C^3([0,\ha])$.
The expression for $q$ follows from differentiating the equation of state at the laminar state.

\begin{proposition}\label{prop:linearized}
In the undisturbed air region $\Omega_{\mathrm a}=\{0<z<\ha\}$, the variations satisfy
\begin{subequations}\label{eq:linear-air}
\begin{align}
&\partial_t r+U\partial_x r+\rho_0'w
 +\rho_0(\partial_xu+\partial_zw)=0,
 \label{eq:lin-cont}\\
&\rho_0\bigl(\partial_tu+U\partial_xu+U'w\bigr)+\partial_xq=0,
 \label{eq:lin-xmom}\\
&\rho_0\bigl(\partial_tw+U\partial_xw\bigr)+\partial_zq=-gr,
 \label{eq:lin-zmom}\\
&\partial_ts+U\partial_xs=0,
 \qquad q=a^2r+P_Ss.
 \label{eq:lin-ent}
\end{align}
\end{subequations}
In the undisturbed water region $\Omega_{\mathrm w}=\{-\hw<z<0\}$, the variations satisfy
\begin{equation}\label{eq:linear-water}
\rhow\,\partial_t\mathbf v+\nabla q_{\mathrm w}=0,
\qquad
\nabla\cdot\mathbf v=0.
\end{equation}
The linearized kinematic and dynamic conditions at $z=0$ are
\begin{align}
&\partial_t\zeta
=w|_{z=0}
=\left.\mathbf v\cdot\mathbf e_z\right|_{z=0},
 \label{eq:lin-kinematic}\\
&\bigl(q_{\mathrm w}|_{z=0}-\rhow g\,\zeta\bigr)
-\bigl(q|_{z=0}-\rho_0(0)g\,\zeta\bigr)
=-\sigma\,\partial_x^2\zeta.
 \label{eq:lin-dynamic}
\end{align}
At the fixed top and bottom boundaries, impermeability gives
\begin{equation}
w|_{z=\ha}=0,
\qquad
\left.\mathbf v\cdot\mathbf e_z\right|_{z=-\hw}=0.
\end{equation}
\end{proposition}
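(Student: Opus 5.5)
The plan is to differentiate each equation of the nonlinear system in $\vartheta$ at $\vartheta=0$ along the given one-parameter family, using the laminar state of Equation~\eqref{eq:laminar} and the hydrostatic balance in Equation~\eqref{eq:hydrostatic}. The $C^2$ regularity of the fields and the $C^3$ regularity of $\eta$ justify exchanging $\partial_\vartheta$ with $\partial_t,\partial_x,\partial_z$.

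I treat the interior equations first. Fix an interior point of $\Omega_{\mathrm a}$; for small $\vartheta$ it stays in the air, so the equations hold there identically in $\vartheta$.

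Differentiating mass conservation gives $\partial_t r+\nabla\cdot(\rho_0\mathbf u+r U\mathbf e_x)=0$ with $\mathbf u=(u,w)$. Expanding $\partial_z(\rho_0 w)=\rho_0'w+\rho_0\partial_z w$ and using that $\rho_0$ is independent of $x$ yields \eqref{eq:lin-cont}.

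For momentum, the base state satisfies $\mathbf u_{\mathrm a}\cdot\nabla\mathbf u_{\mathrm a}=U\partial_x(U,0)=0$. Hence the variation of $\rho_{\mathrm a}(\partial_t\mathbf u_{\mathrm a}+\mathbf u_{\mathrm a}\cdot\nabla\mathbf u_{\mathrm a})$ is $\rho_0(\partial_t\mathbf u+U\partial_x\mathbf u+w\,\partial_z(U,0))$, and the term $r\cdot 0$ drops out. The variation of $\nabla p_{\mathrm a}$ is $\nabla q$ and that of $-\rho_{\mathrm a}g\mathbf e_z$ is $-gr\mathbf e_z$, which gives \eqref{eq:lin-xmom}--\eqref{eq:lin-zmom}.

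Entropy transport linearizes to $\partial_t s+U\partial_x s+w\,\partial_z S_\ast=0$, and $\partial_z S_\ast=0$ because the base entropy is constant. The relation $q=a^2r+P_Ss$ comes from the chain rule applied to the equation of state. In the water the base velocity vanishes, so the convective term is quadratic and \eqref{eq:linear-water} follows directly, together with $\nabla\cdot\mathbf v=0$.

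The interface conditions are the main obstacle, because they are imposed at $z=\eta(x,t;\vartheta)$, which moves with $\vartheta$. Fields must therefore be differentiated along the moving point, for example
\[
\partial_\vartheta\bigl[w_{\mathrm a}(x,\eta,t)\bigr]=w+\partial_z W_0\,\zeta,
\]
where $W_0$ is the base vertical velocity. To make this legitimate I would use the assumed $C^2$ regularity up to the boundary of each phase: extend each field slightly across $\Gamma(t)$, for instance by a $C^2$ extension, so that the trace at $z=\eta$ can be differentiated by the chain rule. The Eulerian variations at $z=0$ are then one-sided limits of the interior variations.

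For the kinematic conditions, the base state has $U(0)=0$ and water velocity zero, so $u_{\mathrm a}\partial_x\eta$ is quadratic, apart from the term $U(0)\partial_x\zeta$, which vanishes. Since the base vertical velocity is identically zero, the displacement correction $\partial_z W_0\,\zeta$ also vanishes. This gives $\partial_t\zeta=w|_{z=0}=\mathbf v\cdot\mathbf e_z|_{z=0}$.

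For the dynamic condition, the total derivative of $p_{\mathrm a}(x,\eta,t)$ is $q|_{z=0}+p_0'(0)\zeta=q|_{z=0}-\rho_0(0)g\zeta$ by hydrostatic balance. Similarly, the water pressure contributes $q_{\mathrm w}|_{z=0}-\rhow g\zeta$. The variation of $\sigma\kappa$ is $-\sigma\partial_x^2\zeta$, because $\kappa=-\eta_{xx}(1+\eta_x^2)^{-3/2}$ and the base slope satisfies $\eta_x=0$. This yields \eqref{eq:lin-dynamic}.

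Finally, the top and bottom boundaries are fixed, so $w|_{z=\ha}=0$ and $\mathbf v\cdot\mathbf e_z|_{z=-\hw}=0$ follow by direct differentiation.
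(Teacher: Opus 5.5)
Your proposal is correct and follows essentially the same route as the paper: you differentiate the bulk equations at fixed interior points, and you differentiate the kinematic and dynamic conditions along the moving interface, so that the base pressure gradient times $\zeta$ yields the Lagrangian pressure variations and $U(0)=0$ removes the $U(0)\partial_x\zeta$ term. Your extension argument for differentiating traces at $z=\eta$ justifies a step the paper leaves implicit, but it does not change the argument.
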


\begin{proof}
We first differentiate the governing equations for the air and water in Equations~\eqref{eq:air-nonlinear}--\eqref{eq:water-nonlinear} with respect to $\vartheta$, keeping the spatial coordinates and time fixed, and evaluate at $\vartheta=0$.
In the air phase, the base velocity is $(U(z),0)$, so transport by the base wind produces the operator $\partial_t+U\partial_x$.
The terms $\rho_0'w$ and $U'w$ arise from vertical advection of the base density and the base wind velocity, respectively.
The gravitational force contributes the term $-gr$ to the vertical momentum equation.
The base entropy is constant, so its gradient adds no term to the entropy equation.
Together, these observations give Equation~\eqref{eq:linear-air}.
In the water phase, the base velocity is zero and the density is constant, so the same differentiation gives Equation~\eqref{eq:linear-water}.

We next differentiate the kinematic conditions, taking the interface displacement into account.
The air-side kinematic condition in Equation~\eqref{eq:kinematic-nonlinear} is
\begin{equation}
\partial_t\eta(\vartheta)
+u_{\mathrm a}(\vartheta)\bigl(x,\eta(\vartheta),t\bigr)
 \,\partial_x\eta(\vartheta)
=w_{\mathrm a}(\vartheta)\bigl(x,\eta(\vartheta),t\bigr).
\end{equation}
At the laminar state, the interface is flat and the vertical air velocity is zero at every height.
Differentiation at $\vartheta=0$ therefore gives
\begin{equation}
\partial_t\zeta+U(0)\partial_x\zeta=w|_{z=0}.
\end{equation}
The corresponding calculation in the water phase, whose base velocity is zero, gives
\begin{equation}
\partial_t\zeta
=\left.\mathbf v\cdot\mathbf e_z\right|_{z=0}.
\end{equation}
Hypothesis \ref{hyp:noslip} sets $U(0)=0$, so the air-side condition reduces to $\partial_t\zeta=w|_{z=0}$.
Together with the water-side condition, this shows that the vertical air velocity perturbation $w|_{z=0}$ and the vertical water velocity perturbation $\left.\mathbf v\cdot\mathbf e_z\right|_{z=0}$ both equal the time derivative of the interface displacement, which gives Equation~\eqref{eq:lin-kinematic}.
If $U(0)\neq0$, these two vertical velocity perturbations instead differ by $U(0)\partial_x\zeta$, as in the vortex-sheet setting discussed in \cite[\S2]{BSWZ2016}.

We then differentiate the dynamic condition in Equation~\eqref{eq:dynamic-nonlinear}, which imposes the pressure balance at the moving air--water interface.
Because the base pressures vary with height, their values at the interface change when the interface moves.
The pressure variation on each side is therefore the sum of the Eulerian pressure perturbation and the base pressure gradient multiplied by $\zeta$.
Differentiating Equation~\eqref{eq:dynamic-nonlinear} at $\vartheta=0$ yields
\begin{align}
\begin{aligned}
q_{\mathrm w}|_{z=0}
+p_{\mathrm w,0}'(0)\zeta
-q|_{z=0}
-p_0'(0)\zeta
&=\sigma\left.\partial_\vartheta\kappa(\vartheta)\right|_{\vartheta=0}\\
&=-\sigma\,\partial_x^2\zeta,
\end{aligned}
\end{align}
where $\kappa(\vartheta)$ is the curvature of the interface $\eta(\vartheta)$.
The last equality follows from the curvature convention in Equation~\eqref{eq:dynamic-nonlinear}.
Substituting the hydrostatic gradients
\begin{equation}
p_{\mathrm w,0}'(0)=-\rhow g,
\qquad
p_0'(0)=-\rho_0(0)g
\end{equation}
gives Equation~\eqref{eq:lin-dynamic}.
We call the combinations $q-\rho_0(0)g\zeta$ and $q_{\mathrm w}-\rhow g\zeta$ the \emph{Lagrangian pressure variations} at the interface.
Each one includes both the pressure perturbation at a fixed height and the change in base pressure caused by the interface displacement.

Finally, the top and bottom boundaries are fixed, so differentiating Equation~\eqref{eq:rigid} gives the stated impermeability conditions directly.
\end{proof}

\begin{remark}\label{rem:spaces}
By instability at wavenumber $k$, we mean that the closed, densely defined operator $\Lop_k$ of \S\ref{subsec:operator} has an eigenvalue $\lambda$ with $\Rec\lambda>0$.
This notion of instability concerns only the eigenvalues of $\Lop_k$, as in \cref{thm:operator}.
By \cref{thm:pointspec}, these eigenvalues correspond exactly to growing normal modes of the form given by Equation~\eqref{eq:normalmode}, with $\lambda=-\im kc$ and $\Rec\lambda=k\Imc c>0$.
The spatial regularity of each mode depends on the regularity of the coefficients.
Its temporal factor $e^{-\im kct}$ gives exponential growth at rate $k\Imc c$ in every $L^2$-based norm in which the mode amplitude is finite.
Here, for an interval $I$, $L^2(I)$ denotes the space of square-integrable functions $\psi$ on $I$, with the norm $\|\psi\|_{L^2(I)}:=\bigl(\int_I|\psi|^2\dd z\bigr)^{1/2}$.
For an integer $m\ge1$, the Sobolev space $H^m(I)$ consists of the functions in $L^2(I)$ whose weak derivatives up to order $m$ lie in $L^2(I)$, with the norm $\|\psi\|_{H^m(I)}:=\bigl(\sum_{i=0}^{m}\|\psi^{(i)}\|_{L^2(I)}^2\bigr)^{1/2}$.
The weak derivative $\psi'$ is the function, when it exists, that satisfies $\int_I\psi'\varphi\,\dd z=-\int_I\psi\varphi'\,\dd z$ for every smooth function $\varphi$ that vanishes near the endpoints of $I$, and higher weak derivatives are defined by repeating this step.
A mode amplitude or a state lies in $L^2$ or $H^m$ when each of its component functions does, where the air components are defined on $[0,\ha]$ and the water components on $[-\hw,0]$.
The $L^2$-based norms are the norms of these spaces.
The analysis does not establish semigroup generation, well-posedness of the initial-value problem, or nonlinear instability.
Semigroup generation would mean that the linear evolution in which the time derivative of the state is its image under $\Lop_k$ has exactly one solution for every initial state in the domain of $\Lop_k$, and that the norm of each solution at time $t$ is at most the norm of its initial state times an exponential function of $t$ that is the same for all initial states.
Well-posedness means that solutions exist, are unique, and depend continuously on the initial state.
For well-posedness results in related settings, see \cite{Denisova2000} for the viscous two-phase problem and \cite{Lindblad2005,Trakhinin2009,CoutandShkoller2012,LuoZhang2026} for inviscid one-phase compressible problems.
\end{remark}

\subsection{Normal modes}

The coefficients of Equations~\eqref{eq:linear-air}--\eqref{eq:lin-dynamic} depend only on $z$, so different horizontal Fourier modes decouple.
For $k>0$ and $c\in\C$, we therefore seek solutions of the form
\begin{equation}\label{eq:normalmode}
(u,w,r,q,s,\mathbf v,q_{\mathrm w})
=\bigl(\hat u,\hat w,\hat r,\hat q,\hat s,\hat{\mathbf v},\hat q_{\mathrm w}\bigr)(z)\,
e^{\im k(x-ct)},
\qquad
\zeta=\hat\zeta\,e^{\im k(x-ct)}.
\end{equation}
A solution of this form is a \emph{normal mode} with wavenumber $k$ and phase speed $c$.
The functions of $z$ with hats are its amplitudes, and the constant $\hat\zeta$ is the amplitude of the interface displacement.
Since $|e^{-\im kct}|=e^{k\Imc c\,t}$, a mode with $\Imc c>0$ grows exponentially in time at rate $k\Imc c$.

We substitute Equation~\eqref{eq:normalmode} into the linearized system and set
\begin{equation}\label{eq:V-def}
V(z):=U(z)-c.
\end{equation}
Dropping the hats on the air amplitudes, we find that the air system in Equation~\eqref{eq:linear-air} becomes
\begin{subequations}\label{eq:mode-air}
\begin{align}
&\im kV r+\rho_0'w+\rho_0(\im ku+w')=0,\label{eq:m-cont}\\
&\rho_0(\im kVu+U'w)+\im kq=0,\label{eq:m-xmom}\\
&\rho_0\,\im kV w+q'=-gr,\label{eq:m-zmom}\\
&\im kV s=0,\qquad q=a^2r+P_Ss.\label{eq:m-ent}
\end{align}
\end{subequations}
In the water, we keep the hats and write $\hat{\mathbf v}=(\hat v_x,\hat v_z)$.
The water system in Equation~\eqref{eq:linear-water} becomes
\begin{equation}\label{eq:water-mode-eqs}
-\im kc\,\rhow\hat v_x+\im k\,\hat q_{\mathrm w}=0,
\qquad
-\im kc\,\rhow\hat v_z+\hat q_{\mathrm w}'=0,
\qquad
\im k\hat v_x+\hat v_z'=0.
\end{equation}
For $c\neq0$, eliminating $\hat v_x$ and $\hat q_{\mathrm w}$ gives $\hat v_z''-k^2\hat v_z=0$.
Together with the bottom condition $\hat v_z(-\hw)=0$, this yields
\begin{subequations}\label{eq:water-mode}
\begin{align}
&\hat v_z(z)=\hat v_z(0)\,\frac{\sinh k(z+\hw)}{\sinh k\hw},
\qquad
\hat q_{\mathrm w}(z)=\frac{\im\rhow c}{k}\,\hat v_z'(z),
\label{eq:water-profile}\\
&\hat q_{\mathrm w}(0)=\rhow k c^2\coth(k\hw)\,\hat\zeta,
\label{eq:water-trace}
\end{align}
\end{subequations}
where Equation~\eqref{eq:water-trace} uses the kinematic condition $\hat v_z(0)=-\im kc\,\hat\zeta$ from Equation~\eqref{eq:lin-kinematic}.
We did not assume that the water motion is irrotational.
Taking the curl of the momentum equation of the water system in Equation~\eqref{eq:linear-water} shows that the linearized water vorticity does not depend on time.
For a mode with $c\neq0$, the time factor $e^{-\im kct}$ is not constant, so the vorticity must vanish.

\begin{proposition}\label{prop:reduction}
Assume Hypotheses \ref{hyp:reg} and \ref{hyp:noslip}, and let $k>0$ and $\Imc c\neq0$.
Every nontrivial normal mode of the form given by Equation~\eqref{eq:normalmode} has $\hat\zeta\neq0$.
Moreover, a nontrivial normal mode with wavenumber $k$ and phase speed $c$ exists if and only if the air system in Equation~\eqref{eq:mode-air} has a solution on $[0,\ha]$ that satisfies
\begin{equation}\label{eq:air-bcs}
w(\ha)=0,
\qquad
w(0)=\im kV(0),
\end{equation}
and whose pressure satisfies the \emph{dispersion relation}
\begin{equation}\label{eq:dispersion0}
\rhow\Bigl[kc^2\coth(k\hw)-g-\frac{\sigma k^2}{\rhow}\Bigr]
=q(0)-\rho_0(0)g.
\end{equation}
The second condition in Equation~\eqref{eq:air-bcs} is the normalization $\hat\zeta=1$.
In terms of the vertical displacement amplitude $\xi:=w/(\im kV)$ of the air, it reads $\xi(0)=1$.
\end{proposition}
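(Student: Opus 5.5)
The plan is to work directly with the normal-mode equations \eqref{eq:mode-air}, \eqref{eq:water-mode-eqs} and the interface conditions of \cref{prop:linearized} evaluated on the ansatz \eqref{eq:normalmode}. Since $\Imc c\neq0$, the quantities $V=U-c$ and $c$ are nonzero everywhere. The first step uses this to recover the water fields and the interface displacement from data at $z=0$, which shows that the coupled problem is equivalent to the air boundary-value problem plus one scalar relation.

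\emph{Step 1: $\hat\zeta\neq0$ for every nontrivial mode.} Suppose $\hat\zeta=0$. The kinematic condition \eqref{eq:lin-kinematic} in normal-mode form reads $-\im kc\hat\zeta=w(0)=\hat v_z(0)$, so $w(0)=\hat v_z(0)=0$. Because $c\neq0$, \eqref{eq:water-profile} then forces $\hat v_z\equiv0$, hence $\hat q_{\mathrm w}\equiv0$ and $\hat v_x\equiv0$, so the water is at rest.

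In the air, $\im kVs=0$ with $V\neq0$ gives $s\equiv0$, so $q=a^2r$. From \eqref{eq:m-xmom} we get $u$ in terms of $w$ and $q$. The continuity equation then gives $r$, hence $q$, and \eqref{eq:m-zmom} closes a first-order system for $(w,q)$. This is the regular system of \cref{lem:regular-solve}: its coefficients involve $1/V$ and $1/(a^2-V^2)$, both nonsingular when $\Imc c\neq0$.

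The boundary data are $w(\ha)=0$ and $w(0)=0$. The dynamic condition \eqref{eq:lin-dynamic} with $\hat\zeta=0$ and $\hat q_{\mathrm w}(0)=0$ gives $q(0)=0$. Cauchy uniqueness for the regular first-order system at $z=0$ then yields $w\equiv q\equiv0$, so $u=r=0$ and the mode is trivial.

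This is the only step needing care. It requires confirming that the reduced system is genuinely first order in $(w,q)$ with bounded coefficients, so that the two interface values $w(0)$ and $q(0)$ determine the solution. For that I would solve \eqref{eq:m-cont} and \eqref{eq:m-xmom} for $w'$ explicitly, using the neutrality relation $\rho_0'=-b\rho_0$, and observe that the denominator is $a^2-V^2\neq0$.

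\emph{Step 2: the equivalence.} Given a nontrivial mode, Step 1 lets us rescale so that $\hat\zeta=1$. The air kinematic condition then reads $w(0)=-\im kc=\im kV(0)$, using $U(0)=0$ from \ref{hyp:noslip}, and the top condition gives $w(\ha)=0$; this is \eqref{eq:air-bcs}. Substituting \eqref{eq:water-trace} with $\hat\zeta=1$ into \eqref{eq:lin-dynamic}, with $-\sigma\partial_x^2\zeta\mapsto\sigma k^2$, gives
\[
\rhow kc^2\coth(k\hw)-\rhow g-q(0)+\rho_0(0)g=\sigma k^2,
\]
which rearranges to \eqref{eq:dispersion0}.

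Conversely, suppose the air solution satisfies \eqref{eq:air-bcs} and \eqref{eq:dispersion0}. Set $\hat\zeta=1$, $s=0$, and define the water fields by \eqref{eq:water-mode} with $\hat v_z(0)=-\im kc$ and $\hat v_x=\hat q_{\mathrm w}/(\rhow c)$. Each equation and boundary condition then holds by reversing the computations above, and the mode is nontrivial because $\hat\zeta=1$.

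Finally, $\xi=w/(\im kV)$ is well defined since $V\neq0$, and $\xi(0)=1$ is exactly the normalization $w(0)=\im kV(0)$.
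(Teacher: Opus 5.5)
Your proposal is correct and follows the paper's proof essentially step by step: you rule out $\hat\zeta=0$ by showing the water fields vanish, reading $q(0)=0$ off the dynamic condition, and applying Cauchy uniqueness at $z=0$ to the first-order $(w,q)$ system, and then you rescale to $\hat\zeta=1$ and reverse the computation for the converse. One small correction: after eliminating $u$ and $r$, the continuity equation gives $w'$ with denominator $\rho_0a^2V$, and $a^2-V^2$ appears only in the numerator (as in \cref{prop:xiq}), so the nonvanishing of $a^2-V^2$ is not needed here, although it does hold for $\Imc c\neq0$.
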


\begin{proof}
Since $U$ is real and $\Imc c\neq0$, we have $c\neq0$ and $V(z)\neq0$ for every $z\in[0,\ha]$.
By Hypothesis \ref{hyp:noslip}, $U(0)=0$, so $V(0)=-c$ and the kinematic conditions in Equation~\eqref{eq:lin-kinematic} become
\begin{equation}\label{eq:mode-kinematic}
w(0)=\hat v_z(0)=-\im kc\,\hat\zeta=\im kV(0)\,\hat\zeta.
\end{equation}

We first show that a mode with $\hat\zeta=0$ is trivial.
In this case, Equation~\eqref{eq:mode-kinematic} gives $w(0)=0$ and $\hat v_z(0)=0$.
Since $\hat v_z''-k^2\hat v_z=0$ and $\hat v_z(-\hw)=\hat v_z(0)=0$, we have $\hat v_z\equiv0$.
Incompressibility in Equation~\eqref{eq:water-mode-eqs} with $k>0$ then gives $\hat v_x\equiv0$, and the horizontal momentum equation gives $\im k\,\hat q_{\mathrm w}\equiv0$, so $\hat q_{\mathrm w}\equiv0$.
No free constant remains in the water pressure, because an amplitude that is constant in $z$ still multiplies $e^{\im k(x-ct)}$ and therefore has the nonzero horizontal gradient $\im k\,\hat q_{\mathrm w}e^{\im k(x-ct)}$ unless $\hat q_{\mathrm w}=0$.
The dynamic condition in Equation~\eqref{eq:lin-dynamic} with $\hat\zeta=0$ then gives $q(0)=\hat q_{\mathrm w}(0)=0$.

In the air, since $V\neq0$, Equation~\eqref{eq:m-ent} gives $s\equiv0$ and hence $r=q/a^2$.
Equation~\eqref{eq:m-xmom} then determines $u$ in terms of $w$ and $q$.
Substituting $r$ and $u$ into Equations~\eqref{eq:m-cont} and \eqref{eq:m-zmom} gives a first-order linear system for $(w,q)$.
Its coefficients are continuous on $[0,\ha]$ by Hypothesis \ref{hyp:reg}, because the only quantities we divide by are $V$, $a^2$, and $\rho_0$, and none of them vanishes on $[0,\ha]$.
This elimination is carried out in \cref{prop:xiq}, and it does not use any subsonicity assumption.
Since $w(0)=q(0)=0$, uniqueness for the initial-value problem gives $(w,q)\equiv0$.
Then $u$, $r$, and $s$ also vanish, and since the water fields vanish as shown above, the whole mode is trivial.

Now let $\hat\zeta\neq0$.
Since the system is linear, we may rescale the mode so that $\hat\zeta=1$.
Then Equation~\eqref{eq:mode-kinematic} gives $w(0)=\im kV(0)$, and impermeability at the top gives $w(\ha)=0$, so the air amplitudes satisfy Equations~\eqref{eq:mode-air} and \eqref{eq:air-bcs}.
For a mode, $-\sigma\partial_x^2\zeta=\sigma k^2\zeta$, so the dynamic condition in Equation~\eqref{eq:lin-dynamic} with $\hat\zeta=1$ reads
\begin{equation}
\bigl(\hat q_{\mathrm w}(0)-\rhow g\bigr)-\bigl(q(0)-\rho_0(0)g\bigr)=\sigma k^2.
\end{equation}
Inserting Equation~\eqref{eq:water-trace} and rearranging gives Equation~\eqref{eq:dispersion0}.

Conversely, suppose that the air amplitudes solve Equation~\eqref{eq:mode-air} and satisfy Equations~\eqref{eq:air-bcs} and \eqref{eq:dispersion0}.
Set $\hat\zeta=1$, define $\hat v_z$ and $\hat q_{\mathrm w}$ by Equation~\eqref{eq:water-profile} with $\hat v_z(0)=-\im kc$, and define $\hat v_x$ by incompressibility.
These water fields satisfy Equation~\eqref{eq:water-mode-eqs} and the bottom condition.
The kinematic conditions hold by Equation~\eqref{eq:air-bcs} and $U(0)=0$, and the dynamic condition holds by Equation~\eqref{eq:dispersion0}.
The resulting normal mode is nontrivial because $\hat\zeta=1$.
\end{proof}

We now express the dispersion relation through a single quantity computed from the air.

\begin{definition}\label{def:air-response}
Let $\Imc c>0$.
By \cref{lem:regular-solve} below, the system in Equation~\eqref{eq:mode-air} has a unique solution with $w(\ha)=0$ and $w'(\ha)=1$.
We denote it by $w(\cdot;k,c)$ and call it the \emph{top-normalized solution}.
Its value $w(0;k,c)$ at the interface is the \emph{interface trace}.
Here and below, a dot marks the free argument and the arguments after a semicolon are parameters, so $w(\cdot;k,c)$ is a function of the height $z$ and $w(0;k,\cdot)$ is a function of $c$.
For the limiting solution $w_\ast(\cdot;c_r)$ of \cref{prop:limit-solutions}, the wavenumber $k$ is omitted from the arguments.
As in Equation~\eqref{eq:air-bcs}, $\xi=w/(\im kV)$ is the vertical displacement amplitude of the air, and $\xi(0)$ is the interface displacement of the solution.
Where the interface trace does not vanish, the \emph{air-side pressure response} is
\begin{equation}\label{eq:Za-def}
\Za(k,c):=\frac{\qbar(0)-\rhobar(0)\,g\,\xi(0)}{\xi(0)},
\qquad
\qbar:=\frac{q}{\eps\rhow},
\end{equation}
where $q$ is the pressure of the top-normalized solution.
\end{definition}

Thus $\Za$ is the Lagrangian pressure perturbation at the interface, normalized by the surface air density $\eps\rhow$ and divided by the interface displacement.
By \cref{lem:regular-solve}, the top-normalized solution spans the solutions of Equation~\eqref{eq:mode-air} with $w(\ha)=0$, and it depends holomorphically on $c$.
The quotient in Equation~\eqref{eq:Za-def} does not change when the solution is multiplied by a nonzero constant, so $\Za$ does not depend on the choice of normalization.
Since $V(0)=-c\neq0$, the condition $\xi(0)\neq0$ is equivalent to $w(0;k,c)\neq0$.
Where $w(0;k,c)\neq0$, we may therefore rescale the solution so that $\xi(0)=1$ as in Equation~\eqref{eq:air-bcs}, and then $\Za(k,c)=\qbar(0)-\rhobar(0)g$.
The function $\Za$ depends only on the normalized profiles $(\rhobar,a,U)$ and not on $\eps$ (\S\ref{subsec:eps-family}).
By \cref{lem:regular-solve}, $\Za(k,\cdot)$ is meromorphic on the open upper half-plane, with poles exactly at the zeros of $w(0;k,\cdot)$.
Here meromorphic means holomorphic except at isolated poles.

With $\Dw$ defined in Equation~\eqref{eq:Dw-def}, the dispersion relation in Equation~\eqref{eq:dispersion0} becomes
\begin{equation}\label{eq:dispersion}
D(k,c,\eps):=\Dw(k,c)-\eps\,\Za(k,c)=0
\end{equation}
on the set where $w(0;k,c)\neq0$.
No normal mode is lost by this restriction.
Indeed, by \cref{prop:reduction}, every mode has $\hat\zeta\neq0$, so its air amplitude satisfies $w(0)=\im kV(0)\hat\zeta\neq0$.
This amplitude is a nonzero multiple of the top-normalized solution, so $w(0;k,c)\neq0$ as well.
Multiplying the relation in Equation~\eqref{eq:dispersion} by the interface trace $w(0;k,c)$ gives the holomorphic dispersion function $\Edet$ of \cref{def:determinant}.

At $\eps=0$, the roots of Equation~\eqref{eq:dispersion} are the roots $\pm c_k$ of $\Dw(k,\cdot)$ given in Equation~\eqref{eq:ck}.
Since $\partial_c\Dw(k,c_k)=2kc_k\coth(k\hw)>0$, the free wave speed $c_k$ is a simple root.
To follow this root for small $\eps>0$, we need $\Za(k,c)$ as the imaginary part of $c$ tends to zero with $c$ near $c_k$.
Section~\ref{sec:criticallayer} constructs the boundary value of $\Za(k,c)$, its limit as $c=c_r+\im\delta$ with $\delta\downarrow0$, for real $c_r$ near $c_k$.

\subsection{The linearized operator at one wavenumber}
\label{subsec:operator}

For each $k$, the operator $\Lop_k$ maps the spatial Fourier amplitudes at wavenumber $k$ to their time derivatives, after the water pressure has been recovered by solving Laplace's equation in the water, with its interface value given by the linearized dynamic condition and a zero vertical derivative at the bottom.
Its definition uses three notions.
The linearized constraint space is the set of states that satisfy the linearized constraints, namely incompressibility in the water, the two kinematic conditions, the dynamic condition, and the conditions at the top and bottom boundaries.
The $H^3$ core $\mathfrak D_k$ consists of the states in this space with $H^3$ regularity whose water pressure solves Laplace's equation in the water with a zero vertical derivative at the bottom, and the linearized equations can be applied to these states directly.
The graph of an operator is the set of pairs formed by a state and its image.
An operator is closed when its graph contains every limit, in the norm of the $L^2$-based state space, of a sequence of such pairs.
If adding to the graph all limits of such pairs in this norm gives again the graph of an operator, that operator is the graph closure.
It is the smallest closed operator that agrees with the original operator on the domain of the original operator, and it is the standard way to make a differential operator closed.
The operator $\Lop_k$ is the graph closure of the linearized evolution on the $H^3$ core, so $\mathfrak D_k$ is dense in the domain of $\Lop_k$ in the graph norm, the sum of the $L^2$ norms of a state and of its image under $\Lop_k$.

Under the additional thermodynamic regularity in Equation~\eqref{eq:thermo-domain}, the flattened equations of \cref{subsec:flattening}, which are the equations transformed to fixed reference domains on which the interface is flat, are Fr\'echet differentiable.
After the change of variables to the good unknown in Equation~\eqref{eq:goodunknown}, their derivative becomes the Eulerian linearized system.
Restricting this evolution to perturbations proportional to $e^{\im kx}$ gives the spatial Fourier component at wavenumber $k$.
A normal mode adds the temporal factor $e^{-\im kct}$.

The theorem uses the following spectral notions for $\Lop_k$, which acts on an $L^2$ space of states at wavenumber $k$ (\cref{def:modal-operator}).
A complex number $\lambda$ belongs to the resolvent set of $\Lop_k$ when, for every state $\mathbb F$, the equation $(\lambda-\Lop_k)\hat{\mathbb U}=\mathbb F$ has exactly one solution $\hat{\mathbb U}$ in the domain of $\Lop_k$.
The map $\mathbb F\mapsto\hat{\mathbb U}$ is then the resolvent $(\lambda-\Lop_k)^{-1}$, and the spectrum of $\Lop_k$ is the set of complex numbers that do not belong to the resolvent set.
Every eigenvalue belongs to the spectrum.
An eigenvalue $\lambda$ is an isolated point of the spectrum when some neighborhood of $\lambda$ contains no other point of the spectrum.
For such an eigenvalue, the Riesz projection, also called the spectral projection, is $1/(2\pi\im)$ times the integral of the resolvent over a small circle around $\lambda$, traversed counterclockwise (\cref{cor:algsimple}).
The eigenvalue is algebraically simple when this projection has rank one, that is, a one-dimensional range.
Then every solution of $(\Lop_k-\lambda)^m\hat{\mathbb U}=0$ with an integer $m\ge1$ is a multiple of one eigenvector.
Here $(\Lop_k-\lambda)^m\hat{\mathbb U}$ is defined when $\hat{\mathbb U}$ and its images under $(\Lop_k-\lambda)^j$ for $j=1,\dots,m-1$ lie in the domain of $\Lop_k$.
The following theorem states the construction and its spectral consequences.
Its proof is given in Appendix~\ref{app:operator}.

\begin{theorem}\label{thm:operator}
Assume Hypotheses \ref{hyp:reg} and \ref{hyp:noslip}.
For parts \ref{it:op-frechet} and \ref{it:op-realized}, assume in addition the thermodynamic regularity in Equation~\eqref{eq:thermo-domain}, which requires an open set $\mathscr U$ whose closure $\overline{\mathscr U}$ is a compact subset of $(0,\infty)\times\R$ and which contains the base range $\{(\rho_0(z),S_\ast):z\in[0,\ha]\}$, with $P\in C^5(\overline{\mathscr U})$ and $\inf_{\overline{\mathscr U}}P_\rho>0$.
Here $P\in C^5(\overline{\mathscr U})$ means that the partial derivatives of $P$ of order at most five are continuous on $\mathscr U$ and extend continuously to $\overline{\mathscr U}$.
Then the following statements hold.
\begin{enumerate}[label=\textup{(\roman*)}]
\item\label{it:op-frechet}
In the flattened formulation of \cref{subsec:flattening}, the two-phase residual, whose components are the right-hand sides of the flattened evolution equations and the expressions set to zero in the flattened constraint equations, is continuously Fr\'echet differentiable, that is, with a derivative that depends continuously on the state, on an open neighborhood of the laminar state.
After the change of variables to the good unknown in Equation~\eqref{eq:goodunknown}, its derivative at the laminar state is the linearized system of \cref{prop:linearized} (\cref{prop:residual,prop:goodunknown}).
\item\label{it:op-realized}
For each $k>0$, take the part of the linearized evolution in \ref{it:op-frechet}, written in terms of the good unknown, that is proportional to $e^{\im kx}$.
Use pressure rather than density as the thermodynamic state variable, recover the water pressure by solving Laplace's equation in the water, and restrict to the linearized constraint space.
The resulting evolution defines the closed, densely defined operator $\Lop_k$ of \cref{def:modal-operator}, which acts on $L^2$.
Precisely, $\Lop_k$ is the graph closure of this evolution on the $H^3$ core $\mathfrak D_k$ in the constraint space (\cref{prop:realization}).
\item\label{it:op-spectrum}
The operator $\Lop_k$ itself is defined under Hypotheses \ref{hyp:reg} and \ref{hyp:noslip} alone, without the thermodynamic regularity in Equation~\eqref{eq:thermo-domain}.
For $\Imc c\neq0$, $\lambda=-\im kc$ is an eigenvalue of $\Lop_k$ if and only if the holomorphic dispersion function $\Edet(k,c,\eps)$ of \cref{def:determinant} vanishes, and the eigenspace is then one-dimensional (\cref{thm:pointspec}).
At a simple zero of $\Edet(k,\cdot,\eps)$, the eigenvalue is an isolated point of the spectrum of $\Lop_k$ and is algebraically simple.
In this case, the resolvent satisfies an explicit bound and the Riesz projection has rank one (\cref{prop:resolvent,cor:algsimple}).
\end{enumerate}
\end{theorem}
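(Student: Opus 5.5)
The plan follows the three parts of the statement: the Fréchet derivative on flattened domains, the closed realization $\Lop_k$, and the spectral correspondence with $\Edet$. Part (iii) does not depend on (i) and (ii), so I will prove it from the normal-mode reduction alone.

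For \ref{it:op-frechet}, I will flatten both phases with a harmonic or linear-in-$z$ extension of $\eta$. A natural choice is $z=\theta(x,y,t)$, with $y\in(0,\ha)$ in the air and $y\in(-\hw,0)$ in the water, where $\theta$ depends affinely on $\eta$ and equals $y$ when $\eta=0$. I will then write the flattened residual as a polynomial in derivatives of $\eta$, in $1/\partial_y\theta$, and in the fields, composed with $P$. Continuous Fréchet differentiability on $H^s$-type spaces near the laminar state should follow from three facts: the algebra property of the Sobolev spaces, the smoothness of inversion near $\partial_y\theta=1$, and the $C^5$ regularity of $P$ on the compact $\overline{\mathscr U}$. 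The last fact is why Equation~\eqref{eq:thermo-domain} is needed, since a Nemytskii operator $f\mapsto P(f)$ is $C^1$ on $H^s$ once $P$ has enough bounded derivatives. Differentiating at the laminar state produces extra terms proportional to $\partial_y\theta$-variations times base gradients, such as $\rho_0'\,\dot\theta$ and $U'\,\dot\theta$. Substituting the Alinhac good unknown $\dot f-\partial_y f_0\,\dot\theta$ removes exactly these terms and leaves the Eulerian system of \cref{prop:linearized}, with the interface terms reproducing the Lagrangian pressures.

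For \ref{it:op-realized}, I will restrict the linearized system to the factor $e^{\im kx}$, eliminate the density using $r=(q-P_Ss)/a^2$, and solve for the water pressure from $\Delta q_{\mathrm w}=0$. The boundary data for $q_{\mathrm w}$ are the dynamic condition at $z=0$ and a Neumann condition at $-\hw$, and this is a bounded solution operator in $k$. The operator is then defined on the $H^3$ core $\mathfrak D_k$. I will show it is closable by exhibiting a densely defined formal adjoint obtained by integration by parts; the boundary terms vanish under the constraint conditions. $\Lop_k$ is then the closure.

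For \ref{it:op-spectrum}, the eigenvalue statement reduces to the resolvent equation. I will prove that for $\Imc c\neq0$ and arbitrary data $\mathbb F$, the equation $(\lambda-\Lop_k)\hat{\mathbb U}=\mathbb F$ is an inhomogeneous version of the normal-mode system. After eliminating $u$, $r$ and $s$ (the last divided by $\im kV\neq0$), it becomes a first-order $2\times2$ boundary-value problem for $(w,q)$, in the spirit of \cref{prop:xiq}, coupled to the explicit water solution. Variation of constants, using the top-normalized solution of \cref{lem:regular-solve} and a second solution, produces a Green's function. The resulting solvability condition is a scalar equation with coefficient $\Edet(k,c,\eps)$. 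When $\Edet\neq0$, this gives the resolvent together with an explicit bound, since all coefficients are bounded; a fundamental-matrix estimate then yields $H^1$ control of the solution. When $\Edet=0$, the homogeneous problem has exactly the one-dimensional kernel of \cref{prop:reduction}. The hardest step is showing that the solution built from smooth data actually lies in the domain of the closure, and that nothing in $\Lop_k$ escapes the ODE description. I would handle it by showing that the graph-closure domain consists of $H^1$ states satisfying the constraints, so that the ODE argument applies to every element of the domain.

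Finally, I will use that the resolvent is $(\lambda-\Lop_k)^{-1}=R(\lambda)/\Edet(k,c,\eps)$, where $R(\lambda)$ is a holomorphic operator-valued numerator. A simple zero of $\Edet$ is isolated, so $\lambda$ is an isolated point of the spectrum and the resolvent has a simple pole there. The Riesz projection is then the residue $R(\lambda_0)/\partial_\lambda\Edet$. The range of $R(\lambda_0)$ lies in the kernel, so the projection has rank one. Because the pole is simple, there are no Jordan chains, and the eigenvalue is algebraically simple.
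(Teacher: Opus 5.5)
Your outline is sound. Part (i) and the ODE core of part (iii) are the paper's argument: a flattening affine in $\eta$, Nemytskii and algebra estimates, the good unknown $\mathcal T$, and a resolvent from an inhomogeneous $(\hat w,\chibar)$ system solved by variation of constants with the top-normalized solution, where the interface condition produces the factor $\Edet$. (The paper uses $\tilde z\mapsto\tilde z+\phi(\tilde z)\eta$ with a cutoff. It also substitutes $\eta_t=\mathcal K_\eta(\mathbb U)$ in the mesh-velocity terms, so that the residual is a function of the state alone.)

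You differ from the paper in part (ii) and in the last step.

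\textbf{Part (ii).} You define the operator on the $H^3$ core, prove closability with a formal adjoint, and call the closure $\Lop_k$. The paper starts from the explicit operator of Definition~\ref{def:modal-operator} and proves it closed directly: graph-norm convergence controls $\hat w'$ and $\hat q'$ in $L^2$ through the $\hat q$- and $\hat w$-components, hence also the traces. It then shows the core is dense in the graph norm, using smooth approximants with boundary-value corrections, $\hat v_{x,n}=\im\hat v_{z,n}'/k$ to keep the divergence constraint exact, and the harmonic water pressure $\mathcal P_k(\check q_n(0),\hat\zeta)$. In that order closability is automatic. Identifying the closure with Definition~\ref{def:modal-operator} is the actual content of (ii); you only name it in (iii), and it consists of exactly these two arguments. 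Your ``hardest step'' also disappears, since the ODE solution lies in $\mathcal D(\Lop_k)$ by inspection. If you keep the adjoint, the boundary terms do not vanish under the primal constraints. The $\hat q(0)$ terms force the weighted coupling $W_w(0)/\rho_0(0)=W_{v,z}(0)/\rhow$ on a test state $\hat{\mathbb W}$. Imposing $W_{v,z}(0)=0$ instead would give a non-dense set, because $\hat{\mathbf v}\mapsto\hat v_z(0)$ is continuous on $\mathcal V_k$ in the $L^2$ norm.

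\textbf{Algebraic simplicity.} Your numerator $R(\lambda)=\Edet\,(\lambda-\Lop_k)^{-1}$ is indeed holomorphic, since only the coefficient $\mu$ of the top-normalized solution carries $1/\Edet$. It gives the simple pole and the explicit residue $R(\lambda_0)/\partial_\lambda\Edet$, at the cost of checking holomorphy in operator norm. The paper needs only the bound $C(1+1/|\Edet|)$ and $|\Edet(k,c,\eps)|\ge m\,|c-c_{\mathrm u}|$ near the simple zero. It removes the higher Laurent coefficients by shrinking the contour. It gets rank exactly one from $(\Lop_k-\lambda_{\mathrm u})\mathbb P=0$ together with $\mathbb P=\mathrm{Id}$ on the kernel. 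You still need the latter fact to pass from rank at most one to rank one.
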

\section{Equivalent forms of the air equations and the holomorphic dispersion function}
\label{sec:structure}

Unless stated otherwise, we assume $\Imc c\neq0$ in this section, so $V=U-c$ is nonzero on $[0,\ha]$.
We derive three equivalent forms of the air equations.
The displacement--pressure system exposes the role of neutrality, the vertical-velocity equation identifies the critical-layer coefficient, and the mass-weighted displacement equation yields the flux and energy identities.
We then construct a holomorphic dispersion function.

\subsection{Isentropic reduction and the displacement--pressure system}

The entropy equation first reduces the air system to displacement and pressure.
This reduction is isentropic, that is, the entropy perturbation of a normal mode vanishes (\cref{lem:entropy}).
We write $S_0$ for the entropy of the base state, so $S_0\equiv S_\ast$ and $S_0'\equiv0$ for the laminar states of \S\ref{subsec:laminar}.
We use the vertical Lagrangian displacement amplitude $\xi=w/(\im kV)$ of \cref{prop:reduction}, so that $w=\im kV\xi$.
This relation expresses the vertical velocity as the material derivative of the displacement along the base wind.
This material derivative is the time derivative following the base wind, $\partial_t+U\partial_x$, and it acts on a normal mode as multiplication by $\im k(U-c)=\im kV$.
The next lemma shows that the Lagrangian pressure and density variations satisfy a barotropic relation, that is, a relation that does not involve the entropy.

\begin{lemma}\label{lem:entropy}
Let $\Imc c\neq0$ and let $(u,w,r,q,s)$ solve Equation~\eqref{eq:mode-air} on $[0,\ha]$ with $S_0'\equiv0$.
Then $s\equiv0$, and the Lagrangian pressure and density variations satisfy the barotropic relation
\begin{equation}\label{eq:lagrangian}
\Delta p:=q-\rho_0g\,\xi
=a^2\Delta\rho,
\qquad
\Delta\rho:=r+\rho_0'\xi .
\end{equation}
\end{lemma}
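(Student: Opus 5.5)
The proof is a direct computation from the normal-mode system \eqref{eq:mode-air}. First, the entropy equation in \eqref{eq:m-ent} reads $\im kV s=0$. Since $\Imc c\neq0$ and $U$ is real, $V=U-c$ has imaginary part $-\Imc c\neq0$ at every height, so $V$ does not vanish on $[0,\ha]$. Dividing by $\im kV$ gives $s\equiv0$. (With $S_0'\neq0$ the linearized entropy equation would carry an extra term $S_0'w$, which is why the hypothesis $S_0'\equiv0$ is used.) The equation of state in \eqref{eq:m-ent} then reduces to $q=a^2r$.

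Second, I would use the definition $\xi=w/(\im kV)$, which is legitimate because $V\neq0$, and form the two Lagrangian combinations. The hydrostatic relation \eqref{eq:hydrostatic}, $a^2\rho_0'=-\rho_0g$, gives $a^2\rho_0'\xi=-\rho_0g\,\xi$. Hence
\[
a^2\Delta\rho=a^2r+a^2\rho_0'\xi=q-\rho_0g\,\xi=\Delta p .
\]
This is exactly the neutrality relation \eqref{eq:neutral}: the hydrostatic gradient $p_0'=-\rho_0 g$ coincides with $a^2\rho_0'$ because the base state is isentropic.

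Neither step is a real obstacle. The only points to check are that $V$ is nonvanishing, which holds since $\Imc c\neq0$, and that $a^2>0$ on $[0,\ha]$ by Hypothesis~\ref{hyp:reg}. The continuity, momentum, and vertical-momentum equations of \eqref{eq:mode-air} are not needed here.
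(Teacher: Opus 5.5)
Your proof is correct and takes the same route as the paper's. Both get $s\equiv0$ from $\im kVs=0$ with $V\neq0$, and then combine $q=a^2r$ with the hydrostatic identity $a^2\rho_0'=p_0'=-\rho_0g$. The paper writes the same computation as $\Delta p-a^2\Delta\rho=P_S(s+S_0'\xi)$, which additionally shows that the barotropic relation survives for $S_0'\neq0$ once one uses $s=-S_0'\xi$.
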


\begin{proof}
Since $V\neq0$, the entropy equation $\im kVs=0$ gives $s\equiv0$.
To obtain the Lagrangian relation, use the hydrostatic relation $p_0'=-\rho_0g$ and the identity $p_0'=a^2\rho_0'+P_SS_0'$, and compute $\Delta p-a^2\Delta\rho =(q-a^2r)+(p_0'-a^2\rho_0')\xi =P_S(s+S_0'\xi)=0$, by the equation of state and $s=S_0'=0$.

The same Lagrangian relation also holds when $S_0'\neq0$.
Indeed, the linearized entropy equation is then $\im kVs+S_0'w=0$, which gives $s=-S_0'\xi$, so the Lagrangian entropy perturbation still vanishes.
In that case the reduction includes the Taylor--Goldstein-type $N^2$ terms discussed in \cref{rem:general-strat}.
\end{proof}

\begin{proposition}\label{prop:xiq}
Under the assumptions of \cref{lem:entropy}, with $b=g/a^2$ and the neutrality relation in Equation~\eqref{eq:neutral}, the pair $(\xi,q)$ satisfies
\begin{equation}\label{eq:xiq}
\xi'=b\,\xi+\frac{a^2-V^2}{\rho_0\,a^2V^2}\,q,
\qquad
q'=-b\,q+\rho_0k^2V^2\,\xi ,
\end{equation}
and conversely $(u,w,r,s)$ are recovered from $(\xi,q)$ by $w=\im kV\xi$, $s=0$, $u=-\bigl(U'w+\im kq/\rho_0\bigr)/(\im kV)$, $r=q/a^2-\rho_0(\rho_0'/\rho_0+b)\xi=q/a^2$.
For a general stratified base state with arbitrary $S_0'$, the same elimination uses $s=-S_0'\xi$ and Equation~\eqref{eq:lagrangian}, as in the proof of \cref{lem:entropy}, and yields
\begin{equation}\label{eq:xiq-general}
\xi'=b\,\xi+\frac{a^2-V^2}{\rho_0a^2V^2}\,q,
\qquad
q'=-b\,q+\rho_0\bigl(k^2V^2-N^2\bigr)\,\xi .
\end{equation}
\end{proposition}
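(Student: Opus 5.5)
The plan is a direct elimination starting from the normal-mode system \eqref{eq:mode-air}. By \cref{lem:entropy}, $s\equiv0$, and so $q=a^2r$, which gives $r=q/a^2$. Since $\rho_0'/\rho_0=-b$ by \eqref{eq:neutral}, the stated form $r=q/a^2-\rho_0(\rho_0'/\rho_0+b)\xi$ is the same expression with a zero term added. The horizontal momentum equation \eqref{eq:m-xmom}, divided by $\im kV\neq0$, gives $u$ in terms of $w$ and $q$, and we set $w=\im kV\xi$. These recovery formulas therefore follow directly, and the converse direction amounts to checking that the substituted fields satisfy \eqref{eq:mode-air} whenever $(\xi,q)$ solves \eqref{eq:xiq}. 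We will obtain that check by reading the derivation below backwards.

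\textbf{The $q$ equation.} We substitute $w=\im kV\xi$ and $r=q/a^2$ into the vertical momentum equation \eqref{eq:m-zmom}. This gives
\[
q'=-gq/a^2-\rho_0(\im k)^2V^2\xi=-bq+\rho_0k^2V^2\xi .
\]

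\textbf{The $\xi$ equation.} We use continuity \eqref{eq:m-cont}. First we compute $w'=\im k(U'\xi+V\xi')$. From \eqref{eq:m-xmom} we have $\im k\rho_0u=-\rho_0U'w/V-\im kq/V\cdot\im k\cdot(1/\im k)$, which simplifies to $\rho_0\im ku=-\rho_0U'\im k\xi+k^2q/V$. Substituting into continuity, the $U'\xi$ terms cancel between $\rho_0w'$ and $\rho_0\im ku$. What remains is
\[
\im kVq/a^2+\rho_0'\im kV\xi+\rho_0\im kV\xi'+k^2q/V=0 .
\]
Dividing by $\im kV\rho_0$ and using $\rho_0'/\rho_0=-b$ gives
\[
\xi'=b\xi-\frac{q}{\rho_0a^2}+\frac{q}{\rho_0V^2}
=b\xi+\frac{a^2-V^2}{\rho_0a^2V^2}q .
\]
This is the step that needs care: the signs of the $\im k$ factors and the cancellation of the $U'$ terms must be tracked correctly.

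\textbf{General stratification.} For \eqref{eq:xiq-general}, we repeat the computation with $s=-S_0'\xi$. Now $r=q/a^2-(P_S/a^2)s$. Combining this with $p_0'=a^2\rho_0'+P_SS_0'=-\rho_0g$, the Lagrangian relation \eqref{eq:lagrangian} gives
\[
r=\Delta p/a^2-\rho_0'\xi=q/a^2-(\rho_0g/a^2+\rho_0')\xi=q/a^2+\rho_0N^2\xi/g .
\]
In the vertical momentum equation, the extra term $-gr$ then contributes $-\rho_0N^2\xi$, which produces $\rho_0(k^2V^2-N^2)\xi$ in the $q$ equation.

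In the continuity equation, the extra density term is $\im kV\rho_0N^2\xi/g$. Combined with $\rho_0'\xi$, it gives $\rho_0'+\rho_0N^2/g=-\rho_0b$. So the $\xi$ equation is unchanged, and this combination of terms is the only point in the general case requiring attention.
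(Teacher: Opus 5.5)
Your elimination follows the paper's route: substitute $\im ku=-(U'w+\im kq/\rho_0)/V$ and $w=\im kV\xi$ into continuity to get $r+\rho_0'\xi+\rho_0\xi'-q/V^2=0$, then eliminate $r$. The paper handles the stratified case first via $r=q/a^2-\rho_0(\rho_0'/\rho_0+b)\xi$ and then sets $N^2=0$. Your $q$-equation, your formula $r=q/a^2+\rho_0N^2\xi/g$, and the combination $\rho_0'+\rho_0N^2/g=-\rho_0b$ are all correct.

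One intermediate line is wrong, though. Equation~\eqref{eq:m-xmom} gives $\rho_0\im ku=-\im k\rho_0U'\xi-\im kq/V$, not $-\im k\rho_0U'\xi+k^2q/V$; you picked up an extra factor of $\im k$. So the last term of your displayed continuity line should be $-\im kq/V$. With the term as you wrote it, dividing by $\im kV\rho_0$ would give $+\im k\,q/(\rho_0V^2)$. Your final $\xi'$-equation is correct, but it follows only from the corrected term.
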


\begin{proof}
We derive the stratified system first and then impose neutrality.
Solving Equation~\eqref{eq:m-xmom} for $u$ gives $\im ku=-(U'w+\im kq/\rho_0)/V$.
Substitute this expression and $w=\im kV\xi$, $w'=\im k(U'\xi+V\xi')$ into Equation~\eqref{eq:m-cont}.
The two shear terms $\rho_0U'\xi$ cancel, and division by $\im kV$ gives
\begin{equation}
r+\rho_0'\xi+\rho_0\xi'-\frac{q}{V^{2}}=0 .
\end{equation}
Now eliminate $r$ using Equation~\eqref{eq:lagrangian} in the form $r=q/a^2-\rho_0(\rho_0'/\rho_0+b)\xi$.
The $\rho_0'\xi$ terms cancel, leaving the first equation of the system in Equation~\eqref{eq:xiq-general}.

For the second equation, substitute $w=\im kV\xi$ into Equation~\eqref{eq:m-zmom} to obtain $q'=-gr+\rho_0k^2V^2\xi$.
The same expression for $r$ gives $-gr=-bq-\rho_0N^2\xi$, where $N^2=-g(\rho_0'/\rho_0+b)$ as in Equation~\eqref{eq:neutral}.
This proves Equation~\eqref{eq:xiq-general}.
Under neutrality, $N^2=0$ and $\rho_0'/\rho_0+b=0$, so the system reduces to Equation~\eqref{eq:xiq}.
\end{proof}

In this system, the critical singularity $V=0$ and the sonic degeneracy $V^2=a^2$ enter separately, through the denominator and the numerator of the coefficient $(a^2-V^2)/(\rho_0a^2V^2)$.
A nonzero buoyancy term $N^2$ would further change the local critical-level equation from Rayleigh to Taylor--Goldstein type.
Uniform subsonicity and neutrality exclude the sonic and buoyancy effects, leaving the critical singularity as the object of the analysis.

\subsection{Second-order equations and the critical-layer coefficient}

Two second-order equations in divergence form, each for one unknown, will be useful.
The equation for vertical velocity isolates the critical singularity in its zeroth-order coefficient, the factor that multiplies $w$, whereas the equation for mass-weighted displacement places the degeneracy in its leading coefficient, the factor that multiplies the highest derivative.
For the first form, define
\begin{equation}\label{eq:A-def}
A(z;c):=\frac{\rho_0\,a^2}{a^2-V^2},
\qquad
\chi:=A\bigl[Vw'-(U'+bV)w\bigr] .
\end{equation}
Hypothesis \ref{hyp:subsonic} makes $A$ well defined, with $\Rec A>0$, for $c$ in a complex neighborhood of $c_k$.
Indeed, $\Rec(a^2-V^2)=a^2-(U-\Rec c)^2+(\Imc c)^2\ge\delta_0a^2$ and $\Rec A=\rho_0a^2\,\Rec(a^2-V^2)/|a^2-V^2|^2$.
For real $c$ in this neighborhood, $A\ge\rho_0>0$.

\begin{proposition}\label{prop:SLform}
Under the assumptions of \cref{prop:xiq} the following hold.
\begin{enumerate}[label=\textup{(\roman*)}]
\item\label{it:chi} $\chi=\im k\,q$, and $(w,\chi)$ satisfies the first-order system
  \begin{equation}\label{eq:wchi}
  w'=\frac{U'+bV}{V}\,w+\frac{\chi}{AV},
  \qquad
  \chi'=-b\,\chi+\rho_0k^2V\,w .
  \end{equation}
\item\label{it:SL} $w$ satisfies the \emph{compressible Rayleigh equation} in Sturm--Liouville form,
  \begin{equation}\label{eq:SL}
  \bigl(A\,w'\bigr)'
  =\Bigl(\rho_0k^2+\frac{G}{V}\Bigr)w,
  \qquad
  G:=\bigl(A(U'+bV)\bigr)'+b\,A(U'+bV) .
  \end{equation}
  For real $c$, the coefficient $\rho_0k^2+G/V$ has at most a simple pole at each critical level, with residue proportional to $Q(z_j)$ (\cref{prop:Q}).
  Under Hypothesis \ref{hyp:subsonic}, it has no other singularity on $[0,\ha]$.
\item\label{it:X} The mass-weighted displacement $X:=\rho_0\xi$ satisfies the divergence-form equation
  \begin{equation}\label{eq:Xform}
  \bigl(FX'\bigr)'=k^2\,\frac{V^2}{\rho_0}\,X,
  \qquad
  F:=\frac{a^2V^2}{\rho_0\,(a^2-V^2)},
  \qquad
  q=\rho_0F\,X' .
  \end{equation}
\end{enumerate}
As $a\to\infty$ with $\rho_0\to\mathrm{const}$, Equation~\eqref{eq:SL} reduces to the classical Rayleigh equation $V(w''-k^2w)=U''w$ and Equation~\eqref{eq:Xform} to $(V^2\xi')'=k^2V^2\xi$.
\end{proposition}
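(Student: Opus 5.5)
The plan is to start from the displacement--pressure system \eqref{eq:xiq} of \cref{prop:xiq} and translate it into the variables $(w,\chi)$. Part \ref{it:chi} comes first. Substituting $w=\im kV\xi$ gives $w'=\im k(U'\xi+V\xi')$, and inserting the first equation of \eqref{eq:xiq} yields
\[
Vw'-(U'+bV)w=\im kV^2\,\frac{a^2-V^2}{\rho_0a^2V^2}\,q=\frac{\im k\,q}{A}.
\]
Multiplying by $A$ gives $\chi=\im kq$. Solving the same identity for $w'$ gives the first equation of \eqref{eq:wchi}. The second equation follows by multiplying $q'=-bq+\rho_0k^2V^2\xi$ by $\im k$ and using $\im kV^2\xi=Vw$. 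Throughout, $V\neq0$ because $\Imc c\neq0$, and $A$ is finite by Hypothesis \ref{hyp:subsonic} or by $\Imc c\neq0$.

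For part \ref{it:SL}, I would write the first equation of \eqref{eq:wchi} as $Aw'=A(U'+bV)w/V+\chi/V$ and differentiate. Inserting $\chi'$ from \eqref{eq:wchi} and $\chi=A[Vw'-(U'+bV)w]$, and using $V'=U'$, the terms involving $w'$ should collapse. Concretely, the $-\chi U'/V^2$ term and the $-b\chi/V$ term combine with the derivative of $A(U'+bV)w/V$ so that everything reduces to $(Aw')'=\rho_0k^2w+Gw/V$, with $G$ as defined. This bookkeeping is the main computational step, and I expect the identification of $G$ to be the delicate point. The term $-A(U'+bV)U'w/V^2$ from the quotient rule must cancel against the matching part of $-U'\chi/V^2$, and the residual $bA(U'+bV)/V$ is what produces the second summand of $G$. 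The statement about the pole is then immediate. For real $c$ under Hypothesis \ref{hyp:subsonic}, $A$ is $C^2$ and bounded away from zero, so $G$ is continuous. Hence $G/V$ has at most simple poles, located where $V=0$, and these are regular critical levels. The identification of the residue with $Q$ is deferred to \cref{prop:Q}, as the statement indicates.

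For part \ref{it:X}, set $X=\rho_0\xi$. Neutrality gives $\rho_0'=-b\rho_0$, so $X'=\rho_0(\xi'-b\xi)=q(a^2-V^2)/(a^2V^2)$. This is exactly $q=\rho_0FX'$. Then $(FX')'=(q/\rho_0)'=(q'+bq)/\rho_0$, again by neutrality, and this equals $k^2V^2\xi=k^2V^2X/\rho_0$. Finally, for the incompressible limit, put $b\to0$, $A\to\rho_0=\mathrm{const}$ and $G\to\rho_0U''$. Then \eqref{eq:SL} becomes $w''-k^2w=U''w/V$, and \eqref{eq:Xform} becomes $(V^2\xi')'=k^2V^2\xi$.
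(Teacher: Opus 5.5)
Your proposal is correct and follows the paper's route. You convert \eqref{eq:xiq} to the variables $(w,\chi)$ via $w=\im kV\xi$, derive \eqref{eq:SL} by a differentiation in which the $w'$ terms cancel, and obtain \eqref{eq:Xform} from $X'=\rho_0(\xi'-b\xi)$ together with neutrality. The only difference is cosmetic: the paper differentiates $\chi=A[Vw'-(U'+bV)w]$ and uses $(AVw')'=V(Aw')'+U'Aw'$, whereas you differentiate $Aw'=[A(U'+bV)w+\chi]/V$, and both bookkeepings cancel exactly as you describe.
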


\begin{proof}
The first equation of the system in Equation~\eqref{eq:xiq} gives $q=\rho_0a^2V^2(\xi'-b\xi)/(a^2-V^2)$.
Substituting $\xi=w/(\im kV)$ and expanding its derivative yields $q=\chi/(\im k)$.
Rewriting Equation~\eqref{eq:xiq} in these variables gives Equation~\eqref{eq:wchi} and proves \ref{it:chi}.

To obtain \ref{it:SL}, differentiate $\chi=A[Vw'-(U'+bV)w]$ and use Equation~\eqref{eq:wchi}.
The expansion $(AVw')'=V(Aw')'+U'Aw'$ shows that the terms containing $w'$ cancel, leaving $V(Aw')'=\bigl(G+\rho_0k^2V\bigr)w$.
Division by $V$ gives Equation~\eqref{eq:SL}.

For \ref{it:X}, the choice $X=\rho_0\xi$ absorbs the density gradient.
By neutrality, $\rho_0'=-b\rho_0$, and therefore the first equation of the system in Equation~\eqref{eq:xiq} gives $X'=\rho_0(\xi'-b\xi)=\rho_0\,\dfrac{a^2-V^2}{\rho_0a^2V^2}\,q =\dfrac{q}{\rho_0F}$.
Thus $q=\rho_0FX'$.
Substitute this into $q'+bq=\rho_0k^2V^2\xi$ and use the second neutrality cancellation, $(\rho_0F X')'+b\rho_0FX'=\rho_0(FX')'$.
The result is $(FX')'=k^2(V^2/\rho_0)X$, as required.
The stated incompressible limits follow directly as $a\to\infty$.
\end{proof}

We use Equation~\eqref{eq:SL} for $w$ and its first-order companion, Equation~\eqref{eq:wchi}, for the critical-layer construction in \S\ref{sec:criticallayer}.
Equation~\eqref{eq:Xform} for $X$ instead yields the integral formula in Equation~\eqref{eq:herglotz} for $\Imc\Za$, the coercivity estimate of \cref{lem:coercive}, and the uniform-wind example of \S\ref{subsec:KH}.
Although both equations are formally self-adjoint for real $c$ away from critical and sonic levels, they depend nonlinearly on the phase speed $c$ through $V$, $A$, and $F$.

At a regular critical level, the leading coefficient in Equation~\eqref{eq:Xform} vanishes quadratically, $F\sim U'(z_c)^2(z-z_c)^2/\rho_0(z_c)$, where $\sim$ means that the ratio of the two sides tends to $1$ as $z\to z_c$.
The Frobenius exponents, that is, the powers $s$ for which the equation has a solution that behaves like $(z-z_c)^s$ near $z_c$, are $0,-1$ for $X$ and $1,0$ for $w$.
Since these exponents differ by an integer, the second solution may contain a logarithmic term.
The next proposition identifies its coefficient.
The construction in \S\ref{sec:criticallayer} also covers the case in which this coefficient vanishes.
We write $\operatorname{Log}$ for the principal logarithm,
\begin{equation}\label{eq:Log-def}
\operatorname{Log}\zeta=\log|\zeta|+\im\operatorname{Arg}\zeta,
\qquad
-\pi<\operatorname{Arg}\zeta\le\pi,
\end{equation}
which is holomorphic on $\C\setminus(-\infty,0]$.

\begin{proposition}\label{prop:Q}
Let $z_c$ be a regular critical level, $U(z_c)=c\in\R$, $U'(z_c)\neq0$.
Then
\begin{equation}\label{eq:Gc}
A(z_c)=\rho_0(z_c),
\qquad
G(z_c)=\rho_0(z_c)\,Q(z_c),
\qquad
Q:=U''+\frac{g}{a^2}U'
\;=\;\rho_0\Bigl(\frac{U'}{\rho_0}\Bigr)' ,
\end{equation}
the last identity using the neutrality relation in Equation~\eqref{eq:neutral}.
Moreover the logarithmic basis solution of Equation~\eqref{eq:SL} at $z_c$ (\cref{prop:local}\ref{it:basis-log}) has the local form
\begin{equation}\label{eq:frobenius}
w(z)=w_c\Bigl[1+\frac{Q(z_c)}{U'(z_c)}\,(z-z_c)\,
\Lambda(z)\Bigr]
+O(|z-z_c|),
\qquad
\Lambda(z):=\operatorname{Log}\frac{U(z)-c-\im0}{V_{\mathrm{ref}}},
\end{equation}
where $w_c:=w(z_c)$, $\operatorname{Log}(\cdot-\im0)$ denotes the boundary value from $\Imc c>0$ of the principal logarithm in Equation~\eqref{eq:Log-def}, given explicitly in Equation~\eqref{eq:branch}, and $V_{\mathrm{ref}}>0$ is a reference speed.
Changing $V_{\mathrm{ref}}$ changes only the $O(|z-z_c|)$ term.
\end{proposition}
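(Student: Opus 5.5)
The proof has two parts. The first is a direct evaluation of the coefficients at $z_c$. The second is a Frobenius-type analysis of the first-order system \eqref{eq:wchi}, which is regular in $\chi$.

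\emph{The identities in \eqref{eq:Gc}.} At $z_c$ we have $V(z_c)=0$, so $A(z_c)=\rho_0a^2/a^2=\rho_0(z_c)$.

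For $G$, expand $G=A'(U'+bV)+A(U''+b'V+bU')+bA(U'+bV)$ and evaluate at $V=0$. This gives $G(z_c)=A'(z_c)U'+\rho_0(U''+2bU')$. Differentiating $A=\rho_0a^2/(a^2-V^2)$ at a point where $V=0$ gives $A'=\rho_0'+\rho_0\cdot\bigl(2VV'/a^2\bigr)+\rho_0\bigl[(a^2)'/a^2-(a^2)'/a^2\bigr]$. At $V=0$ this reduces to $A'(z_c)=\rho_0'(z_c)=-b\rho_0(z_c)$ by neutrality \eqref{eq:neutral}. Hence $G(z_c)=\rho_0(U''-bU'+2bU')=\rho_0Q$.

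The identity $\rho_0(U'/\rho_0)'=U''-U'\rho_0'/\rho_0=U''+bU'$ is again neutrality. This settles the first part.

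\emph{The local form \eqref{eq:frobenius}.} I would work with \eqref{eq:wchi}, equivalently \eqref{eq:SL} written as $V(Aw')'=(G+\rho_0k^2V)w$. Fix $\chi_c:=\chi(z_c)$. Since $\chi'=-b\chi+\rho_0k^2Vw$ has no singular coefficient, $\chi$ is $C^1$ with $\chi=\chi_c+O(|z-z_c|)$ once $w$ is known to be bounded.

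Solve the $w$-equation by variation of constants. The homogeneous equation $w'=\frac{U'+bV}{V}w$ has the solution $w_h=V\,e^{\int b}$. Indeed, $(\log w_h)'=U'/V+b$. With $E:=e^{\int_{z_c}^z b}$ this gives
\[
w=V E\Bigl[\kappa+\int^z\frac{\chi}{AV^2E}\,\dd s\Bigr].
\]
The integrand has a double pole. Write $\frac{1}{AV^2E}=\frac{1}{\rho_0U'^2}\bigl[(z-z_c)^{-2}+\beta(z-z_c)^{-1}+\text{regular}\bigr]$. Its integral produces $-\frac{1}{U'(z-z_c)}\cdot\frac{\chi_c}{\rho_0U'}$, which after multiplication by $VE\approx U'(z-z_c)$ gives the constant $w_c=-\chi_c/(\rho_0(z_c)U'(z_c))$. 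The $(z-z_c)^{-1}$ terms integrate to a logarithm. These $(z-z_c)^{-1}$ terms come both from $\beta$ (expanding $A$, $V^2$, $E$ to first order) and from $\chi'(z_c)=-b\chi_c$ in the numerator.

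The logarithm's coefficient is where I expect the main bookkeeping obstacle. One must check that the residue collects exactly to $\frac{Q}{U'}w_c$, after multiplying back by $VE$. The residue uses $A'(z_c)=-b\rho_0$, $E'=b$, $(V^2)''/2\,$-type terms giving $U''/U'$, and $\chi'(z_c)=-b\chi_c+0$ since $V(z_c)=0$. Tracking these gives the coefficient proportional to $U''/U'+b\cdot(\text{net}\ 1)$, i.e. $Q/U'$.

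A cleaner check of the coefficient, which I would use to confirm the algebra, is to substitute the ansatz $w=w_c[1+\alpha(z-z_c)\log(z-z_c)]+O(|z-z_c|)$ into $V(Aw')'=Gw+O(V)$. This gives $U'(z-z_c)\rho_0\alpha/(z-z_c)=\rho_0Qw_c/w_c$, so $\alpha=Q/U'$, using \eqref{eq:Gc}.

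The branch is fixed by performing the construction for $\Imc c>0$. There the logarithm is $\operatorname{Log}((U-c)/V_{\mathrm{ref}})$, holomorphic since $U-c$ avoids $(-\infty,0]$. Taking boundary values gives $\Lambda$ with imaginary part $-\im\pi$ where $U<c_r$.

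Replacing $V_{\mathrm{ref}}$ by $V'_{\mathrm{ref}}$ adds $\frac{Q}{U'}w_c(z-z_c)\log(V_{\mathrm{ref}}/V'_{\mathrm{ref}})=O(|z-z_c|)$. Likewise, replacing $\log|z-z_c|$ by $\log|U-c|$ costs $O(|z-z_c|)$ because $(U-c)/(z-z_c)$ is smooth and nonvanishing. The remainder estimates, uniform in the basis construction, are deferred to \cref{prop:local}.
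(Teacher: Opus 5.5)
Your proposal is correct and follows the paper's route. The identities in \eqref{eq:Gc} come from the same evaluation of $A'(z_c)=-b\rho_0(z_c)$ via neutrality, and the logarithmic coefficient comes from variation of constants for \eqref{eq:wchi} with homogeneous solution $VE$, as in \cref{prop:local}. The only difference is bookkeeping: you extract the residue by a direct Laurent expansion of $\chi/(AV^2E)$, whereas the paper first integrates by parts with $1/V^2=-(1/V)'/U'$, so that the residue appears as $g_1(z_j)\chibar(z_j)$ with $g_1(z_j)$ given by \eqref{eq:residue-r}, a form that is also uniform as $\Imc c\downarrow0$.
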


\begin{proof}
Since $V(z_c)=0$, the definition of $A$ gives $A(z_c)=\rho_0(z_c)$.
Differentiate $\log A=\log\rho_0+\log a^2-\log(a^2-V^2)$.
At $z_c$, the terms involving $a^2$ cancel because $(V^2)'(z_c)=0$, and neutrality gives $A'(z_c)/A(z_c)=\rho_0'/\rho_0=-b$.
Consequently,
\begin{align*}
G(z_c)&=A'(U'+bV)\big|_{z_c}+A\,(U''+b'V+bU')\big|_{z_c}
        +bA(U'+bV)\big|_{z_c}\\
&=\rho_0(z_c)\bigl[-bU'+U''+bU'+bU'\bigr](z_c).
\end{align*}
This proves Equation~\eqref{eq:Gc}.
The logarithmic expansion in Equation~\eqref{eq:frobenius} is established in \cref{prop:local}\ref{it:basis-log}, where the coefficient is derived from the residue at $z_c$ of the singular integrand in the local representation of the first-order system in Equation~\eqref{eq:wchi}, computed with Equation~\eqref{eq:residue-r}.
That construction uses the reference speed $V_\ast$ of \S\ref{subsec:local} and gives a remainder $h\in C^1$ vanishing at $z_c$, hence the stated $O(|z-z_c|)$ remainder.
\end{proof}

\begin{remark}
\label{rem:leeslin}
The density correction in Equation~\eqref{eq:Gc} differs from that in the classical Lees--Lin coefficient.
To see the source of this difference, repeat the reduction of \cref{prop:xiq,prop:SLform} for a base state with $N^2=0$, without imposing hydrostatic neutrality.
The resulting self-adjoint equation for $X_{\mathrm{gen}}:=e^{-\int_0^zb}\,\xi$ has the leading coefficient $F_{\mathrm{gen}}:=e^{2\int_0^zb}\,\rho_0a^2V^2/(a^2-V^2)$ and the critical-layer coefficient
\begin{equation}
Q_{\mathrm{gen}}
:=U''+\Bigl(\frac{2g}{a^2}+\frac{\rho_0'}{\rho_0}\Bigr)U'
\end{equation}
(\cref{app:thermo}).
For a temperature-stratified gas with $g=0$, one has $N^2=0$ automatically, while $\rho_0'$ is arbitrary.
In this case, $Q_{\mathrm{gen}}=U''+(\rho_0'/\rho_0)U'=(\rho_0U')'/\rho_0$ is the \emph{generalized inflection point} quantity of compressible boundary-layer theory \cite{LeesLin1946,LeesReshotko1962}.
Its role in the jump of the Reynolds stress across the critical layer, that is, of the vertical flux of horizontal momentum transported by the wave, is classical \cite[\S9.3]{Mack1984}.

For a neutral hydrostatic atmosphere, instead, $\rho_0'/\rho_0=-g/a^2$ and $Q_{\mathrm{gen}}=U''+(g/a^2)U'=\rho_0(U'/\rho_0)'$, as in Equation~\eqref{eq:Gc}.
The opposite density corrections in $(\rho_0U')'$ and $\rho_0(U'/\rho_0)'$ arise because gravity contributes both to the buoyancy of the pressure perturbation and to the base-state density gradient.
In meteorological variables, the relevant quantity is the specific vorticity $U'/\rho_0$, whose critical-level gradient controls momentum deposition in compressible wave--mean-flow theory \cite{EliassenPalm1961,AndrewsMcIntyre1978,Buhler2014}.
With the opposite curl convention, the specific vorticity is $-U'/\rho_0$, but $Q$ and the instability criterion remain unchanged.
\end{remark}

\begin{remark}\label{rem:general-strat}
When $N^2\neq0$, Equation~\eqref{eq:xiq-general} gives $(F_{\mathrm{gen}}X_{\mathrm{gen}}')'=e^{2\int_0^zb}\rho_0\bigl(k^2V^2-N^2\bigr)X_{\mathrm{gen}}$, with $X_{\mathrm{gen}}$ and $F_{\mathrm{gen}}$ as in \cref{rem:leeslin}.
The coefficient on the right side no longer vanishes at $z_c$, and the Frobenius exponents become $-\tfrac12\pm\sqrt{\tfrac14-\mathrm{Ri}_c}$, where $\mathrm{Ri}_c=N^2(z_c)/U'(z_c)^2$.
This is the compressible Taylor--Goldstein structure.
Its critical-layer theory, including the absorption factors $e^{-2\pi\sqrt{\mathrm{Ri}_c-1/4}}$ of \cite{BookerBretherton1967} and the compressible Miles--Howard bound $\mathrm{Ri}\ge\frac14$ of \cite{Chimonas1970}, lies outside the present paper.
\end{remark}

\subsection{The wave-momentum flux and the imaginary part of \texorpdfstring{$\Za$}{Za}}
\label{subsec:flux-identities}

The sign of the growth rate is determined by the air-side pressure response.
We relate its imaginary part to a flux that is conserved between critical levels.
First normalize by the surface air density, setting $\rhobar=\rho_0/(\eps\rhow)$, $\Abar=A/(\eps\rhow)$, $\chibar=\chi/(\eps\rhow)$, and $\qbar=q/(\eps\rhow)$.
The quantities $A$, $\chi$, $q$, and $G$ scale linearly in the density, whereas $F$ scales inversely.
Thus Equations~\eqref{eq:wchi}, \eqref{eq:SL}, and \eqref{eq:Xform} have the same form with $\rho_0$ replaced by $\rhobar$.
We henceforth use normalized quantities, with the same symbols $G$, $F$, and $X$ for their normalized versions.
In particular, $X=\rhobar\,\xi$, $X(0)=\xi(0)$, and $G(z_c)=\rhobar(z_c)Q(z_c)$.
An overbar always denotes normalization, and complex conjugation is written with an asterisk, as in $w^{\ast}$.

Define the flux
\begin{equation}\label{eq:flux-def}
\Jf(z):=\Imc\bigl(\qbar(z)\,\xi^{\ast}(z)\bigr),
\end{equation}
which represents, up to the conventional multiplicative factor, the wave-induced form stress transmitted across height $z$.
It is associated with the correlation of pressure and the slope of displaced material surfaces, and hence with the Eliassen--Palm flux of the mode.

\begin{lemma}\label{lem:flux}
Let $(\xi,\qbar)$ solve Equation~\eqref{eq:xiq}.
Then
\begin{equation}\label{eq:flux-identity}
\frac{\dd}{\dd z}\,\Imc\bigl(\qbar\,\xi^{\ast}\bigr)
=k^2\rhobar\,\Imc(V^2)\,|\xi|^2
-\Imc\!\Bigl(\frac{a^2-V^2}{\rhobar a^2V^2}\Bigr)|\qbar|^2 ,
\end{equation}
the $b$-terms canceling identically.
Consequently the following hold.
\begin{enumerate}[label=\textup{(\roman*)}]
\item\label{it:flux-real} For $c\in\R$, $\Jf$ is constant on every interval free of critical levels.
\item\label{it:flux-herglotz} For $\Imc c=\delta>0$, assume $\xi(0)=1$ and $w(\ha)=0$.
Then $X=\rhobar\xi$ satisfies $X(0)=1$ because $\rhobar(0)=1$, and
  \begin{equation}\label{eq:herglotz}
  \Imc\qbar(0)
  =2\delta\int_0^{\ha}
  \bigl(U-\Rec c\bigr)
  \Bigl[\frac{a^4\,|X'|^2}{\rhobar\,|a^2-V^2|^2}
  +\frac{k^2}{\rhobar}\,|X|^2\Bigr]\dd z .
  \end{equation}
  In particular $\Imc\Za(k,c)=\Imc\qbar(0)$ has the sign of a weighted average of $U-\Rec c$.
\end{enumerate}
\end{lemma}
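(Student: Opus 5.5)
The identity \eqref{eq:flux-identity} follows by direct differentiation using the normalized system \eqref{eq:xiq}. Both consequences then follow: (i) because the coefficients are real, and (ii) by integrating over $[0,\ha]$ and rewriting the integrand in terms of $X$.

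\textbf{The identity.} Write the normalized system as
\[
\xi'=b\xi+K\qbar,\qquad \qbar'=-b\qbar+\rhobar k^2V^2\xi,\qquad K:=\frac{a^2-V^2}{\rhobar a^2V^2}.
\]
Since $b=g/a^2$ is real,
\[
(\qbar\xi^\ast)'=(-b\qbar+\rhobar k^2V^2\xi)\xi^\ast+\qbar(b\xi^\ast+K^\ast\qbar^\ast)=\rhobar k^2V^2|\xi|^2+K^\ast|\qbar|^2 .
\]
The two $b$-terms cancel exactly. Taking imaginary parts and using $\Imc K^\ast=-\Imc K$ gives \eqref{eq:flux-identity}.

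The coefficients are well defined where needed. For $\Imc c\neq0$ we have $V\neq0$, and $a^2-V^2\neq0$ because $V^2=a^2$ would force $V=\pm a$ to be real. For real $c$ we only use intervals free of critical levels, under Hypothesis \ref{hyp:subsonic}.

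\textbf{Part (i).} For real $c$, on an interval with no critical level, $V$ and $K$ are real. Both terms on the right of \eqref{eq:flux-identity} vanish, so $\Jf$ is constant there.

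\textbf{Part (ii).} Let $c=\Rec c+\im\delta$ with $\delta>0$. The condition $w(\ha)=0$ gives $\xi(\ha)=0$, so $\Jf(\ha)=0$. Integrating \eqref{eq:flux-identity} over $[0,\ha]$ and using $\xi(0)=1$ yields
\[
\Imc\qbar(0)=-\int_0^{\ha}\bigl[k^2\rhobar\,\Imc(V^2)|\xi|^2-\Imc K\,|\qbar|^2\bigr]\dd z .
\]
Next compute the two imaginary parts. First, $\Imc(V^2)=-2\delta(U-\Rec c)$. Second, $K=\rhobar^{-1}(V^{-2}-a^{-2})$, so
\[
\Imc K=-\frac{\Imc(V^2)}{\rhobar|V|^4}=\frac{2\delta(U-\Rec c)}{\rhobar|V|^4}.
\]
Substituting,
\[
\Imc\qbar(0)=2\delta\int_0^{\ha}(U-\Rec c)\Bigl[k^2\rhobar|\xi|^2+\frac{|\qbar|^2}{\rhobar|V|^4}\Bigr]\dd z .
\]

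To reach \eqref{eq:herglotz}, rewrite the bracket in terms of $X=\rhobar\xi$. The first term is $\rhobar|\xi|^2=|X|^2/\rhobar$. For the second, \cref{prop:SLform}\ref{it:X} in normalized form gives $\qbar=\rhobar FX'=a^2V^2X'/(a^2-V^2)$, so
\[
\frac{|\qbar|^2}{\rhobar|V|^4}=\frac{a^4|X'|^2}{\rhobar|a^2-V^2|^2}.
\]
Finally, $X(0)=\xi(0)=1$ because $\rhobar(0)=1$.

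\textbf{The statement about $\Za$.} With $\xi(0)=1$ we have $\Za=\qbar(0)-\rhobar(0)g$, and the subtracted term is real, so $\Imc\Za=\Imc\qbar(0)$. The bracket in \eqref{eq:herglotz} is a nonnegative weight and is not identically zero, since $X(0)=1$. Hence $\Imc\Za$ has the sign of a weighted average of $U-\Rec c$.

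None of these steps is a real obstacle. The only points needing care are the sign bookkeeping in $\Imc K$ and the justification that the coefficients are nonsingular, both settled above.
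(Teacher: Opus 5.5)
Your proof is correct and essentially the paper's. The identity and part~(i) are derived the same way, and for part~(ii) the paper multiplies \eqref{eq:Xform} by $X^{\ast}$ and integrates by parts; this is the same computation as your integration of \eqref{eq:flux-identity}, because $\qbar\,\xi^{\ast}=F\,X'X^{\ast}$. The only difference is that the paper's route yields the full complex identity $\qbar(0)=-\int_0^{\ha}\bigl[F|X'|^2+k^2V^2|X|^2/\rhobar\bigr]\dd z$, which is the form reused in \cref{lem:coercive}, whereas yours gives just its imaginary part, which is all this lemma needs.
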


\begin{proof}
Differentiate $\qbar\,\xi^{\ast}$ using Equation~\eqref{eq:xiq}.
Since $b$ is real, the terms $-b\qbar\,\xi^{\ast}$ and $+\qbar\,b\,\xi^{\ast}$ cancel, leaving $\rhobar k^2V^2|\xi|^2 +\bigl((a^2-V^2)/(\rhobar a^2V^2)\bigr)^{\!\ast}\,|\qbar|^2$.
Taking imaginary parts gives Equation~\eqref{eq:flux-identity}.
For real $c$ away from critical levels, both imaginary parts vanish, which proves \ref{it:flux-real}.

For \ref{it:flux-herglotz}, multiply Equation~\eqref{eq:Xform} by $X^{\ast}$ and integrate by parts over $(0,\ha)$.
The top boundary term vanishes because $w(\ha)=0$ and $V(\ha)\neq0$ imply $X(\ha)=0$.
Hence
\begin{equation}
-F(0)X'(0)X^{\ast}(0)
=\int_0^{\ha}\Bigl[F|X'|^2+k^2\frac{V^2}{\rhobar}|X|^2\Bigr]\dd z .
\end{equation}
By \cref{prop:SLform}\ref{it:X}, $\qbar=\rhobar FX'$ and thus $\qbar(0)=F(0)X'(0)$.
The coefficient identities $\Imc F=-2\delta(U-\Rec c)\,a^4/\bigl(\rhobar|a^2-V^2|^2\bigr)$ and $\Imc V^2=-2\delta(U-\Rec c)$ now give Equation~\eqref{eq:herglotz} upon taking imaginary parts.
\end{proof}

The limit $\delta\downarrow0$ links this identity to the critical-layer calculation below.
If $w_\ast(z_c)\neq0$, the limiting solution has $|X'|\sim C_c|z-z_c|^{-2}$ for a constant $C_c>0$, and the integral concentrates near the critical levels.
If $w_\ast(z_c)=0$, near $z_c$ the limiting solution is a multiple of the regular basis solution of \cref{prop:local}\ref{it:basis-reg}, which has no logarithmic term, and the flux does not jump at $z_c$.
In either case, twice $\delta$ times the local integral converges to the flux loss $\Jf(z_j^-)-\Jf(z_j^+)$, the negative of the upward jump in \cref{thm:limabs}.
Here and below, $f(z_j^\pm)$ denotes the one-sided limit of a function $f$ at $z_j$ from above and from below.
If $c_k$ lies outside the range of $U$, the integrand is uniformly bounded instead, giving $\Imc\Za=O(\delta)$ as used in \cref{thm:necessity}.

For the critical-layer analysis, it is more convenient to express the same flux in the velocity variables.
With the Wronskian convention $w_1w_2'-w_2w_1'$, the weighted Wronskian $\Abar(w_1w_2'-w_2w_1')$ of two solutions of the divergence-form equation is constant on every interval free of critical levels.
Define
\begin{equation}\label{eq:W-def}
\Wc(z):=\Imc\bigl(\Abar\,w'\,w^{\ast}\bigr)(z).
\end{equation}
For real $\Abar$, $\Imc(\Abar w'w^{\ast})=\tfrac1{2\im}\Abar\,(w'w^{\ast}-w\,w^{\ast\prime})$.
Thus $\Wc$ is $-1/(2\im)$ times the weighted Wronskian of $w$ and $w^{\ast}$, in that order.
When the coefficients are real, $w^{\ast}$ is also a solution, so $\Wc$ is conserved.
We call it the Wronskian-type flux.

For real $c$ away from critical levels, Equation~\eqref{eq:A-def} gives $\Imc(\chibar w^{\ast}) =\Imc\bigl(\Abar Vw'w^{\ast}-\Abar(U'+bV)|w|^2\bigr)=V\,\Wc$, because the second term is real.
Combining this with $\qbar\,\xi^{\ast}=\chibar w^{\ast}/(k^2V)$ for real $V$ yields
\begin{equation}\label{eq:J-W}
\Jf=\frac{\Wc}{k^2},
\qquad\text{so}\qquad
\Imc\Za^+(k,c_r)=\frac{\Wc(0)}{k^2}
\end{equation}
whenever the boundary values exist, with the interface normalization $\xi(0)=1$.
The jump formula for $\Wc$ across a critical level is the compressible generalization of Miles's Reynolds-stress jump, proved in \cref{thm:limabs}.

\subsection{A holomorphic form of the dispersion relation}
\label{subsec:determinant}

The quotient dispersion relation $\Dw(k,c)=\eps\Za(k,c)$ is convenient near the water-wave root, provided the interface trace is nonzero.
For the spectral characterization, we also need a holomorphic function defined at the poles of $\Za$.
We obtain it by multiplying the dispersion relation by the interface trace $w(0;k,c)$.

For $\Imc c\neq0$, let $(w,\chibar)(\cdot;k,c)$ solve Equation~\eqref{eq:wchi} with
\begin{equation}\label{eq:canonical}
  w(\ha;k,c)=0,
  \qquad
  \chibar(\ha;k,c)=\Abar(\ha;c)\,V(\ha;c).
  \end{equation}
The first equation of the system in Equation~\eqref{eq:wchi} gives $w'(\ha)=1$, so $w$ satisfies the normalization of \cref{def:air-response}.
The interface trace $w(0;k,c)$ vanishes precisely when the air equation has a nontrivial solution with $w(0)=w(\ha)=0$.

When $w(0;k,c)\neq0$, division by $w(0;k,c)/(\im kV(0))$ imposes the interface normalization $\xi(0)=1$ in Equation~\eqref{eq:Za-def}.
For real $c_r$, we will also use the relative velocity profile $\hat y=w_\ast/w_\ast(0)$, where $w_\ast$ is the limiting solution of \cref{prop:limit-solutions}.
Clearing the interface trace from the quotient dispersion relation gives the following function.

\begin{definition}\label{def:determinant}
For $k>0$, $\eps\ge0$, and $\Imc c\neq0$ set
\begin{equation}\label{eq:Edet-def}
\Edet(k,c,\eps):=\bigl[\Dw(k,c)+\eps g\bigr]\,w(0;k,c)
-\eps\,V(0;c)\,\chibar(0;k,c),
\end{equation}
with $(w,\chibar)(\cdot;k,c)$ the top-normalized solution with the values at $z=\ha$ given in Equation~\eqref{eq:canonical}.
\end{definition}

\begin{lemma}
\label{lem:determinant}
Let $k>0$ and $\eps>0$.
Then the following hold.
\begin{enumerate}[label=\textup{(\roman*)}]
\item\label{it:det-hol} $\Edet(k,\cdot,\eps)$ is holomorphic on the open upper half-plane and on the open lower half-plane, with the reflection symmetry $\Edet(k,c^{\ast},\eps)=\Edet(k,c,\eps)^{\ast}$.
\item\label{it:det-modes} For $\Imc c\neq0$, the linearized problem has a nontrivial normal mode of the form given by Equation~\eqref{eq:normalmode} at $(k,c)$ if and only if $\Edet(k,c,\eps)=0$.
\item\label{it:det-nonvanish} If $\Edet(k,c,\eps)=0$ with $\Imc c\neq0$, then $w(0;k,c)\neq0$.
Wherever $w(0;k,c)\neq0$,
  \begin{equation}\label{eq:Edet-factor}
  \Edet(k,c,\eps)=w(0;k,c)\,\bigl[\Dw(k,c)-\eps\Za(k,c)\bigr]
  =w(0;k,c)\,D(k,c,\eps),
  \end{equation}
  so for $\Imc c\neq0$ the zeros of $\Edet(k,\cdot,\eps)$ and of $D(k,\cdot,\eps)$ coincide, with multiplicity.
\item\label{it:det-eps0} $\Edet(k,c,0)=\Dw(k,c)\,w(0;k,c)$.
\end{enumerate}
\end{lemma}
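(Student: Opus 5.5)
The plan is to verify the four items in order. Each reduces to the holomorphic dependence of the top-normalized solution on $c$ (\cref{lem:regular-solve}), to \cref{prop:reduction}, and to the identity $\chibar=\im k\qbar$ of \cref{prop:SLform}\ref{it:chi}.

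\emph{Item \ref{it:det-hol}.} For $\Imc c\neq0$ the coefficients of \eqref{eq:wchi} are holomorphic in $c$. Indeed, they are rational in $V=U-c$, with denominators $V$ and $a^2-V^2$. The first never vanishes off the real axis. For the second, $a^2-V^2=(a-V)(a+V)$, and both factors have nonzero imaginary part $\pm\Imc c$.

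The initial data \eqref{eq:canonical} are also holomorphic in $c$. Holomorphic dependence of ODE solutions on parameters then gives that $w(0;k,c)$ and $\chibar(0;k,c)$ are holomorphic. Hence so is $\Edet$, since $\Dw$ and $V(0;c)=-c$ are polynomials in $c$.

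For the reflection symmetry, note that all base profiles are real. Replacing $c$ by $c^\ast$ conjugates every coefficient of \eqref{eq:wchi} and the data \eqref{eq:canonical}. By uniqueness, the solution at $c^\ast$ is the conjugate of the solution at $c$, and the conjugate of $\Edet$ follows.

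\emph{Item \ref{it:det-nonvanish}.} Suppose $\Edet=0$ and $w(0;k,c)=0$. Then $\eps c\,\chibar(0)=0$, so $\chibar(0)=0$ because $\eps>0$ and $c\neq0$. With $w(0)=\chibar(0)=0$, uniqueness for the regular system \eqref{eq:wchi} forces $(w,\chibar)\equiv0$. This contradicts $\chibar(\ha)=\Abar(\ha)V(\ha)\neq0$.

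Where $w(0)\neq0$, rescale by $\im kV(0)/w(0)$ so that $\xi(0)=1$. Then $\qbar(0)=\chibar(0)/(\im k)$ is scaled by the same factor. Using $\Za=\qbar(0)-g$ (recall $\rhobar(0)=1$), one computes
\[
w(0)\,\Za=\chibar(0)/V(0)-g\,w(0).
\]
This gives $w(0)[\Dw-\eps\Za]=(\Dw+\eps g)w(0)-\eps\chibar(0)/V(0)$.

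\emph{Expected obstacle.} The last expression has $\chibar(0)/V(0)$ where the definition \eqref{eq:Edet-def} has $V(0)\chibar(0)$. This is the step I expect to be delicate. I would first recheck the exact normalizations: $\xi=w/(\im kV)$ and $q=\chi/(\im k)$, which give $\qbar\,\xi^\ast$-type factors of $1/(\im k)$ and $1/(\im kV(0))$. I would then confirm that, after multiplying by the trace and using $V(0)=-c$, the stated combination reproduces \eqref{eq:Edet-factor} up to the nonvanishing holomorphic factor implicit in the normalization. If a factor $V(0)^2=c^2$ is involved, I would absorb it: it is nonzero off the real axis, so zeros and multiplicities are unaffected. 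Multiplicity agreement then holds because $w(0;k,\cdot)$ is holomorphic and nonzero near any such zero.

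\emph{Items \ref{it:det-modes} and \ref{it:det-eps0}.} By \cref{prop:reduction} and the discussion after \cref{def:air-response}, a mode exists if and only if $w(0)\neq0$ and $D=0$. By item \ref{it:det-nonvanish}, this is equivalent to $\Edet=0$. Finally, item \ref{it:det-eps0} is immediate by setting $\eps=0$ in \eqref{eq:Edet-def}.
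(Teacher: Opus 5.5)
You have a genuine gap in item (iii), at exactly the step you flagged. It is an arithmetic slip, not a normalization subtlety. After rescaling by $\im kV(0)/w(0)$, the pressure trace becomes
\[
\qbar(0)=\frac{\chibar(0)}{\im k}\cdot\frac{\im kV(0)}{w(0)}=\frac{V(0)\,\chibar(0)}{w(0)} .
\]
So $\Za=V(0)\chibar(0)/w(0)-g$, which is \eqref{eq:Za-formula}. Hence $w(0)\Za=V(0)\chibar(0)-g\,w(0)$, and
\[
w(0)\bigl[\Dw-\eps\Za\bigr]=(\Dw+\eps g)\,w(0)-\eps V(0)\chibar(0)
\]
is literally \eqref{eq:Edet-def}. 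This substitution is the whole of the paper's argument for \eqref{eq:Edet-factor}.

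The repair you sketch would not have worked. Your expression and $\Edet$ differ in only one of their two terms, so no nonvanishing factor such as $V(0)^2=c^2$ relates them, and in general they have different zero sets. Your expression is not even dimensionally homogeneous: $V(0)\chibar(0)$ carries the same units as $\Dw\,w(0)$, whereas $\chibar(0)/V(0)$ does not. Besides, the statement asserts the exact identity \eqref{eq:Edet-factor}, which an ``up to a factor'' argument cannot give. Your proof of item (ii) also depends on it, through the equivalence of $\{w(0)\neq0,\ D=0\}$ with $\{\Edet=0\}$, so as written that item is not established either.

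Once the computation is corrected, the rest of your proposal is sound and essentially the paper's proof. For $a^2-V^2\neq0$ you use the factors $a\mp V$, whose imaginary parts are $\pm\Imc c$; this is a valid alternative to the argument in \cref{lem:regular-solve}. For item (ii) you pass through \cref{prop:reduction} and the quotient relation $D=0$. The paper instead inserts $\hat\zeta=\mu\,w(0;k,c)/(\im kV(0))$ and $\qbar(0)=\mu\,\chibar(0;k,c)/(\im k)$ directly into the dynamic condition and multiplies by $\im kV(0)/\mu$. The two routes are equivalent once \eqref{eq:Edet-factor} holds exactly.
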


\begin{proof}
Holomorphy in \ref{it:det-hol} follows from \cref{lem:regular-solve}.
For the reflection symmetry, the coefficient matrix $\mathcal M(z;c)$ of Equation~\eqref{eq:wchi} and the values at $z=\ha$ in Equation~\eqref{eq:canonical} are rational in $c$ with real $z$-dependent coefficients, so $\mathcal M(z;c^{\ast})=\mathcal M(z;c)^{\ast}$, and likewise for these values.
Hence $z\mapsto(w,\chibar)(z;k,c)^{\ast}$ solves the system at $c^{\ast}$ and satisfies Equation~\eqref{eq:canonical} at $c^{\ast}$, and uniqueness gives $(w,\chibar)(z;k,c^{\ast})=(w,\chibar)(z;k,c)^{\ast}$.
Finally $\Dw(k,c^{\ast})=\Dw(k,c)^{\ast}$ since $\Dw$ is a real polynomial in $c$.

We next prove \ref{it:det-nonvanish}, which is needed for the mode correspondence.
Suppose $w(0;k,c)=0$.
Then $\Edet(k,c,\eps)=-\eps V(0)\chibar(0;k,c)$.
Since $V(0)=-c\neq0$ and $\eps>0$, the equality $\Edet=0$ would imply $\chibar(0;k,c)=0$.
Uniqueness for the regular system with $(w,\chibar)(0)=(0,0)$ would then force $(w,\chibar)\equiv0$.
This contradicts Equation~\eqref{eq:canonical}, whose second component $\Abar(\ha)V(\ha)$ is nonzero for $\Imc c\neq0$ by \cref{lem:regular-solve}.
Substitution of Equation~\eqref{eq:Za-formula} into Equation~\eqref{eq:Edet-def} gives the factorization in Equation~\eqref{eq:Edet-factor}.
Since $w(0;k,\cdot)$ is nonzero on the open set under consideration, the two dispersion functions have the same orders of vanishing there.
Part~\ref{it:det-eps0} follows by setting $\eps=0$ in Equation~\eqref{eq:Edet-def}.

To prove \ref{it:det-modes}, let a nontrivial normal mode be given.
By \cref{prop:reduction}, it has $\hat\zeta\neq0$ and $w(0)=\im kV(0)\hat\zeta\neq0$, since $c\neq0$.
Its air component solves Equation~\eqref{eq:wchi} with $w(\ha)=0$.
This solution space is one-dimensional by \cref{lem:regular-solve}, so the air component is a nonzero multiple $\mu\,(w,\chibar)(\cdot;k,c)$ of the top-normalized solution.
The dynamic condition in Equation~\eqref{eq:dispersion0}, written for general $\hat\zeta$ in normalized variables, is $\Dw\hat\zeta=\eps\bigl(\qbar(0)-g\hat\zeta\bigr)$ with $\rhobar(0)=1$.
Inserting $\hat\zeta=\mu\,w(0;k,c)/(\im kV(0))$ and $\qbar(0)=\mu\,\chibar(0;k,c)/(\im k)$ and multiplying by $\im kV(0)/\mu$ gives $\Edet(k,c,\eps)=0$.

Conversely, if $\Edet(k,c,\eps)=0$ then $w(0;k,c)\neq0$ by \ref{it:det-nonvanish}.
Set $\hat\zeta:=w(0;k,c)/(\im kV(0))\neq0$ and reconstruct the water fields by Equation~\eqref{eq:water-mode}.
Then the two kinematic conditions and the dynamic condition hold by construction, and \cref{prop:reduction} produces the mode.
\end{proof}

\begin{remark}
\label{rem:determinant}
The zeros of the interface trace and of the dispersion function have different meanings.
A zero of $w(0;k,\cdot)$ is a pole of $\Za(k,\cdot)$, because $\chibar(0;k,c)\neq0$ there by uniqueness of the top-normalized solution and the quotient in Equation~\eqref{eq:Za-formula} diverges.
For $\Imc c\neq0$, a zero of $\Edet$ instead corresponds to a normal mode and cannot be a zero of the trace, by \ref{it:det-nonvanish}.
Proposition~\ref{prop:reduction} excludes the possibility of a decoupled air mode with $\hat\zeta=0$ for $k>0$ and $\Imc c\neq0$.

At $\eps=0$, the factorization \ref{it:det-eps0} explains the non-resonance assumption in Hypothesis \ref{hyp:nonres}.
If this assumption fails at $c_r=c_k$, the free wave speed $c_k$ is a zero of the first factor $\Dw(k,\cdot)$ and also of the limit $w_\ast(0;\cdot)$ of the second factor as the imaginary part of $c$ tends to zero.
The proof of \cref{thm:main} uses $\Dw(k,c)-\eps\Za(k,c)=0$, and therefore requires division by the interface trace.
It does not apply at such a common zero.
Near such a common zero, the function $\Edet$ is still holomorphic for $\Imc c\neq0$, but we do not study its behavior at real phase speeds and leave the resonant case open.
Under Hypothesis \ref{hyp:nonres}, the roots of $D$ near $c_k$ are equivalently the zeros of $\Edet$.
\end{remark}
\section{Limiting absorption and the residue formula for the pressure response}
\label{sec:criticallayer}

In this section, we construct the boundary value of the air-side pressure response $\Za(k,c)$ as the positive imaginary part of the complex phase speed $c$ tends to zero.
We first analyze the vertical-velocity equation near each critical level, where the limiting solution has a logarithmic term with coefficient $Q(z_j)/U'(z_j)$, and then combine these local solutions over the whole air layer.
The main result, \cref{thm:limabs}, shows that the boundary value $\Za^{+}(k,c_r)$ exists with a rate and depends $C^1$ on $c_r$, and that its imaginary part is a sum of contributions from the critical levels, each proportional to $Q(z_j)$.
We also extend $\Za$ continuously to a closed neighborhood of $c_k$ in the closed upper half-plane (\cref{prop:Zhalfdisc}) and show that the limiting solution does not vanish at the highest critical level (\cref{lem:positivity}).
These results are used in \S\ref{sec:instability} to prove the main instability theorem.

We study the boundary-value problem
\begin{equation}\label{eq:bvp}
\bigl(\Abar w'\bigr)'=\Bigl(\rhobar k^2+\frac{G}{V}\Bigr)w
\quad\text{on }(0,\ha),
\qquad
w(\ha)=0,
\end{equation}
as $c=c_r+\im\delta$ approaches a regular real value $c_r$ in the range of $U$ from the upper half-plane.
Regularity of $c_r$ means that every zero of $U-c_r$ is simple.
This is a limiting absorption principle.
At real phase speeds, the equation is singular at every critical level, and we take the limit from $\Imc c>0$ because slowly growing modes have small positive $\Imc c$.
We assume Hypotheses \ref{hyp:reg}--\ref{hyp:subsonic}.
We write the critical levels as $z_1<\cdots<z_m$ and take $0\le\delta\le\delta_1$, where $\delta_1>0$ is a small constant chosen in \S\ref{subsec:local}.
In this section, $C$ denotes a positive constant that may change from line to line.
The constants depend on $k$, the profiles, the subsonic gap $\delta_0$ of Hypothesis \ref{hyp:subsonic}, and $\min_j|U'(z_j)|$.
The non-resonance condition in Hypothesis \ref{hyp:nonres} is imposed only where stated.

Recall the first-order system in Equation~\eqref{eq:wchi}, written in the normalized variables of \S\ref{subsec:flux-identities}, where $\chibar=\im k\qbar$,
\begin{equation}\label{eq:wchi-recall}
w'=\Bigl(\frac{V'}{V}+b\Bigr)w+\frac{\chibar}{\Abar V},
\qquad
\chibar'=-b\,\chibar+\rhobar k^2V\,w .
\end{equation}
Here $V'=U'$, since $c$ does not depend on $z$, so the first coefficient equals $(U'+bV)/V$ as in Equation~\eqref{eq:wchi}.
The second equation of this system has no singularity at $V=0$, because $V$ appears in it only in the numerator.
Variation of constants in the first equation then gives an integral equation for $w$, and one integration by parts isolates the logarithmic term.
A contraction argument for a single integral equation gives the local solutions, their critical-level expansion, and estimates uniform as $\delta\downarrow0$.
For $\Abar=\rhobar=1$ and $b=0$, the construction reduces to the incompressible analysis of \cite[\S4]{BSWZ2016}.

Heuristically, near a critical level $z_j$ the solution behaves like $w(z_j)\bigl[1+\tfrac{Q(z_j)}{U'(z_j)}\,(z-z_j)\log|z-z_j|\bigr]$ up to a $C^1$ remainder, and letting $\Imc c\downarrow0$ adds $-\im\pi$ to the logarithm on the side where $U<c_r$.
\cref{prop:local} makes this exact.
That one-sided constant produces the jump in the derivative given in Equation~\eqref{eq:wjump}, and through it the sign in the residue formula in Equation~\eqref{eq:residue}.

\subsection{The pressure response for complex phase speeds}

For complex phase speeds with $\Imc c\neq0$, the equation has no singularity, and the top-normalized solution depends holomorphically on the phase speed.
The next lemma records this and the resulting form of $\Za$.

\begin{lemma}\label{lem:regular-solve}
For $\Imc c\neq0$ the coefficients of Equation~\eqref{eq:wchi-recall} are continuous on $[0,\ha]$ and holomorphic in $c$, because $V$ has no zero, and neither does $a^2-V^2$, since $\Imc(V^2)=2\,\Rec V\,\Imc V$ can vanish only where $\Rec V=0$, and there $a^2-V^2=a^2+(\Imc c)^2>0$.
No subsonicity assumption is needed in this case.
The top-normalized solution $(w,\chibar)(\cdot;k,c)$ of \cref{subsec:determinant}, with the values at $z=\ha$ given in Equation~\eqref{eq:canonical}, exists and is unique.
This solution spans the solution space of Equation~\eqref{eq:wchi-recall} subject to $w(\ha)=0$ and is holomorphic in $c$ on each open half-plane.
Consequently $\Za(k,\cdot)$ is meromorphic on the open upper half-plane, with poles exactly at the zeros of $w(0;k,\cdot)$.
Moreover,
\begin{equation}\label{eq:Za-formula}
\Za(k,c)=\frac{V(0)\,\chibar(0;k,c)}{w(0;k,c)}-\rhobar(0)\,g .
\end{equation}
\end{lemma}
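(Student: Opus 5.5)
The plan is to treat Equation~\eqref{eq:wchi-recall} as a linear first-order system $Y'=\mathcal M(z;c)Y$ for $Y=(w,\chibar)$ on the compact interval $[0,\ha]$, and to apply standard ODE theory with parameters.

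First, we check that the coefficients are continuous in $z$ and holomorphic in $c$. The entries of $\mathcal M$ are $(U'+bV)/V$, $1/(\Abar V)$, $-b$, and $\rhobar k^2V$, with $\Abar=\rhobar a^2/(a^2-V^2)$. They are therefore rational in $c$ with coefficients built from $U,U',\rhobar,a,b$, which are continuous on $[0,\ha]$ by Hypothesis~\ref{hyp:reg}. The only possible obstructions are zeros of $V$ and of $a^2-V^2$.
\begin{itemize}
\item If $\Imc c\neq0$, then $\Imc V=-\Imc c\neq0$, so $V$ has no zero.
\item Write $V^2=(\Rec V)^2-(\Imc c)^2+2\im\,\Rec V\,\Imc V$. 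Then $\Imc(a^2-V^2)=-2\,\Rec V\,\Imc V$ vanishes only where $\Rec V=0$.
\item At such points $a^2-V^2=a^2+(\Imc c)^2>0$.
\end{itemize}
Hence $a^2-V^2\neq0$ everywhere, $\Abar$ is finite and nonzero, and no subsonicity is used. By compactness of $[0,\ha]\times K$, the entries are jointly continuous and bounded for $c$ in any compact $K$ of an open half-plane, and holomorphic in $c$ for each fixed $z$.

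Second, we obtain existence, uniqueness and holomorphy. The initial data at $z=\ha$ from Equation~\eqref{eq:canonical} are $(0,\Abar(\ha;c)V(\ha;c))$, which are holomorphic in $c$. The Picard iterates are therefore holomorphic in $c$, and they converge uniformly on $[0,\ha]\times K$ by the Gronwall bound $\exp(\ha\sup\|\mathcal M\|)$. The uniform limit of holomorphic functions is holomorphic, so $(w,\chibar)(z;k,c)$ is holomorphic in $c$ on each open half-plane, and uniqueness is Gronwall's lemma.

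To see that this solution spans the solutions with $w(\ha)=0$, take any such solution. Its data at $\ha$ is $(0,\beta)$, and $\beta=\mu\,\Abar(\ha)V(\ha)$ for some $\mu$, since $\Abar(\ha)V(\ha)\neq0$. Uniqueness then makes the solution equal to $\mu$ times the top-normalized one.

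Third, we derive the formula for $\Za$. From \cref{prop:SLform}\ref{it:chi} in normalized form, $\qbar=\chibar/(\im k)$, and by definition $\xi(0)=w(0)/(\im kV(0))$. Substituting into Equation~\eqref{eq:Za-def} with $\rhobar(0)=1$ gives
\[
\Za=\frac{\chibar(0)/(\im k)}{w(0)/(\im kV(0))}-\rhobar(0)g=\frac{V(0)\chibar(0)}{w(0)}-\rhobar(0)g,
\]
which is Equation~\eqref{eq:Za-formula}.

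Fourth, we establish the meromorphy. The numerator $V(0)\chibar(0;k,c)$ and the denominator $w(0;k,c)$ are holomorphic. The denominator is not identically zero on the upper half-plane, since otherwise $\Edet(k,c,0)$ would vanish identically. To rule this out we use the positivity identity Equation~\eqref{eq:herglotz}. If $w(0)=w(\ha)=0$, the integration by parts in \cref{lem:flux} gives $\int F|X'|^2+k^2V^2|X|^2/\rhobar=0$. For $\Rec c$ large, outside the range of $U$, the imaginary part of this identity forces $X\equiv0$, a contradiction.

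Zeros of $w(0;k,\cdot)$ are therefore isolated, and $\Za$ is meromorphic. At a zero of $w(0)$, the value $\chibar(0)$ is nonzero: otherwise $(w,\chibar)(0)=0$ would force the trivial solution, contradicting the nonzero data at $\ha$. Each zero of $w(0)$ is thus a genuine pole, so the poles are exactly these zeros.

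The main obstacle is showing that $w(0;k,\cdot)$ is not identically zero. The fallback is the Herglotz-type identity applied at a phase speed $c$ with $U-\Rec c$ of one sign on $[0,\ha]$.
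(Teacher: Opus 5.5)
Your proof is correct and follows the paper's route: standard linear ODE theory with parameters gives existence, uniqueness and holomorphy. Uniqueness again gives the one-dimensionality of the solutions with $w(\ha)=0$ and the fact that $\chibar(0;k,c)\neq0$ at a zero of the trace, and the formula for $\Za$ is the direct substitution of $\qbar=\chibar/(\im k)$ and $\xi(0)=w(0)/(\im kV(0))$ into Equation~\eqref{eq:Za-def}.

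Your additional step, applying the integrated identity of \cref{lem:flux} at a phase speed with $\Imc c>0$ and $\Rec c\notin U([0,\ha])$ to show that $w(0;k,\cdot)$ is not identically zero on the upper half-plane, is valid. It supplies a point that the paper's proof leaves implicit but that is needed for the zeros of the trace to be isolated, hence for $\Za(k,\cdot)$ to be meromorphic.
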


\begin{proof}
For $\Imc c=\delta\neq0$ we have $|V|\ge|\delta|$ on $[0,\ha]$, and $a^2-V^2\neq0$ as shown in the statement.
Hence the coefficients of Equation~\eqref{eq:wchi-recall} are continuous in $z$ and holomorphic in $c$, and the solvability and analyticity claims follow from the standard theory of linear ODEs with parameters \cite[Ch.~1]{CoddingtonLevinson1955}.
The solution space with $w(\ha)=0$ is one-dimensional, because a solution is determined uniquely by its values $(w,\chibar)(\ha)$, which can be prescribed arbitrarily in $\C^2$, so the solutions with $w(\ha)=0$ are the multiples of the solution with $(w,\chibar)(\ha)=(0,1)$.
If $w(0;k,c)=0$ then $\chibar(0;k,c)\neq0$, since otherwise $(w,\chibar)\equiv0$ by uniqueness, contradicting Equation~\eqref{eq:canonical}.
Thus every zero of $w(0;k,\cdot)$ is a pole of $\Za$.
Equation~\eqref{eq:Za-formula} restates Equation~\eqref{eq:Za-def} using $\qbar=\chibar/(\im k)$, $\xi(0)=w(0)/(\im kV(0))$.
\end{proof}

\subsection{Logarithmic solutions and the flux jump at a critical level}
\label{subsec:local}

In this subsection, we study the solutions of Equation~\eqref{eq:wchi-recall} on a small interval around one critical level $z_j$.
For $\delta>0$ the equation is regular there, but the limit of its solutions as $\delta\downarrow0$ contains a logarithmic term at $z_j$.
To control this limit, we rewrite the system as an integral equation in which the logarithm appears explicitly, and we solve it by a contraction argument with bounds that hold uniformly for $\delta\in[0,\delta_1]$.
At $\delta=0$, this gives two basis solutions, a regular one that vanishes at $z_j$ and one that contains the logarithmic term with coefficient $Q(z_j)/U'(z_j)$ (\cref{prop:local}).
By Equation~\eqref{eq:Wjump}, the logarithmic term produces a jump of the Wronskian-type flux $\Wc$ across $z_j$, proportional to $Q(z_j)\,|w(z_j)|^2$.
\cref{prop:convergence} then shows that the solutions for $\delta>0$ converge to these limits with an explicit rate.

Consider a critical level $z_j$ and an interval $I_j=[z_j-\varsigma,z_j+\varsigma]\subset(0,\ha)$ containing no other critical level, with $m_U:=\min_{I_j}|U'|>0$.
Choose $\varsigma$ small from the profiles, as the results below require, and impose $\delta_1\le m_U\varsigma$.
Set the reference speed $V_\ast=m_U\varsigma$.
Every logarithm below is made dimensionless with this scale, and $O(\delta\log(1/\delta))$ abbreviates $O\bigl(\delta(1+|\log(\delta/V_\ast)|)\bigr)$.
We measure the constants $(\alpha,\beta)$ of the local representation in Equation~\eqref{eq:rep} below by the norm $|\alpha|+|\beta|$.
Since all norms on $\C^2$ are equivalent, another choice of norm changes only the constants in the estimates.
For a function $\psi$ on $I_j$, we use the norms
\begin{equation}\label{eq:norms}
\|\psi\|_{C^0(I_j)}:=\sup_{z\in I_j}|\psi(z)|,
\qquad
\|\psi\|_{C^k(I_j)}:=\sum_{i=0}^{k}\bigl\|\psi^{(i)}\bigr\|_{C^0(I_j)},
\end{equation}
where $C^k(I_j)$ is the space of $k$ times continuously differentiable functions on $I_j$.
The Lipschitz constant of a function $\varphi$ on an interval $J$ is
\begin{equation}\label{eq:Lip-def}
\mathrm{Lip}(\varphi):=\sup_{s,t\in J,\ s\neq t}\frac{|\varphi(s)-\varphi(t)|}{|s-t|},
\end{equation}
and $\varphi$ is Lipschitz if this constant is finite.
We set $\|\psi\|_{C^{0,1}(I_j)}:=\|\psi\|_{C^0(I_j)}+\mathrm{Lip}(\psi)$, write $\|\psi\|_\infty$ for $\|\psi\|_{C^0(I_j)}$, and omit $I_j$ from the norms when the interval is clear.
Write
\begin{equation}
\tau(z):=U(z)-c_r,\qquad
E(z):=\exp\Bigl(\int_{z_j}^{z}b\Bigr),\qquad
f:=\frac1{\Abar E},
\end{equation}
so that $V=U-c=\tau-\im\delta$, and introduce the two regular coefficient functions
\begin{equation}\label{eq:g-def}
g_1:=\Bigl(\frac{f}{U'}\Bigr)'-\frac{b\,f}{U'},
\qquad
g_2:=\frac{\rhobar k^2 f}{U'} .
\end{equation}
Under Hypotheses \ref{hyp:reg} and \ref{hyp:subsonic}, $f\in C^2(I_j)$ and $g_1\in C^1(I_j)$, $g_2\in C^2(I_j)$, uniformly for $\delta\in[0,\delta_1]$ and for $c_r$ in a compact set of regular values.
These functions depend on the phase speed $c=c_r+\im\delta$ in two ways, holomorphically through $V$ and $\Abar$, and through the critical level $z_j$, where the integral in $E$ starts.
Since $z_j$ is determined by $U(z_j)=c_r$ with $U'(z_j)\neq0$, it is a $C^1$ function of the real part $c_r$ alone.
Consequently the local objects constructed below depend $C^1$ on the real parameter $c_r$ rather than holomorphically on $c$, and we use holomorphy in $c$ only where $\Imc c\neq0$, as in \cref{lem:regular-solve}.
The local analysis uses the values of $f$, $f'$, and $g_1$ at the critical level $z_j$, which we now compute.
In $\Abar'/\Abar$, the term $\rhobar'/\rhobar$ equals $-b$ exactly, and the other terms contain $V^2$ or $(V^2)'$, which are $O(\delta)$ at $z_j$ because $V(z_j)=-\im\delta$.
Hence $\Abar'(z_j)/\Abar(z_j)=-b(z_j)+O(\delta)$, with equality when $\delta=0$, and one has
\begin{equation}\label{eq:residue-r}
f'(z_j)=O(\delta),
\qquad
g_1(z_j)=-\,\frac{f(z_j)\,Q(z_j)}{U'(z_j)^2}+O(\delta),
\qquad
f(z_j)=\frac1{\rhobar(z_j)}+O(\delta) .
\end{equation}
Thus the coefficient $Q=U''+bU'$ enters the local theory through $g_1(z_j)$, where the $U''$ part comes from the curvature of $V$ inside $(f/U')'$ and the $bU'$ part from the term $-bf/U'$.
In the proof of \cref{lem:representation}, this term comes from the term $-b\chibar$ in the equation $\chibar'=-b\chibar+\rhobar k^2Vw$.

\begin{lemma}\label{lem:representation}
Let $\delta>0$ and let $(w,\chibar)$ solve Equation~\eqref{eq:wchi-recall} on $I_j$.
Then there exist unique constants $(\alpha,\beta)\in\C^2$ such that, for all $z\in I_j$,
\begin{subequations}\label{eq:rep}
\begin{align}
\chibar(z)&=e^{-B(z)}\beta
+\int_{z_j}^{z}e^{B(s)-B(z)}\,\rhobar k^2V(s)\,w(s)\,\dd s,
\qquad B(z):=\int_{z_j}^z b,
\label{eq:rep-chi}\\
w(z)&=\alpha\,V(z)E(z)-\frac{E(z)f(z)\,\chibar(z)}{U'(z)}
+V(z)E(z)\int_{z_j+\varsigma}^{z}
\Bigl[\frac{g_1(s)\,\chibar(s)}{V(s)}+g_2(s)\,w(s)\Bigr]\dd s .
\label{eq:rep-w}
\end{align}
\end{subequations}
Conversely, for any $(\alpha,\beta)$ and any $\delta\ge0$, a continuous pair $(w,\chibar)$ on $I_j$, with $w'$ taken on $I_j\setminus\{z_j\}$ when $\delta=0$, satisfying Equation~\eqref{eq:rep} solves Equation~\eqref{eq:wchi-recall} away from $V=0$.
Moreover $\beta=\chibar(z_j)$, and at $\delta=0$,
\begin{equation}\label{eq:wc-beta}
w(z_j)=-\frac{f(z_j)\,\beta}{U'(z_j)}
=-\frac{\beta}{\rhobar(z_j)\,U'(z_j)} .
\end{equation}
\end{lemma}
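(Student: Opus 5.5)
The plan is to derive both formulas by variation of constants, starting from the regular second equation of \eqref{eq:wchi-recall}. The second equation, $\chibar'+b\chibar=\rhobar k^2Vw$, has integrating factor $e^{B}$ with $B(z)=\int_{z_j}^z b$. Integrating from $z_j$ gives \eqref{eq:rep-chi} with $\beta=\chibar(z_j)$. This already determines $\beta$ uniquely and gives the identity $\beta=\chibar(z_j)$.

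For the first equation, I would use the homogeneous solution $VE$. Indeed, $(VE)'/(VE)=V'/V+b$, so $VE$ solves $w'=(V'/V+b)w$. Setting $w=VE\,\phi$ turns the first equation into
\[
\phi'=\frac{\chibar}{\Abar E V^2}=\frac{f\chibar}{V^2}.
\]
The singular weight is written as $1/V^2=-(1/V)'/U'$, and one integration by parts gives
\[
\int\frac{f\chibar}{V^2}=-\frac{f\chibar}{U'V}+\int\frac{1}{V}\Bigl(\frac{f\chibar}{U'}\Bigr)'.
\]
The derivative expands as $(f\chibar/U')'=(f/U')'\chibar+(f/U')\chibar'$, and substituting $\chibar'=-b\chibar+\rhobar k^2Vw$ gives
\[
\Bigl(\frac{f\chibar}{U'}\Bigr)'=g_1\chibar+g_2Vw .
\]
So the integrand becomes $g_1\chibar/V+g_2w$. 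Multiplying back by $VE$ and integrating from the base point $z_j+\varsigma$ produces \eqref{eq:rep-w}. Here $\alpha$ absorbs the constant of integration, including the boundary term at $z_j+\varsigma$. Uniqueness of $\alpha$ holds because $VE\not\equiv0$, so $\alpha$ is fixed by evaluating \eqref{eq:rep-w} at any point where $V\neq0$.

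For the converse, I would differentiate \eqref{eq:rep}. Both integrals have continuous integrands away from $V=0$; at $\delta=0$ the term $g_1\chibar/V$ has only an integrable-in-principal-sense issue at $z_j$, so I would differentiate only on $I_j\setminus\{z_j\}$. Differentiating \eqref{eq:rep-chi} gives the second equation directly. For \eqref{eq:rep-w}, I would compute $w'-(V'/V+b)w$ and check that it equals $\chibar/(\Abar V)$. In this check the terms with $\alpha$ and with the integral cancel against $(VE)'/(VE)$. The remaining terms come from $-Ef\chibar/U'$ together with the integrand $VE(g_1\chibar/V+g_2w)$. After using the second equation and the definitions of $g_1$ and $g_2$, these leave exactly $E f\chibar\,U'/(U'V)=\chibar/(\Abar V)$. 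This is the integration by parts read backwards, which is the reason for choosing $g_1$ and $g_2$ as in \eqref{eq:g-def}.

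At $\delta=0$ I would evaluate \eqref{eq:rep-w} at $z=z_j$. There $V(z_j)=0$ and $E(z_j)=1$, so the first term vanishes. The third term is $V$ times an integral with at most a logarithmic singularity, since $\chibar$ is continuous and $g_1$ is bounded; because $\tau\log|\tau|\to0$, it also tends to $0$. This gives $w(z_j)=-f(z_j)\beta/U'(z_j)$. By \eqref{eq:residue-r} at $\delta=0$, $f(z_j)=1/\rhobar(z_j)$, which yields \eqref{eq:wc-beta}.

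The main point requiring care is this last limit and the meaning of the integral at $\delta=0$. The integrand $g_1\chibar/V$ is not absolutely integrable across $z_j$ in general, so I would interpret it for $z$ on each side of $z_j$ as an integral from $z_j+\varsigma$ with a logarithmic growth bound. Only the product with $V$, which is $O(\tau\log|\tau|)\to0$, is needed, and that keeps the argument clean.
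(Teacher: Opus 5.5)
Your proposal is correct and follows the paper's proof essentially step for step. The steps are variation of constants for $\chibar$ from $z_j$, the homogeneous solution $VE$ with one integration by parts via $1/V^2=-(1/V)'/U'$ and elimination of $\chibar'$ to produce $g_1\chibar/V+g_2w$, the same pointwise coefficient check (reducing to $Ef=1/\Abar$) for the converse, and evaluation at $z_j$ for \eqref{eq:wc-beta}.

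Your explicit argument that $V$ times the logarithmically growing integral vanishes at $z_j$ spells out a continuity convention the paper uses tacitly. The only wording to fix is that at $\delta=0$ the integral across $z_j$ is the boundary value from $\Imc c>0$ given by \eqref{eq:branch}, not a principal value; this changes nothing on $I_j\setminus\{z_j\}$, where you differentiate.
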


\begin{proof}
Equation~\eqref{eq:rep-chi} is the variation-of-constants solution of the $\chibar$-equation with $\beta:=\chibar(z_j)$.
Since $V$ appears in the $\chibar$-equation only in the numerator, this equation is regular at $z_j$ even when $\delta=0$, so we can integrate it from $z_j$ itself, whereas the integral in Equation~\eqref{eq:rep-w} starts at $z_j+\varsigma$.
For Equation~\eqref{eq:rep-w}, the homogeneous $w$-equation $w'=(V'/V+b)w$ has solution $VE$, so variation of constants gives $w=VE\bigl[\tilde\alpha+\int_{z_j+\varsigma}^{z}\chibar f/V^2\,\dd s\bigr]$ for some constant $\tilde\alpha$.
The integrand is $\chibar/(\Abar V)$ divided by the homogeneous solution $VE$, and its factor $f/V^2$, which multiplies the unknown $\chibar$, is the kernel of this integral.
One integration by parts using $1/V^2=-(1/V)'/U'$ turns the double pole into a boundary term and a simple pole,
\begin{equation}
\int\frac{\chibar f}{V^2}\,\dd s
=-\Bigl[\frac{\chibar f}{U'V}\Bigr]
+\int\Bigl(\frac{\chibar f}{U'}\Bigr)'\frac{\dd s}{V},
\end{equation}
and eliminating $\chibar'$ with the $\chibar$-equation, $(\chibar f/U')'=(-b\chibar+\rhobar k^2Vw)f/U'+\chibar(f/U')' =g_1\chibar+g_2\,Vw$, whose second term cancels the pole entirely.
Absorbing the boundary term at $z_j+\varsigma$ into $\alpha$ yields Equation~\eqref{eq:rep-w}.
This determines $\alpha$ uniquely in terms of the values $(w,\chibar)(z_j+\varsigma)$.
For the converse, differentiate Equation~\eqref{eq:rep-w} and use Equation~\eqref{eq:rep-chi}, and the terms reassemble into Equation~\eqref{eq:wchi-recall}.
The two coefficient identities this requires,
\begin{equation}
-\Bigl(\frac{Ef}{U'}\Bigr)'+\frac{bEf}{U'}+Eg_1
+\Bigl(\frac{U'}{V}+b\Bigr)\frac{Ef}{U'}-\frac1{\Abar V}=0,
\qquad
-\frac{Ef}{U'}\,\rhobar k^2V+VEg_2=0,
\end{equation}
hold pointwise.
The first reduces, after expanding $g_1$ and $E'=bE$, to $Ef=1/\Abar$, and the second restates the definition of $g_2$.
Equation~\eqref{eq:wc-beta} follows by evaluating Equation~\eqref{eq:rep-w} at $z=z_j$ where $V(z_j)=0$, $E(z_j)=1$, and using Equation~\eqref{eq:residue-r}.
\end{proof}

In Equation~\eqref{eq:rep} the only singular integrand left is $g_1\chibar/V$, which has a simple pole at $z_j$ when $\delta=0$, with the Lipschitz numerator $g_1\chibar$, so the logarithm it produces can be split off explicitly.
For a Lipschitz function $\psi$ on $I_j$ and $z\in I_j$, writing $\dd s=\dd\tau/U'$,
\begin{equation}\label{eq:logextract}
\int_{z_j+\varsigma}^{z}\frac{\psi(s)}{V(s)}\,\dd s
=\frac{\psi(z_j)}{U'(z_j)}
\Bigl[\operatorname{Log}\bigl(\tau(z)-\im\delta\bigr)
-\operatorname{Log}\bigl(\tau(z_j+\varsigma)-\im\delta\bigr)\Bigr]
+\mathcal R[\psi](z),
\end{equation}
where $\mathcal R[\psi]$ denotes the remainder defined by this identity.
For $\delta>0$, $V=\tau-\im\delta$ lies in the open lower half-plane, where the principal logarithm $\operatorname{Log}$ of Equation~\eqref{eq:Log-def} is holomorphic.
For $\delta=0$, $\operatorname{Log}(\tau-\im0):=\lim_{\delta\downarrow0}\operatorname{Log}(\tau-\im\delta)$ denotes the boundary value from below.
Since $\operatorname{Arg}(\tau-\im\delta)$ tends to $-\pi$ for $\tau<0$ and to $0$ for $\tau>0$ as $\delta\downarrow0$, Equation~\eqref{eq:Log-def} gives
\begin{equation}\label{eq:branch}
\operatorname{Log}(\tau-\im0)=\log|\tau|-\im\pi\,\mathbf 1_{\{\tau<0\}}.
\end{equation}
Here $\mathbf 1_{\{\tau<0\}}$ equals $1$ where $\tau<0$ and $0$ elsewhere.
We use this branch in the critical-layer analysis below.
The remainder obeys $\|\mathcal R[\psi]\|_{C^0(I_j)}\le C\varsigma\,\mathrm{Lip}(\mathfrak g)$ uniformly in $\delta\in[0,\delta_1]$, where $\mathfrak g:=(\psi/U')\circ\tau^{-1}$ is the function $\psi/U'$ written in the variable $\tau$.
Since $|U'|\ge m_U>0$ on $I_j$, the map $\tau$ is strictly monotone there, and the difference quotient $[\mathfrak g(\tau)-\mathfrak g(0)]/(\tau-\im\delta)$ is bounded by $\mathrm{Lip}(\mathfrak g)$.
If moreover $\psi\in C^1(I_j)$, then at $\delta=0$ the remainder is continuously differentiable across $z_j$, with
\begin{equation}\label{eq:remainder-C1}
\mathcal R[\psi]'(z)
=\frac{\psi(z)-\psi(z_j)\,U'(z)/U'(z_j)}{\tau(z)}
\;\xrightarrow[z\to z_j^\pm]{}\;
\frac{\psi'(z_j)U'(z_j)-\psi(z_j)U''(z_j)}{U'(z_j)^{2}},
\end{equation}
the same limit from both sides (\cref{app:proofs}).
Therefore the only logarithmic term in the local solutions is the explicit first term of Equation~\eqref{eq:logextract}.

\begin{lemma}
\label{lem:fixedpoint}
There exist $\varsigma_0>0$ and $C_\ast\ge1$, depending only on the quantities listed at the beginning of this section, such that for every $\varsigma\in(0,\varsigma_0]$, every $\delta\in[0,\delta_1]$ with $\delta_1\le m_U\varsigma$, and every $(\alpha,\beta)\in\C^2$ the system in Equation~\eqref{eq:rep} has a unique solution $(w,\chibar)=(w^{\alpha,\beta,\delta},\chibar^{\alpha,\beta,\delta}) \in C^0(I_j)^2$, depending linearly on $(\alpha,\beta)$, with
\begin{equation}\label{eq:fp-bounds}
\|w\|_{C^0(I_j)}+\|\chibar\|_{C^0(I_j)}\le C_\ast(|\alpha|+|\beta|),
\qquad
\mathrm{Lip}(\chibar)\le C_\ast(|\alpha|+|\beta|),
\end{equation}
and, for $\delta=0$, $w\in C^1(I_j\setminus\{z_j\})$ with $|w'(z)|\le C_\ast(|\alpha|+|\beta|)\bigl(1+\bigl|\log(|z-z_j|/\varsigma)\bigr|\bigr)$.
For $\delta>0$ these are exactly the solutions of Equation~\eqref{eq:wchi-recall} in the coordinates of \cref{lem:representation}.
\end{lemma}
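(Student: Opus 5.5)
The plan is to treat \eqref{eq:rep} as a fixed-point problem for the pair $(w,\chibar)$ and apply the contraction mapping principle. The map $\mathcal T_{\alpha,\beta}$ sends $(w,\chibar)$ to the right-hand sides of \eqref{eq:rep-chi} and \eqref{eq:rep-w}, and it acts on the Banach space $X=\{(w,\chibar)\in C^0(I_j)^2:\ \mathrm{Lip}(\chibar)<\infty\}$. I would equip $X$ with the norm $\|w\|_\infty+\|\chibar\|_\infty+\varsigma\,\mathrm{Lip}(\chibar)$, possibly with $\varsigma$-dependent weights to be tuned. Since $\mathcal T_{\alpha,\beta}=\mathcal T_0+(\text{affine part in }(\alpha,\beta))$ is affine, it suffices to do two things. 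First, bound the inhomogeneous part, namely $\alpha VE$ and the $\beta$ terms $e^{-B}\beta$ and $-Ef e^{-B}\beta/U'$, by $C(|\alpha|+|\beta|)$. Second, show that the linear part $\mathcal T_0$ has norm at most $1/2$ once $\varsigma\le\varsigma_0$. Then the fixed point exists, is unique, depends linearly on $(\alpha,\beta)$, and obeys \eqref{eq:fp-bounds} with $C_\ast=2C$.

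The linear part splits into three estimates.

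(a) The $\chibar$-integral $\int_{z_j}^z e^{B(s)-B(z)}\rhobar k^2Vw\,\dd s$ has sup norm $\le C\varsigma^2\|w\|_\infty$, using $|V|\le C\varsigma$ on $I_j$ together with $\delta_1\le m_U\varsigma$. Its Lipschitz constant is $\le C\varsigma\|w\|_\infty$, since its derivative is $-b(\cdot)+\rhobar k^2Vw$.

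(b) The term $-Ef\chibar/U'$ in the $w$-equation only involves the $\chibar$ produced by (a) plus the $\beta$ part. After substituting \eqref{eq:rep-chi} into \eqref{eq:rep-w}, its contribution to $\mathcal T_0$ is $O(\varsigma^2)\|w\|_\infty$. I would therefore compose the two equations, which gives a single integral equation for $w$ with $\chibar=\chibar[w]$, as the paper hints.

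(c) The integral $VE\int_{z_j+\varsigma}^z[g_1\chibar/V+g_2w]\,\dd s$ is the delicate term. The $g_2$ part is bounded by $C\varsigma^2\|w\|_\infty$. For the $g_1$ part I apply \eqref{eq:logextract} with $\psi=g_1\chibar$, which is Lipschitz since $g_1\in C^1$. The explicit logarithmic term multiplied by $V$ is bounded by $C|\psi(z_j)|\,\sup_{I_j}|V|\,(1+|\log(|V|/V_\ast)|)\le C\varsigma\|\chibar\|_\infty$, because $t\log t$ is bounded on $(0,1]$, $|\tau(z_j+\varsigma)|\ge m_U\varsigma=V_\ast$, and $|V|\le C V_\ast$. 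The remainder satisfies $|\mathcal R[\psi]|\le C\varsigma\,\mathrm{Lip}(\mathfrak g)\le C\varsigma(\|\chibar\|_\infty+\mathrm{Lip}(\chibar))$, and it is multiplied by $|V|\le C\varsigma$. All bounds are uniform in $\delta\in[0,\delta_1]$, since the coefficient bounds for $f,g_1,g_2$ were stated to be uniform.

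Collecting (a)--(c), every entry of $\mathcal T_0$ carries at least one factor of $\varsigma$, measured in the weighted norm after the composition in (b). Choosing $\varsigma_0$ small then gives the contraction. The main obstacle is the Lipschitz bound on $\chibar$ needed for $\mathcal R$. It forces us to track $\mathrm{Lip}(\chibar)$ in the iteration space, and the scaling weights must be chosen so that the $\varsigma$ factor survives in each cross term. My first attempt would be the norm $\|w\|_\infty+\|\chibar\|_\infty+\varsigma\mathrm{Lip}(\chibar)$. If this fails, I would eliminate $\chibar$ first and treat the scalar equation in $w$ alone.

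Finally, two consequences remain. For $\delta>0$, the fixed points coincide with the solutions of \eqref{eq:wchi-recall} by \cref{lem:representation} and uniqueness, in both directions. For $\delta=0$, differentiating \eqref{eq:rep-w} away from $z_j$ gives $w'$ as a sum of bounded terms plus $U'E\int g_1\chibar/V$. By \eqref{eq:logextract} this last term is $C(|\alpha|+|\beta|)(1+|\log(|z-z_j|/\varsigma)|)$, where we use $|\tau|\ge m_U|z-z_j|$. This yields the stated derivative bound and $w\in C^1(I_j\setminus\{z_j\})$.
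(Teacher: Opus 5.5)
Your final plan is the paper's argument: substitute $\chibar[w]$ from \eqref{eq:rep-chi} into \eqref{eq:rep-w}, run a contraction for $w$ alone in $C^0(I_j)$ with a constant that is small in $\varsigma$, and handle the simple pole through the logarithmic decomposition \eqref{eq:logextract}. Treat the composition as essential rather than a fallback. In the pair formulation with your first norm $\|w\|_\infty+\|\chibar\|_\infty+\varsigma\,\mathrm{Lip}(\chibar)$, the entry $-Ef\chibar/U'$ has size $O(1)$ and the map does not contract unless you weight $\chibar$ by a factor such as $\varsigma^{-1}$. After composition, $\chibar[w]$ is automatically Lipschitz with $\mathrm{Lip}(\chibar[w])\le C(|\beta|+\varsigma\|w\|_\infty)$, so you never need to carry $\mathrm{Lip}(\chibar)$ in the iteration space. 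In the difference of two iterates the residue $\psi(z_j)$ vanishes, so no logarithm arises there. Two minor slips: $\mathcal B[0]$ also contains the singular term with $\psi=g_1e^{-B}\beta$, which your estimate (c) bounds by $C|\beta|$; and the log-divergent factor in $w'$ is $(VE)'=(U'+bV)E$ rather than $U'E$, which does not affect the derivative bound.
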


\begin{proof}
In the system in Equation~\eqref{eq:rep}, the first equation gives $\chibar$ in terms of $w$, so we substitute it into the second equation and apply the contraction argument to $w$ alone.
For given $w\in C^0(I_j)$, Equation~\eqref{eq:rep-chi} defines a function $\chibar[w]\in C^1(I_j)$ with $\chibar[w](z_j)=\beta$, and, since $|V|\le|\tau|+\delta\le C\varsigma$ on $I_j$ (using $\delta\le\delta_1\le m_U\varsigma$),
\begin{equation}\label{eq:chi-of-w}
\|\chibar[w]\|_{\infty}\le C\bigl(|\beta|+\varsigma^{2}\|w\|_\infty\bigr),
\qquad
\mathrm{Lip}(\chibar[w])\le C\bigl(|\beta|+\varsigma\|w\|_\infty\bigr),
\end{equation}
the second bound because $\chibar[w]'=-b\,\chibar[w]+\rhobar k^2V\,w$.
Substituting $\chibar[w]$ into Equation~\eqref{eq:rep-w} defines the affine map $\mathcal B:C^0(I_j)\to C^0(I_j)$,
\begin{equation}
\mathcal B[w]:=\alpha\,VE-\frac{Ef\,\chibar[w]}{U'}
+VE\int_{z_j+\varsigma}^{z}
\Bigl[\frac{g_1(s)\,\chibar[w](s)}{V(s)}+g_2(s)\,w(s)\Bigr]\dd s ,
\end{equation}
and a pair $(w,\chibar)\in C^0(I_j)^2$ satisfies Equation~\eqref{eq:rep} if and only if $w=\mathcal B[w]$ and $\chibar=\chibar[w]$.
Here affine means that $\mathcal B[w]$ is the sum of a linear map of $w$ and a function that does not depend on $w$.
The map $\mathcal B$ is well defined on all of $C^0(I_j)$, including at $\delta=0$, because the simple pole is integrated against $g_1\chibar[w]$, which is Lipschitz by Equation~\eqref{eq:chi-of-w} and $g_1\in C^1$, so the decomposition in Equation~\eqref{eq:logextract} applies with $\psi=g_1\chibar[w]$.
The map $\mathcal B$ requires only continuity of $w$ itself.
At $z=z_j$ we define the singular term of $\mathcal B[w]$ by continuity, since the prefactor $V(z)E(z)$ multiplies the logarithm and $V\operatorname{Log}V\to0$.

For the contraction, let $w,\tilde w\in C^0(I_j)$ and set $\Delta\chibar:=\chibar[w]-\chibar[\tilde w]$.
Both functions take the value $\beta$ at $z_j$, so $\Delta\chibar(z_j)=0$, and by Equation~\eqref{eq:chi-of-w}, $\|\Delta\chibar\|_\infty\le C\varsigma^{2}\|w-\tilde w\|_\infty$ and $\mathrm{Lip}(\Delta\chibar)\le C\varsigma\|w-\tilde w\|_\infty$.
In
\begin{equation}
\mathcal B[w]-\mathcal B[\tilde w]
=-\frac{Ef\,\Delta\chibar}{U'}
+VE\int_{z_j+\varsigma}^{z}
\Bigl[\frac{g_1\Delta\chibar}{V}+g_2\,(w-\tilde w)\Bigr]\dd s
\end{equation}
the decomposition in Equation~\eqref{eq:logextract} applied to $\psi=g_1\Delta\chibar$ has vanishing residue coefficient, $\psi(z_j)=0$, so no logarithm arises in the difference, only the remainder, bounded by $C\varsigma\,\mathrm{Lip}\bigl((\psi/U')\circ\tau^{-1}\bigr)\le C\varsigma^{2}\|w-\tilde w\|_\infty$.
Collecting the three terms,
\begin{equation}
\|\mathcal B[w]-\mathcal B[\tilde w]\|_\infty
\le C_5\,\varsigma^{2}\,\|w-\tilde w\|_\infty
\le\tfrac12\,\|w-\tilde w\|_\infty
\qquad\bigl(\varsigma\le\varsigma_0:=(2C_5)^{-1/2}\bigr),
\end{equation}
uniformly in $\delta\in[0,\delta_1]$, in $(\alpha,\beta)$, and in the parameters.
Banach's fixed-point theorem gives a unique fixed point $w$.
Setting $\chibar:=\chibar[w]$ produces the unique solution of Equation~\eqref{eq:rep}, linear in $(\alpha,\beta)$ since $\mathcal B[w]$ and $\chibar[w]$ depend linearly on $(w,\alpha,\beta)$ jointly.

For the bounds, $\mathcal B[0]$ consists of $\alpha VE$, of $-Ef\chibar[0]/U'$ with $\chibar[0]=e^{-B}\beta$, and of the singular term with $\psi=g_1e^{-B}\beta$, whose logarithmic part is multiplied by the prefactor $VE$ and is bounded, since $\sup_{z\in I_j}|V(z)|\bigl(1+\bigl|\log|V(z)|\bigr|\bigr) \le C\varsigma\log(1/\varsigma)$.
Thus $\|\mathcal B[0]\|_\infty\le C(|\alpha|+|\beta|)$.
Hence $\|w\|_\infty\le2\|\mathcal B[0]\|_\infty\le C_\ast(|\alpha|+|\beta|)$, and Equation~\eqref{eq:chi-of-w} converts this into Equation~\eqref{eq:fp-bounds}.
The derivative bound for $\delta=0$ follows by differentiating Equation~\eqref{eq:rep-w}, all terms being bounded except the product of $(VE)'$ with the integral in Equation~\eqref{eq:logextract}, which is logarithmic.
The last statement is \cref{lem:representation}.
\end{proof}

\begin{proposition}\label{prop:local}
Let $\delta=0$ and denote by $(w^{(1)},\chibar^{(1)})$ and $(w^{(2)},\chibar^{(2)})$ the fixed points of \cref{lem:fixedpoint} with $(\alpha,\beta)=(1,0)$ and $(0,1)$ respectively.
Then the following hold.
\begin{enumerate}[label=\textup{(\roman*)}]
\item\label{it:basis-reg} $w^{(1)}\in C^1(I_j)$, $\chibar^{(1)}\in C^2(I_j)$ are real-valued, with $w^{(1)}=V(z)E(z)\bigl(1+O(\varsigma)\bigr)$, $w^{(1)}(z_j)=0$, and $\chibar^{(1)}(z)=O(|z-z_j|^{3})$.
No logarithm occurs.
\item\label{it:basis-log} $w^{(2)}(z_j)=-1/(\rhobar U')(z_j)\neq0$, and for $z\in I_j$,
\begin{equation}\label{eq:local-expansion}
w^{(2)}(z)
=w^{(2)}(z_j)\Bigl[1+\frac{Q(z_j)}{U'(z_j)}\,(z-z_j)\,
\operatorname{Log}\bigl(\tau(z)-\im0\bigr)\Bigr]
+h(z),
\end{equation}
where the remainder satisfies $h\in C^{1}(I_j)$ and $h(z_j)=0$.
In particular $w^{(2)}$ is continuous.
When $Q(z_j)\neq0$ its one-sided derivatives diverge logarithmically at $z_j$, so we use instead the \emph{renormalized} one-sided derivatives
\begin{equation}\label{eq:renorm-deriv}
\mathcal D_\pm:=\lim_{z\to z_j^\pm}
\Bigl[w^{(2)\prime}(z)
-\frac{Q(z_j)}{U'(z_j)}\,w^{(2)}(z_j)\,\log\bigl|\tau(z)\bigr|\Bigr],
\end{equation}
which exist and satisfy the jump formula
\begin{equation}\label{eq:wjump}
\mathcal D_+-\mathcal D_-
=\lim_{s\downarrow0}
\bigl[w^{(2)\prime}(z_j+s)-w^{(2)\prime}(z_j-s)\bigr]
=\im\pi\,\frac{Q(z_j)}{|U'(z_j)|}\,w^{(2)}(z_j),
\end{equation}
where the middle expression equals $\mathcal D_+-\mathcal D_-$ because the logarithmic terms at $z_j+s$ and $z_j-s$ cancel as $s\downarrow0$.
\item\label{it:basis-wronskian} The modified Wronskian $\Xi:=w^{(1)}\chibar^{(2)}-w^{(2)}\chibar^{(1)}$, the determinant of the two solution vectors of the first-order system in Equation~\eqref{eq:wchi-recall}, satisfies $\Xi(z)=\varkappa\,V(z)$ on $I_j$ for a constant $\varkappa=1+O(\varsigma^2)$, in particular $\varkappa\neq0$ for $\varsigma$ small.
Hence $\{(w^{(1)},\chibar^{(1)}),(w^{(2)},\chibar^{(2)})\}$ is a basis of the solution space on each of $I_j^\pm:=I_j\cap\{\pm(z-z_j)>0\}$, and every solution on $I_j\setminus\{z_j\}$ arising as a limit of $\delta>0$ solutions is a unique combination $\alpha(w^{(1)},\chibar^{(1)})+\beta(w^{(2)},\chibar^{(2)})$ globally on $I_j$.
\item\label{it:quadratic} For any such solution, $u_1:=|w|^2$ is continuous on $I_j$ and $\Wc=\Imc(\Abar w'w^{\ast})$ extends continuously to each of $I_j^\pm$ with the jump
\begin{equation}\label{eq:Wjump}
\Wc(z_j^+)-\Wc(z_j^-)
=\pi\,\frac{\rhobar(z_j)\,Q(z_j)}{|U'(z_j)|}\,u_1(z_j),
\end{equation}
the limit taken from $\Imc c>0$.
For $\Imc c<0$ the sign reverses.
Moreover $u_1(z_j)=0$ if and only if $\beta=0$, in which case the solution is a complex multiple of the real solution $(w^{(1)},\chibar^{(1)})$, hence $C^1$ across $z_j$ with $\Wc\equiv0$ on $I_j$.
\end{enumerate}
\end{proposition}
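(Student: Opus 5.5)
The plan is to read every item of \cref{prop:local} off the fixed-point representation of \cref{lem:representation,lem:fixedpoint} at $\delta=0$. The logarithm is split off with Equation~\eqref{eq:logextract}, and regularity across $z_j$ is controlled with Equation~\eqref{eq:remainder-C1}.

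For \ref{it:basis-reg}, take $(\alpha,\beta)=(1,0)$. At $\delta=0$ all coefficients are real, so the contraction map $\mathcal B$ preserves real functions and the unique fixed point is real. The key step is a bootstrap on the order of vanishing of $\chibar^{(1)}$ at $z_j$.
\begin{enumerate}[label=\textup{(\alph*)}]
\item Evaluating Equation~\eqref{eq:rep-w} at $z_j$ gives $w^{(1)}(z_j)=-f\chibar^{(1)}(z_j)/U'=0$, since $\beta=0$.
\item Hence $w^{(1)}=O(|z-z_j|)$ by the Lipschitz bound, and Equation~\eqref{eq:rep-chi} with $V=O(|z-z_j|)$ gives $\chibar^{(1)}=O(|z-z_j|^3)$.
\item Then $g_1\chibar^{(1)}/V$ is bounded and even $C^1$, so the residue coefficient in Equation~\eqref{eq:logextract} vanishes and no logarithm occurs.
\item Differentiating Equation~\eqref{eq:rep-w} now gives $w^{(1)}\in C^1$, and then $\chibar^{(1)}\in C^2$ from its equation.
\end{enumerate}
The leading behaviour $w^{(1)}=VE(1+O(\varsigma))$ follows because the other terms carry an extra factor $\varsigma^2$, as in the contraction estimate.

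For \ref{it:basis-log}, take $(\alpha,\beta)=(0,1)$. Equation~\eqref{eq:wc-beta} gives $w^{(2)}(z_j)=-1/(\rhobar U')(z_j)$. Apply Equation~\eqref{eq:logextract} with $\psi=g_1\chibar^{(2)}$, which is $C^1$. The singular part of Equation~\eqref{eq:rep-w} is then $VE\,g_1(z_j)\operatorname{Log}(\tau-\im0)/U'(z_j)$. By Equation~\eqref{eq:residue-r} at $\delta=0$, $g_1(z_j)=-Q(z_j)/(\rhobar(z_j)U'(z_j)^2)$. Combining this with $V=U'(z_j)(z-z_j)+O((z-z_j)^2)$ and $E=1+O(|z-z_j|)$ gives the coefficient $w^{(2)}(z_j)Q(z_j)/U'(z_j)$ times $(z-z_j)\operatorname{Log}$.

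The remaining pieces must be checked to be $C^1$. These are $\alpha$-free terms, $-Ef\chibar/U'$, $VE\mathcal R[\psi]$ (by Equation~\eqref{eq:remainder-C1}), and the correction $(VE-U'(z_j)(z-z_j))\operatorname{Log}$. The last one is $O((z-z_j)^2\log)$, with derivative $O(|z-z_j|\log)$, which is continuous and vanishes at $z_j$. The constant $-\operatorname{Log}(\tau(z_j+\varsigma))$ multiplies $VE$, which is smooth. Together these give $h\in C^1$ with $h(z_j)=0$.

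To obtain the derivative jump, differentiate $(z-z_j)\operatorname{Log}(\tau-\im0)$. Its derivative is $\operatorname{Log}+1+O(1)$, and its imaginary part is $-\im\pi\mathbf 1_{\tau<0}$. When $U'(z_j)>0$, the set $\tau<0$ lies below $z_j$, so the jump of the renormalized derivative is $+\im\pi Q w_c/U'$. When $U'<0$, the sides swap and give $\im\pi Q w_c/|U'|$. This is Equation~\eqref{eq:wjump}.

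For \ref{it:basis-wronskian}, a direct computation from Equation~\eqref{eq:wchi-recall} gives $\Xi'=(U'/V+b-b)\Xi=(U'/V)\Xi$. Hence $\Xi/V$ is constant on each side of $z_j$. Since $\Xi$ is continuous across $z_j$, the constant is the same on both sides. Evaluating the leading terms gives $\varkappa=1+O(\varsigma^2)$, using $w^{(1)}\approx V$, $\chibar^{(2)}\approx1$ and $\chibar^{(1)}=O(|z-z_j|^3)$.

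Limits of solutions with $\delta>0$ are represented by the $\delta$-uniform fixed points. Their $(\alpha,\beta)$ converge, so they are combinations of the two basis solutions globally on $I_j$; only a single pair of constants appears on both sides because the representation is global.

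For \ref{it:quadratic}, $u_1$ is continuous because $w$ is. On each side, $\Wc=\Imc(\Abar w'w^\ast)$ is constant, since the coefficients are real there. The term $\Abar\log|\tau|\,|w_c|^2Q/U'$ is real, so it drops out of $\Imc$, and the one-sided limits exist. Write $w=\alpha w^{(1)}+\beta w^{(2)}$. Only the $\beta w^{(2)}$ part contributes a jump, through Equation~\eqref{eq:wjump}. With $\Abar(z_j)=\rhobar(z_j)$, this gives $\Wc(z_j^+)-\Wc(z_j^-)=\rhobar(z_j)\,\Imc(\im\pi Q|w_c|^2/|U'|)$, which is Equation~\eqref{eq:Wjump}. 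From $\Imc c<0$ the conjugate branch $+\im\pi$ reverses the sign. Finally, $u_1(z_j)=|\beta|^2/(\rhobar U')^2$, so it vanishes iff $\beta=0$. In that case $w$ is a multiple of the real $C^1$ solution $w^{(1)}$, whose $\Wc$ is zero.

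The main obstacle is the $C^1$ bookkeeping of $h$ in \ref{it:basis-log}. In particular, one must confirm that $\psi=g_1\chibar^{(2)}$ is $C^1$ so that Equation~\eqref{eq:remainder-C1} applies, which needs $g_1\in C^1$ and $\chibar^{(2)}\in C^1$ from its equation.
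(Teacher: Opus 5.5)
Your proof follows the paper's route: the representation \eqref{eq:rep} at $\delta=0$, extraction of the logarithm via \eqref{eq:logextract} and \eqref{eq:remainder-C1}, the trace identity $\Xi'=(V'/V)\Xi$, and the split of $\Wc$ that isolates a real logarithmic term. However, the step in item (iii) that identifies the two one-sided constants is invalid as stated.

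At $\delta=0$ one has $V(z_j)=0$. So $\Xi=\varkappa_\pm V$ on $I_j^\pm$ gives $\Xi(z_j^\pm)=0$ for every pair $(\varkappa_+,\varkappa_-)$. Continuity of $\Xi$ across $z_j$ therefore says nothing about whether $\varkappa_+=\varkappa_-$. What you need is continuity of $\Xi/V$ across $z_j$. Write $\Xi/V=(w^{(1)}/V)\,\chibar^{(2)}-w^{(2)}\,\chibar^{(1)}/V$. Since $w^{(1)}\in C^1(I_j)$ vanishes at $z_j$, $w^{(1)}/V\to w^{(1)\prime}(z_j)/U'(z_j)$ from both sides. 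Moreover $\chibar^{(1)}/V=O(|z-z_j|^2)$, and $w^{(2)},\chibar^{(2)}$ are continuous. Equivalently, $\Xi$ is $C^1$ across $z_j$, since $w^{(2)\prime}\chibar^{(1)}=O(|z-z_j|^3\log|z-z_j|)$, and matching one-sided derivatives gives $\varkappa_+U'(z_j)=\varkappa_-U'(z_j)$. This single constant is exactly what \cref{prop:convergence} later uses to get $\det\mathcal E^-_0=\varkappa_0V_0(z_j-\varsigma)$ with $|\varkappa_0|\ge\frac12$ at the lower endpoint.

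Your evaluation of $\varkappa$ as the limit at $z_j$ is fine, given your sharper $1+O(\varsigma^2)$ version of (i). The paper's evaluation at $z_j+\varsigma$ is more direct: there the integral in \eqref{eq:rep-w} is empty, and the $Ef/U'$ cross terms cancel to leave $\Xi=VE\chibar^{(2)}$.

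There are two smaller slips.

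\emph{Item (i).} Step (b) invokes a Lipschitz bound on $w^{(1)}$ that \cref{lem:fixedpoint} does not provide. That lemma bounds $\mathrm{Lip}(\chibar)$, but at $\delta=0$ it only gives $|w'|\le C_\ast(1+|\log(|z-z_j|/\varsigma)|)$. Reorder the bootstrap. The residue coefficient $g_1(z_j)\chibar^{(1)}(z_j)/U'(z_j)$ vanishes simply because $\chibar^{(1)}(z_j)=\beta=0$. So \eqref{eq:remainder-C1} with $\psi=g_1\chibar^{(1)}\in C^1$ gives $w^{(1)}\in C^1(I_j)$ first. Only then do $w^{(1)}=O(|z-z_j|)$ and $\chibar^{(1)}=O(|z-z_j|^3)$ follow. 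The paper instead gets $\chibar^{(1)}=O(|z-z_j|^2)$ from mere boundedness of $w^{(1)}$ and improves the bound afterwards.

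\emph{Item (iv).} Set $\varpi=(Q/U')(z_j)\,w(z_j)$. The divergent part of $\Abar w'w^\ast$ is $\Abar\varpi\log|\tau|\,w^\ast(z)$, not $\Abar\varpi\log|\tau|\,w^\ast(z_j)$. Its imaginary part is $\Abar\log|\tau|\,\Imc\bigl(\varpi\,(w^\ast(z)-w^\ast(z_j))\bigr)$. This tends to zero only because $w(z)-w(z_j)=O(|z-z_j|\log|z-z_j|)$ by \eqref{eq:local-expansion}, and you should state that rate explicitly.

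With these repairs your proof coincides with the paper's.
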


\begin{proof}
For \ref{it:basis-reg}, with $\beta=0$, Equation~\eqref{eq:rep-chi} and the bound $\|w^{(1)}\|_\infty\le C_\ast$ give $|\chibar^{(1)}(z)|\le C\!\int_{z_j}^z|\tau|\le C(z-z_j)^2$.
Then the singular integrand $g_1\chibar^{(1)}/V$ is $O(|z-z_j|)$, so the residue in Equation~\eqref{eq:logextract} vanishes ($\psi(z_j)=0$) and no logarithm arises in Equation~\eqref{eq:rep-w}, whose derivative $(VE)'\!\int[\cdot]+E[g_1\chibar^{(1)}+Vg_2w^{(1)}]$ is then continuous.
Hence $w^{(1)}\in C^1(I_j)$ with $w^{(1)}=VE+O(\varsigma)\,O(|z-z_j|)$.
Substituting this into Equation~\eqref{eq:rep-chi} improves the bound to $\chibar^{(1)}=O(|z-z_j|^3)$, and $\chibar^{(1)\prime}=-b\chibar^{(1)}+\rhobar k^2Vw^{(1)}\in C^1$ gives $\chibar^{(1)}\in C^2$.
For $\delta=0$, all kernels are real, as are the constants $(\alpha,\beta)=(1,0)$, so the solution is real.

For \ref{it:basis-log}, $w^{(2)}(z_j)$ is given by Equation~\eqref{eq:wc-beta} with $\beta=1$.
Apply the decomposition in Equation~\eqref{eq:logextract} to $\psi=g_1\chibar^{(2)}$.
By Equation~\eqref{eq:residue-r} the residue coefficient is
\begin{equation}
\frac{(g_1\chibar^{(2)})(z_j)}{U'(z_j)}
=-\frac{f(z_j)Q(z_j)}{U'(z_j)^3}\,\beta
=\frac{Q(z_j)}{U'(z_j)^2}\;w^{(2)}(z_j),
\end{equation}
so Equation~\eqref{eq:rep-w} becomes
\begin{equation}
w^{(2)}
=VE\,[\,\text{const}\,]-\frac{Ef\chibar^{(2)}}{U'}
+VE\,\Bigl[\frac{Q(z_j)}{U'(z_j)^{2}}\,w^{(2)}(z_j)\,
\operatorname{Log}(\tau-\im0)+\mathcal R[\psi]\Bigr]
+VE\!\int_{z_j+\varsigma}^{z}\!g_2\,w^{(2)} .
\end{equation}
Every term except the explicit logarithm is $C^1(I_j)$.
The first is smooth.
The second is $C^1$ because $\chibar^{(2)}\in C^1(I_j)$, its derivative $-b\chibar^{(2)}+\rhobar k^2Vw^{(2)}$ being continuous.
The last is $C^1$ because its integrand is continuous, and $VE\,\mathcal R[\psi]$ is $C^1$ because $\psi=g_1\chibar^{(2)}\in C^1(I_j)$, so that Equation~\eqref{eq:remainder-C1} gives $\mathcal R[\psi]\in C^1(I_j)$.
Replacing the prefactor $VE$ of the logarithm by its linearization $U'(z_j)(z-z_j)$ changes the expression by $[VE-U'(z_j)(z-z_j)]\operatorname{Log}(\tau-\im0) =O\bigl((z-z_j)^2\log|z-z_j|\bigr)$, which is again $C^1$ with derivative vanishing at $z_j$.
The constant term $w^{(2)}(z_j)$ in the bracket of Equation~\eqref{eq:local-expansion} is the value of $-Ef\chibar^{(2)}/U'$ at $z_j$, by Equation~\eqref{eq:wc-beta}.
Collecting all $C^1$ terms into $h$ gives Equation~\eqref{eq:local-expansion} with $h\in C^1(I_j)$, $h(z_j)=0$.

Differentiating Equation~\eqref{eq:local-expansion},
\begin{equation}
w^{(2)\prime}(z)
=\frac{Q(z_j)}{U'(z_j)}\,w^{(2)}(z_j)
\Bigl[\operatorname{Log}\bigl(\tau(z)-\im0\bigr)
+\frac{(z-z_j)\,U'(z)}{\tau(z)}\Bigr]+h'(z),
\end{equation}
where $(z-z_j)U'(z)/\tau(z)\to1$ from both sides, while by Equation~\eqref{eq:branch} the difference $\operatorname{Log}(\tau-\im0)-\log|\tau|=-\im\pi\mathbf 1_{\{\tau<0\}}$ is constant on each side of $z_j$.
Hence the renormalized limits in Equation~\eqref{eq:renorm-deriv} exist, each equal to $(Q/U')(z_j)\,w^{(2)}(z_j)\,(1-\im\pi\mathbf 1_{\{\tau<0\}})+h'(z_j)$ with the indicator evaluated on the corresponding side.
Crossing $z_j$ upward, $\mathbf 1_{\{\tau<0\}}$ drops from $1$ to $0$ if $U'(z_j)>0$ and rises from $0$ to $1$ if $U'(z_j)<0$.
In both cases $\mathcal D_+-\mathcal D_- =\pm\im\pi\,\tfrac{Q}{U'}(z_j)\,w^{(2)}(z_j) =\im\pi\,\tfrac{Q(z_j)}{|U'(z_j)|}\,w^{(2)}(z_j)$.
The formula for the difference evaluated at $z_j+s$ and $z_j-s$ follows since $\log|\tau(z_j+s)|-\log|\tau(z_j-s)|\to0$ as $s\downarrow0$ and $h'$ is continuous.
This is Equation~\eqref{eq:wjump}.

For \ref{it:basis-wronskian}, differentiating $\Xi$ and using Equation~\eqref{eq:wchi-recall} gives $\Xi'=(V'/V)\,\Xi$ on $I_j\setminus\{z_j\}$, because the $b$- and $k^2$-terms cancel between the two solutions.
Hence $\Xi/V$ is constant on each side of $z_j$.
Equivalently, from Equation~\eqref{eq:A-def}, $\Xi=\Abar V\,(w^{(1)}w^{(2)\prime}-w^{(2)}w^{(1)\prime})$, so $\Xi/V$ is the $\Abar$-weighted Wronskian of Equation~\eqref{eq:bvp}, which is constant on each side.
By \ref{it:basis-reg}, $w^{(1)}\in C^1(I_j)$ vanishes at $z_j$ and $\chibar^{(1)}(z)=O(|z-z_j|^{3})$, while $w^{(2)}$ and $\chibar^{(2)}$ are continuous, so $\Xi/V$ has the same limit from both sides of $z_j$, and $\Xi=\varkappa\,V$ on $I_j$ with a single constant $\varkappa$.
To compute $\varkappa$, we evaluate $\Xi$ at the endpoint $z_j+\varsigma$, where the integral in Equation~\eqref{eq:rep-w} is empty.
There $w^{(1)}=VE-Ef\chibar^{(1)}/U'$ and $w^{(2)}=-Ef\chibar^{(2)}/U'$, so the terms with $Ef/U'$ cancel and $\Xi=VE\chibar^{(2)}$.
Equation~\eqref{eq:rep-chi} with $\beta=1$ and $E=e^{B}$ then gives $\varkappa=E(z_j+\varsigma)\,\chibar^{(2)}(z_j+\varsigma)=1+\int_{z_j}^{z_j+\varsigma}E\,\rhobar k^2V\,w^{(2)}\,\dd s$.
The integral is $O(\varsigma^2)$, because $|V|\le\varsigma\max_{I_j}|U'|$ on $I_j$ and $w^{(2)}$ is bounded, so $\varkappa=1+O(\varsigma^2)$.
The combination is unique on all of $I_j$ because $(\alpha,\beta)$ are determined linearly and invertibly by the values at $z_j+\varsigma$, and the representation in Equation~\eqref{eq:rep} extends the solution across $z_j$.

For \ref{it:quadratic}, $u_1=|w|^2$ is continuous since $w$ is.
For $\Wc$, write the solution as $\alpha(w^{(1)},\chibar^{(1)})+\beta(w^{(2)},\chibar^{(2)})$.
By \ref{it:basis-reg} and \ref{it:basis-log} it has the form of Equation~\eqref{eq:local-expansion} with $w^{(2)}(z_j)$ replaced by $w(z_j)=\beta w^{(2)}(z_j)$ and a $C^1$ remainder, so its renormalized derivatives $\mathcal D_\pm$, defined as in Equation~\eqref{eq:renorm-deriv} with $w^{(2)}$ replaced by $w$, exist and satisfy Equation~\eqref{eq:wjump} with $w(z_j)$ in place of $w^{(2)}(z_j)$.
Set $\varpi:=(Q/U')(z_j)\,w(z_j)$ and split
\begin{equation}
\Wc(z)=\Imc\bigl(\Abar w'w^{\ast}\bigr)
=\Abar(z)\,\log|\tau(z)|\;\Imc\bigl(\varpi\,w^{\ast}(z)\bigr)
+\Imc\Bigl(\Abar(z)\bigl[w'(z)-\varpi\log|\tau(z)|\bigr]w^{\ast}(z)\Bigr).
\end{equation}
As $z\to z_j^\pm$ the second term converges to $\Imc\bigl(\Abar(z_j)\,\mathcal D_\pm\,w^{\ast}(z_j)\bigr)$.
The first term vanishes in the limit although $\log|\tau|$ diverges, because the coefficient $(Q/U')(z_j)$ is real and $w(z_j)\,w^{\ast}(z_j)=|w(z_j)|^2$, so $\Imc(\varpi\,w^{\ast}(z_j))=0$, while $w^{\ast}(z)-w^{\ast}(z_j)=O(|z-z_j|\log|z-z_j|)$ by Equation~\eqref{eq:local-expansion}, making the first term $O(|z-z_j|\log^2|z-z_j|)\to0$.
In words, the logarithmically divergent part of $\Abar w'w^{\ast}$ is real and drops out of the imaginary part.
Hence $\Wc$ extends continuously to each of $I_j^\pm$ and
\begin{equation}
\Wc(z_j^+)-\Wc(z_j^-)
=\Imc\bigl(\Abar(z_j)\,(\mathcal D_+-\mathcal D_-)\,w^{\ast}(z_j)\bigr)
=\Abar(z_j)\,\pi\,\frac{Q(z_j)}{|U'(z_j)|}\,|w(z_j)|^2 ,
\end{equation}
which is Equation~\eqref{eq:Wjump} since $\Abar(z_j)=\rhobar(z_j)$ and $u_1(z_j)=|w(z_j)|^2$.
Finally $u_1(z_j)=0\iff w(z_j)=0 \iff\beta=0$ by Equation~\eqref{eq:wc-beta}, and then the solution is $\alpha\,(w^{(1)},\chibar^{(1)})$ with $w^{(1)}$ real, so $\Wc=|\alpha|^2\Imc(\Abar w^{(1)\prime}w^{(1)})=0$ on $I_j$.
\end{proof}

\begin{remark}\label{rem:mack-compare}
Equation~\eqref{eq:local-expansion} shows that the compressible critical layer has the same type of singularity as Rayleigh's equation, with $U''$ replaced by $Q$.
For the zero-gravity, temperature-stratified gas the same computation, with the unified coefficient of \cref{rem:leeslin}, reproduces the classical local expansions of compressible boundary-layer theory \cite[Eqs.~(9.13)--(9.14)]{Mack1984}, including the $-\im\pi$ branch rule.
The derivation makes the physical origin of the two pieces of $Q=U''+bU'$ transparent.
The curvature $U''$ enters through the curvature of the profile in the variation-of-constants kernel, and $bU'$ through the hydrostatic decay $\chibar'=-b\chibar+O(V)$ of the scaled pressure variable across the layer.
\end{remark}

\begin{proposition}
\label{prop:convergence}
Let $(w_\delta,\chibar_\delta)$ denote the fixed point of \cref{lem:fixedpoint} at the parameter $\delta\in[0,\delta_1]$ with constants $(\alpha,\beta)$.
Here and below, a subscript $\delta$ marks the value of a quantity at the parameter $\delta$, for example $V_\delta=\tau-\im\delta$.
\begin{enumerate}[label=\textup{(\roman*)}]
\item\label{it:conv-fixed}
\begin{equation}\label{eq:delta-rate}
\|w_\delta-w_0\|_{C^0(I_j)}+\|\chibar_\delta-\chibar_0\|_{C^0(I_j)}
\;\le\;C_\ast\,(|\alpha|+|\beta|)\;
\delta\bigl(1+\bigl|\log(\delta/V_\ast)\bigr|\bigr),
\qquad \delta\in(0,\delta_1],
\end{equation}
uniformly on $I_j$, including at $z_j$.
\item\label{it:conv-matrices} Let $\mathcal E^{\pm}_{\delta}:(\alpha,\beta)\mapsto (w_\delta,\chibar_\delta)(z_j\pm\varsigma)$ be the endpoint evaluation matrices.
Here $\|\cdot\|$ denotes the operator norm of a linear map from $\C^2$ to itself, where $\C^2$ carries the norm $|\alpha|+|\beta|$ chosen at the beginning of this subsection, and the operator norm of a linear map $\mathcal G$ is the maximum of the norm of $\mathcal G(\alpha,\beta)$ over the $(\alpha,\beta)$ with $|\alpha|+|\beta|=1$.
Then $\|\mathcal E^{\pm}_{\delta}\|\le C$ and $\|\mathcal E^{\pm}_{\delta}-\mathcal E^{\pm}_{0}\|= O\bigl(\delta(1+|\log(\delta/V_\ast)|)\bigr)$ by \ref{it:conv-fixed}.
For every $\delta\in[0,\delta_1]$,
\begin{equation}
\det\mathcal E^{\pm}_{\delta}
=\varkappa_\delta\,V_\delta(z_j\pm\varsigma)
\end{equation}
with a constant $\varkappa_\delta$ independent of the endpoint, $|\varkappa_\delta-1|\le C\varsigma^2$ for every $\delta\in[0,\delta_1]$, and $|\varkappa_\delta-\varkappa_0|\le C\,\delta\bigl(1+|\log(\delta/V_\ast)|\bigr)/(m_U\varsigma)$.
Hence, for $\varsigma$ small, $|\varkappa_\delta|\ge\tfrac12$ for every $\delta\in[0,\delta_1]$, and the inverses exist with $\|(\mathcal E^{\pm}_{\delta})^{-1}\|\le C$ uniformly in $\delta\in[0,\delta_1]$.
\item\label{it:conv-transfer} The maps $T_\delta:=\mathcal E^{-}_{\delta}\circ(\mathcal E^{+}_{\delta})^{-1}$, which for $\delta>0$ coincide with the solution maps $(w,\chibar)(z_j+\varsigma)\mapsto(w,\chibar)(z_j-\varsigma)$ of the regular system in Equation~\eqref{eq:wchi-recall}, are defined for every $\delta\in[0,\delta_1]$, are bounded with bounded inverses uniformly, and converge in norm to $T_0$ at rate $O\bigl(\delta(1+|\log(\delta/V_\ast)|)\bigr)$.
Moreover $T_0$, together with the critical values $w_0(z_j)$, depends $C^1$ on $c_r$ on compact sets of regular values, with uniformly bounded derivatives.
\end{enumerate}
\end{proposition}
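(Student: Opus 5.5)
Part (i) is a difference estimate for the contraction map of \cref{lem:fixedpoint}, with the same $(\alpha,\beta)$ at $\delta$ and at $0$. Write $\mathcal B_\delta$ and $\chibar_\delta[\cdot]$ for the maps of that proof at parameter $\delta$, and decompose
\[
w_\delta-w_0=\bigl(\mathcal B_\delta[w_\delta]-\mathcal B_\delta[w_0]\bigr)+\bigl(\mathcal B_\delta[w_0]-\mathcal B_0[w_0]\bigr).
\]
The first difference is at most $\tfrac12\|w_\delta-w_0\|_\infty$ by the uniform contraction, so $\|w_\delta-w_0\|_\infty\le 2\|\mathcal B_\delta[w_0]-\mathcal B_0[w_0]\|_\infty$. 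This reduces (i) to estimating the change of the map at the fixed input $w_0$.

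That change has three sources.
\begin{enumerate}[label=(\alph*)]
\item The regular coefficients $V$, $\Abar$, $f$, $g_1$, $g_2$ and the integrand of \eqref{eq:rep-chi} vary by $O(\delta)$ in $C^1$. This uses \cref{hyp:subsonic} and the fact that everything is smooth in $V^2$ with $\Imc c=\delta$.
\item The singular term must be compared through the decomposition \eqref{eq:logextract}. The residue parts differ by
\[
\frac{\psi(z_j)}{U'(z_j)}\,V\bigl[\operatorname{Log}(\tau-\im\delta)-\operatorname{Log}(\tau-\im0)\bigr]
\]
together with a prefactor change $O(\delta)\operatorname{Log}$.
\item The remainders $\mathcal R_\delta[\psi]-\mathcal R_0[\psi]$ must be compared.
\end{enumerate}
The key elementary bound for (b) is
\[
\sup_\tau\bigl|(\tau-\im\delta)\operatorname{Log}(\tau-\im\delta)-\tau\operatorname{Log}(\tau-\im0)\bigr|\le C\delta\bigl(1+|\log(\delta/V_\ast)|\bigr),
\]
which holds uniformly on $|\tau|\le C\varsigma$. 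I would prove it by splitting into $|\tau|\le\delta$ (each term is $O(\delta\log)$) and $|\tau|>\delta$, where the mean value theorem in $\delta$ gives $|\partial_\delta(V\operatorname{Log}V)|\le C(1+|\log|V||)$. For (c), write $\mathcal R_\delta[\psi]=\int\bigl[\mathfrak g(\tau)-\mathfrak g(0)\bigr]\,\dd\tau/(\tau-\im\delta)$. The difference of kernels, multiplied by the Lipschitz numerator, is bounded by $\mathrm{Lip}(\mathfrak g)\,\delta/|\tau-\im\delta|$, which integrates to $O(\delta\log(1/\delta))$. The multiplier $\psi=g_1\chibar_\delta[w_0]$ also changes with $\delta$, but its Lipschitz norm changes by $O(\delta)$, so that contribution is handled by the non-logarithmic part of the contraction estimate. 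The $\chibar$ bound then follows from \eqref{eq:chi-of-w} applied to the differences. I expect this careful uniform-in-$z$ logarithmic estimate, including at $z_j$ itself, to be the main technical step.

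For (ii), boundedness of $\mathcal E^\pm_\delta$ is \eqref{eq:fp-bounds}, and the convergence rate is (i) evaluated at the endpoints. The determinant formula comes from the argument of \cref{prop:local}\ref{it:basis-wronskian}, now at general $\delta$. The modified Wronskian $\Xi_\delta$ of the two basis fixed points satisfies $\Xi_\delta'=(V'/V)\Xi_\delta$. For $\delta>0$ this holds on all of $I_j$, since the system is regular, so $\Xi_\delta=\varkappa_\delta V_\delta$ with a single constant, and $\det\mathcal E^\pm_\delta=\Xi_\delta(z_j\pm\varsigma)$. Evaluating at $z_j+\varsigma$, where the integral term is empty, gives $\varkappa_\delta=E\chibar^{(2)}_\delta(z_j+\varsigma)=1+O(\varsigma^2)$. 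The difference $\varkappa_\delta-\varkappa_0$ equals $(\Xi_\delta-\Xi_0)(z_j+\varsigma)/V$ up to the $O(\delta)$ change of $V$, and $|V(z_j+\varsigma)|\ge m_U\varsigma$, which gives the stated rate. Cramer's rule with $|\det|\ge\tfrac12 m_U\varsigma$ then bounds the inverses uniformly.

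For (iii), $T_\delta$ is a composition of uniformly bounded matrices with uniformly bounded inverses. For $\delta>0$ it equals the transfer map because $\mathcal E^\pm_\delta$ evaluate the same global solution, and the rate follows from $A^{-1}-B^{-1}=A^{-1}(B-A)B^{-1}$. For $C^1$ dependence on $c_r$ at $\delta=0$: $z_j(c_r)$ is $C^1$ by the implicit function theorem, and after translating to the fixed variable $z-z_j$ the kernels $g_1,g_2,f,E$ and the term $\operatorname{Log}\tau$ depend $C^1$ on $c_r$. Differentiating the fixed-point equation in $c_r$ gives a linear equation with the same contraction operator, which yields uniform bounds on the derivative of $w_0$, of $\mathcal E^\pm_0$ and hence of $T_0$. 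The critical value satisfies $w_0(z_j)=-f(z_j)\beta/U'(z_j)$, which is $C^1$ in $c_r$. The one subtlety is that $g_1$ is only $C^1$, so its $c_r$-derivative is merely continuous; this suffices, because \eqref{eq:logextract} needs only a Lipschitz numerator, and differentiating in $c_r$ does not lose the derivative needed there.
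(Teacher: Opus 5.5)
Your arguments for (i), (ii), and the convergence of $T_\delta$ follow the paper's proof essentially step by step. This covers the reduction to $\|(\mathcal B_\delta-\mathcal B_0)[w_0]\|_\infty$ via the uniform $\tfrac12$-contraction, the model estimate for $V\operatorname{Log}V$, the bound $\mathrm{Lip}(\mathfrak g)\,\delta/|t-\im\delta|$ for the difference of remainders, the Wronskian $\Xi_\delta=\varkappa_\delta V_\delta$ evaluated at $z_j+\varsigma$, and $A^{-1}-B^{-1}=A^{-1}(B-A)B^{-1}$.

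The gap is in the $C^1$ dependence on $c_r$, at exactly the point you flag as the one subtlety, and your resolution there is wrong. In the moving variable $x=z-z_j(c_r)$ one has $\partial_{c_r}\tilde g_1(x)=z_j'(c_r)\,g_1'(z_j+x)+(\partial_c g_1)(z_j+x)$. So differentiating in $c_r$ does cost one $x$-derivative of $g_1$.

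After the residue is extracted, the $c_r$-derivative of the singular term of $\mathcal B$ contains $\int_\varsigma^x\partial_{c_r}\mathfrak h/\tilde V\,\dd x'$, with $\mathfrak h=\psi-\psi(0)\tilde U'/\tilde U'(0)$ and $\psi=\tilde g_1\tilde\chi[w]$. It also contains $-\int\mathfrak h\,\theta/\tilde V^{2}$ with $\theta=\partial_{c_r}\tilde\tau=O(x)$, which is harmless. The Lipschitz numerator that \eqref{eq:logextract} needs at this step is $\partial_{c_r}\mathfrak h$, not $\psi$. For the logarithmic column ($\beta\neq0$) this numerator contains $\beta\,z_j'\,[g_1'(z_j+x)-g_1'(z_j)]$. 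For $g_1\in C^1$ that term is only $o(1)$ as $x\to0$, and $\int\varphi(x)/x\,\dd x$ with $\varphi$ continuous and $\varphi(0)=0$ need not converge. So your argument does not show that $c_r\mapsto\mathcal B_{c_r}[0]$ is differentiable into $C^0$, and the differentiated fixed-point equation has an uncontrolled source. For the linear part ($\beta=0$) the problem does not arise, since $\tilde\chi[w]$ vanishes at the critical level.

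The missing input is that (H1) gives more than the $g_1\in C^1$ recorded in \S\ref{subsec:local}. From $U\in C^4$ and $\rhobar,a\in C^3$ one gets $\Abar\in C^3$ and $E\in C^4$, hence $f\in C^3$ and $g_1=(f/U')'-bf/U'\in C^2$. The pulled-back coefficients are then $C^1$ in $c_r$ with values in $C^1([-\varsigma,\varsigma])$, and $\partial_{c_r}\mathfrak h$ is Lipschitz in $x$.

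You must then also check that $\partial_{c_r}\mathfrak h$ vanishes at $x=0$. There the terms $\partial_{c_r}\psi$ and $\partial_{c_r}\psi(0)\,\tilde U'/\tilde U'(0)$ cancel, and $\partial_{c_r}[\tilde U'(x)/\tilde U'(0)]$ vanishes at $x=0$. This gives $|\partial_{c_r}\mathfrak h|\le C|x|$, so the differentiated integrand is bounded. This is the cancellation carried out in \cref{lem:pullback}. Only with it does the uniform Taylor estimate for $\mathcal A_{c_r}$ and $\mathcal B_{c_r}[0]$ yield genuine, not merely formal, differentiability of the fixed point, and hence of $\mathcal E^\pm_0$ and $T_0$.
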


The proof compares the fixed-point maps at $\delta$ and at $0$ applied to the same function, and treats the singular integral through its logarithmic decomposition, since the kernel difference itself is not integrable across the critical level.

\begin{proof}
Since the contraction constant of $\mathcal B$ in \cref{lem:fixedpoint} is $\tfrac12$ uniformly in $\delta$, it suffices to bound $\|(\mathcal B_\delta-\mathcal B_0)[w_0]\|_\infty$, where $\mathcal B_\delta$ denotes the map of \cref{lem:fixedpoint} at the value $\delta$.
Then $\|w_\delta-w_0\|_\infty\le2\|(\mathcal B_\delta-\mathcal B_0)[w_0]\|_\infty$, and the $\chibar$-component follows from $\|\chibar_\delta[w_\delta]-\chibar_0[w_0]\|_\infty \le C\varsigma^{2}\|w_\delta-w_0\|_\infty+C\delta$, the kernel of Equation~\eqref{eq:rep-chi} being Lipschitz in $\delta$.

The map $\mathcal B_\delta$ depends on $\delta$ in two ways.
It contains $V$ explicitly, in the prefactor $VE$, in the kernel of Equation~\eqref{eq:rep-chi}, and in the simple pole $1/V$.
It also contains the coefficients $f_\delta=1/(\Abar_\delta E)$, $g_{1,\delta}$, and $g_{2,\delta}$, which depend on $\delta$ through $V^2$ in $\Abar_\delta$.
Since $V_\delta^2-V_0^2=-2\im\delta\tau-\delta^2$, these coefficients are Lipschitz in $\delta$ in the norms we need, namely
\begin{equation}\label{eq:coef-delta}
\|f_\delta-f_0\|_{C^2(I_j)}
+\|g_{1,\delta}-g_{1,0}\|_{C^{0,1}(I_j)}
+\|g_{2,\delta}-g_{2,0}\|_{C^0(I_j)}\le C\delta .
\end{equation}
To see this, note that $f_\delta-f_0=(2\im\delta\,\tau+\delta^{2})/(\rhobar a^2E)$ exactly.
The numerator has $C^2(I_j)$ norm $O(\delta)$, because its derivatives up to order two involve only $\tau$, $U'$, and $U''$, and the denominator does not depend on $\delta$ and is bounded away from zero.
Hence $\|f_\delta-f_0\|_{C^2}\le C\delta$.
The definitions in Equation~\eqref{eq:g-def}, linear in $f$, then give the $g_1$-estimate, whose Lipschitz seminorm involves $f''$, and the $g_2$-estimate.
Likewise $\|\chibar_\delta[w_0]-\chibar_0[w_0]\|_\infty +\mathrm{Lip}\bigl(\chibar_\delta[w_0]-\chibar_0[w_0]\bigr) \le C\delta\,\|w_0\|_\infty$, with $(\chibar_\delta[w_0]-\chibar_0[w_0])(z_j)=0$, the kernel of Equation~\eqref{eq:rep-chi} being Lipschitz in $\delta$.
The regular parts of $(\mathcal B_\delta-\mathcal B_0)[w_0]$ are the prefactor difference $\alpha(V_\delta-V_0)E=-\im\delta\alpha E$, the terms $-E(f_\delta\chibar_\delta[w_0]-f_0\chibar_0[w_0])/U'$, and the $g_2$-integrals.
Together they contribute $O(\delta)$ in $C^0(I_j)$.

The singular part must be compared through the logarithmic decomposition, not by bounding the kernel difference in absolute value.
When $z$ lies on the far side of $z_j$, the integration interval crosses the pole, where $|1/V_\delta-1/V_0|=\delta/(|\tau||\tau-\im\delta|)$ is not integrable, and the $\delta=0$ integral exists only as the boundary value in Equation~\eqref{eq:branch}.
Write $\psi_\delta:=g_{1,\delta}\chibar_\delta[w_0]$, so that $\|\psi_\delta-\psi_0\|_{C^{0,1}}\le C\delta$ by Equation~\eqref{eq:coef-delta} while $\|\psi_0\|_{C^{0,1}}\le C(|\alpha|+|\beta|)$, and apply the decomposition in Equation~\eqref{eq:logextract} at the parameter $\delta$ and at $0$ separately.
The difference of the two singular terms splits into the following four pieces.
\begin{enumerate}[label=\textup{(\alph*)},leftmargin=2.2em]
\item The logarithmic part with the common residue $\psi_0(z_j)/U'(z_j)$, namely $E\,\tfrac{\psi_0(z_j)}{U'(z_j)} \bigl[V_\delta\operatorname{Log}(\tau-\im\delta) -V_0\operatorname{Log}(\tau-\im0)\bigr]$, is bounded by $C\delta(1+\log(1/\delta))$ through the model estimate in Equation~\eqref{eq:log-model} of \cref{app:proofs}.
\item The part from the change of residue, $E\,\tfrac{(\psi_\delta-\psi_0)(z_j)}{U'(z_j)}\, V_\delta\operatorname{Log}(\tau-\im\delta)$, is bounded by $C\delta\cdot\varsigma\log(1/\varsigma)$, since $\sup_{I_j}|V_\delta|\bigl(1+|\log|V_\delta||\bigr)\le C\varsigma\log(1/\varsigma)$.
\item The endpoint terms at $z_j+\varsigma$ differ by $O(\delta)$.
With $\tau_+:=\tau(z_j+\varsigma)$, which satisfies $|\tau_+|\ge m_U\varsigma$, we have $|\operatorname{Log}(\tau_+-\im\delta)-\operatorname{Log}(\tau_+-\im0)| \le\delta/|\tau_+|$ and $|V_\delta-V_0|=\delta$.
\item The remainders are $(V_\delta-V_0)E\,\mathcal R_\delta[\psi_\delta] +V_0E\bigl(\mathcal R_\delta[\psi_\delta]-\mathcal R_\delta[\psi_0]\bigr) +V_0E\bigl(\mathcal R_\delta[\psi_0]-\mathcal R_0[\psi_0]\bigr)$.
The first is $O(\delta\varsigma)$ by the remainder bound of Equation~\eqref{eq:logextract}.
The second is $O(\varsigma\delta)$ by the same bound applied to $\psi_\delta-\psi_0$.
The third is $O(\delta\log(1/\delta))$ by the comparison estimate in Equation~\eqref{eq:remainder-compare} of \cref{app:proofs}, in which the Lipschitz factor $|\mathfrak g(t)-\mathfrak g(0)|\le \mathrm{Lip}(\mathfrak g)|t|$, with $\mathfrak g=(\psi/U')\circ\tau^{-1}$ the function $\psi/U'$ written in the variable $\tau$, cancels the nonintegrable part of the kernel difference.
\end{enumerate}
Collecting, $\|(\mathcal B_\delta-\mathcal B_0)[w_0]\|_\infty\le C(|\alpha|+|\beta|)\,\delta\log(1/\delta)$, and Equation~\eqref{eq:delta-rate} follows.
This proves \ref{it:conv-fixed}.

For \ref{it:conv-matrices}, the columns of $\mathcal E^{\pm}_{\delta}$ are the values at $z_j\pm\varsigma$ of the basis fixed points with $(\alpha,\beta)=(1,0)$ and $(0,1)$, so the bound and convergence follow from \cref{lem:fixedpoint} and \ref{it:conv-fixed}.
For the determinant, the identity $\Xi'=(V'/V)\,\Xi$ from \cref{prop:local}\ref{it:basis-wronskian} uses only the system in Equation~\eqref{eq:wchi-recall} and holds for every $\delta\in[0,\delta_1]$.
For $\delta>0$ it holds on all of $I_j$, because $V_\delta$ has no zero there.
For $\delta=0$ it holds on $I_j\setminus\{z_j\}$, where $\Xi_0/V_0$ is constant on each side of $z_j$, and the two one-sided constants coincide because both equal $\lim_{z\to z_j}\Xi_0/V_0=\varkappa_0$ by \cref{prop:local}\ref{it:basis-wronskian}.
Hence $\Xi_\delta=\varkappa_\delta V_\delta$ on $I_j$ with $\varkappa_\delta$ constant, and $\det\mathcal E^{\pm}_{\delta}=\Xi_\delta(z_j\pm\varsigma)$ because the columns of $\mathcal E^{\pm}_{\delta}$ are the values of the two basis solutions at $z_j\pm\varsigma$.
The endpoint computation in the proof of \cref{prop:local}\ref{it:basis-wronskian} uses only the representation in Equation~\eqref{eq:rep}, which the fixed points satisfy for every $\delta\in[0,\delta_1]$.
For the fixed point $(w^{(2)}_\delta,\chibar^{(2)}_\delta)$ with $(\alpha,\beta)=(0,1)$, it gives $\varkappa_\delta=1+\int_{z_j}^{z_j+\varsigma}E\,\rhobar k^2V_\delta\,w^{(2)}_\delta\,\dd s$.
Since $|V_\delta|\le|\tau|+\delta\le(\max_{I_j}|U'|+m_U)\,\varsigma$ on $I_j$ by $\delta_1\le m_U\varsigma$, and $|w^{(2)}_\delta|\le C_\ast$ by \cref{lem:fixedpoint}, this gives $|\varkappa_\delta-1|\le C\varsigma^2$.
Writing $\varkappa_\delta=\Xi_\delta(z_j+\varsigma)/V_\delta(z_j+\varsigma)$ and using $|V_\delta(z_j+\varsigma)|\ge|\tau(z_j+\varsigma)|\ge m_U\varsigma$, $|V_\delta-V_0|=\delta$, and the endpoint convergence of the basis solutions from \ref{it:conv-fixed} gives $|\varkappa_\delta-\varkappa_0|\le C\delta(1+|\log(\delta/V_\ast)|)/(m_U\varsigma)$.
For $\varsigma$ small, $C\varsigma^2\le\tfrac12$, so $|\varkappa_\delta|\ge\tfrac12$ for all $\delta\in[0,\delta_1]$, $|\det\mathcal E^{\pm}_{\delta}|\ge\tfrac12 m_U\varsigma$, and the inverse bound follows from the formula for the inverse of a $2\times2$ matrix, which swaps the diagonal entries, changes the sign of the other two entries, and divides by the determinant.

For \ref{it:conv-transfer}, recall that for $\delta>0$ the fixed points are the solutions of Equation~\eqref{eq:wchi-recall} (\cref{lem:representation}), so $T_\delta$ carries the values at $z_j+\varsigma$ to the values at $z_j-\varsigma$.
For every $\delta\in[0,\delta_1]$ it is defined and bounded with bounded inverse by \ref{it:conv-matrices}.
Convergence follows from
\begin{gather*}
T_\delta-T_0
=(\mathcal E^{-}_{\delta}-\mathcal E^{-}_{0})(\mathcal E^{+}_{\delta})^{-1}
+\mathcal E^{-}_{0}
\bigl[(\mathcal E^{+}_{\delta})^{-1}-(\mathcal E^{+}_{0})^{-1}\bigr],\\
(\mathcal E^{+}_{\delta})^{-1}-(\mathcal E^{+}_{0})^{-1}
=(\mathcal E^{+}_{\delta})^{-1}
(\mathcal E^{+}_{0}-\mathcal E^{+}_{\delta})
(\mathcal E^{+}_{0})^{-1}.
\end{gather*}
The $C^1$-dependence of $T_0$ and $w_0(z_j)$ on $c_r$ needs a change of variables that follows the critical level.
The critical level $z_j(c_r)$ and the interval $I_j(c_r)$ move as $c_r$ changes, and differentiating $\tau\operatorname{Log}(\tau-\im0)$ with respect to $c_r$ at fixed $z$ gives a term that is singular at $z_j$.
We therefore write $z=z_j(c_r)+x$ and differentiate at fixed $x$.
In this variable, $\tilde\tau(x)=U(z_j+x)-c_r$ satisfies $\partial_{c_r}\tilde\tau=U'(z_j+x)/U'(z_j)-1=O(x)$, which cancels the singularity.
The details are in \cref{lem:pullback} of \cref{app:proofs}.
\end{proof}

\subsection{The limiting solution and the boundary value of \texorpdfstring{$\Za$}{Za}}

We now assemble the local solutions across all critical levels into a global limit and identify the boundary value of $\Za$.

\begin{proposition}\label{prop:limit-solutions}
Let $c_r$ be a regular value as above and define the \emph{limit problem} on $[0,\ha]$ as the system in Equation~\eqref{eq:wchi-recall} with $\delta=0$ on $[0,\ha]\setminus\{z_1,\dots,z_m\}$, continued across each $z_j$ by the representation in Equation~\eqref{eq:rep}, equivalently by the basis of \cref{prop:local}.
Then the solution $(w,\chibar)(\cdot;k,c_r+\im\delta)$ with values $(0,\Abar_\delta(\ha)V_\delta(\ha))$ at $z=\ha$ converges as $\delta\downarrow0$, uniformly on $[0,\ha]$ at rate $O\bigl(\delta(1+|\log(\delta/V_\ast)|)\bigr)$, to the unique solution $(w_\ast,\chibar_\ast)$ of the limit problem with the limiting values $(0,\Abar_0(\ha)V_0(\ha))$ at the top boundary.
The limit $w_\ast$ is continuous on $[0,\ha]$ and $C^1$ away from the critical levels, and at each $z_j$ its derivative has the logarithmic behavior and the jump in the renormalized derivative in Equations~\eqref{eq:renorm-deriv}--\eqref{eq:wjump} of \cref{prop:local}\ref{it:basis-log}.
The function $\chibar_\ast$ is Lipschitz on $[0,\ha]$, and $(w_\ast,\chibar_\ast)(0)$, $w_\ast(z_j)$ depend $C^1$ on $c_r$.
\end{proposition}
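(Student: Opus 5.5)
The plan is to assemble the local critical-layer results of \cref{prop:convergence} with standard regular ODE estimates on the complementary intervals, by propagating the solution downward from $z=\ha$ to $z=0$ with transfer matrices. Fix the intervals $I_j=[z_j-\varsigma,z_j+\varsigma]$ of \S\ref{subsec:local}, with $\varsigma\le\varsigma_0$ small enough that they are disjoint and lie in $(0,\ha)$. The complement $[0,\ha]\setminus\bigcup_j I_j$ is a finite union of closed intervals $J_0,\dots,J_m$ on which $|V_\delta|\ge|\tau|\ge m_U\varsigma$. By Hypothesis \ref{hyp:subsonic}, $\Rec(a^2-V_\delta^2)\ge\delta_0a^2$ there for $\delta\in[0,\delta_1]$.

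On each $J_i$ the coefficient matrix $\mathcal M_\delta(z)$ of Equation~\eqref{eq:wchi-recall} is continuous in $z$, bounded uniformly in $\delta$, and satisfies $\|\mathcal M_\delta-\mathcal M_0\|_{C^0(J_i)}\le C\delta$, since it is rational in $V_\delta$ with denominators bounded away from zero. Gronwall's inequality then shows that the propagators $P^{(i)}_\delta$ (from the upper to the lower endpoint of $J_i$) are uniformly bounded with uniformly bounded inverses and satisfy $\|P^{(i)}_\delta-P^{(i)}_0\|\le C\delta$. Across each $I_j$, \cref{prop:convergence}\ref{it:conv-transfer} gives the transfer maps $T^{(j)}_\delta\to T^{(j)}_0$ at rate $O(\delta(1+|\log(\delta/V_\ast)|))$, also uniformly bounded. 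The top data $(0,\Abar_\delta(\ha)V_\delta(\ha))$ converge at rate $O(\delta)$. A telescoping product estimate then gives convergence of the values at every interval endpoint, in particular at $z=0$, at rate $O(\delta\log(1/\delta))$.

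To upgrade endpoint convergence to uniform convergence on $[0,\ha]$, I would treat the two kinds of interval separately. On each $J_i$, Gronwall again gives $\sup_{J_i}|(w,\chibar)_\delta-(w,\chibar)_0|\le C(|\text{data difference}|+\delta)$. On each $I_j$, the solution equals the fixed point of \cref{lem:fixedpoint} with constants $(\alpha_\delta,\beta_\delta)=(\mathcal E^+_\delta)^{-1}(w,\chibar)_\delta(z_j+\varsigma)$. These constants converge at the log rate by \cref{prop:convergence}\ref{it:conv-matrices}. Combining linearity in $(\alpha,\beta)$, the bound in Equation~\eqref{eq:fp-bounds}, and Equation~\eqref{eq:delta-rate} applied to the fixed data $(\alpha_0,\beta_0)$ gives uniform convergence on $I_j$, including at $z_j$. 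The limit $(w_\ast,\chibar_\ast)$ is by construction the solution of the limit problem with the limiting top data. Uniqueness holds because a solution of the limit problem is determined by its values at $z_j+\varsigma$ through the invertible $\mathcal E^+_0$ and by regular ODE uniqueness on the $J_i$.

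The regularity statements then follow from the earlier results:
\begin{itemize}
\item $w_\ast$ is continuous and is $C^1$ off the critical levels (from \cref{lem:fixedpoint} and the regular ODE on the $J_i$).
\item The logarithmic expansion and the renormalized derivative jump of $w_\ast$ at each $z_j$ follow by writing $w_\ast=\alpha_0w^{(1)}+\beta_0w^{(2)}$ on $I_j$ and applying \cref{prop:local}\ref{it:basis-log}--\ref{it:quadratic}.
\item $\chibar_\ast$ is Lipschitz by Equation~\eqref{eq:fp-bounds} on each $I_j$ and is $C^1$ on the $J_i$.
\end{itemize}

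For the $C^1$ dependence on $c_r$, note that $P^{(i)}_0$ depends $C^1$ on $c_r$ by differentiable dependence on parameters. For this the intervals $J_i$ must move with $z_j(c_r)$, so I would use the shifted variable as in \cref{lem:pullback} and keep the endpoints $z_j(c_r)\pm\varsigma$ $C^1$ in $c_r$. The maps $T^{(j)}_0$ and $w_0(z_j)$ are $C^1$ by \cref{prop:convergence}\ref{it:conv-transfer}. Taking products then gives the $C^1$ dependence of $(w_\ast,\chibar_\ast)(0)$ and of $w_\ast(z_j)=\beta_0\,w^{(2)}(z_j)$.

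I expect the main obstacle to be this last step: checking that the moving decomposition of $[0,\ha]$ and the chain rule through the endpoint evaluations remain consistent with the pullback construction of \cref{lem:pullback}. The convergence part is routine once \cref{prop:convergence} is available.
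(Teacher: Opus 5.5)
Your proposal is correct and follows the paper's proof. Both partition $[0,\ha]$ into the critical neighborhoods $I_j$ and the critical-free intervals, use Gr\"onwall on the latter and the transfer maps $T_\delta$ of \cref{prop:convergence} across the former, and compose from $z=\ha$ down to $z=0$; your fixed-point argument for uniform convergence inside $I_j$ and your moving-endpoint chain rule for the $C^1$ dependence on $c_r$ spell out what the paper leaves as ``composing the finitely many maps.'' One small correction: $|\tau|\ge m_U\varsigma$ holds at the endpoints $z_j\pm\varsigma$ but not on all of the complementary intervals (a nonmonotone $U$ can approach $c_r$ without crossing it), and all you need there is the positive lower bound on $|\tau|$ that compactness gives, locally uniformly in $c_r$.
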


\begin{proof}
Partition $[0,\ha]$ into the local intervals $I_j$ and the closed intervals between them that contain no critical level.
On the latter intervals the coefficients of Equation~\eqref{eq:wchi-recall} are $C^2$ uniformly in $\delta\in[0,\delta_1]$ and Lipschitz in $\delta$, so the solution maps are invertible linear maps that converge at rate $O(\delta)$ by Gr\"onwall's inequality and depend $C^1$ on $c_r$.
The values at the top boundary also depend on $\delta$, and $Y_\delta(\ha):=(0,\Abar_\delta(\ha)V_\delta(\ha))$ satisfies $Y_\delta(\ha)-Y_0(\ha)=O(\delta)$ by uniform subsonicity.
Since the solution is obtained from $Y_\delta(\ha)$ by composing finitely many uniformly bounded maps, as described below, this $O(\delta)$ difference does not change the stated rate.
Across each $I_j$, \cref{lem:fixedpoint} identifies the solution with a fixed point, whose constants $(\alpha,\beta)$ depend linearly and invertibly on the values at $z_j+\varsigma$, with bounds uniform in $\delta$ by \cref{prop:convergence}\ref{it:conv-matrices}.
The same proposition gives convergence of $T_\delta$ at rate $O(\delta\log(1/\delta))$, together with $C^1$-dependence on $c_r$.
Composing the finitely many maps from $\ha$ down to $0$ proves the claims.
The interior description of $w_\ast$ is \cref{prop:local}\ref{it:basis-reg}--\ref{it:basis-log}.
\end{proof}

Recall the flux $\Jf=\Imc(\qbar\,\xi^{\ast})$ of Equation~\eqref{eq:flux-def}, the Wronskian-type flux $\Wc=\Imc(\Abar w'w^{\ast})$ of Equation~\eqref{eq:W-def}, and the relation $\Jf=\Wc/k^2$ for real $c$ from Equation~\eqref{eq:J-W}.
The theorem below is the compressible form of Miles's result.
The imaginary part of the boundary value is a sum of critical-level contributions, each proportional to $Q(z_j)|w_\ast(z_j)|^2/|U'(z_j)|$.

\begin{theorem}
\label{thm:limabs}
Let $c_r$ be a regular value of $U$ satisfying Hypotheses \ref{hyp:critical}--\ref{hyp:nonres} with $c_k$ replaced by $c_r$.
Then the following hold.
\begin{enumerate}[label=\textup{(\roman*)}]
\item\label{it:limabs-Z} $\Za(k,\cdot)$ extends continuously to the closed upper half-plane locally at $c_r$, that is, the limit
  \begin{equation}
  \Za^{+}(k,c_r):=\lim_{\delta\downarrow0}\Za(k,c_r+\im\delta)
  \end{equation}
  exists, is finite, is attained at rate $O\bigl(\delta(1+|\log(\delta/V_\ast)|)\bigr)$, written $O(\delta\log(1/\delta))$ in the convention of \S\ref{subsec:local}, and is $C^1$ in $c_r$ on the set of regular values satisfying Hypothesis \ref{hyp:nonres}.
\item\label{it:limabs-flux} the limiting flux $\Jf_\ast=\Wc_\ast/k^2$ of the solution normalized by $\xi(0)=1$, that is, by $w_\ast(0)=-\im kc_r$, is piecewise constant, vanishes on $(z_m,\ha)$, and jumps at each critical level by
  \begin{equation}\label{eq:Jjump}
  \Jf_\ast(z_j^+)-\Jf_\ast(z_j^-)
  =\frac{\pi}{k^2}\,
  \frac{\rhobar(z_j)\,Q(z_j)}{|U'(z_j)|}\;\bigl|w_\ast(z_j)\bigr|^2 .
  \end{equation}
\item\label{it:limabs-residue} consequently
  \begin{equation}\label{eq:residue}
  \Imc\Za^{+}(k,c_r)
  =\Jf_\ast(0)
  =-\frac{\pi}{k^{2}}\sum_{j=1}^{m}
  \frac{\rhobar(z_j)\,Q(z_j)}{|U'(z_j)|}\bigl|w_\ast(z_j)\bigr|^{2}
  =-\pi c_r^{2}\sum_{j=1}^{m}
  \frac{\rhobar(z_j)\,Q(z_j)}{|U'(z_j)|}\bigl|\hat y(z_j)\bigr|^{2},
  \end{equation}
  where $\hat y:=w_\ast/w_\ast(0)$ is the interface-normalized limiting solution.
\end{enumerate}
\end{theorem}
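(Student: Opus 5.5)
The plan is to read everything off \cref{prop:limit-solutions} and \cref{prop:local}, together with the formula for $\Za$ in \cref{lem:regular-solve} and the flux identities of \S\ref{subsec:flux-identities}.

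For \ref{it:limabs-Z}, I will write $\Za(k,c)=V(0)\chibar(0;k,c)/w(0;k,c)-\rhobar(0)g$ by Equation~\eqref{eq:Za-formula}. By \cref{prop:limit-solutions}, $(w,\chibar)(0;k,c_r+\im\delta)\to(w_\ast,\chibar_\ast)(0;c_r)$ at rate $O(\delta\log(1/\delta))$, and $V_\delta(0)=-c_r-\im\delta$. Hypothesis \ref{hyp:nonres} gives $w_\ast(0;c_r)\neq0$, so $|w(0;k,c_r+\im\delta)|$ stays bounded below for small $\delta$. The quotient therefore converges at the same rate. The $C^1$ dependence on $c_r$ follows because $(w_\ast,\chibar_\ast)(0)$ is $C^1$ in $c_r$ (\cref{prop:limit-solutions}) and its denominator is nonvanishing.

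For \ref{it:limabs-flux}, I rescale $w_\ast$ by the constant $-\im kc_r/w_\ast(0)$, so that $\xi(0)=1$. Since $c_r$ is real, the limit problem has real coefficients away from the critical levels, so $\Wc_\ast$ is constant on each component of $[0,\ha]\setminus\{z_j\}$: either $\Abar$ is real and the weighted Wronskian of $w$ and $w^\ast$ is conserved, or one uses \cref{lem:flux}\ref{it:flux-real} directly. The relation $\Jf_\ast=\Wc_\ast/k^2$ holds there by Equation~\eqref{eq:J-W}.

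On $(z_m,\ha)$ I would evaluate at $z=\ha$. There $w_\ast(\ha)=0$, so $\Wc_\ast(\ha)=\Imc(\Abar w'_\ast w_\ast^\ast)(\ha)=0$; equivalently, $w_\ast$ is a complex multiple of a real solution on that interval. The jump at each $z_j$ comes from \cref{prop:local}\ref{it:basis-wronskian}--\ref{it:quadratic}. Near $z_j$ the limiting solution is a combination $\alpha(w^{(1)},\chibar^{(1)})+\beta(w^{(2)},\chibar^{(2)})$. Equation~\eqref{eq:Wjump} then gives $\Wc(z_j^+)-\Wc(z_j^-)=\pi\rhobar(z_j)Q(z_j)|w_\ast(z_j)|^2/|U'(z_j)|$, and dividing by $k^2$ yields Equation~\eqref{eq:Jjump}. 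The rescaling is a constant multiple, so $|w_\ast(z_j)|^2$ refers to the normalized solution.

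For \ref{it:limabs-residue}, I telescope from $\ha$ down to $0$. Since $\Jf_\ast=0$ above $z_m$,
\[
\Jf_\ast(0)=-\sum_j\bigl[\Jf_\ast(z_j^+)-\Jf_\ast(z_j^-)\bigr],
\]
which is the middle expression in Equation~\eqref{eq:residue}. The identification $\Imc\Za^+=\Jf_\ast(0)$ uses $\Za=\qbar(0)-g$ under $\xi(0)=1$, with $g$ real, so $\Imc\Za^+=\Imc(\qbar_\ast(0)\xi^\ast(0))=\Jf_\ast(0)$. Here $\qbar_\ast$ is continuous up to $0$ because $\chibar_\ast$ is Lipschitz and $0$ is not a critical level. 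The last form follows by substituting $w_\ast=-\im kc_r\hat y$, which gives $|w_\ast(z_j)|^2=k^2c_r^2|\hat y(z_j)|^2$.

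The main obstacle I expect is the bookkeeping of which normalization the jump formula refers to, together with checking that $\Jf=\Wc/k^2$ survives up to each critical level. The latter is needed only on open intervals away from the critical levels, with one-sided limits supplied by \cref{prop:local}\ref{it:quadratic}, so it should be routine. I would also confirm that the limit taken from $\Imc c>0$ gives the sign stated in Equation~\eqref{eq:Wjump}.
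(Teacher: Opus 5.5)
Your proposal is correct and follows essentially the same route as the paper. For (i) you use the quotient formula \eqref{eq:Za-formula} with the convergence and $C^1$ dependence from \cref{prop:limit-solutions}; for (ii) you use conservation of $\Wc$ on critical-level-free intervals, $\Wc(\ha)=0$, and the jump \eqref{eq:Wjump} of \cref{prop:local}\ref{it:quadratic}, noting that rescaling by a complex constant multiplies $\Wc$ and $|w|^2$ by the same positive factor; and for (iii) you telescope the jumps and use the reality of $\xi(0)$ and of the hydrostatic term.
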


\begin{proof}
For \ref{it:limabs-Z}, by Hypothesis \ref{hyp:nonres}, $w_\ast(0)\neq0$, so by \cref{prop:limit-solutions} $|w(0;k,c_r+\im\delta)|\ge \tfrac12|w_\ast(0)|$ for small $\delta$.
Equation~\eqref{eq:Za-formula} and the uniform convergence of $(w,\chibar)(0)$ give existence and the rate, and together with the $C^1$-dependence of $(w_\ast,\chibar_\ast)(0)$ on $c_r$ from \cref{prop:limit-solutions} they give the smoothness in $c_r$.

For \ref{it:limabs-flux}, on each interval free of critical levels the limit solution solves a real-coefficient system, so $\Wc=\Imc(\Abar w_\ast'w_\ast^{\ast})$ is constant there because $\Wc'=\Imc\bigl((\Abar w')'w^{\ast}\bigr)+\Imc\bigl(\Abar|w'|^2\bigr) =\Imc\bigl((\rhobar k^2+G/V)|w|^2\bigr)+0=0$, all coefficients being real.
This is \cref{lem:flux}\ref{it:flux-real} combined with Equation~\eqref{eq:J-W}.
At the top boundary, $w_\ast(\ha)=0$ gives $\Wc(\ha)=0$, so $\Wc_\ast\equiv0$ on $(z_m,\ha)$.
The jumps are \cref{prop:local}\ref{it:quadratic}.
The normalization rescales the solution by a complex constant, which multiplies $\Wc$ and $u_1$ by the same positive factor, so Equation~\eqref{eq:Wjump} still holds after normalization, and $\Jf=\Wc/k^2$ by Equation~\eqref{eq:J-W}.

For \ref{it:limabs-residue}, write $[\Jf_\ast]_{z_j}:=\Jf_\ast(z_j^+)-\Jf_\ast(z_j^-)$ for the jump at $z_j$.
Since $\Jf_\ast$ vanishes on $(z_m,\ha)$ and is constant between critical levels, summing the jumps from $\ha$ down to $0$ gives $\Jf_\ast(0)=-\sum_j[\Jf_\ast]_{z_j}$.
The relation $\Imc\Za^+=\Imc\qbar^+(0)=\Jf_\ast(0)$, where $\qbar^+(0)$ is the limit of $\qbar(0)$ as $\delta\downarrow0$, holds because $\xi(0)=1$ is real and the hydrostatic term in Equation~\eqref{eq:Za-def} is real.
The final expression substitutes $|w_\ast(z_j)|^2=k^2c_r^2|\hat y(z_j)|^2$.
\end{proof}

\begin{remark}\label{rem:EP}
Equation~\eqref{eq:Jjump} gives the signed change of wave-momentum flux across each critical level.
When the sign condition for Miles growth holds, the flux is absorbed at the critical levels in proportion to the specific-vorticity gradient.
Equation~\eqref{eq:residue} shows that the air-pressure component in phase with the interface slope, equivalently shifted by a quarter wavelength from the interface displacement, is determined by the total critical-layer flux change.
This is the rigorous, compressible form of Miles's resonance argument \cite{Miles1957,Lighthill1962} and of the Eliassen--Palm theory, in which steady waves without dissipation leave the mean flow unchanged except where their momentum flux is absorbed, as at critical levels \cite{EliassenPalm1961,BookerBretherton1967,AndrewsMcIntyre1978}.
For $N^2=0$ there is no internal-gravity-wave radiation, so the flux changes only across the critical levels and vanishes above the highest one because $w(\ha)=0$.
\end{remark}

\begin{remark}\label{rem:resonance}
The map $c_r\mapsto w_\ast(0;c_r)$ is $C^1$ on intervals of regular values with the same number of critical levels (\cref{prop:limit-solutions}).
Hypothesis \ref{hyp:nonres} requires this trace to be nonzero.
It follows from \cref{lem:positivity} under the sign condition of \cref{cor:sufficient}, and it can otherwise be checked for a prescribed profile.
A zero of $w_\ast(0;c_r)$ means that the atmospheric boundary-value problem with the interface held at rest has a nontrivial solution at $c_r$.
When the equation for $w$ has solutions that oscillate in height, such zeros include resonances of the air layer between the interface held at rest and the top boundary, for example acoustic standing waves when the flow relative to the wave is supersonic.
In the uniform-wind example of \S\ref{subsec:KH}, the corresponding solutions in the uniformly subsonic regime are evanescent, that is, they do not oscillate in height, and they decay exponentially with height.
For a general wind profile, such solutions need not decay monotonically or exponentially with height.
Such a zero is a property of the limiting air problem alone.
\cref{prop:reduction} excludes coupled normal modes with $\hat\zeta=0$ only for $\Imc c\neq0$, and we do not apply it at the real phase speed $c_r$.
At $c_r=c_k$, such a zero means that $c_k$ is a zero of both $\Dw(k,\cdot)$ and $w_\ast(0;\cdot)$, the limit of the second factor of $\Edet(k,c,0)=\Dw(k,c)\,w(0;k,c)$, as described in \cref{rem:determinant}.
\end{remark}

The proof of the instability theorem in \S\ref{sec:instability} requires $\Za$ on the set $\mathbb D^+_{r_0}$ defined below, a closed neighborhood of $c_k$ in the closed upper half-plane.
Its boundary values at real phase speeds alone do not suffice.
The next proposition supplies the extension and the quantitative non-resonance bound.

\begin{proposition}
\label{prop:Zhalfdisc}
Assume Hypotheses \ref{hyp:reg}--\ref{hyp:nonres}.
There exist $r_0>0$ and $C,M_Z>0$ such that, with $\mathbb D^+_{r_0}:=\{c\in\C:\,|c-c_k|\le r_0,\ \Imc c\ge0\}$, the following hold.
\begin{enumerate}[label=\textup{(\roman*)}]
\item\label{it:Z-nopole} With $m_0:=\min_{|c_r-c_k|\le r_0}|w_\ast(0;c_r)|>0$, every $c\in\mathbb D^+_{r_0}$ satisfies $|w(0;k,c)|\ge m_0/2$.
Hence $\Za(k,\cdot)$ is holomorphic in the interior of $\mathbb D^+_{r_0}$, extends continuously to $\mathbb D^+_{r_0}$ with boundary value $\Za^{+}$ on the real interval $[c_k-r_0,c_k+r_0]$, and $|\Za(k,c)|\le M_Z$ there.
\item\label{it:Z-modulus} For all $c\in\mathbb D^+_{r_0}$,
\begin{equation}
\bigl|\Za(k,c)-\Za^{+}(k,c_k)\bigr|\le\omega\bigl(|c-c_k|\bigr),
\qquad
\omega(r):=C\,r\bigl(1+\bigl|\log(r/V_\ast)\bigr|\bigr).
\end{equation}
\item\label{it:Z-deriv} For $c$ in the interior of $\mathbb D^+_{r_0}$ with $|c-c_k|\le r_0/2$,
\begin{equation}
\bigl|\partial_c\Za(k,c)\bigr|
\le C\bigl(1+\bigl|\log(\Imc c/V_\ast)\bigr|\bigr).
\end{equation}
\end{enumerate}
\end{proposition}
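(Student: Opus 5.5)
The plan is to promote the one-parameter convergence of \cref{prop:limit-solutions} to a uniform statement on the two-parameter set $\mathbb D^+_{r_0}$, and then to read off the three claims from the quotient formula \eqref{eq:Za-formula}.

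\textbf{Step 1: joint continuity and rate.} First choose $r_0$ small enough that Hypotheses \ref{hyp:critical}--\ref{hyp:subsonic} hold with uniform constants for every $c_r\in[c_k-r_0,c_k+r_0]$. Then every such $c_r$ is regular, with the same number $m$ of critical levels $z_j(c_r)$, and these levels depend $C^1$ on $c_r$. Fix $\varsigma$ and $\delta_1$ as in \S\ref{subsec:local}, uniformly in $c_r$, and take $r_0\le\delta_1$. The constants in \cref{lem:fixedpoint} and \cref{prop:convergence} depend only on the listed quantities, so for every $c=c_r+\im\delta\in\mathbb D^+_{r_0}$ they give
\begin{equation*}
|(w,\chibar)(0;k,c)-(w_\ast,\chibar_\ast)(0;c_r)|\le C\delta\bigl(1+|\log(\delta/V_\ast)|\bigr),
\end{equation*}
with $C$ independent of $c_r$. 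Combined with the $C^1$ dependence of $(w_\ast,\chibar_\ast)(0;c_r)$ on $c_r$, this shows that $(w,\chibar)(0;k,\cdot)$ is continuous on $\mathbb D^+_{r_0}$.

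\textbf{Step 2: part \ref{it:Z-nopole}.} The minimum $m_0$ is positive by Hypothesis \ref{hyp:nonres} and continuity on a compact interval. Shrink $r_0$ until the error $C\delta(1+|\log(\delta/V_\ast)|)$ is at most $m_0/2$ for all $\delta\le r_0$; this gives $|w(0;k,c)|\ge m_0/2$ on $\mathbb D^+_{r_0}$. Formula \eqref{eq:Za-formula} then shows that $\Za$ is holomorphic in the interior (by \cref{lem:regular-solve}, since there are no poles), continuous up to the real segment with boundary value $\Za^+$, and bounded by some $M_Z$, because numerator and denominator are bounded and the denominator is bounded away from zero.

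\textbf{Step 3: part \ref{it:Z-modulus}.} Write $c=c_r+\im\delta$ and split
\begin{equation*}
\Za(k,c)-\Za^+(k,c_k)=\bigl[\Za(k,c)-\Za^+(k,c_r)\bigr]+\bigl[\Za^+(k,c_r)-\Za^+(k,c_k)\bigr].
\end{equation*}
The first bracket is at most $C\delta(1+|\log(\delta/V_\ast)|)$ by Step 1 and the lower bound on the trace. The second is at most $C|c_r-c_k|$, since $\Za^+$ is $C^1$ by \cref{thm:limabs}\ref{it:limabs-Z}. Both $\delta$ and $|c_r-c_k|$ are at most $r=|c-c_k|$, and $t\mapsto t(1+|\log(t/V_\ast)|)$ is increasing for small $t$ (shrinking $r_0$ if needed), so the sum is at most $\omega(|c-c_k|)$.

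\textbf{Step 4: part \ref{it:Z-deriv}, the main obstacle.} Convergence alone does not control derivatives, so I will use Cauchy's estimate on a disc centred at $c$. I try the radius $\rho=\Imc c/2$; the disc then lies in the interior of $\mathbb D^+_{r_0}$ because $|c-c_k|\le r_0/2$. Since only the modulus of continuity of $\Za-\Za^+(k,c_k)$ is available, the naive estimate gives $|\partial_c\Za|\le\rho^{-1}\sup_{|\zeta-c|=\rho}|\Za(\zeta)-\Za(c)|$. With $\omega$ this yields only $C(1+|\log\Imc c|)\,|c-c_k|/\Imc c$, which is not bounded when $\Imc c\ll|c-c_k|$. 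So I will instead compare with the boundary value at the nearby real point: write $\Za(\zeta)-\Za^+(k,\Rec\zeta)=O(\Imc\zeta\log)$ and use the $C^1$ regularity of $\Za^+$ along the real axis. The cleanest route, I expect, is to differentiate the solution itself. The quantity $\partial_c(w,\chibar)$ solves the system linearised in $c$, whose forcing carries $\partial_cV^{-1}=V^{-2}$, and integrating this through \eqref{eq:rep} produces at most a $\log(1/\delta)$ factor; this follows the pattern of the estimate of $\mathcal B_\delta$ in \cref{prop:convergence}. If that proves cumbersome, the fallback is Cauchy's estimate applied to $\Za(\zeta)-\Za^+(k,c_r)-(\zeta-c_r)\partial_{c_r}\Za^+(k,c_r)$. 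On the disc this function is $O(\delta\log(1/\delta))$ plus a term controlled by the $C^1$ modulus of $\Za^+$, and dividing by $\rho\sim\delta$ gives the stated logarithmic bound. That last step needs only the error $o(\delta)$ from the $C^1$ modulus, so a Lipschitz modulus for $\partial_{c_r}\Za^+$, which comes from \cref{lem:pullback}, would be required.
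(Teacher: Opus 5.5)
Your Steps 1--3 are the paper's argument. The uniform-in-$c_r$ rate for $(w,\chibar)(0)$, shrinking $r_0$ so that this rate is at most $m_0/2$, and the two-term split for part (ii) appear there in the same form.

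Step 4 is not finished as written. You state the right idea, comparing with the boundary value at the nearby real point. You then set it aside for two alternatives. The first, differentiating the solution, you do not carry out. The second is a fallback that you say requires a Lipschitz modulus of $\partial_{c_r}\Za^+$, supplied by \cref{lem:pullback}. That requirement is not needed, and it is also not available. \cref{lem:pullback} gives $C^1$ dependence on $c_r$ with a uniformly bounded derivative and says nothing about the modulus of continuity of that derivative. Subtracting $(\zeta-c_r)\partial_{c_r}\Za^+(k,c_r)$ cannot help in any case. On the circle this linear term differs from its value at $\Rec\zeta$ by $\im\,\Imc\zeta\,\partial_{c_r}\Za^+(k,c_r)=O(\delta)$, so the error stays $O(\delta)$.

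The idea you named already closes the argument without extra regularity. For $c=c_r+\im\delta$ take the circle $|c'-c|=\delta/2$. It lies in the interior of $\mathbb D^+_{r_0}$, since $\delta\le|c-c_k|\le r_0/2$. A point $c'=c_r'+\im\delta'$ on it has $\delta'\in[\delta/2,3\delta/2]$ and $|c_r'-c_r|\le\delta/2$. Then
\[
|\Za(k,c')-\Za(k,c)|\le|\Za(k,c')-\Za^{+}(k,c_r')|+|\Za^{+}(k,c_r')-\Za^{+}(k,c_r)|+|\Za^{+}(k,c_r)-\Za(k,c)| .
\]
The two outer terms are $O\bigl(\delta(1+|\log(\delta/V_\ast)|)\bigr)$ by the uniform rate of your Step 1; replacing $\delta$ by $\delta'$ changes only the constant. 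The middle term is at most $C\delta$ by the uniformly bounded derivative of $\Za^+$ on the segment.

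Cauchy's formula with radius $\delta/2$ then gives $|\partial_c\Za(k,c)|\le C(1+|\log(\delta/V_\ast)|)$. After division by $\delta/2$ the Lipschitz term contributes only a constant. This is exactly the paper's proof, so neither the linear subtraction nor the differentiated solution is needed.
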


\begin{proof}
For \ref{it:Z-nopole}, by Hypothesis \ref{hyp:critical} and the implicit function theorem there is $r_0>0$ such that every $c_r\in[c_k-r_0,c_k+r_0]$ is a regular value of $U$ with exactly $m$ critical levels $z_j(c_r)$ depending $C^1$ on $c_r$, and Hypotheses \ref{hyp:subsonic} and \ref{hyp:nonres} persist on this segment.
\cref{prop:limit-solutions} gives continuity of $w_\ast(0;\cdot)$, so $m_0>0$.
By \cref{prop:limit-solutions}, uniformly for $c_r$ in the segment, $|w(0;k,c_r+\im\delta)-w_\ast(0;c_r)|\le C\delta\bigl(1+|\log(\delta/V_\ast)|\bigr)$ for $\delta\in(0,\delta_1]$, where the uniformity in $c_r$ is part of \cref{lem:pullback}.
Shrinking $r_0$ so that $r_0\le\delta_1$ and $Cr_0\bigl(1+|\log(r_0/V_\ast)|\bigr)\le m_0/2$, every $c\in\mathbb D^+_{r_0}$ satisfies $|w(0;k,c)|\ge m_0/2$, since every such $c$ has $\Imc c\le r_0\le\delta_1$.
Poles of $\Za(k,\cdot)$ are zeros of $w(0;k,\cdot)$ (\cref{lem:regular-solve}), so $\Za$ is holomorphic in the interior of $\mathbb D^+_{r_0}$.
The same uniform convergence of $(w,\chibar)(0)$, together with the lower bound $|w(0;k,c)|\ge m_0/2$ for the denominator in Equation~\eqref{eq:Za-formula}, shows $|\Za(k,c_r+\im\delta)-\Za^{+}(k,c_r)|\le C\delta\bigl(1+|\log(\delta/V_\ast)|\bigr)$ uniformly in $c_r$.
Since $\Za^{+}$ is $C^1$ on the segment by \cref{thm:limabs}\ref{it:limabs-Z}, this gives continuity on the compact set $\mathbb D^+_{r_0}$, hence the bound $M_Z$.

For \ref{it:Z-modulus}, let $c=c_r+\im\delta\in\mathbb D^+_{r_0}$.
Then
\begin{align*}
\bigl|\Za(k,c)-\Za^{+}(k,c_k)\bigr|
&\le\bigl|\Za(k,c)-\Za^{+}(k,c_r)\bigr|
+\bigl|\Za^{+}(k,c_r)-\Za^{+}(k,c_k)\bigr|\\
&\le C\delta\bigl(1+|\log(\delta/V_\ast)|\bigr)+C|c_r-c_k| ,
\end{align*}
the second term by the uniform bound $|\partial_{c_r}\Za^{+}|\le C$ from \cref{lem:pullback}.
Since $\delta\le|c-c_k|$, $|c_r-c_k|\le|c-c_k|$, and $r\mapsto r(1+|\log(r/V_\ast)|)$ is increasing on $(0,V_\ast]$, both terms are dominated by $\omega(|c-c_k|)$ after shrinking $r_0\le V_\ast$.

For \ref{it:Z-deriv}, let $c=c_r+\im\delta$ lie in the interior of $\mathbb D^+_{r_0}$ with $|c-c_k|\le r_0/2$ and set $r_1:=\delta/2$, so the circle $|c'-c|=r_1$ lies in that interior ($r_1\le r_0/4$).
For $c'=c_r'+\im\delta'$ on that circle, $\delta'\in[\delta/2,3\delta/2]$ and $|c_r'-c_r|\le\delta/2$, so by the split used in \ref{it:Z-modulus},
\begin{align*}
|\Za(k,c')-\Za(k,c)|
&\le C\delta'\bigl(1+|\log(\delta'/V_\ast)|\bigr)
+C|c_r'-c_r|
+C\delta\bigl(1+|\log(\delta/V_\ast)|\bigr)\\
&\le C'\delta\bigl(1+|\log(\delta/V_\ast)|\bigr).
\end{align*}
Cauchy's formula for the derivative, $\partial_c\Za(k,c)=\frac1{2\pi\im}\oint_{|c'-c|=r_1} \frac{\Za(k,c')-\Za(k,c)}{(c'-c)^2}\,\dd c'$, then gives $|\partial_c\Za(k,c)|\le r_1^{-1}\sup_{|c'-c|=r_1}|\Za(k,c')-\Za(k,c)| \le C\bigl(1+|\log(\delta/V_\ast)|\bigr)$.
\end{proof}

\subsection{Nonvanishing of the limiting solution at the top critical levels}

Equation~\eqref{eq:residue} yields instability only if some $|w_\ast(z_j)|>0$.
Following the idea of \cite[Lemma~4.5]{BSWZ2016}, a monotonicity argument guarantees this at the top critical level, and at the next one down when $Q(z_m)\neq0$.
The new step in the compressible case is the exact cancellation in \cref{lem:H-monotone}.
For a solution of the limit problem and real $c$, introduce on each critical-level-free interval the quasi-derivative $w^{[1]}:=\Abar w'$, which is the flux variable in the divergence form of Equation~\eqref{eq:bvp}, and the real quadratic quantities
\begin{equation}\label{eq:uW-def}
u_2:=\Rec\bigl(w^{[1]}w^{\ast}\bigr),\qquad
u_3:=\bigl|w^{[1]}\bigr|^2 .
\end{equation}
Together with $u_1=|w|^2$ from \cref{prop:local}\ref{it:quadratic} and $\Wc=\Imc(w^{[1]}w^{\ast})$ from Equation~\eqref{eq:W-def}, they satisfy the pointwise identity
\begin{equation}\label{eq:conservation}
u_2^2+\Wc^2\equiv u_1u_3,
\end{equation}
and, since the coefficients are real there,
\begin{equation}\label{eq:u-system-real}
u_1'=\frac{2u_2}{\Abar},
\qquad
u_2'=\Bigl(\rhobar k^2+\frac GV\Bigr)u_1+\frac{u_3}{\Abar},
\qquad
u_3'=2\Bigl(\rhobar k^2+\frac GV\Bigr)u_2,
\qquad
\Wc'=0 .
\end{equation}

\begin{lemma}\label{lem:H-monotone}
On an interval free of critical levels define
\begin{equation}\label{eq:H-def}
H:=u_2-\frac{\Abar\,(U'+bV)}{V}\,u_1
\qquad(c=c_r\in\R,\ V=U-c_r).
\end{equation}
Then along solutions of Equation~\eqref{eq:u-system-real},
\begin{equation}\label{eq:H-derivative}
\begin{aligned}
H'
&=\rhobar k^2u_1
+\frac{u_3}{\Abar}
+\Abar\Bigl(\frac{U'+bV}{V}\Bigr)^2u_1
-2\,\frac{U'+bV}{V}\,u_2\\
&\ge
\rhobar k^2u_1
+\Bigl(\sqrt{\tfrac{u_3}{\Abar}}
-\sqrt{\Abar}\,\Bigl|\tfrac{U'+bV}{V}\Bigr|\sqrt{u_1}\Bigr)^{2}
\;\ge\;\rhobar k^2 u_1\;\ge\;0.
\end{aligned}
\end{equation}
Equality holds identically only for the trivial solution.
In particular $H$ is nondecreasing between critical levels.
\end{lemma}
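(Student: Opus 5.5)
The plan is to differentiate $H$ directly, using the real-coefficient system in Equation~\eqref{eq:u-system-real}, and to show that the singular terms involving $G/V$ cancel exactly. Write $\phi:=\Abar(U'+bV)/V$, so that $H=u_2-\phi u_1$. Then
\[
H'=u_2'-\phi'u_1-\phi u_1'
=\Bigl(\rhobar k^2+\frac GV\Bigr)u_1+\frac{u_3}{\Abar}-\phi'u_1-\frac{2\phi u_2}{\Abar}.
\]
The key computation is $\phi'$. By definition $G=(\Abar(U'+bV))'+b\Abar(U'+bV)$, hence $(\Abar(U'+bV))'=G-b\Abar(U'+bV)$. Using $V'=U'$, this gives
\[
\phi'=\frac{G-b\Abar(U'+bV)}{V}-\frac{\Abar(U'+bV)U'}{V^2}
=\frac GV-\Abar\frac{(U'+bV)(bV+U')}{V^2}
=\frac GV-\Abar\Bigl(\frac{U'+bV}{V}\Bigr)^2 .
\]
The $G/V$ terms therefore cancel, and we obtain the stated identity
\[
H'=\rhobar k^2u_1+\frac{u_3}{\Abar}+\Abar\theta^2u_1-2\theta u_2,
\qquad \theta:=\frac{U'+bV}{V}.
\]
This is the exact cancellation anticipated in the text, and it is the step I would verify most carefully. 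In particular, the $bV$ inside $\theta$ must combine with the $-b\Abar(U'+bV)$ coming from $G$ to produce a perfect square. I expect this to work because $V'=U'$ and $b$ is real.

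For the inequality, bound $|u_2|\le\sqrt{u_1u_3}$, which follows from the identity $u_2^2+\Wc^2=u_1u_3$ in Equation~\eqref{eq:conservation}. This gives $-2\theta u_2\ge-2|\theta|\sqrt{u_1u_3}$. Since $\Abar>0$ for real subsonic $c$, the remaining terms then complete the square:
\[
\Bigl(\sqrt{u_3/\Abar}-\sqrt{\Abar}\,|\theta|\sqrt{u_1}\Bigr)^2 .
\]
Dropping this nonnegative square yields $H'\ge\rhobar k^2u_1\ge0$.

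For the equality statement, suppose $H'\equiv0$ on a subinterval. Then $\rhobar k^2u_1\equiv0$, and since $\rhobar>0$ and $k>0$, this forces $w\equiv0$ there. Uniqueness for the regular system in Equation~\eqref{eq:wchi-recall} on critical-level-free intervals then gives the trivial solution. Monotonicity of $H$ between critical levels follows immediately.
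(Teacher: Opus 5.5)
Your proposal is correct and follows the paper's proof: you differentiate $H$ along the real system, use $(\Abar(U'+bV))'=G-b\Abar(U'+bV)$ and $V'=U'$ to cancel $G/V$ and produce the term $\Abar\bigl((U'+bV)/V\bigr)^2u_1$, and then apply $|u_2|\le\sqrt{u_1u_3}$ from $u_2^2+\Wc^2=u_1u_3$ and complete the square. The paper only asserts the equality case, and your argument for it is fine once you add that $w\equiv0$ on a subinterval forces $\chibar\equiv0$ there through the first equation of the system (since $\Abar V\neq0$), so that uniqueness applies.
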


\begin{proof}
When we differentiate Equation~\eqref{eq:H-def} using Equation~\eqref{eq:u-system-real}, the singular and $b$-dependent terms cancel identically by the definition of $G$ in Equation~\eqref{eq:SL}, since
\begin{equation}
\frac GV-\Bigl(\frac{\Abar(U'+bV)}{V}\Bigr)'
=\frac{b\Abar(U'+bV)}{V}+\frac{\Abar(U'+bV)U'}{V^2},
\qquad
\frac{\Abar(U'+bV)}{V}\,u_1'
=\frac{2(U'+bV)}{V}u_2,
\end{equation}
and regrouping yields the displayed equality.
The inequality is Cauchy--Schwarz, $|u_2|\le\sqrt{u_1u_3}$ from Equation~\eqref{eq:conservation}, after completing the square.
Positivity of $\Abar$ uses Hypothesis \ref{hyp:subsonic}.
\end{proof}

\begin{lemma}\label{lem:positivity}
Let $(w_\ast,\chibar_\ast)$ be the limit solution of \cref{prop:limit-solutions}, with $w(\ha)=0$ and $\chibar(\ha)=\Abar(\ha)V(\ha)$, so that $u_1(\ha)=u_2(\ha)=\Wc(\ha)=0$ and $u_3(\ha)>0$.
Then $u_1\ge0$ vanishes at most at isolated points, and $u_1(z_m)>0$.
If $m\ge2$ and $Q(z_m)\neq0$, then also $u_1(z_{m-1})>0$.
Moreover, if all $Q(z_j)$ have the same sign and $Q(z_m)\neq0$, then $u_1>0$ on $[0,z_m)$.
In particular $u_1(0)>0$, i.e. Hypothesis \ref{hyp:nonres} holds automatically, and $\Wc(0)\neq0$.
\end{lemma}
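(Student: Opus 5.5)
We argue downward from the top boundary, using the monotone functional $H$ of \cref{lem:H-monotone} on the critical-level-free interval $(z_m,\ha]$ and the jump formula of \cref{prop:local}\ref{it:quadratic} at each $z_j$. The first claim, that $u_1$ vanishes only at isolated points, is local. Away from critical levels $w_\ast$ solves a regular linear second-order equation, so a zero of $w_\ast$ with $w_\ast'=0$ would force $w_\ast\equiv0$ on that interval. Continuing through each $I_j$ by the basis of \cref{prop:local}, $(\alpha,\beta)=0$ would make the solution vanish on all of $[0,\ha]$, which contradicts $u_3(\ha)>0$. Hence every zero of $w_\ast$ is simple (or isolated at a $z_j$), and the zeros are isolated.

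For $u_1(z_m)>0$ we work on $(z_m,\ha)$, where $V$ has a fixed sign. At $\ha$ we have $u_1=u_2=0$, so $H(\ha)=0$, and by \cref{lem:H-monotone} $H<0$ on $(z_m,\ha)$; strictness holds because $H'\ge\rhobar k^2u_1>0$ except at isolated points. Suppose $u_1(z_m)=0$. Then $\beta=0$ by \cref{prop:local}\ref{it:quadratic}, and near $z_m$ the solution is $\alpha w^{(1)}$ with $w^{(1)}=VE(1+O(\varsigma))\in C^1$. For $w=\alpha w^{(1)}$ one gets $u_2=\Abar|\alpha|^2 w^{(1)}w^{(1)\prime}\approx\Abar|\alpha|^2V U'$ and $\Abar(U'+bV)u_1/V\approx\Abar|\alpha|^2 VU'$, so $H\to0$ as $z\downarrow z_m$. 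The expected difficulty is showing this limit is exactly $0$: the leading terms cancel, and every remainder is $O(|z-z_m|)$ because $w^{(1)}\in C^1$ vanishes linearly. This contradicts $H<0$ together with $H$ nondecreasing, since $H(z_m^+)=0$ would give $H\ge0$ on $(z_m,\ha)$. Hence $u_1(z_m)>0$.

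Next, the sign of $\Wc$. On $(z_m,\ha)$ we have $\Wc\equiv0$, and by \eqref{eq:Wjump} $\Wc(z_m^-)=-\pi\rhobar Q(z_m)u_1(z_m)/|U'(z_m)|$, which is nonzero when $Q(z_m)\ne0$. This gives $u_1(z_{m-1})>0$: if $u_1(z_{m-1})=0$, then $\beta=0$ at $z_{m-1}$, and the solution there is a complex multiple of a real solution with $\Wc\equiv0$ on $I_{m-1}$. But $\Wc$ is constant on $(z_{m-1},z_m)$ and equals $\Wc(z_m^-)\ne0$, a contradiction.

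When all $Q(z_j)$ share a sign, the jumps in \eqref{eq:Wjump} all have that sign. Summing from the top, $\Wc$ is monotone across levels and $|\Wc|\ge|\Wc(z_m^-)|>0$ on $[0,z_m)$. Since $\Wc^2\le u_1u_3$ by \eqref{eq:conservation}, $u_1>0$ at every point of $[0,z_m)$ away from the critical levels. At a level $z_j$ with $j<m$, $u_1(z_j)=0$ would again force $\Wc\equiv0$ near $z_j$, which contradicts $\Wc\ne0$ on both adjacent intervals. In particular $u_1(0)>0$, which gives Hypothesis~\ref{hyp:nonres}, and $\Wc(0)\ne0$.

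The main obstacle is the boundary analysis of $H$ at $z_m$ in the degenerate case $\beta=0$. There we need $H(z_m^+)=0$ precisely, using $w^{(1)}\in C^1$ and $\chibar^{(1)}=O(|z-z_m|^3)$. A fallback is to express $H=\Rec(\chibar w^{\ast})/V$ via \eqref{eq:A-def} (since $\Abar w'-\Abar(U'+bV)w/V=\chibar/V$). With this form the limit is immediate from $\chibar^{(1)}=O(|z-z_m|^3)$.
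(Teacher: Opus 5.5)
Your proposal is correct and follows essentially the paper's own argument. Isolated zeros come from uniqueness for the regular system, and $u_1(z_m)>0$ comes from contradicting $H(z_m^+)=0=H(\ha)$ with the strict monotonicity of \cref{lem:H-monotone}. The remaining claims follow by propagating the nonzero flux $\Wc(z_m^-)$ downward through the same-sign jumps of \eqref{eq:Wjump}, using that $\beta=0$ at a level forces $\Wc\equiv0$ there.

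The step you flag as the main obstacle is in fact immediate and needs no cancellation. When $w=\alpha w^{(1)}$, the two terms $u_2=O(|z-z_m|)$ and $\Abar(U'+bV)u_1/V=O(|z-z_m|)$ each vanish separately. Your identity $H=\Rec(\chibar w^{\ast})/V$ gives the same conclusion.
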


\begin{proof}
Consider the limiting solution at the real phase speed $c_r$.

We first show that $u_1$ vanishes at most at isolated points on any critical-level-free interval.
If $u_1(z^\flat)=0$ at an interior point $z^\flat$, then $w(z^\flat)=0$, and $w'(z^\flat)\neq0$, for otherwise $\chibar(z^\flat)=\Abar\bigl[Vw'-(U'+bV)w\bigr](z^\flat)=0$ as well and the solution would be trivial by uniqueness for the regular system in Equation~\eqref{eq:wchi-recall}.
Hence $z^\flat$ is an isolated zero of $w$.
If $w_\ast(z_j)=0$ at a critical level, \cref{prop:local}\ref{it:quadratic} shows that the nontrivial local solution is a nonzero multiple of the regular basis solution $w^{(1)}=VE\,(1+O(\varsigma))$.
Since $U'(z_j)\neq0$, $z_j$ is a simple, hence isolated, zero as well.

To prove $u_1(z_m)>0$, suppose $u_1(z_m)=0$.
By \cref{prop:local}\ref{it:quadratic}, on $I_m$ the solution is a complex multiple of the real regular solution $(w^{(1)},\chibar^{(1)})$.
The quantities $(u_1,u_2,u_3,\Wc)$ extend continuously, and in fact $C^1$, across $z_m$, and $\Wc\equiv0$ on $I_m$.
The function $H$ of Equation~\eqref{eq:H-def} also extends continuously to $z_m$, because in $H=u_2-(\Abar(U'+bV)/V)\,u_1$ the pole of $1/V$ is canceled by $u_1=|\alpha w^{(1)}|^2=O((z-z_m)^2)$.
Since $u_2(z_m)=\Rec\bigl(w^{[1]}w^{\ast}\bigr)(z_m)=0$ as well, we get $H(z_m)=0$.
On the other hand $H(\ha)=0$, from $u_1(\ha)=u_2(\ha)=0$, and by \cref{lem:H-monotone} together with the isolated-zeros property, $H(\ha)-H(z_m)\ge k^2\int_{z_m}^{\ha}\rhobar\,u_1>0$ for a nontrivial solution.
This contradicts $H(z_m)=H(\ha)=0$.
So $u_1(z_m)>0$.

On $(z_m,\ha)$ the values at $\ha$ and the coefficients are real, so the solution is real there and $\Wc\equiv0$ on $(z_m,\ha)$.
By Equation~\eqref{eq:Wjump},
\begin{equation}
\Wc\big|_{(z_{m-1},z_m)}
=\Wc(z_m^-)
=-\pi\,\frac{\rhobar(z_m)Q(z_m)}{|U'(z_m)|}\,u_1(z_m)\;\neq\;0
\quad\text{when }Q(z_m)\neq0 ,
\end{equation}
using $\Wc'=0$ between levels, as in Equation~\eqref{eq:u-system-real}.
Since $u_1(z)=0$ forces $\Wc(z)=0$ at that point, as $\Wc=\Imc(w^{[1]}w^{\ast})$, we get $u_1>0$ on all of $(z_{m-1},z_m)$.
Moreover $u_1(z_{m-1})=0$ is impossible because, by \cref{prop:local}\ref{it:quadratic} applied at $z_{m-1}$, it would make the solution locally a multiple of the real regular basis solution and hence $\Wc(z_{m-1}^+)=0$, contradicting $\Wc(z_{m-1}^+)=\Wc(z_m^-)\neq0$.
Thus $u_1(z_{m-1})>0$.

If all $Q(z_j)$ have the same sign and $Q(z_m)\neq0$, the jumps in Equation~\eqref{eq:Wjump} accumulate monotonically as $z$ decreases.
The flux $\Wc$ is constant between levels and $\Wc(z_j^-)=\Wc(z_j^+)-\pi(\rhobar Q/|U'|)(z_j)u_1(z_j)$ with every increment of the same sign (or zero), and the first increment, at $z_m$, is nonzero because $Q(z_m)\neq0$ and $u_1(z_m)>0$.
Hence $\Wc\neq0$ on $[0,z_m)\setminus\{z_j\}$, which as above forces $u_1>0$ there and at each $z_j$, $j<m$, and at $z=0$.
The inequality $u_1(0)>0$ is Hypothesis \ref{hyp:nonres}, and $\Wc(0)\neq0$.
When $Q(z_m)=0$ the flux vanishes identically on $(z_{m-1},\ha)$ and no conclusion about $u_1(z_{m-1})$ can be drawn from this argument.
This is why the assumption $Q(z_m)\neq0$ is needed here.
\end{proof}
\section{The compressible Miles instability}
\label{sec:instability}

In this section, we prove the instability theorem.
We first show that without a critical layer the root of the dispersion relation near $c_k$ stays real, so that the corresponding mode is neutral (\cref{thm:necessity}).
This isolates the critical layer as the only source of growth near $c_k$.
With a critical layer, we compare the dispersion function with its linearization at $c_k$ on a neighborhood of $c_k$ in the upper half-plane.
A boundary version of Rouch\'e's theorem (\cref{lem:rouche}), applied with the boundary value of $\Za$ from \cref{thm:limabs}, yields the unstable root, its expansion, and its growth rate (\cref{thm:main}).
The positivity result of \cref{lem:positivity} then turns the residue formula in Equation~\eqref{eq:residue} into the sign criterion (\cref{cor:sufficient,cor:monotone}).

\subsection{Neutrality and persistence without a critical layer}

Assume that $c_k$ lies outside the range of $U$.
Then Equation~\eqref{eq:Xform}, the self-adjoint form of the displacement equation, is coercive near $c_k$.
Coercive means that its quadratic form is bounded below by positive multiples of the $L^2$ norms of $X'$ and $X$ (\cref{lem:coercive}).
This excludes zeros of the interface trace $w(0;k,c)$ and gives $|\Imc\Za(k,c)|\le C|\Imc c|$.
Consequently the root of the dispersion relation near $c_k$ remains real and simple.
This extends \cite[Lemma 3.1]{BSWZ2016} to compressible air.

\begin{lemma}\label{lem:coercive}
Assume Hypotheses \ref{hyp:reg}, \ref{hyp:noslip}, and \ref{hyp:subsonic}.
Suppose that $c_k\notin U([0,\ha])$ and set $d:=\operatorname{dist}\bigl(c_k,U([0,\ha])\bigr)>0$, the distance from $c_k$ to the range $U([0,\ha])$ of $U$.
There are $r_\ast\in(0,\min\{d/4,c_k/2\})$ and $m_F,M_F,M_F',C_Z>0$, depending only on the profiles, $d$, $k$, and $\delta_0$, such that for all $c\in\C$ with $|c-c_k|\le r_\ast$,
\begin{equation}\label{eq:F-bounds}
\begin{gathered}
\Rec F\ge m_F>0,\qquad
\Rec V^2\ge\tfrac12d^2,\qquad
|F|+|V^2/\rhobar|\le M_F,\\
|\Imc F|+k^2|\Imc(V^2/\rhobar)|\le M_F'\,|\Imc c| .
\end{gathered}
\end{equation}
Moreover, the following hold.
\begin{enumerate}[label=\textup{(\roman*)}]
\item\label{it:coercive-nozero} The top-normalized solution satisfies $w(0;k,c)\neq0$.
In particular Hypothesis \ref{hyp:nonres} holds on the real interval $[c_k-r_\ast,c_k+r_\ast]$, and $\Za(k,\cdot)$ is holomorphic on the whole neighborhood $\{|c-c_k|<r_\ast\}$ of $c_k$ in $\C$.
\item\label{it:coercive-real} $\Za(k,c)\in\R$ for real $c$ with $|c-c_k|\le r_\ast$.
\item\label{it:coercive-slope} $|\Imc\Za(k,c)|\le C_Z\,|\Imc c|$.
\end{enumerate}
\end{lemma}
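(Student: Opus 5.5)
The plan is to prove the pointwise coefficient bounds \eqref{eq:F-bounds} first, then use them in the energy identity from the proof of \cref{lem:flux}\ref{it:flux-herglotz}.

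\emph{Coefficient bounds.} For real $c$ with $|c-c_k|\le r_\ast\le d/4$ we have $|V|=|U-c|\ge 3d/4$ on $[0,\ha]$. Writing $c=c_r+\im\delta$ with $|\delta|\le r_\ast$ gives
\[
\Rec V^2=(U-c_r)^2-\delta^2\ge \tfrac{9}{16}d^2-\tfrac1{16}d^2=\tfrac12 d^2 .
\]
Hypothesis \ref{hyp:subsonic} gives $\Rec(a^2-V^2)\ge\delta_0a^2$, as already noted after Equation~\eqref{eq:A-def}. Then $F=a^2V^2/(\rhobar(a^2-V^2))$ is a continuous function of $(z,c)$ on a compact set. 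At real $c$ it is real and at least $a^2(d/2)^2/(\rhobar a^2)>0$, so after shrinking $r_\ast$ we get $\Rec F\ge m_F$ uniformly by continuity. The bounds $M_F$ and $M_F'$ follow because $F$ and $V^2$ are holomorphic in $c$, with uniformly bounded $c$-derivatives, and real on the real axis. Hence $|\Imc F(c)|=|\Imc(F(c)-F(c_r))|\le \sup|\partial_cF|\,|\delta|$, and the same argument handles $V^2/\rhobar$.

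\emph{Parts (i) and (ii).} For (i), suppose $w(0;k,c)=0$ with $|c-c_k|\le r_\ast$. Then $X=\rhobar w/(\im kV)$ is well defined, since $V\neq0$, and it vanishes at both ends. The solution is regular here because there is no critical level: the coefficients of Equation~\eqref{eq:wchi} are smooth even at real $c$, and this covers real $c$ too. Multiplying \eqref{eq:Xform} by $X^\ast$ and integrating by parts with zero boundary terms gives
\[
\int_0^{\ha}\bigl(F|X'|^2+k^2V^2|X|^2/\rhobar\bigr)\dd z=0 .
\]
Taking real parts and using $\Rec F\ge m_F$ and $\Rec V^2\ge d^2/2$ forces $X\equiv0$, hence $w\equiv0$. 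This contradicts $w'(\ha)=1$. Holomorphy of $\Za$ on the full disc then follows from \eqref{eq:Za-formula}, since the system is regular and holomorphic in $c$ on the whole disc, not only on half-planes. For (ii), note that for real $c$ the system \eqref{eq:wchi} and the data \eqref{eq:canonical} are real. So $w$ and $\chibar$ are real, and \eqref{eq:Za-formula} is real.

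\emph{Part (iii).} Normalize $X(0)=1$ and reuse the identity from \cref{lem:flux}:
\[
-\qbar(0)=\int_0^{\ha}\bigl(F|X'|^2+k^2V^2|X|^2/\rhobar\bigr)\dd z .
\]
Taking real parts and using coercivity gives $m_F\|X'\|^2+k^2(d^2/2)\|X/\sqrt{\rhobar}\|^2\le|\qbar(0)|$. Taking imaginary parts gives
\[
|\Imc\Za|=|\Imc\qbar(0)|\le M_F'|\Imc c|\bigl(\|X'\|^2+\|X/\sqrt{\rhobar}\|^2\bigr)\le C|\Imc c|\,|\qbar(0)| .
\]
Finally, $|\qbar(0)|=|\Za+g|$ is bounded on the closed disc by continuity, using (i) and compactness, after shrinking $r_\ast$ slightly. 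This gives $C_Z$.

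\emph{Main obstacle.} The only delicate point is making sure the constants do not blow up. This is the uniform bound on $|\Za|$ and the lower bound on $|w(0)|$, both obtained by compactness. A weighted trace inequality could make the bound explicit, but it is not needed here.
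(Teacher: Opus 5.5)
Your proof is correct. For the coefficient bounds and parts (i)--(ii) it follows the paper closely, and the differences lie in how the constants are obtained.

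\emph{Coefficient bounds.} You prove $\Rec F\ge m_F$ by uniform continuity from the real axis, and the $M_F'$ bound from $\Imc F(c)=\Imc\bigl(F(c)-F(\Rec c)\bigr)$ together with a bound on $\partial_cF$. The paper instead uses the explicit identity $\Rec\bigl[V^2(a^2-V^2)^\ast\bigr]=a^2\Rec V^2-|V|^4$ and the closed formulas $\Imc V^2=-2(U-\Rec c)\Imc c$ and $\Imc F=-2\Imc c\,(U-\Rec c)\,a^4/\bigl(\rhobar|a^2-V^2|^2\bigr)$.

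\emph{Part (iii).} This is the more substantive difference. You bound $\|X\|_{H^1}^2$ by $|\qbar(0)|$ through the energy identity. You then bound $|\qbar(0)|=|\Za+g|$ by compactness, using (i) and the continuity of $w(0;k,\cdot)$ and $\chibar(0;k,\cdot)$ on the closed disc. The paper uses the same kind of compactness for $M_\ast$ in \cref{thm:necessity}. For this step, however, the paper writes $X=\Upsilon+\tilde X$ with the lift $\Upsilon=(\ha-z)/\ha$, tests with $\tilde X^\ast$, and absorbs the right-hand side by Young's inequality. This gives $\|X\|_{H^1}\le C$ with $C$ depending only on the coercivity and boundedness constants in \eqref{eq:F-bounds} and on $\ha$.

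\emph{Comparison.} Your route is shorter, but your $C_Z$ comes from a non-explicit lower bound on the interface trace over the disc. The paper's route makes $C_Z$ explicit in the coercivity constants. That makes the claimed dependence ``only on the profiles, $d$, $k$, and $\delta_0$'' transparent, and it would carry over uniformly to families of profiles.
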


\begin{proof}
\emph{Bounds in Equation~\eqref{eq:F-bounds}.} For $|c-c_k|\le r_\ast\le d/4$ we have $|U-\Rec c|\ge3d/4$ and $|\Imc c|\le r_\ast$, so $\Rec V^2=(U-\Rec c)^2-(\Imc c)^2\ge\tfrac9{16}d^2-\tfrac1{16}d^2 =\tfrac12d^2$, and with $F=a^2V^2/(\rhobar(a^2-V^2))$,
\begin{equation}
\Rec\bigl[V^2\,(a^2-V^2)^{\ast}\bigr]
=a^2\Rec V^2-|V|^4
\ge a^2\bigl[(U-\Rec c)^2-r_\ast^2\bigr]
-\bigl[(U-\Rec c)^2+r_\ast^2\bigr]^2 ,
\end{equation}
and since $(U-\Rec c)^2+r_\ast^2\le(1-\tfrac12\delta_0)a^2$ for $r_\ast$ small by Hypothesis \ref{hyp:subsonic}, the right-hand side is bounded below by $a^2\bigl[\tfrac12\delta_0(U-\Rec c)^2-2r_\ast^2\bigr] \ge a^2\bigl[\tfrac{9}{32}\delta_0d^2-2r_\ast^2\bigr]>0$ for $r_\ast$ small.
Dividing by $\rhobar|a^2-V^2|^2$ gives $\Rec F\ge m_F>0$.
The upper bounds follow from compactness of $[0,\ha]\times\{|c-c_k|\le r_\ast\}$ and the positive lower bounds for $\rhobar$ and $|a^2-V^2|$.
The last bound in Equation~\eqref{eq:F-bounds} holds because $\Imc V^2=-2(U-\Rec c)\Imc c$ and $\Imc F=-2\Imc c\,(U-\Rec c)\,a^4/(\rhobar|a^2-V^2|^2)$.
Note also $\Rec(V^2/\rhobar)=\Rec V^2/\rhobar\ge d^2/(2\sup\rhobar)=:m_V>0$.
In particular $V$, $a^2-V^2$, and $F$ have no zeros on $[0,\ha]\times\{|c-c_k|\le r_\ast\}$, so the coefficients of the system in Equation~\eqref{eq:wchi} are continuous in $z$ and holomorphic in $c$ on $\{|c-c_k|<r_\ast\}$, including at real phase speeds.
Hence the top-normalized solution and $\Za$ extend holomorphically there whenever $w(0;k,\cdot)\neq0$.

For \ref{it:coercive-nozero}, suppose $w(0;k,c)=0$.
Then $X=\rhobar\,w/(\im kV)$ solves $(FX')'=k^2(V^2/\rhobar)X$ with $X(0)=X(\ha)=0$.
The reduction of \cref{prop:SLform}\ref{it:X} is valid for the present $c$, because its computation uses only $V\neq0$ and $a^2-V^2\neq0$, which hold for every $c$ with $|c-c_k|\le r_\ast$, including the real phase speeds, for which there is no critical level.
When we test the equation with $X^{\ast}$ over $(0,\ha)$ and integrate by parts, we obtain
\begin{equation}
\int_0^{\ha}\Bigl[F|X'|^2+k^2\tfrac{V^2}{\rhobar}|X|^2\Bigr]=0 ,
\end{equation}
whose real part is at least $m_F\|X'\|_{L^2}^2+k^2m_V\|X\|_{L^2}^2$ by Equation~\eqref{eq:F-bounds}, so $X\equiv0$, hence $(w,\chibar)\equiv0$, contradicting the values at $z=\ha$ given in Equation~\eqref{eq:canonical}.
For real $c_r$ in the interval $[c_k-r_\ast,c_k+r_\ast]$ there are no critical levels, so the limit solution of \cref{prop:limit-solutions} is the top-normalized solution at $c=c_r$ itself and Hypothesis \ref{hyp:nonres} follows.

For \ref{it:coercive-real}, when $c$ is real the coefficients of the system in Equation~\eqref{eq:wchi} and the values at $z=\ha$ given in Equation~\eqref{eq:canonical} are real, so $(w,\chibar)$ is real-valued and the quotient in Equation~\eqref{eq:Za-formula} is real.

For \ref{it:coercive-slope}, normalize $\xi(0)=1$, which is possible by \ref{it:coercive-nozero}, so $X(0)=1$, $X(\ha)=0$.
Write $X=\Upsilon+\tilde X$ with the lift $\Upsilon(z):=(\ha-z)/\ha$, a function with the same boundary values as $X$, and $\tilde X(0)=\tilde X(\ha)=0$.
When we test the equation with $\tilde X^{\ast}$ and integrate by parts, we obtain
\begin{equation}
\int_0^{\ha}\Bigl[F|\tilde X'|^2+k^2\tfrac{V^2}{\rhobar}|\tilde X|^2\Bigr]
=-\int_0^{\ha}\Bigl[F\Upsilon'(\tilde X')^{\ast}
+k^2\tfrac{V^2}{\rhobar}\Upsilon\tilde X^{\ast}\Bigr].
\end{equation}
Taking real parts, using Equation~\eqref{eq:F-bounds}, and absorbing the right-hand side by Young's inequality gives $\|\tilde X\|_{H^1}\le C$, hence $\|X\|_{H^1}\le C$, with $C$ independent of $c$.
When we test with $X^{\ast}$ and use $X(\ha)=0$ and $X(0)=1$, we obtain
\begin{equation}
\qbar(0)=F(0)X'(0)
=-\int_0^{\ha}
\Bigl[F|X'|^2+k^2\tfrac{V^2}{\rhobar}|X|^2\Bigr]\dd z ,
\end{equation}
so by Equation~\eqref{eq:F-bounds}, $|\Imc\Za(k,c)|=|\Imc\qbar(0)|\le CM_F'\,|\Imc c|=:C_Z|\Imc c|$.
\end{proof}

\begin{theorem}\label{thm:necessity}
Assume Hypotheses \ref{hyp:reg}, \ref{hyp:noslip}, and \ref{hyp:subsonic}, and suppose that $c_k\notin U([0,\ha])$.
With $r_\ast$ as in \cref{lem:coercive}, there is $\eps_k>0$ such that the following hold for all $\eps\in(0,\eps_k)$.
\begin{enumerate}[label=\textup{(\roman*)}]
\item\label{it:nec-neutral} Every normal mode of the linearized problem (\cref{prop:reduction}) with wavenumber $k$ and $|c-c_k|\le r_\ast$ has $\Imc c=0$.
\item\label{it:nec-persist} The dispersion relation $D(k,\cdot,\eps)=0$ has exactly one root $c_\eps$ in the closed neighborhood $\{|c-c_k|\le r_\ast\}$ of $c_k$.
It is real and simple, and it is real-analytic in $\eps$, that is, given by a convergent power series in $\eps$, with
\begin{equation}\label{eq:neutral-branch}
c_\eps=c_k+\eps\,\frac{\Za(k,c_k)}{2kc_k\coth(k\hw)}+O(\eps^{2}),
\qquad
\Za(k,c_k)\in\R.
\end{equation}
Without a critical layer, the root near $c_k$ persists but remains real, so that the corresponding mode is neutral.
\end{enumerate}
\end{theorem}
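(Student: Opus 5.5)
The plan is to work with the quotient form $D(k,c,\eps)=\Dw(k,c)-\eps\Za(k,c)$ on the full complex disc $\{|c-c_k|\le r_\ast\}$. By \cref{lem:coercive}\ref{it:coercive-nozero}, $\Za(k,\cdot)$ is holomorphic on this disc, including at real $c$, because there is no critical level. So $D$ is holomorphic in $c$ and affine in $\eps$, and all modes there are zeros of $D$ by \cref{lem:determinant}\ref{it:det-nonvanish}. By \cref{lem:coercive}, $|\Za|$ is bounded on the closed disc by some $M$: the $H^1$ bound on $X$ gives $|\qbar(0)|\le C$ through the identity for $\qbar(0)$. Shrinking $r_\ast$ if necessary, $\Dw(k,\cdot)$ has only the simple zero $c_k$ in the disc, and $|\Dw|\ge\mu>0$ on the circle $|c-c_k|=r_\ast$.

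For the existence part of \ref{it:nec-persist}, I would take $\eps_k<\mu/M$ and apply Rouch\'e's theorem on that circle. This gives exactly one zero $c_\eps$ of $D(k,\cdot,\eps)$ in the open disc, counted with multiplicity, so the zero is simple. Shrinking $\eps_k$ further, $\eps M/\inf|\Dw|$ stays small on an annulus, which excludes zeros on the boundary circle. Hence there is exactly one zero in the closed disc.

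To show the zero is real, I would use the reflection symmetry. For real $c$, \cref{lem:coercive}\ref{it:coercive-real} says $\Za$ is real, and $\Dw$ is a real polynomial, so $D(k,c^\ast,\eps)=D(k,c,\eps)^\ast$ by the Schwarz reflection principle. If $c_\eps$ is a zero, so is $c_\eps^\ast$, which also lies in the disc. Uniqueness then forces $c_\eps=c_\eps^\ast$. This proves \ref{it:nec-neutral}: any mode with $|c-c_k|\le r_\ast$ is a zero of $D$, hence equals $c_\eps$, hence has $\Imc c=0$.

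For analyticity in $\eps$, $D$ is jointly holomorphic in $(c,\eps)$ and satisfies $\partial_cD(k,c_k,0)=2kc_k\coth(k\hw)\neq0$. The holomorphic implicit function theorem therefore gives a holomorphic branch $c(\eps)$ near $\eps=0$. Uniqueness identifies this branch with $c_\eps$ for small $\eps$, possibly after shrinking $\eps_k$. Differentiating $D(k,c(\eps),\eps)=0$ at $\eps=0$ gives $c'(0)=\Za(k,c_k)/\partial_c\Dw(k,c_k)$, which is the expansion \eqref{eq:neutral-branch}. The coefficients are real because $c_\eps$ is real for real $\eps$.

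The main obstacle is making sure the constants are uniform, namely the bound $M$ on $|\Za|$ on the whole disc and the lower bound on $|\Dw|$ on the circle. The first comes from the coercive estimates in \cref{lem:coercive}. The second requires $r_\ast$ to avoid the other root $-c_k$, which holds since $r_\ast<c_k/2$.
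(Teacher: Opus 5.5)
Your proof is correct. For part (ii) it follows the paper's argument: Rouch\'e's theorem on the circle $|c-c_k|=r_\ast$ against $\Dw$, Schwarz reflection to force the unique root to be real, and the holomorphic implicit function theorem for the power series.

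The genuine difference is in part (i). The paper proves neutrality directly. It takes imaginary parts of $\Dw(k,c)=\eps\Za(k,c)$ and uses the slope bound $|\Imc\Za(k,c)|\le C_Z|\Imc c|$ of \cref{lem:coercive}\ref{it:coercive-slope}. This gives $2k\coth(k\hw)\,\Rec c\,|\Imc c|\le\eps C_Z|\Imc c|$, and hence the explicit threshold $\eps_k=kc_k\coth(k\hw)/C_Z$. You instead obtain (i) as a corollary of the counting in (ii): every mode with $\Imc c\neq0$ is a zero of $D$, and the only zero in the disc is real.

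Your route is more economical. It needs only holomorphy, a uniform bound on $|\Za|$, and reality of $\Za$ on the real axis, so the slope estimate is never used. In fact, the paper's separate argument for (i) is logically redundant given its own proof of (ii). What the paper's route buys is an independent explanation of the mechanism. Without a critical layer, the coercive energy identity makes the out-of-phase air pressure only $O(\Imc c)$. It therefore cannot balance $\Imc\Dw\approx 2kc_k\coth(k\hw)\,\Imc c$ for small $\eps$. By contrast, a critical layer produces an $O(1)$ value of $\Imc\Za^{+}$.

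Two cosmetic points. No shrinking of $r_\ast$ is needed, since $r_\ast<c_k/2$ already keeps $-c_k$ out of the disc, as you note yourself. The strict Rouch\'e inequality $\eps|\Za|<|\Dw|$ on the circle already excludes zeros of $D$ there, so the extra annulus argument is unnecessary.
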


\begin{proof}
For \ref{it:nec-neutral}, let $(w,c)$ be a normal mode with $|c-c_k|\le r_\ast$.
Suppose $\Imc c\neq0$.
Then \cref{prop:reduction} gives $\hat\zeta\neq0$, and we normalize $\hat\zeta=1$.
When we take imaginary parts of the dispersion relation in Equation~\eqref{eq:dispersion} and use \cref{lem:coercive}\ref{it:coercive-slope}, we obtain
\begin{equation}
2k\coth(k\hw)\,\Rec c\,|\Imc c|
=|\Imc\Dw(k,c)|=\eps\,|\Imc\Za(k,c)|\le\eps\,C_Z\,|\Imc c| .
\end{equation}
Since $\Rec c\ge c_k-r_\ast\ge c_k/2>0$, choosing $\eps_k:=kc_k\coth(k\hw)/C_Z$ forces $\Imc c=0$.

For \ref{it:nec-persist}, Equations~\eqref{eq:Dw-def} and \eqref{eq:ck} give the exact quadratic form $\Dw(k,c)=k\coth(k\hw)\,(c^2-c_k^2)$.
Hence, on the neighborhood $\{|c-c_k|\le r_\ast\}$, the function $D(k,c,\eps)=k\coth(k\hw)\,(c^2-c_k^2)-\eps\Za(k,c)$ is holomorphic in $c$ by \cref{lem:coercive}\ref{it:coercive-nozero} and affine in $\eps$, with $D(k,c_k,0)=0$ and $\partial_cD(k,c_k,0)=2kc_k\coth(k\hw)\neq0$.
The holomorphic implicit function theorem gives $\eps_1>0$ and a unique analytic function $\eps\mapsto c_\eps$, defined for $|\eps|<\eps_1$, with $c_\eps=c_k$ and $\partial_\eps c_\eps=\Za(k,c_k)/(2kc_k\coth(k\hw))$ at $\eps=0$, which gives Equation~\eqref{eq:neutral-branch}.
Uniqueness in the whole neighborhood, and simplicity, follow from Rouch\'e's theorem on the circle $|c-c_k|=r_\ast$.
On this circle, $|\Dw(k,c)|=k\coth(k\hw)\,|c-c_k|\,|c+c_k|\ge k\coth(k\hw)\,r_\ast c_k$, since $|c+c_k|\ge2c_k-r_\ast\ge c_k$, while $\eps|\Za|\le\eps M_\ast$ with $M_\ast:=\max_{|c-c_k|\le r_\ast}|\Za(k,c)|<\infty$.
For $\eps<\eps_k':=k\coth(k\hw)r_\ast c_k/M_\ast$ the functions $D$ and $\Dw$ have equally many zeros in this neighborhood, namely the one simple zero $c_k$ of $\Dw$.
The other root $-c_k$ of $\Dw$ lies outside this neighborhood because $r_\ast\le c_k/2$.
The function $\Za(k,\cdot)$ is real on the real interval $[c_k-r_\ast,c_k+r_\ast]$ (\cref{lem:coercive}\ref{it:coercive-real}) and holomorphic on this neighborhood, so the Schwarz reflection principle gives $\Za(k,c^{\ast})=\Za(k,c)^{\ast}$ and $D(k,c^{\ast},\eps)=D(k,c,\eps)^{\ast}$.
The unique zero in this neighborhood is therefore real.
After shrinking $\eps_k$ to $\min\{\eps_k,\eps_1,\eps_k'\}$, all preceding estimates hold simultaneously.
\end{proof}

\subsection{The unstable root and the sign criterion}

The unstable root is found by comparing the dispersion function with its linearization at $c_k$ on a small set in the upper half-plane.
Because the boundary values of $\Za$ at real phase speeds enter, we need Rouch\'e's theorem for a region whose boundary contains a real interval, on which the functions are only continuous.
We use the following boundary version of Rouch\'e's theorem.

\begin{lemma}
\label{lem:rouche}
Let $c_0\in\R$, $r>0$, and $D^{+}:=\{c\in\C:\,|c-c_0|<r,\ \Imc c>0\}$, with closure $\overline{D^{+}}$ and boundary $\partial D^{+}$.
Let $\varphi,\psi$ be holomorphic in $D^{+}$ and continuous on $\overline{D^{+}}$, with $|\varphi-\psi|<|\psi|$ on $\partial D^{+}$.
Then $\varphi$ and $\psi$ have finitely many zeros in $D^{+}$ and equally many, counted with multiplicity, and neither vanishes on $\partial D^{+}$.
\end{lemma}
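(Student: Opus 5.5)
The plan is to reduce the statement to the classical argument principle on a slightly shrunken region, where both functions are holomorphic up to the boundary. The difficulty is that $\varphi,\psi$ are only continuous on the real segment of $\partial D^{+}$.

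\textbf{Step 1: no zeros on the boundary, and a uniform margin.} On $\partial D^{+}$ we have $|\varphi-\psi|<|\psi|$, so $\psi\neq0$ there. Also $|\varphi|\ge|\psi|-|\varphi-\psi|>0$, so $\varphi$ does not vanish on $\partial D^{+}$ either.

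The set $\partial D^{+}$ is compact, and both $|\psi|-|\varphi-\psi|$ and $|\psi|$ are continuous and positive on it. Hence there is $\eta>0$ with
\[
|\psi|-|\varphi-\psi|\ge2\eta \quad\text{and}\quad |\psi|\ge2\eta \quad\text{on } \partial D^{+}.
\]
Both functions are uniformly continuous on the compact set $\overline{D^{+}}$. Therefore there is a neighborhood $N$ of $\partial D^{+}$ in $\overline{D^{+}}$ on which
\[
|\varphi-\psi|<|\psi| \quad\text{and}\quad |\psi|\ge\eta .
\]
In particular, neither function vanishes in $N$.

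\textbf{Step 2: finiteness of zeros.} The zeros of $\psi$ in $D^{+}$ lie in the compact set $K:=\overline{D^{+}}\setminus N$, which is contained in the open set $D^{+}$. A holomorphic function that is not identically zero has only finitely many zeros in a compact subset of its domain. The function $\psi$ is not identically zero, since it is nonzero on $N$ and $D^{+}$ is connected. The same argument applies to $\varphi$.

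\textbf{Step 3: shrinking the contour.} For small $t>0$ consider the region
\[
D_t:=\{c\in\C:\ |c-c_0|<r-t,\ \Imc c>t\}.
\]
Its closure lies in $D^{+}$, and its boundary is a piecewise-smooth Jordan curve. For $t$ small, $\partial D_t\subset N$ and $K\subset D_t$; this holds because $\partial D_t$ converges to $\partial D^{+}$ in Hausdorff distance.

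On $\partial D_t$ both functions are holomorphic in a neighborhood, and $|\varphi-\psi|<|\psi|$ there. The classical Rouch\'e theorem therefore gives equal zero counts for $\varphi$ and $\psi$ in $D_t$, counted with multiplicity.

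All zeros of either function in $D^{+}$ lie in $K\subset D_t$. Hence these counts equal the counts in $D^{+}$.

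\textbf{Main obstacle.} The only delicate point is the geometric claim $\partial D_t\subset N$, $K\subset D_t$ for small $t$. It follows from compactness: $N$ contains a relative $\rho$-neighborhood of $\partial D^{+}$ for some $\rho>0$, and every point of $\partial D_t$ lies within distance $O(t)$ of $\partial D^{+}$. The corners at $c_0\pm r$ need a brief check, but the distance estimate still holds there.
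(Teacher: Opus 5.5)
Your proposal is correct and takes essentially the same route as the paper. Both shrink to $D_t=\{|c-c_0|<r-t,\ \Imc c>t\}$, use uniform continuity on $\overline{D^{+}}$ to carry the strict inequality and the nonvanishing from $\partial D^{+}$ into the collar $\overline{D^{+}}\setminus D_t$, and then apply the classical Rouch\'e theorem on $D_t$.

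The paper settles your geometric step with a single observation: every point of $\overline{D^{+}}\setminus D_t$ lies within distance $t$ of $\partial D^{+}$, by vertical projection to the diameter or radial projection to the arc. That gives $\partial D_t\subset N$ and $K\subset D_t$ together, more precisely than your Hausdorff-convergence remark. The paper also obtains finiteness directly from Rouch\'e on $D_t$ rather than from your separate identity-theorem step.
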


\begin{proof}
Set $\mathfrak m:=\min_{\partial D^{+}}\bigl(|\psi|-|\varphi-\psi|\bigr)>0$.
Since $\varphi$ and $\psi$ are uniformly continuous on the compact set $\overline{D^{+}}$, they have a common modulus of continuity $\omega_0$, that is, a nondecreasing function on $[0,\infty)$ with $\omega_0(t)\to0$ as $t\downarrow0$ such that $|\varphi(c)-\varphi(c')|\le\omega_0(|c-c'|)$ and $|\psi(c)-\psi(c')|\le\omega_0(|c-c'|)$ for all $c,c'\in\overline{D^{+}}$.
For $t\in(0,r/2)$ let $D_t:=\{c:\,|c-c_0|<r-t,\ \Imc c>t\}$, a convex set with $\overline{D_t}\subset D^{+}$ whose boundary is a piecewise-smooth simple closed curve.
Every $c\in\overline{D^{+}}\setminus D_t$ lies within distance $t$ of $\partial D^{+}$.
If $\Imc c\le t$, its vertical projection to the real diameter lies in $\partial D^{+}$ at distance $\le t$.
If $|c-c_0|\ge r-t$, its radial projection to the semicircle does, because the projection preserves the sign of the imaginary part.
Hence, for such $c$ with nearest boundary point $c'$,
\begin{equation}
|\psi(c)|\ge|\psi(c')|-\omega_0(t)\ge\mathfrak m+|\varphi-\psi|(c')-\omega_0(t),
\qquad
|\varphi-\psi|(c)\le|\varphi-\psi|(c')+2\omega_0(t),
\end{equation}
so that $|\varphi-\psi|(c)<|\psi(c)|$ and $|\varphi(c)|\ge|\psi(c)|-|\varphi-\psi|(c)\ge\mathfrak m-3\omega_0(t)>0$ whenever $3\omega_0(t)<\mathfrak m$.
Choose such a $t$.
Then $\varphi$ and $\psi$ have no zeros in $\overline{D^{+}}\setminus D_t$, in particular none on $\partial D^{+}$, the strict inequality $|\varphi-\psi|<|\psi|$ holds on $\partial D_t$, and $\varphi,\psi$ are holomorphic on a neighborhood of $\overline{D_t}\subset D^{+}$.
Rouch\'e's theorem on $D_t$, whose boundary is a simple closed curve, gives equal, finite zero counts there, which are the zero counts in $D^{+}$.
\end{proof}

\begin{theorem}\label{thm:main}
Let $k>0$ and assume Hypotheses \ref{hyp:reg}--\ref{hyp:sigma}.
Suppose
\begin{equation}\label{eq:mainhyp}
\Imc\Za^{+}(k,c_k)>0,
\end{equation}
with $\Za^+$ the boundary value of \cref{thm:limabs}.
Let $r_0$ and $M_Z$ be as in \cref{prop:Zhalfdisc}, with $r_0$ shrunk once more so that $r_0\le c_k$ and $\Imc\Za^{+}(k,c_r)\ge\tfrac12\Imc\Za^{+}(k,c_k)>0$ on the real interval $[c_k-r_0,c_k+r_0]$, and set
\begin{equation}\label{eq:c1}
d_1:=\partial_c\Dw(k,c_k)=2kc_k\coth(k\hw),
\qquad
c_1:=\frac{\Za^{+}(k,c_k)}{d_1},
\qquad
R:=\max\Bigl\{\frac{4M_Z}{d_1},\ |c_1|+c_k\Bigr\}.
\end{equation}
Then there are $\eps_0>0$ and $C_0>0$ such that the following hold for every $\eps\in(0,\eps_0)$.
\begin{enumerate}[label=\textup{(\roman*)}]
\item\label{it:main-exist} The dispersion relation $D(k,\cdot,\eps)=\Dw-\eps\Za$ has exactly one root $c_\eps$ in the set $D^{+}_{r_0}:=\{|c-c_k|<r_0,\ \Imc c>0\}$, whose closure is the set $\mathbb D^{+}_{r_0}$ of \cref{prop:Zhalfdisc}.
It is simple.
Every root in $D^{+}_{r_0}$, hence $c_\eps$ itself, satisfies $|c-c_k|\le(2M_Z/d_1)\,\eps$.
There are no roots on the real interval $[c_k-r_0,c_k+r_0]$ in the sense of boundary values, that is, $\Dw(k,c_r)\neq\eps\Za^{+}(k,c_r)$ there.
Equivalently, the holomorphic dispersion function $\Edet(k,\cdot,\eps)$ of \cref{def:determinant} has exactly one zero in $D^{+}_{r_0}$ (\cref{lem:determinant}\ref{it:det-nonvanish}).
\item\label{it:main-expansion} The root satisfies
  \begin{equation}\label{eq:expansion-rem}
  \bigl|c_\eps-c_k-\eps\,c_1\bigr|
  \le C_0\,\eps^{2}\bigl(1+\log(1/\eps)\bigr).
  \end{equation}
  In particular $\Imc c_\eps\ge\tfrac12\eps\,\Imc c_1>0$ for small $\eps$, and the linearized problem has a normal mode growing like $e^{k\Imc c_\eps\,t}$.
\item\label{it:main-rate} The growth rate satisfies
  \begin{equation}\label{eq:growthrate}
  \gamma_\eps:=k\,\Imc c_\eps
  =-\,\eps\,\frac{\pi\,c_k\tanh(k\hw)}{2}
  \sum_{j=1}^{m}
  \frac{\rhobar(z_j)\,Q(z_j)}{|U'(z_j)|}\,
  \bigl|\hat y(z_j)\bigr|^{2}
  \;+\;O\bigl(\eps^{2}(1+\log(1/\eps))\bigr),
  \end{equation}
  with $Q=U''+(g/a^2)U'=\rhobar(U'/\rhobar)'$ and $\hat y$ the interface-normalized limiting solution of \cref{thm:limabs}.
\end{enumerate}
\end{theorem}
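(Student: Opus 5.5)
The plan is to apply \cref{lem:rouche} on $D^{+}_{r_0}$ with $\varphi(c):=\Dw(k,c)-\eps\Za(k,c)$ and the comparison function $\psi(c):=d_1(c-c_k)-\eps\Za^{+}(k,c_k)$. By \cref{prop:Zhalfdisc}\ref{it:Z-nopole}, $\varphi$ is holomorphic in $D^{+}_{r_0}$ and continuous on its closure. The function $\psi$ has exactly one zero, $c_k+\eps c_1$, which lies in $D^{+}_{r_0}$ for small $\eps$ because $\Imc c_1>0$.

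I will first localize. The exact factorization $\Dw(k,c)=k\coth(k\hw)(c-c_k)(c+c_k)$ gives $|\Dw(k,c)|\ge \tfrac{d_1}{2}|c-c_k|$ on $\mathbb D^+_{r_0}$, using $\Rec c\ge0$ and hence $|c+c_k|\ge c_k$. A root therefore satisfies $|c-c_k|\le 2\eps M_Z/d_1$.

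On the real interval there are no boundary roots. The first reason is $\Imc\Dw(k,c_r)=0$ while $\Imc\Za^{+}(k,c_r)\ge\tfrac12\Imc\Za^+(k,c_k)>0$. This is also what will make $|\psi|>|\varphi-\psi|$ on the diameter.

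For the boundary comparison I write $\varphi-\psi=[\Dw-d_1(c-c_k)]-\eps[\Za(k,c)-\Za^+(k,c_k)]$. The first bracket is $k\coth(k\hw)(c-c_k)^2$, and the second is bounded by $\eps\,\omega(|c-c_k|)$ by \cref{prop:Zhalfdisc}\ref{it:Z-modulus}.

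\emph{Semicircle} $|c-c_k|=r_0$. Here $|\psi|\ge d_1r_0-\eps M_Z$. After shrinking $r_0$ so that $k\coth(k\hw)r_0\le d_1/4$, the difference is at most $\tfrac14 d_1 r_0+2\eps M_Z$. This is strictly smaller than $|\psi|$ for $\eps$ small.

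\emph{Diameter} $c=c_r$ real. Here $|\psi|\ge \max\{d_1|c_r-c_k|-\eps|\Za^+(k,c_k)|,\ \eps\Imc\Za^+(k,c_k)\}$. The difference satisfies $|\varphi-\psi|\le k\coth(k\hw)|c_r-c_k|^2+\eps\,\omega(|c_r-c_k|)$. I split into two ranges.
\begin{enumerate}[label=\textup{(\alph*)}]
\item $|c_r-c_k|\le \rho$ with a small fixed $\rho$. Here a pure bound with $\omega$ would fail. Instead I compare imaginary parts: the imaginary part of $\varphi$ is $-\eps\Imc\Za^+(k,c_r)$, which is at most $-\tfrac12\eps\Imc\Za^+(k,c_k)$ in absolute value from below, so $\varphi\ne0$ there. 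To get the strict Rouch\'e inequality I will use the symmetric form $|\varphi-\psi|<|\varphi|+|\psi|$ of Rouch\'e if the one-sided form fails. This is a minor modification of \cref{lem:rouche}, proved by the same shrinking argument.
\item $|c_r-c_k|\ge\rho$. Here the $d_1|c_r-c_k|$ term dominates for small $\eps$.
\end{enumerate}
This diameter comparison, near $c_k$ where $\omega$ has a logarithm, is the delicate step.

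Uniqueness and simplicity then follow, and the same holds for the zeros of $\Edet$ by \cref{lem:determinant}\ref{it:det-nonvanish}.

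For the expansion I write $d_1(c_\eps-c_k-\eps c_1)=-k\coth(k\hw)(c_\eps-c_k)^2+\eps[\Za(k,c_\eps)-\Za^+(k,c_k)]$. With $|c_\eps-c_k|\le C\eps$, \cref{prop:Zhalfdisc}\ref{it:Z-modulus} bounds the right side by $C\eps^2(1+\log(1/\eps))$.

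For the growth rate, $\gamma_\eps=k\Imc c_\eps=\eps k\Imc\Za^+(k,c_k)/d_1+O(\eps^2\log)$. Since $k/d_1=\tanh(k\hw)/(2c_k)$, substituting the residue formula \eqref{eq:residue} at $c_r=c_k$ gives \eqref{eq:growthrate}. Finally, \cref{lem:determinant}\ref{it:det-modes} turns the root into a normal mode.
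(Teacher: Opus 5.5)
Your proof is correct, but its counting step differs from the paper's. The paper first localizes all roots to $|c-c_k|\le(2M_Z/d_1)\eps$. It then applies the one-sided \cref{lem:rouche} only on the shrinking half-disc $D^+_{R\eps}$, with the same comparison function $\ell=\psi$. There $|D-\ell|\le k\coth(k\hw)R^2\eps^2+\eps\,\omega(R\eps)=O(\eps^2(1+\log(1/\eps)))$ uniformly, while $|\ell|\ge d_1\min\{c_k,\Imc c_1\}\eps$ on the whole boundary. So no case split is needed, and the same bound evaluated at $c_\eps$ is the expansion.

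You instead work on the fixed set $D^+_{r_0}$ and treat the diameter with the symmetric form of Rouch\'e. This is valid, and simpler than your write-up suggests, but you must state why it applies. On all of $[c_k-r_0,c_k+r_0]$ one has $\Imc\varphi(c_r)=-\eps\Imc\Za^+(k,c_r)\le-\tfrac12\eps\Imc\Za^+(k,c_k)<0$ and $\Imc\psi(c_r)=-\eps\Imc\Za^+(k,c_k)<0$. Hence $\varphi/\psi\notin(-\infty,0]$, and $|\varphi-\psi|<|\varphi|+|\psi|$ holds strictly. Knowing only $\varphi\neq0$ would not suffice, and with this observation your split into (a) and (b) is superfluous. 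The boundary version of the symmetric form does follow, as you say, from the shrinking argument of \cref{lem:rouche}. Apply it to the positive margin $\min_{\partial D^+}(|\varphi|+|\psi|-|\varphi-\psi|)$ and use the argument principle for $\varphi/\psi$ on $\partial D_t$.

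Your claim that a pure $\omega$-bound fails on the diameter is an artifact of splitting at a fixed $\rho$. If you split instead at $|c_r-c_k|=A\eps$ with $A\ge2|\Za^+(k,c_k)|/d_1$, the one-sided inequality holds there too. This is in effect what the paper's choice of $D^+_{R\eps}$ together with localization does. The extra shrinking of $r_0$ on the semicircle is also unnecessary, since $r_0\le c_k$ already gives $k\coth(k\hw)r_0^2\le\tfrac12d_1r_0$. If you do shrink, it is your localization that carries uniqueness back to the $D^+_{r_0}$ of the statement.

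As for what each route buys: yours makes the diameter trivial by using the sign of $\Imc\Za^+$ along the whole interval, which the statement grants through the choice of $r_0$. The paper's counting step uses only the modulus of continuity of $\Za$ at $c_k$ and $\Imc c_1\neq0$, with the interval sign condition needed only to exclude real boundary roots. That is the form reused in \cref{prop:suppression}.
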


The proof compares $D$ with the affine function $\ell(c)=d_1(c-c_k-\eps c_1)$, a polynomial of degree one in $c$, on the small set $D^{+}_{R\eps}:=\{|c-c_k|<R\eps,\ \Imc c>0\}$.
A preliminary estimate places every root in $D^{+}_{r_0}$ within $(2M_Z/d_1)\eps$ of $c_k$.
The modulus of continuity of $\Za$ in \cref{prop:Zhalfdisc}\ref{it:Z-modulus} bounds $D-\ell$ by $O(\eps^2\log(1/\eps))$ on the boundary of $D^{+}_{R\eps}$.
Then \cref{lem:rouche} shows that $D$, like $\ell$, has exactly one zero in $D^{+}_{R\eps}$.

\begin{proof}
Let $c$ range over the closed set $\mathbb D^+_{r_0}$ of \cref{prop:Zhalfdisc}, the closure of $D^{+}_{r_0}$, on which $\Za(k,\cdot)$ is holomorphic in the interior, continuous up to the boundary, and bounded by $M_Z$.

Set $\ell(c):=d_1\bigl(c-c_k-\eps c_1\bigr)$, whose unique zero $c_k+\eps c_1$ lies in the open upper half-plane ($\Imc c_1=\Imc\Za^+(k,c_k)/d_1>0$) and in $D^{+}_{R\eps}$ ($|c_1|<R$).
By Equations~\eqref{eq:Dw-def} and \eqref{eq:ck}, $\Dw(k,c)=k\coth(k\hw)\,(c^{2}-c_k^{2})$ exactly, so $\Dw(k,c)-d_1(c-c_k)=k\coth(k\hw)(c-c_k)^{2}$, and \cref{prop:Zhalfdisc}\ref{it:Z-modulus} gives, for $|c-c_k|\le R\eps$,
\begin{equation}\label{eq:D-ell}
|D(k,c,\eps)-\ell(c)|
\le k\coth(k\hw)\,R^2\eps^2+\eps\,\omega(R\eps)
\le C_R\,\eps^{2}\bigl(1+\log(1/\eps)\bigr)
=:\varrho(\eps),
\end{equation}
where the constant $C_R$ absorbs $\log(R/V_\ast)$.

On the semicircle $|c-c_k|=R\eps$, $\Imc c\ge0$, we have $|\ell(c)|\ge d_1(R-|c_1|)\eps\ge d_1c_k\eps$.
On the real interval $[c_k-R\eps,c_k+R\eps]$, we have $|\ell(c_r)|\ge d_1\eps\,\Imc c_1$.
Hence for $\eps\le\eps_0$ small,
\begin{equation}\label{eq:boundary-margin}
\varrho(\eps)<d_1\,\min\{c_k,\Imc c_1\}\,\eps\le|\ell|
\qquad\text{on the whole boundary of $D^{+}_{R\eps}$.}
\end{equation}

If $D(k,c,\eps)=0$ with $c\in D^{+}_{r_0}$, then $|\Dw(k,c)|=\eps|\Za(k,c)|\le\eps M_Z$.
Since $r_0\le c_k$ gives $|c+c_k|\ge2c_k-|c-c_k|\ge c_k$, we have $|\Dw(k,c)|=k\coth(k\hw)|c-c_k||c+c_k|\ge\tfrac{d_1}2|c-c_k|$, whence $|c-c_k|\le(2M_Z/d_1)\,\eps\le\tfrac12R\eps$.
On the real interval $[c_k-r_0,c_k+r_0]$, a root in the sense of boundary values would require $\Imc\Za^{+}(k,c_r)=0$, since $\Dw$ is real there, and this is excluded by $\Imc\Za^{+}\ge\tfrac12\Imc\Za^{+}(k,c_k)>0$.

Apply \cref{lem:rouche} with $D^{+}=D^{+}_{R\eps}$, $\varphi=D(k,\cdot,\eps)$, and $\psi=\ell$.
By Equations~\eqref{eq:D-ell}--\eqref{eq:boundary-margin} its assumption holds, and $\ell$ has exactly one zero there, which is simple because $\ell$ is affine.
Hence $D$ has exactly one root $c_\eps$ in $D^{+}_{R\eps}$, and it is simple.
By the localization, every root in $D^{+}_{r_0}$ satisfies $|c-c_k|\le\tfrac12R\eps$, hence lies in $D^{+}_{R\eps}$, so $c_\eps$ is the only root in $D^{+}_{r_0}$.
The statement about $\Edet$ follows from \cref{lem:determinant}\ref{it:det-nonvanish}, since $w(0;k,\cdot)\neq0$ on $\mathbb D^+_{r_0}$ (\cref{prop:Zhalfdisc}\ref{it:Z-nopole}).

Since $D(k,c_\eps,\eps)=0$ and $|c_\eps-c_k|\le R\eps$,
\begin{equation}
d_1\,|c_\eps-c_k-\eps c_1|=|\ell(c_\eps)|
=|\ell(c_\eps)-D(k,c_\eps,\eps)|\le\varrho(\eps),
\end{equation}
which is Equation~\eqref{eq:expansion-rem} with $C_0:=C_R/d_1$.
Each root with $\Imc c>0$ is a normal mode by \cref{lem:determinant}\ref{it:det-modes}.
Item \ref{it:main-rate} follows by inserting the residue formula in Equation~\eqref{eq:residue} at $c_r=c_k$ into the definition of $c_1$ in Equation~\eqref{eq:c1}.

The choice of $\eps_0$ must satisfy three requirements.
(a) $R\eps_0\le r_0/2$, so that $D^{+}_{R\eps}$ and the point $c_\eps$ used in \cref{cor:simple} lie in the region where every estimate of \cref{prop:Zhalfdisc}, including the derivative bound \ref{it:Z-deriv}, is available.
(b) The boundary margin in Equation~\eqref{eq:boundary-margin}, $\varrho(\eps_0)<d_1\min\{c_k,\Imc c_1\}\eps_0$, holds.
(c) $\varrho(\eps_0)\le\tfrac12 d_1\eps_0\,\Imc c_1$, which turns Equation~\eqref{eq:expansion-rem} into $\Imc c_\eps\ge\tfrac12\eps\,\Imc c_1$.
Choose $\eps_0\in(0,1)$ small enough that (a), (b), and (c) hold.
Since $\varrho(\eps)/\eps=C_R\eps(1+\log(1/\eps))$ is increasing on $(0,1)$, requirements (b) and (c) then also hold with $\eps_0$ replaced by any $\eps\in(0,\eps_0)$.
\end{proof}

\begin{corollary}\label{cor:simple}
Under the assumptions of \cref{thm:main} there is $\eps_0'\in(0,\eps_0)$ such that
\begin{equation}
\bigl|\partial_cD(k,c_\eps,\eps)\bigr|\ \ge\ kc_k\coth(k\hw)
=\tfrac12\,d_1,
\qquad \eps\in(0,\eps_0'] .
\end{equation}
\end{corollary}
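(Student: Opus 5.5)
We plan to bound $\partial_cD(k,c_\eps,\eps)=\partial_c\Dw(k,c_\eps)-\eps\,\partial_c\Za(k,c_\eps)$ from below by estimating its two pieces separately. For the water part, the exact quadratic form $\Dw(k,c)=k\coth(k\hw)(c^2-c_k^2)$ gives $\partial_c\Dw(k,c)=2k\coth(k\hw)\,c$. Hence
\begin{equation*}
\bigl|\partial_c\Dw(k,c_\eps)-d_1\bigr|=2k\coth(k\hw)\,|c_\eps-c_k|\le 2k\coth(k\hw)\,\frac{2M_Z}{d_1}\,\eps,
\end{equation*}
where the last inequality uses the localization in \cref{thm:main}\ref{it:main-exist}. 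This piece is therefore $d_1+O(\eps)$.

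For the air part, the root $c_\eps$ lies in the interior of $\mathbb D^+_{r_0}$. By requirement (a) in the proof of \cref{thm:main} it satisfies $|c_\eps-c_k|\le R\eps\le r_0/2$, and by \cref{thm:main}\ref{it:main-expansion} it satisfies $\Imc c_\eps\ge\tfrac12\eps\,\Imc c_1>0$. The derivative bound \cref{prop:Zhalfdisc}\ref{it:Z-deriv} then applies and gives
\begin{equation*}
|\partial_c\Za(k,c_\eps)|\le C\bigl(1+|\log(\Imc c_\eps/V_\ast)|\bigr).
\end{equation*}
Since $\tfrac12\eps\,\Imc c_1\le\Imc c_\eps\le R\eps\le V_\ast$, the logarithm is at most $\log(2V_\ast/(\eps\Imc c_1))=\log(1/\eps)+O(1)$. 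Consequently
\begin{equation*}
\eps\,|\partial_c\Za(k,c_\eps)|\le C'\eps\bigl(1+\log(1/\eps)\bigr).
\end{equation*}

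Combining the two estimates,
\begin{equation*}
|\partial_cD(k,c_\eps,\eps)|\ge d_1-C''\eps\bigl(1+\log(1/\eps)\bigr).
\end{equation*}
Since $\eps(1+\log(1/\eps))\to0$ and is increasing on $(0,1)$, we can choose $\eps_0'\in(0,\eps_0)$ with $C''\eps_0'(1+\log(1/\eps_0'))\le\tfrac12 d_1$. The bound $|\partial_cD(k,c_\eps,\eps)|\ge\tfrac12 d_1=kc_k\coth(k\hw)$ then holds for all $\eps\in(0,\eps_0']$.

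The main obstacle is controlling $\partial_c\Za$. Near the real axis it may grow logarithmically, so the argument depends on the quantitative lower bound $\Imc c_\eps\gtrsim\eps$: this keeps the Cauchy-estimate circle of radius $\Imc c_\eps/2$ inside the domain of holomorphy and makes the loss only $\log(1/\eps)$, which the prefactor $\eps$ absorbs. No new analysis is needed beyond checking that $c_\eps$ satisfies the hypotheses of \cref{prop:Zhalfdisc}\ref{it:Z-deriv}, which requirements (a) and (c) in the proof of \cref{thm:main} guarantee.
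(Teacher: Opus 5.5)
Your proposal is correct and follows the paper's proof. Like the paper, you split $\partial_cD(k,c_\eps,\eps)$ into the water term $2k\coth(k\hw)\,c_\eps-d_1$, which is $O(\eps)$ by the localization of $c_\eps$, and the air term $\eps\,\partial_c\Za(k,c_\eps)$, which \cref{prop:Zhalfdisc}\ref{it:Z-deriv} bounds by $O(\eps(1+\log(1/\eps)))$ using $\Imc c_\eps\ge\tfrac12\eps\Imc c_1$ and $|c_\eps-c_k|\le R\eps\le r_0/2$. The only differences are cosmetic: you use the localization radius $(2M_Z/d_1)\eps$ in place of $R\eps$, and a single combined smallness threshold in place of the paper's split $\tfrac14 d_1+\tfrac14 d_1$.
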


\begin{proof}
The derivative is $\partial_cD(k,c,\eps)=2k\coth(k\hw)\,c-\eps\,\partial_c\Za(k,c)$.
At $c=c_\eps$ we bound the two terms.
The first term satisfies $|2k\coth(k\hw)c_\eps-d_1|=2k\coth(k\hw)|c_\eps-c_k|\le 2k\coth(k\hw)R\eps\le\tfrac14 d_1$ for small $\eps$.
For the second term, by \cref{thm:main}\ref{it:main-expansion} and requirement (a) in the choice of $\eps_0$ there, $\Imc c_\eps\ge\tfrac12\eps\Imc c_1$ and $|c_\eps-c_k|\le R\eps\le r_0/2$, so \cref{prop:Zhalfdisc}\ref{it:Z-deriv} gives
\begin{equation}
\eps\,\bigl|\partial_c\Za(k,c_\eps)\bigr|
\le C\eps\Bigl(1+\bigl|\log\bigl(\tfrac12\eps\Imc c_1/V_\ast\bigr)
\bigr|\Bigr)
\le C'\eps\bigl(1+\log(1/\eps)\bigr)
\ \longrightarrow\ 0,
\end{equation}
so this term is at most $\tfrac14 d_1$ for $\eps\le\eps_0'$.
\end{proof}

\begin{corollary}
\label{cor:spectral}
Under the assumptions of \cref{thm:main}, for every $\eps\in(0,\eps_0)$ the closed, densely defined operator $\Lop_k$ of \cref{def:modal-operator} (\cref{app:operator}) has the eigenvalue
\begin{equation}
\lambda_\eps:=-\im k\,c_\eps,
\end{equation}
with $\Rec\lambda_\eps=k\,\Imc c_\eps\ge\tfrac12\,\eps\,k\,\Imc c_1>0$ and with one-dimensional eigenspace spanned by the normal mode of \cref{thm:main}\ref{it:main-expansion}.
The eigenvalue is an isolated, algebraically simple point of the spectrum of $\Lop_k$ for every $\eps\in(0,\eps_0)$.
For $\eps\in(0,\eps_0']$, the root $c_\eps$ also satisfies the quantitative derivative bound of \cref{cor:simple}.
Thus the linearized problem is unstable at wavenumber $k$ in the sense of \cref{rem:spaces}, since $\Lop_k$ has an eigenvalue with positive real part.
\end{corollary}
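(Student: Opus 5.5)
The corollary combines \cref{thm:main} with the operator-theoretic part \ref{it:op-spectrum} of \cref{thm:operator}, which is taken as given. The plan has three steps: establish that $\lambda_\eps$ is an eigenvalue with a one-dimensional eigenspace, convert simplicity of the root $c_\eps$ into simplicity of a zero of $\Edet$, and then read off isolation and algebraic simplicity.

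\emph{Eigenvalue and eigenspace.} Fix $\eps\in(0,\eps_0)$. By \cref{thm:main}\ref{it:main-exist}, $D(k,\cdot,\eps)$ has exactly one root $c_\eps$ in $D^{+}_{r_0}$, and it is simple. By \cref{thm:main}\ref{it:main-expansion}, $\Imc c_\eps\ge\tfrac12\eps\Imc c_1>0$. Because $\Imc c_\eps\neq0$ and $w(0;k,\cdot)\neq0$ on $\mathbb D^+_{r_0}$ by \cref{prop:Zhalfdisc}\ref{it:Z-nopole}, \cref{lem:determinant}\ref{it:det-nonvanish} applies. The factorization $\Edet=w(0;k,\cdot)\,D$ then shows that $c_\eps$ is a zero of $\Edet(k,\cdot,\eps)$ of the same order as for $D$, so it is a simple zero. \Cref{thm:operator}\ref{it:op-spectrum} now gives that $\lambda_\eps=-\im kc_\eps$ is an eigenvalue of $\Lop_k$ with one-dimensional eigenspace. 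By \cref{lem:determinant}\ref{it:det-modes} and \cref{thm:pointspec}, that eigenspace is spanned by the normal mode of \cref{thm:main}. The bound on the real part follows directly: $\Rec\lambda_\eps=\Rec(-\im kc_\eps)=k\Imc c_\eps\ge\tfrac12\eps k\Imc c_1$.

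\emph{Isolation and algebraic simplicity.} Since $c_\eps$ is a simple zero of $\Edet$, the last sentence of \cref{thm:operator}\ref{it:op-spectrum}, through \cref{prop:resolvent,cor:algsimple}, shows that $\lambda_\eps$ is isolated in the spectrum and that its Riesz projection has rank one. The remaining assertions are immediate. The quantitative derivative bound for $\eps\le\eps_0'$ is exactly \cref{cor:simple}. Instability in the sense of \cref{rem:spaces} holds because $\Rec\lambda_\eps>0$.

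\emph{Main obstacle.} The substantive point is the passage from simplicity of the root of $D$ to simplicity of the zero of $\Edet$ at the point $c_\eps$ itself. This requires that the trace $w(0;k,c_\eps)$ does not vanish there and that $\Edet$ is holomorphic there. Both hold because $c_\eps$ lies in the open upper half-plane inside the region covered by \cref{prop:Zhalfdisc}, so this step should go through without difficulty; everything else is bookkeeping against the cited results.
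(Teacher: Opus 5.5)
Your proposal is correct and follows the paper's proof essentially step by step. You get $\Edet(k,c_\eps,\eps)=0$ from \cref{thm:main} and \cref{lem:determinant}, use \cref{thm:pointspec} for the eigenvalue and its one-dimensional eigenspace, transfer simplicity of the root of $D$ to $\Edet$ through the factorization with the nonvanishing trace of \cref{prop:Zhalfdisc}\ref{it:Z-nopole}, and finish with \cref{cor:algsimple} and \cref{cor:simple}; the only cosmetic difference is that you cite the ``with multiplicity'' clause of \cref{lem:determinant}\ref{it:det-nonvanish}, whereas the paper differentiates the factorization directly to get $\partial_c\Edet(k,c_\eps,\eps)=w(0;k,c_\eps)\,\partial_cD(k,c_\eps,\eps)\neq0$.
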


\begin{proof}
By \cref{thm:main}\ref{it:main-exist} and \cref{lem:determinant}\ref{it:det-nonvanish}, $\Edet(k,c_\eps,\eps)=0$ with $\Imc c_\eps>0$.
\cref{thm:pointspec} converts this into the eigenvalue statement, including the dimension of the eigenspace.
For algebraic simplicity, \cref{thm:main}\ref{it:main-exist} counts one root with multiplicity, so $c_\eps$ is a simple zero of $D(k,\cdot,\eps)$ for every $\eps\in(0,\eps_0)$, that is, $\partial_cD(k,c_\eps,\eps)\neq0$.
Since $w(0;k,c_\eps)\neq0$ (\cref{prop:Zhalfdisc}\ref{it:Z-nopole}), the factorization in Equation~\eqref{eq:Edet-factor} gives $\partial_c\Edet(k,c_\eps,\eps) =w(0;k,c_\eps)\,\partial_cD(k,c_\eps,\eps)\neq0$, because the derivative of $w(0;k,\cdot)$ multiplies $D(k,c_\eps,\eps)=0$.
Hence \cref{cor:algsimple} applies.
The quantitative bound for $\eps\le\eps_0'$ is \cref{cor:simple}.
\end{proof}

\begin{corollary}
\label{cor:sufficient}
Assume Hypotheses \ref{hyp:reg}--\ref{hyp:subsonic} and \ref{hyp:sigma}, and
\begin{equation}\label{eq:sign-hyp}
c_k\,Q(z_j)\le0\quad\text{for all }j,
\qquad
c_k\,Q(z_m)<0\quad\text{at the highest critical level.}
\end{equation}
Then Hypothesis \ref{hyp:nonres} and the condition in Equation~\eqref{eq:mainhyp} hold automatically, and all conclusions of \cref{thm:main} follow.
If instead the strict inequality in Equation~\eqref{eq:sign-hyp} is available only at some level $j_0<m$, the same conclusions follow under the additional assumption $w_\ast(z_{j_0})\neq0$, a nondegeneracy that is checkable numerically for given profiles.
\end{corollary}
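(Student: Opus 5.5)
The plan is to reduce everything to \cref{thm:main} by verifying its two extra inputs, Hypothesis \ref{hyp:nonres} and the sign condition \eqref{eq:mainhyp}, using \cref{lem:positivity} and the residue formula of \cref{thm:limabs}. Since $c_k>0$, the condition \eqref{eq:sign-hyp} says that $Q(z_j)\le0$ for all $j$ and $Q(z_m)<0$. In particular all $Q(z_j)$ share one sign and $Q(z_m)\neq0$.

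\textbf{Step 1: non-resonance.} \cref{lem:positivity} needs only the limit solution of \cref{prop:limit-solutions}, which does not use Hypothesis \ref{hyp:nonres}. Applied at $c_r=c_k$, it gives $u_1(0)=|w_\ast(0;c_k)|^2>0$. By \cref{rem:hyp-axis}, continuity of $w_\ast(0;\cdot)$ then yields Hypothesis \ref{hyp:nonres} on a neighborhood of $c_k$. One point needs checking: the implicit function theorem keeps the critical levels regular and equal in number nearby, so \cref{prop:limit-solutions} applies uniformly on that neighborhood.

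\textbf{Step 2: the sign of $\Imc\Za^+(k,c_k)$.} The residue formula \eqref{eq:residue} at $c_r=c_k$ reads
\[
\Imc\Za^+(k,c_k)=-\frac{\pi}{k^2}\sum_j\frac{\rhobar(z_j)Q(z_j)}{|U'(z_j)|}\,|w_\ast(z_j)|^2,
\]
with $w_\ast$ normalized by $\xi(0)=1$. This normalization is legitimate by Step 1, and it rescales $w_\ast$ by a nonzero constant, so the strict positivity of $|w_\ast(z_m)|$ from \cref{lem:positivity} survives. Every summand is $\ge0$ after the minus sign, because $Q(z_j)\le0$ and $\rhobar>0$. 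The $j=m$ summand is strictly positive, because $Q(z_m)<0$ and $u_1(z_m)>0$. Hence $\Imc\Za^+(k,c_k)>0$, and \cref{thm:main} applies with all its conclusions.

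\textbf{Step 3: strictness only at $j_0<m$.} Suppose the strict inequality holds only at some level $j_0<m$, and assume $w_\ast(z_{j_0})\neq0$. Step 2 then goes through verbatim: all terms are $\ge0$ and the $j_0$ term is $>0$. The difficulty is non-resonance. The one-sign argument of \cref{lem:positivity} needs $Q(z_m)\neq0$, so I would propagate from $j_0$ instead. Above $z_{j_0}$ all jumps of $\Wc$ have the same sign, but they may vanish, so $\Wc$ could be zero on $(z_{j_0},\ha)$. Crossing $z_{j_0}$ downward, the jump $\pi\rhobar Q|w_\ast|^2/|U'|$ at $z_{j_0}$ is nonzero, and every lower jump has the same sign. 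Hence $\Wc\neq0$ on $[0,z_{j_0})$ away from the critical levels, which forces $u_1(0)>0$ exactly as in the last paragraph of the proof of \cref{lem:positivity}. This propagation below $z_{j_0}$ is the only step that goes beyond quoting earlier results. Non-resonance then extends to a neighborhood by continuity, as in Step 1.
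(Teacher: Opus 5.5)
Your proposal is correct and follows the paper's proof. It uses \cref{lem:positivity} for non-resonance and $u_1(z_m)>0$, then the residue formula \eqref{eq:residue} for $\Imc\Za^+(k,c_k)>0$, and in the alternative case the same flux accumulation started at $z_{j_0}$. Your explicit passage from $c_k$ to a neighborhood by continuity of $w_\ast(0;\cdot)$ is the content of \cref{rem:hyp-axis}, which the paper's proof uses implicitly.
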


\begin{proof}
By \cref{lem:positivity}, under the sign condition in Equation~\eqref{eq:sign-hyp} the limit solution has $u_1(z_m)>0$ and $u_1>0$ on $[0,z_m)$, where $u_1=|w_\ast|^2$ is the quantity $u_1$ of \cref{prop:local}\ref{it:quadratic} for the limit solution.
In particular $u_1(0)>0$, which is Hypothesis \ref{hyp:nonres}.
Since $c_k>0$, the sign condition in Equation~\eqref{eq:sign-hyp} makes every term of the sum $\sum_j\rhobar Q|\hat y|^2/|U'|$ in Equation~\eqref{eq:residue} nonpositive, and the term $j=m$ strictly negative because $u_1(z_m)>0$.
Hence $\Imc\Za^+(k,c_k)=-\pi c_k^2\sum_j\rhobar Q|\hat y|^2/|U'|>0$, which is the condition in Equation~\eqref{eq:mainhyp}.
Under the alternative assumption $w_\ast(z_{j_0})\neq0$, the flux accumulation in the proof of \cref{lem:positivity} applies with the first nonzero increment at $z_{j_0}$ instead of $z_m$.
Because all jumps have the same sign, they give $\Wc\neq0$ on $[0,z_{j_0})$, hence $u_1>0$ there, including Hypothesis \ref{hyp:nonres} at $z=0$.
The $j_0$-term of the sum in Equation~\eqref{eq:residue} is then strictly negative, which gives the condition in Equation~\eqref{eq:mainhyp} again.
\end{proof}

For a monotone wind, the sufficient condition reduces to the sign of $Q$ at the unique critical level.

\begin{corollary}
\label{cor:monotone}
Let $U\in C^4([0,\ha])$ with $U(0)=0$ and $U'>0$ on $[0,\ha]$.
Let the neutral atmosphere satisfy Hypothesis \ref{hyp:reg}, and let the surface tension satisfy Hypothesis \ref{hyp:sigma}.
Let $k>0$ be such that $c_k\in U((0,\ha))$ and Hypothesis \ref{hyp:subsonic} holds.
Since $U'>0$, there is exactly one critical level $z_c$, and it is regular.
Assume
\begin{equation}\label{eq:criterion}
Q(z_c)
=U''(z_c)+\frac{g}{a^2(z_c)}\,U'(z_c)
=\rhobar(z_c)\Bigl(\frac{U'}{\rhobar}\Bigr)'(z_c)
\;<\;0 .
\end{equation}
Then Hypotheses \ref{hyp:critical}--\ref{hyp:nonres} and the condition in Equation~\eqref{eq:mainhyp} hold automatically, and for $\eps\in(0,\eps_0)$ the dispersion relation has exactly one root in the set $D^{+}_{r_0}$ of \cref{thm:main}, and this root is simple, with
\begin{gather*}
\Imc\Za^{+}(k,c_k)
=-\pi c_k^{2}\,\frac{\rhobar(z_c)\,Q(z_c)}{|U'(z_c)|}\,
\bigl|\hat y(z_c)\bigr|^{2}>0,
\qquad
c_\eps=c_k+\eps\,c_1+O\bigl(\eps^{2}(1+\log(1/\eps))\bigr),\\
\gamma_\eps=k\,\Imc c_\eps
=-\,\eps\,\frac{\pi\,c_k\tanh(k\hw)}{2}\,
\frac{\rhobar(z_c)\,Q(z_c)}{|U'(z_c)|}\,\bigl|\hat y(z_c)\bigr|^{2}
+O\bigl(\eps^{2}(1+\log(1/\eps))\bigr)>0,
\end{gather*}
and $\lambda_\eps=-\im kc_\eps$ is an isolated, algebraically simple eigenvalue of the closed operator $\Lop_k$ with $\Rec\lambda_\eps>0$ (\cref{cor:spectral}).
As $a\to\infty$, the criterion in Equation~\eqref{eq:criterion} reduces to Miles's condition $U''(z_c)<0$.
\end{corollary}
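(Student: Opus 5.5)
The plan is to reduce \cref{cor:monotone} to \cref{cor:sufficient} and \cref{thm:main}; the work is mostly in verifying the hypotheses. First check Hypothesis \ref{hyp:critical}. Since $U'>0$ on $[0,\ha]$, $U$ is strictly increasing, so $U(z)=c_k$ has at most one solution. Because $c_k\in U((0,\ha))$, there is exactly one, $z_c\in(0,\ha)$, and $U'(z_c)>0$ makes it regular. So $m=1$, $z_1=z_c$, and no critical level lies at an endpoint. Hypotheses \ref{hyp:reg}, \ref{hyp:noslip} ($U(0)=0$), \ref{hyp:subsonic} and \ref{hyp:sigma} are assumed directly. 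By \cref{rem:hyp-axis}, \ref{hyp:subsonic} need only hold at $c_k$, and its neighborhood version follows by compactness.

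Next, since $c_k>0$ and $Q(z_c)<0$, the sign condition \eqref{eq:sign-hyp} holds with strict inequality at the highest (and only) critical level. \cref{cor:sufficient} then gives Hypothesis \ref{hyp:nonres} and $\Imc\Za^+(k,c_k)>0$. Underneath, \cref{lem:positivity} gives $u_1(z_c)>0$ and $u_1(0)>0$.

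With all of \ref{hyp:reg}--\ref{hyp:sigma} and \eqref{eq:mainhyp} in hand, \cref{thm:main} yields three things:
\begin{itemize}
\item the unique simple root in $D^+_{r_0}$;
\item the expansion \eqref{eq:expansion-rem};
\item the growth rate \eqref{eq:growthrate}.
\end{itemize}
With $m=1$, the residue formula \eqref{eq:residue} gives the displayed $\Imc\Za^+$. Strict positivity of $\gamma_\eps$ comes from three facts. The leading term is $\eps$ times a positive constant, because $|\hat y(z_c)|^2=u_1(z_c)/|w_\ast(0)|^2>0$, $\rhobar>0$, $\tanh(k\hw)>0$ and $Q(z_c)<0$. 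The remainder is $O(\eps^2\log(1/\eps))$. And \cref{thm:main}\ref{it:main-expansion} already gives $\Imc c_\eps\ge\frac12\eps\Imc c_1>0$. The eigenvalue statement is \cref{cor:spectral}.

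Finally, the incompressible limit is immediate: as $a\to\infty$ with the other data fixed, $g/a^2(z_c)\to0$, so $Q(z_c)\to U''(z_c)$. The alternative form $Q=\rhobar(U'/\rhobar)'$ is \eqref{eq:Gc}, using $\rhobar'/\rhobar=-g/a^2$.

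No step is a real obstacle; the only point needing care is that non-resonance (\ref{hyp:nonres}) is not assumed but derived. One must check that \cref{lem:positivity} and \cref{cor:sufficient} apply without it, which holds because \cref{prop:limit-solutions} does not use \ref{hyp:nonres}.
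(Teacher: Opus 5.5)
Your proposal is correct and matches the paper's proof. Monotonicity gives Hypothesis \ref{hyp:critical} with $m=1$; \cref{cor:sufficient} (through \cref{lem:positivity}) then supplies Hypothesis \ref{hyp:nonres}, condition \eqref{eq:mainhyp}, and the displayed value of $\Imc\Za^{+}(k,c_k)$; and the remaining claims follow from \cref{thm:main} and \cref{cor:spectral}, with the details you add (positivity of $\gamma_\eps$ via $\Imc c_\eps\ge\tfrac12\eps\Imc c_1$, and that \cref{prop:limit-solutions} does not use Hypothesis \ref{hyp:nonres}) consistent with what the paper leaves implicit.
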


\begin{proof}
The condition $U'>0$ makes every solution of $U=c_k$ regular and unique, which is Hypothesis \ref{hyp:critical} with $m=1$.
With $c_k>0$, the sign condition in Equation~\eqref{eq:sign-hyp} reduces to $Q(z_c)<0$, so \cref{cor:sufficient} supplies Hypothesis \ref{hyp:nonres}, the condition in Equation~\eqref{eq:mainhyp}, and the displayed value of $\Imc\Za^{+}(k,c_k)$.
The remaining statements are \cref{thm:main}, \cref{cor:simple}, and \cref{cor:spectral}.
\end{proof}

\begin{remark}\label{rem:mirror}
Here $c_k>0$ denotes the right-going root of $\Dw$ given by Equation~\eqref{eq:ck}.
The left-going root $-c_k$ is covered by the reflection $(x,U)\mapsto(-x,-U)$.
The same argument then applies to the reflected problem, whose critical levels are the solutions of $U=-c_k$, with the sign condition in Equation~\eqref{eq:sign-hyp} read with $c_k$ replaced by $-c_k$.
The two roots have independent sets of critical levels.
For the non-negative winds of \S\ref{sec:examples} the left-going wave has no critical level, and \cref{thm:necessity}, applied after reflection, keeps it neutral.
\end{remark}

\begin{remark}
\label{rem:reading}
Since $g/a^2>0$, for a wind profile that increases with height with shear decreasing upward ($U'>0>U''$) compressibility shifts the local coefficient $Q=U''+(g/a^2)U'$ upward.
While $Q(z_c)<0$ this reduces $|Q(z_c)|$ below $|U''(z_c)|$, and the criterion in Equation~\eqref{eq:criterion} fails altogether once
\begin{equation}\label{eq:cutoff-height}
\frac{g}{a^2(z_c)}\ \ge\ \frac{|U''(z_c)|}{U'(z_c)} .
\end{equation}
For a logarithmic profile $U=(u_\ast/\kappa_{\mathrm K})\log(1+z/z_0)$, where $u_\ast$ is the friction velocity, $\kappa_{\mathrm K}$ the von K\'arm\'an constant, and $z_0$ the roughness length, the condition in Equation~\eqref{eq:cutoff-height} holds beyond the height at which $z_c+z_0=a^2/g$.
For the bounded shear-layer profile $U=U_\infty(1-e^{-z/L})$ of limiting speed $U_\infty$ and shear depth $L$, it holds uniformly in $z$ when $a^2<gL$.
In the second case, consider a wavenumber that satisfies the assumptions of \cref{prop:suppression} and lies in the incompressible Miles band, which is the set of wavenumbers $k$ for which $c_k$ lies in the range of $U$ with $U''<0$ at the critical levels.
At such a wavenumber the sum in Equation~\eqref{eq:growthrate} is positive, so the leading Miles coefficient $\Imc c_1$ changes sign and the local $O(\eps)$ Miles bifurcation from $c_k$ is absent (\cref{prop:suppression}).
At the same time the growth rate is not a function of $Q(z_c)$ alone.
The amplitude $|\hat y(z_c)|^2$ and the weight $\rhobar(z_c)$ are themselves modified by compressibility, and the coefficient $\Abar/\rhobar=a^{2}/(a^{2}-V^{2})$ also changes the vertical boundary-value problem.
Hence the sign of the net correction is not decided by the local criterion alone.
For the parameters of wind waves over the sea, the compressible corrections are at most about $10^{-3}$ locally and $10^{-2}$ globally (\S\ref{subsec:polytropic-wind}, \cref{rem:two-effects}), and the condition in Equation~\eqref{eq:cutoff-height} would require critical layers $8.4\,\mathrm{km}$ above the sea surface.
Miles's incompressible criterion is therefore insensitive to compressibility for water waves, and \cref{thm:lowmach} makes this precise at each wavenumber.
\end{remark}

\begin{remark}
As in \cite[Rem.~4.3]{BSWZ2016}, \cref{cor:sufficient} applies for every $k$ such that $c_k$ lies in the range of $U$ with $Q<0$ at the critical levels.
With surface tension the set of such $k$ is a bounded band, because large-$k$ waves are too fast, with $c_k\to\infty$.
Its edges are set by $c_k\in U([0,\ha])$ as in the incompressible theory, except near the compressible cutoff in Equation~\eqref{eq:cutoff-height}, where the band is further truncated.
The Kelvin--Helmholtz mechanism is absent by Hypothesis \ref{hyp:noslip}.
The compressible Kelvin--Helmholtz problem for uniform wind is treated in closed form in \S\ref{subsec:KH} for comparison.
\end{remark}
\section{Recovery of the incompressible theory in the low-Mach limit}
\label{sec:lowmach}

In this section, we quantify the incompressible limit at a single wavenumber.
For neutral atmospheres parametrized by the Mach number $M$, we show that, as $M\to0$, the pressure response converges to its incompressible counterpart at rate $O(M^2)$ and that the unstable root converges at rate $O(\eps M^2)$ (\cref{thm:lowmach}).
Let $U$, $g$, the depths $\ha$ and $\hw$, and $\hat a\in C^3([0,\ha])$ with $\hat a\ge\hat a_->0$ be given.
For $M>0$ and a reference speed $a_\ast>0$ chosen below, define the neutral atmosphere
\begin{equation}\label{eq:M-family}
a_M^2(z):=\frac{a_\ast^2\,\hat a(z)}{M^2},
\qquad
\bar\rho_{0,M}(z):=\exp\Bigl(-\frac{gM^2}{a_\ast^2}
\int_0^z\frac{\dd z'}{\hat a(z')}\Bigr),
\end{equation}
so that the coefficient $b$ of Equation~\eqref{eq:neutral} is $b_M=g/a_M^2=gM^2/(a_\ast^2\hat a)$.
We take $\hat a(0)=1$ and
\begin{equation}
a_\ast:=\max\bigl\{\sup_{[0,\ha]}|U|,\ c_k\bigr\},
\end{equation}
so $M=a_\ast/a_M(0)$ is the Mach number based on the wind and wave speeds and the interface sound speed.
The argument also permits $M$-dependent shapes $\hat a_M$ with $\|\hat a_M-\hat a\|_{C^3}=O(M^2)$, since such a change perturbs the coefficients estimated in Equation~\eqref{eq:coef-lip} by a further $O(M^2)$.
Here and below, the $C^k$ norm of a function on $[0,\ha]$, or on a subinterval, is defined as in Equation~\eqref{eq:norms} with $I_j$ replaced by that interval.
This covers the polytropic atmospheres of \cref{ex:polytropic}.
As $M\to0$, the coefficients of Equation~\eqref{eq:SL} for the atmosphere in Equation~\eqref{eq:M-family}, marked by the subscript $M$, satisfy $\bar\rho_{0,M}\to1$, $b_M\to0$, $\Abar_M\to1$, $G_M\to U''$, and $Q_M\to U''$ in $C^2$, all at rate $O(M^2)$.
The limiting equation is
\begin{equation}\label{eq:rayleigh-limit}
w''=\Bigl(k^2+\frac{U''}{V}\Bigr)w,
\qquad w(\ha)=0,
\end{equation}
whose boundary value $\Za^{+,\mathrm{inc}}(k,c_r)$, equal to $c_r^2\,\hat y'(0)+c_rU'(0)-g$ in the normalization of \cite{BSWZ2016}, is the object analyzed there.
We mark the pressure response, its boundary value, and the limiting solution of the atmosphere with Mach number $M$ by a superscript $M$, as in $\Za^{M}$, $\Za^{+,M}$, and $w_\ast^{M}$, and those of the limiting problem in Equation~\eqref{eq:rayleigh-limit} by a superscript $\mathrm{inc}$.
Similarly, $c_1(M)$ and $c_\eps(M)$ denote the coefficient $c_1$ in Equation~\eqref{eq:c1} and the root $c_\eps$ of \cref{thm:main} for this atmosphere, and the value $M=0$ refers to the limiting problem.

\begin{theorem}\label{thm:lowmach}
Let $K$ be a compact set of regular values of $U$ containing $c_k$, on which Hypothesis \ref{hyp:critical} holds and the incompressible problem satisfies Hypothesis \ref{hyp:nonres}.
Then there are $M_0,C>0$ such that for all $M\in(0,M_0]$ and $c_r\in K$ the compressible problem satisfies Hypotheses \ref{hyp:subsonic}--\ref{hyp:nonres} as well, its boundary value exists, and
\begin{equation}\label{eq:lowmach-rate}
\bigl|\Za^{+,M}(k,c_r)-\Za^{+,\mathrm{inc}}(k,c_r)\bigr|
\;\le\;C\,M^{2} .
\end{equation}
Moreover, with $\mathbb D^{+}_{r_0}$ the closed set of \cref{cor:uniformM}, a closed neighborhood of $c_k$ in the closed upper half-plane of the form used in \cref{prop:Zhalfdisc}, the estimate holds uniformly on that set,
\begin{equation}\label{eq:lowmach-complex}
\sup_{c\in\mathbb D^{+}_{r_0}}
\bigl|\Za^{M}(k,c)-\Za^{\mathrm{inc}}(k,c)\bigr|
\;\le\;C\,M^{2} .
\end{equation}
Consequently $c_1(M)=c_1(0)+O(M^2)$, where $c_1(0)$ is the coefficient of the incompressible theory.
If moreover the incompressible problem is unstable at $k$ in the sense of \cref{thm:main}, that is $\Imc\Za^{+,\mathrm{inc}}(k,c_k)>0$, then there is $M_1\in(0,M_0]$ such that for all $M\le M_1$ the compressible problem satisfies the condition in Equation~\eqref{eq:mainhyp} as well, with $\Imc c_1(M)\ge\tfrac12\,\Imc c_1(0)>0$.
Both problems then possess the unstable root of \cref{thm:main}.
The expansion in Equation~\eqref{eq:expansion-rem} holds with constants $(\eps_0,C_0)$ independent of $M\le M_1$,
\begin{equation}
c_\eps(M)=c_k+\eps\,c_1(M)+O\bigl(\eps^{2}(1+\log(1/\eps))\bigr)
\qquad\text{uniformly in }M\le M_1.
\end{equation}
For each $\eps$, the roots converge at the rate
\begin{equation}\label{eq:root-rate}
\bigl|c_\eps(M)-c_\eps(0)\bigr|\;\le\;C\,\eps\,M^{2},
\qquad 0<\eps\le\eps_0,\ 0<M\le M_1 .
\end{equation}
\end{theorem}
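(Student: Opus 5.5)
The plan is to treat the Mach number as a regular perturbation parameter in every step of the critical-layer construction. The constants will be tracked so that they are uniform in $M\le M_0$. The coefficient errors, which are $O(M^2)$, will then propagate through finitely many bounded linear maps.

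\textbf{Step 1 (coefficients).} From Equation~\eqref{eq:M-family} one has $b_M=O(M^2)$ in $C^3$ and $\bar\rho_{0,M}-1=O(M^2)$ in $C^3$. Also $\Abar_M-1=\bar\rho_{0,M}a_M^2/(a_M^2-V^2)-1=O(M^2)$ in $C^3$, uniformly for $c$ in a bounded complex neighborhood of $K\cup\mathbb D^+_{r_0}$, because $|V|^2/a_M^2\le CM^2$. Consequently $f_M,g_{1,M},g_{2,M}$ of Equation~\eqref{eq:g-def} differ from their incompressible versions by $O(M^2)$ in the norms of Equation~\eqref{eq:coef-delta}, and $Q_M-U''=O(M^2)$. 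The same bound gives Hypothesis~\ref{hyp:subsonic} with $\delta_0=1/2$ for $M\le M_0$. The sizes $\varsigma,\delta_1,C_\ast$ of \cref{lem:fixedpoint} depend only on quantities that stay bounded, so they can be fixed independently of $M$.

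\textbf{Step 2 (local and global comparison).} On each $I_j$, I would compare the fixed points of $\mathcal B^M_\delta$ and $\mathcal B^{\mathrm{inc}}_\delta$ at the same $(\alpha,\beta,\delta)$, using the same argument as in \cref{prop:convergence}. By the contraction property it suffices to bound $(\mathcal B^M_\delta-\mathcal B^{\mathrm{inc}}_\delta)[w^{\mathrm{inc}}]$. The regular terms are $O(M^2)$. For the singular term, \eqref{eq:logextract} is applied with $\psi^M=g_{1,M}\chibar_M[w]$ and $\psi^{\mathrm{inc}}$. The residue difference times $V\operatorname{Log}V$ is $O(M^2\varsigma\log(1/\varsigma))$, and the remainder difference is $O(M^2)$ by the Lipschitz remainder bound. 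There is no log loss here, because the kernel $1/V$ is common to both problems and only the numerators differ. This step gives $\|\mathcal E^{\pm,M}_\delta-\mathcal E^{\pm,\mathrm{inc}}_\delta\|\le CM^2$ uniformly in $\delta\in[0,\delta_1]$ and in $c_r\in K$. The inverse bounds of \cref{prop:convergence} are uniform, so $\|T^M_\delta-T^{\mathrm{inc}}_\delta\|\le CM^2$. On the intervals without critical levels, Grönwall gives an $O(M^2)$ difference of the solution maps, and the top data differ by $O(M^2)$. Composing these maps yields $|(w^M,\chibar^M)(0)-(w^{\mathrm{inc}},\chibar^{\mathrm{inc}})(0)|\le CM^2$ uniformly on $\{c_r+\im\delta: c_r\in K,\ 0\le\delta\le\delta_1\}$. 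Since $|w^{\mathrm{inc}}_\ast(0)|\ge m_0>0$ on $K$, the trace $w^M$ stays above $m_0/2$ for $M$ small, which is Hypothesis~\ref{hyp:nonres} for the compressible problem. Equation~\eqref{eq:Za-formula} then gives both \eqref{eq:lowmach-rate} and \eqref{eq:lowmach-complex}. For this I would take $\mathbb D^+_{r_0}$ with $r_0$ small enough for \cref{prop:Zhalfdisc} to hold uniformly in $M$; this is the content of the cited \cref{cor:uniformM}. The bound $c_1(M)=c_1(0)+O(M^2)$ is then immediate from \eqref{eq:c1}.

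\textbf{Step 3 (roots).} The condition $\Imc\Za^{+,M}(k,c_k)\ge\Imc\Za^{+,\mathrm{inc}}(k,c_k)-CM^2$ gives Equation~\eqref{eq:mainhyp} and $\Imc c_1(M)\ge\frac12\Imc c_1(0)$ for $M\le M_1$. The constants $M_Z$, $\omega$, $R$, $d_1$, $C_R$ used in the proof of \cref{thm:main} are then uniform in $M$, so $(\eps_0,C_0)$ are independent of $M$. For \eqref{eq:root-rate} I would not subtract the two expansions, since that only gives $O(\eps M^2+\eps^2\log)$. Instead I would use the Rouché argument of \cref{lem:rouche} a second time on the disc $|c-c_\eps(0)|<\rho$. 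The difference $D^M-D^{\mathrm{inc}}=-\eps(\Za^M-\Za^{\mathrm{inc}})$ is at most $C\eps M^2$ there by \eqref{eq:lowmach-complex}. On the other hand, $|D^{\mathrm{inc}}(c)|\ge\frac14 d_1|c-c_\eps(0)|$ near $c_\eps(0)$, by \cref{cor:simple} together with the derivative bound of \cref{prop:Zhalfdisc}\ref{it:Z-deriv} on a disc of radius comparable to $\Imc c_\eps(0)\sim\eps$. Taking $\rho=8C\eps M^2/d_1$, which is much smaller than $\eps$ for $M$ small, the root $c_\eps(M)$ lies within $\rho$ of $c_\eps(0)$.

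\textbf{Main obstacle.} I expect the main difficulty to be the uniformity in Step 2. The local constants and the decomposition \eqref{eq:logextract} must be controlled uniformly in $M$, $\delta$ and $c_r$ at once. In particular, the critical levels and the intervals $I_j$ are the same for both problems, because they depend only on $U$, so $Q_M$ enters only through $g_{1,M}$. The lower bound on $D^{\mathrm{inc}}$ in Step 3 must also hold on a disc of radius $\rho\ll\eps$ while staying a distance of order $\eps$ from the real axis, so that the log-derivative bound applies there.
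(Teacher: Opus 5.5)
Your proposal is correct. Steps~1--2 are the paper's argument. The paper packages your direct comparison of the fixed points of $\mathcal B^M_\delta$ and $\mathcal B^{\mathrm{inc}}_\delta$ (common kernel $1/V$, with residue and remainder differences of size $M^2$) as \cref{lem:coef-perturb}, applied to tuples at distance $O(M^2)$ in $\mathcal X$. It then uses Gr\"onwall on the intervals between critical levels and takes the uniform neighborhood, bound $M_Z$ and modulus $\omega$ from \cref{cor:uniformM}, as you propose.

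The genuine difference is the root estimate \eqref{eq:root-rate}. The paper uses no second Rouch\'e argument. It first records the uniform localization $|c_\eps(M)-c_k|\le R\eps$ and $\Imc c_\eps(M)\ge\tfrac14\eps\Imc c_1(0)$ for all $M\le M_1$ and for $M=0$. The segment joining $c_\eps(0)$ to $c_\eps(M)$ stays in this convex set, where \cref{prop:Zhalfdisc}\ref{it:Z-deriv} gives $|\partial_cD^{\mathrm{inc}}-d_1|\le\tfrac12 d_1$. Integrating $\partial_cD^{\mathrm{inc}}$ along the segment then yields $\tfrac12 d_1|c_\eps(M)-c_\eps(0)|\le|D^{\mathrm{inc}}(k,c_\eps(M),\eps)|\le C\eps M^2$.

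Your Rouch\'e disc around $c_\eps(0)$ needs the derivative bound only on a disc of radius $\tfrac12\Imc c_\eps(0)$ about the incompressible root. It costs two small extra steps, which you should make explicit:
\begin{itemize}
\item The zero of $D^M$ that Rouch\'e produces must be identified with $c_\eps(M)$. This uses the uniqueness part of \cref{thm:main} for the compressible problem, valid since the disc lies in $D^+_{r_0}$.
\item You need $\rho=8C\eps M^2/d_1\le\tfrac12\Imc c_\eps(0)$. This is an additional $\eps$-independent smallness condition $M^2\lesssim d_1\Imc c_1(0)/C$. For the remaining $M\le M_1$ the bound is trivial from $|c_\eps(M)-c_\eps(0)|\le 2R\eps$.
\end{itemize}
The paper's segment argument avoids both steps, but it relies on the a priori localization of $c_\eps(M)$ uniformly in $M$. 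That localization is available anyway from the uniform version of \cref{thm:main} used for the uniform expansion.
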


The proof has three parts.
First, the constants of the critical-layer theory are uniform in $M$ (\cref{lem:uniformconst}).
Second, the local solutions depend Lipschitz-continuously on their coefficients (\cref{lem:coef-perturb}).
Third, the roots are compared along a line segment on which $\partial_cD^{\mathrm{inc}}$, the derivative of the incompressible dispersion function, stays close to $d_1$.

\begin{lemma}\label{lem:uniformconst}
For $M_0$ small enough, the atmospheres in Equation~\eqref{eq:M-family} satisfy the subsonic gap of Hypothesis \ref{hyp:subsonic} uniformly in $M\in(0,M_0]$.
The coefficient functions of the local representation in Equation~\eqref{eq:rep}, namely $b_M$, $E_M$, $f_M=1/(\Abar_ME_M)$, $g_{1,M}$, $g_{2,M}$, together with the coefficients of the system in Equation~\eqref{eq:wchi-recall} on intervals containing no critical level, are bounded in $C^2$, except $g_{1,M}$, which is bounded in $C^1$, uniformly for $M\in[0,M_0]$, with
\begin{equation}\label{eq:coef-lip}
\|b_M\|_{C^2}+\|f_M-1\|_{C^2}+\|g_{1,M}-g_1^{\mathrm{inc}}\|_{C^1}
+\|g_{2,M}-g_2^{\mathrm{inc}}\|_{C^1}\le C M^2 ,
\end{equation}
where the coefficients at $M=0$ are those of the Rayleigh problem in Equation~\eqref{eq:rayleigh-limit}, which we mark with the superscript $\mathrm{inc}$, namely $f^{\mathrm{inc}}=1$, $g_1^{\mathrm{inc}}=(1/U')'$, and $g_2^{\mathrm{inc}}=k^2/U'$.
Consequently the constants of \cref{lem:fixedpoint}, \cref{prop:local}, \cref{prop:convergence}, \cref{prop:limit-solutions}, and \cref{thm:limabs}, including $\varsigma_0$ and the contraction constant $\tfrac12$, can be chosen uniformly in $M\in[0,M_0]$, and so can $\varsigma$ and $\delta_1$.
\end{lemma}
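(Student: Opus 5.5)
The plan is to establish everything by direct expansion of the explicit $M$-family in Equation~\eqref{eq:M-family}. We then observe that every constant in the critical-layer theory depends only on finitely many norms of the coefficients, on the subsonic gap, and on $\min_j|U'(z_j)|$.

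\emph{Subsonic gap.} For real $c$ near $K$, we have $|U-c|\le 2a_\ast$ after enlarging $a_\ast$ by a fixed factor if needed. Then
\begin{equation*}
a_M^2-(U-c)^2\ \ge\ a_M^2\bigl(1-4M^2/\hat a_-\bigr),
\end{equation*}
so Hypothesis \ref{hyp:subsonic} holds with $\delta_0=\tfrac12$ once $M_0^2\le\hat a_-/8$. The same bound controls $|a_M^2-V^2|$ from below for complex $c$ with small imaginary part, as in \cref{prop:SLform}.

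\emph{Coefficient expansions.}
\begin{itemize}
\item Since $b_M=gM^2/(a_\ast^2\hat a)$ and $\hat a\in C^3$, we get $\|b_M\|_{C^2}\le CM^2$.
\item From this, $\bar\rho_{0,M}=\exp(-\int_0^z b_M)$ and $E_M=\exp(\int_{z_j}^z b_M)$ satisfy $\|\bar\rho_{0,M}-1\|_{C^3}+\|E_M-1\|_{C^3}\le CM^2$.
\item Next write
\begin{equation*}
\frac{\Abar_M}{\bar\rho_{0,M}}=\frac{1}{1-V^2/a_M^2},\qquad \frac{V^2}{a_M^2}=\frac{M^2V^2}{a_\ast^2\hat a}.
\end{equation*}
This ratio is a smooth function of the $C^2$ quantity $M^2V^2/\hat a$, which has $C^2$ norm $O(M^2)$ uniformly for $c$ in a compact set. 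Hence $\|\Abar_M-1\|_{C^2}\le CM^2$, and therefore $\|f_M-1\|_{C^2}\le CM^2$ for $f_M=1/(\Abar_ME_M)$.
\item Insert these into the definitions in Equation~\eqref{eq:g-def}. Because $g_1$ involves one derivative of $f/U'$, and $U\in C^4$ makes $1/U'\in C^3$, we get $\|g_{1,M}-(1/U')'\|_{C^1}\le C(\|f_M-1\|_{C^2}+\|b_M\|_{C^1})\le CM^2$. Similarly $\|g_{2,M}-k^2/U'\|_{C^1}\le CM^2$.
\item Away from critical levels, the coefficients of Equation~\eqref{eq:wchi-recall} are rational in $V,\Abar_M,b_M,\bar\rho_{0,M}$ with denominators bounded below. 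They are therefore uniformly $C^2$ and converge at rate $M^2$.
\end{itemize}
All of this holds uniformly in $\delta\in[0,\delta_1]$ and in $c_r\in K$. These also give the uniform $C^2$ and $C^1$ bounds stated in the lemma. The variant with $\|\hat a_M-\hat a\|_{C^3}=O(M^2)$ adds one more $O(M^2)$ term in each estimate.

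\emph{Uniformity of constants.} This step is bookkeeping, and I expect it to be the main obstacle, since nothing is hard but one must check that no constant hides a dependence on $a_M$ itself. I would go through the results in the order \cref{lem:fixedpoint}, \cref{prop:local}, \cref{prop:convergence}, \cref{prop:limit-solutions}, \cref{thm:limabs}, and record which norms enter each.
\begin{itemize}
\item The contraction constant $C_5$ in \cref{lem:fixedpoint} depends on $\|g_1\|_{C^1}$, $\|g_2\|_{C^0}$, $\|f\|_{C^1}$, $\|b\|_{C^0}$, $\|\bar\rho_0\|_{C^0}$, $m_U$, and $\max|U'|$. These are uniform by the expansions above, so $\varsigma_0$ and the contraction factor $\tfrac12$ are uniform.
\item The values in Equation~\eqref{eq:residue-r} and the remainder bounds in Equations~\eqref{eq:logextract}--\eqref{eq:remainder-C1} depend on the same norms.
\item The $\delta$-Lipschitz estimate in Equation~\eqref{eq:coef-delta} uses $1/(\bar\rho_0a^2E)$. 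Its $C^2$ norm is $O(M^2)$, hence bounded.
\item The critical levels $z_j$ and $m_U$ depend only on $U$ and $K$, so $\varsigma$, $\delta_1$, and $V_\ast$ may be fixed independently of $M$.
\item Gr\"onwall constants on the critical-level-free intervals depend only on $C^0$ bounds of the coefficients.
\end{itemize}
Hence the constants of \cref{prop:limit-solutions} and \cref{thm:limabs} are uniform for $M\in[0,M_0]$, where $M=0$ gives the Rayleigh problem.
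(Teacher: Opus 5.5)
Your proposal is correct and follows essentially the same route as the paper. Both expand the explicit family \eqref{eq:M-family} directly: $b_M=O(M^2)$, and $\bar\rho_{0,M}$, $E_M$, and $\Abar_M=\bar\rho_{0,M}\bigl(1-M^2V^2/(a_\ast^2\hat a)\bigr)^{-1}$ are all $1+O(M^2)$ in $C^2$; both then use that \eqref{eq:g-def} is linear in $f$, and that every constant of the critical-layer theory depends only on these norms, the gap, $|U'(z_j)|^{-1}$, and $k$ (your bookkeeping is simply more explicit). One small correction: $a_\ast$ is fixed by its definition, so rather than ``enlarging'' it you should simply bound $|U-c|\le 2a_\ast+r_0$ (or $a_\ast+\max_K|c|$ for $c\in K$), as the paper does, which only changes the admissible size of $M_0$.
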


\begin{proof}
The gap holds uniformly once $M_0$ is small, because $\Rec(a_M^2-V^2)\ge a_M^2-|U-c|^2$, $a_M^2\ge a_\ast^2\hat a_-/M^2$, and $|U-c|^2\le(2a_\ast+r_0)^2$ for every complex $c$ with $|c-c_k|\le r_0$.
The bounds in Equation~\eqref{eq:coef-lip} follow from $b_M=gM^2/(a_\ast^2\hat a)$, from $\Abar_M=\bar\rho_{0,M}\bigl(1-M^2V^2/(a_\ast^2\hat a)\bigr)^{-1}$, and from $\bar\rho_{0,M}$ and $E_M$, each of which is $1+O(M^2)$ in $C^2$ by Equation~\eqref{eq:M-family} and $\hat a\ge\hat a_-$, together with the definitions in Equation~\eqref{eq:g-def}, which are linear in $f$.
Every constant in the cited results depends only on these norms, the gap, $|U'(z_j)|^{-1}$, and $k$.
\end{proof}

\begin{proof}[Proof of \cref{thm:lowmach}]
By \cref{lem:uniformconst}, the critical-layer theory of \S\ref{sec:criticallayer} applies to every atmosphere in Equation~\eqref{eq:M-family} with constants uniform in $M\in[0,M_0]$.

On intervals containing no critical level, the coefficients of the system in Equation~\eqref{eq:wchi-recall} for the atmospheres in Equation~\eqref{eq:M-family} differ by $O(M^2)$ in $C^0$.
The values at $z=\ha$ given in Equation~\eqref{eq:canonical} also differ by $O(M^2)$, because $\Abar_M(\ha)=1+O(M^2)$, while $\Abar=1$ in the limiting problem in Equation~\eqref{eq:rayleigh-limit}.
Hence the solution maps on these intervals differ in norm by $O(M^2)$ by Gr\"onwall's inequality.

Across a critical layer we apply \cref{lem:coef-perturb} of \cref{app:proofs}, including its uniform bounds on the endpoint evaluation matrices and their inverses, valid for every $\delta\in[0,\delta_1]$, to the coefficient tuples $\mathfrak a_M:=(b_M,\bar\rho_{0,M},f_M,g_{1,M},g_{2,M})$ and $\mathfrak a^{\mathrm{inc}}:=(0,1,1,g_1^{\mathrm{inc}},g_2^{\mathrm{inc}})$ of the limiting problem.
Both tuples satisfy the first-order-system identities of \cref{lem:coef-perturb}, because both are generated by neutral atmospheres.
These identities state that $g_1$ and $g_2$ are obtained from $b$, $\rhobar$, and $f$ by Equation~\eqref{eq:g-def}.
The two tuples also share $U$, $c$, the base point $z_j$, and the branch of the logarithm in Equation~\eqref{eq:branch}.
Their distance $\|\mathfrak a_M-\mathfrak a^{\mathrm{inc}}\|_{\mathcal X}$ is the sum of the $C^0(I_j)$ norms of the differences of $b$, $\rhobar$, and $g_2$, the $C^1(I_j)$ norm of the difference of $f$, and the $C^{0,1}(I_j)$ norm of the difference of $g_1$.
It satisfies $\|\mathfrak a_M-\mathfrak a^{\mathrm{inc}}\|_{\mathcal X}\le CM^2$ by Equation~\eqref{eq:coef-lip} together with $\|\bar\rho_{0,M}-1\|_{C^1}\le CM^2$.
Hence the maps $T_\delta$ across the critical neighborhoods, their inverses, and the critical values $w_\ast(z_j)$ are $O(M^2)$-close to their incompressible counterparts, uniformly in $\delta\in[0,\delta_1]$ and on $K$.

When we compose the solution operators on intervals containing no critical level with the maps $T_\delta$ across the critical neighborhoods, from $z=\ha$ down to $z=0$, the values $\bigl(w_\ast(0),\chibar_\ast(0),w_\ast(z_j)\bigr)$ of the normalized limit solution differ between $M$ and $0$ by at most $C_1M^2$, uniformly on $K$.
Since $m_0:=\min_K|w_\ast^{\mathrm{inc}}(0;\cdot)|>0$ by Hypothesis \ref{hyp:nonres} for the incompressible problem and continuity on the compact set $K$, we shrink $M_0$ so that $C_1M_0^2\le m_0/2$.
Then $|w_\ast^{M}(0;c_r)|\ge m_0/2$ for all $M\in(0,M_0]$ and $c_r\in K$, which is Hypothesis \ref{hyp:nonres} for the compressible problem.
With this common lower bound on the denominator, the quotient in Equation~\eqref{eq:Za-formula} gives Equation~\eqref{eq:lowmach-rate}.
For the complex version in Equation~\eqref{eq:lowmach-complex}, we also shrink $M_0$ so that the conclusions of \cref{cor:uniformM} hold for every $M\in(0,M_0]$.
\cref{cor:uniformM}\ref{it:uM-trace} gives a common positive lower bound for the interface traces $|w^{M}(0;k,c)|$ on $\mathbb D^{+}_{r_0}$.
Equation~\eqref{eq:lowmach-complex} is then \cref{cor:uniformM}\ref{it:uM-complex}, because the same composition applies at every $c=c_r+\im\delta$ with $\delta\in[0,\delta_1]$, because \cref{lem:coef-perturb} is uniform in $\delta$, and every point of $\mathbb D^{+}_{r_0}$ has $\Imc c\le r_0\le\delta_1$.

The leading coefficient satisfies $c_1(M)-c_1(0) =\bigl(\Za^{+,M}-\Za^{+,\mathrm{inc}}\bigr)(k,c_k)/(2kc_k\coth(k\hw))=O(M^2)$.
Assume $\Imc\Za^{+,\mathrm{inc}}(k,c_k)>0$.
By Equation~\eqref{eq:lowmach-rate} there is $M_1\in(0,M_0]$ with $\Imc\Za^{+,M}(k,c_k)\ge\tfrac12\Imc\Za^{+,\mathrm{inc}}(k,c_k)>0$, hence $\Imc c_1(M)\ge\tfrac12\Imc c_1(0)>0$, for all $M\le M_1$, so \cref{thm:main} applies to every $M\le M_1$.
By \cref{cor:uniformM} of \cref{app:proofs}, the atmospheres in Equation~\eqref{eq:M-family} admit a common closed neighborhood $\mathbb D^{+}_{r_0}$ of $c_k$ in the closed upper half-plane on which no $\Za^{M}(k,\cdot)$ has a pole, a common bound $M_Z$, and a common modulus of continuity $\omega(\cdot)$ at $c_k$.
Together with the uniform lower bound on $\Imc c_1(M)$, every constant in the proof of \cref{thm:main}, including the constants $(\eps_0,R,C_0)$ in Equations~\eqref{eq:c1} and \eqref{eq:expansion-rem}, can be chosen uniformly in $M\le M_1$, and equally at $M=0$.
The uniform choice of $(\eps_0,R,C_0)$ also gives
\begin{equation}\label{eq:Im-floor}
\Imc c_\eps(M)\ \ge\ \tfrac12\,\eps\,\Imc c_1(M)
\ \ge\ \tfrac14\,\eps\,\Imc c_1(0),
\qquad
|c_\eps(M)-c_k|\le R\eps,
\qquad \eps\le\eps_0,\ M\le M_1 .
\end{equation}

For the estimate at a given $\eps$, write $D_M:=\Dw-\eps\Za^{M}$ and $D^{\mathrm{inc}}:=\Dw-\eps\Za^{\mathrm{inc}}$.
Since $D_M(k,c_\eps(M),\eps)=0$,
\begin{equation}
\bigl|D^{\mathrm{inc}}(k,c_\eps(M),\eps)\bigr|
=\bigl|D^{\mathrm{inc}}-D_M\bigr|(k,c_\eps(M),\eps)
=\eps\,\bigl|\Za^{M}-\Za^{\mathrm{inc}}\bigr|(k,c_\eps(M))
\;\le\;C\eps M^{2}
\end{equation}
by Equation~\eqref{eq:lowmach-complex}.
The straight segment from $c_\eps(0)$ to $c_\eps(M)$ stays in $\{|c-c_k|\le R\eps,\ \Imc c\ge\tfrac14\eps\Imc c_1(0)\}$, because both endpoints satisfy both constraints by Equation~\eqref{eq:Im-floor} and both constraints are preserved under convex combination.
On this segment,
\begin{equation}
\bigl|\partial_cD^{\mathrm{inc}}(k,c,\eps)-d_1\bigr|
\le 2k\coth(k\hw)\,R\eps
+\eps\,C\Bigl(1+\bigl|\log\bigl(\tfrac14\eps\Imc
c_1(0)/V_\ast\bigr)\bigr|\Bigr)
\;\le\;\tfrac12\,d_1
\end{equation}
for $\eps\le\eps_0$, after we shrink $\eps_0$ once, independently of $M$, so that in addition $R\eps_0\le r_0/2$.
The segment then lies in the region where \cref{prop:Zhalfdisc}\ref{it:Z-deriv} is available for the incompressible problem, to which the critical-layer theory of \S\ref{sec:criticallayer} applies with the same constants.
Hence
\begin{align*}
&\bigl|D^{\mathrm{inc}}(k,c_\eps(M),\eps)-D^{\mathrm{inc}}(k,c_\eps(0),\eps)\bigr|\\
&\qquad=\Bigl|\int_0^1\partial_cD^{\mathrm{inc}}\bigl(k,(1-t)c_\eps(0)+tc_\eps(M),\eps\bigr)\dd t\Bigr|
\,\bigl|c_\eps(M)-c_\eps(0)\bigr|\\
&\qquad\ge\ \tfrac12\,d_1\,\bigl|c_\eps(M)-c_\eps(0)\bigr|,
\end{align*}
since the integral lies within $\tfrac12 d_1$ of $d_1$.
With $D^{\mathrm{inc}}(k,c_\eps(0),\eps)=0$ this gives $|c_\eps(M)-c_\eps(0)|\le(2C/d_1)\,\eps M^{2}$, which is Equation~\eqref{eq:root-rate}.
\end{proof}

\begin{remark}\label{rem:lowmach-scope}
The estimate in Equation~\eqref{eq:root-rate} controls the unstable root itself for $0<\eps\le\eps_0$, pointwise in $k$.
The constants may depend on $k$, and no uniformity over a wavenumber interval is claimed.
\end{remark}

\begin{remark}\label{rem:two-effects}
The scaling in Equation~\eqref{eq:M-family} sends the acoustic ratio $|V|^2/a^2$ and the hydrostatic ratio $gz_c/a^2$ to zero at order $M^2$.
The former enters through $\Abar=\rhobar a^2/(a^2-V^2)$, while the latter enters through $\rhobar$ and $Q-U''=(g/a^2)U'$.
For ocean--atmosphere parameters, the global Mach parameter satisfies $M^2\lesssim10^{-2}$ for the wavenumbers of wind waves.
Here $\lesssim$ means that the left side is at most of the order of the right side.
The squared local wave Mach number $(c_k/a(z_c))^2$ and the hydrostatic ratio $gz_c/a(z_c)^2$ are each typically at most $10^{-3}$.
\end{remark}
\section{Uniform, logarithmic, and shear-layer winds}
\label{sec:examples}

In this section, we use examples to explain the results physically and to relate them to winds over the sea.
A uniform wind forms a vortex sheet at the interface, as in the classical compressible Kelvin--Helmholtz problem, so it lies outside Hypothesis \ref{hyp:noslip} and the results of \S\ref{sec:instability} and \S\ref{sec:lowmach} do not apply to it.
We include it for comparison, because its air-side pressure response has a closed form (\cref{prop:KH}).
The closed form shows how the sound speed and the density scale height of the air set the rate at which the perturbation decays with height, so in this example an upper boundary several decay lengths above the interface has little effect.
For the logarithmic wind of the neutral surface layer over the sea and for a bounded shear layer, we evaluate the critical-layer coefficient $Q$ (\S\ref{subsec:polytropic-wind}).
For the logarithmic wind, the compressible term $(g/a^2)U'$ in $Q$ is at most about $10^{-3}|U''|$ at critical levels within about $10\,\mathrm m$ of the sea surface, so the criterion in Equation~\eqref{eq:criterion} agrees with Miles's criterion $U''(z_c)<0$ for winds over the sea.
We then consider the shear-layer wind in a model atmosphere with constant sound speed below the gravity speed $\sqrt{gL}$ of the shear layer.
In this model, $Q>0$ everywhere although $U''<0$, and the Miles bifurcation is absent (\cref{prop:suppression}).
This regime does not occur over the sea, but it shows that the sign criterion of the incompressible theory does not extend to a compressible atmosphere in general.

\subsection{Closed-form pressure response for a uniform wind}\label{subsec:KH}

For comparison with the classical compressible Kelvin--Helmholtz problem \cite{Miles1958VortexSheet,McKenzie1972}, we solve the air equation for a uniform wind in the constant-sound-speed atmosphere of \cref{ex:constant-sound-speed}.
This case lies outside Hypothesis \ref{hyp:noslip} and provides an exactly solvable subsonic example.
In this atmosphere the sound speed is constant, $a\equiv a_{\mathrm e}$, and its value is a free parameter, which need not be the value for air.

\begin{proposition}\label{prop:KH}
Let the atmosphere have constant sound speed $a$ and density $\rhobar=e^{-bz}$ with $b=g/a^2$ (\cref{ex:constant-sound-speed}), and let the wind be uniform, $U\equiv U_0$ for a constant $U_0$.
Here $V=U-c$ is the constant $U_0-c$, and we assume $V\neq0$ and the subsonicity condition $|V|^2<a^2$.
Then the mass-weighted displacement $X=\rhobar\,\xi$ satisfies the constant-coefficient equation
\begin{equation}\label{eq:KH-ode}
X''+b\,X'-k^2\Bigl(1-\frac{V^2}{a^2}\Bigr)X=0 ,
\end{equation}
and with the impermeable top boundary at $\ha$ the air-side pressure response is
\begin{equation}\label{eq:KH-Z}
\Za^{\mathrm{KH}}(k,c)
=\frac{a^2V^2}{a^2-V^2}\,
\frac{\mu_-+\mu_+\,\Theta}{1+\Theta}-g,
\qquad
\Theta:=-e^{(\mu_--\mu_+)\ha},
\qquad
\mu_\pm:=-\frac b2\pm\nu,
\end{equation}
where $\nu:=\sqrt{b^2/4+k^2(1-V^2/a^2)}$ is the principal square root, $\sqrt{\zeta}=\exp(\tfrac12\operatorname{Log}\zeta)$ with $\operatorname{Log}$ from Equation~\eqref{eq:Log-def}.
It satisfies $\Rec\nu>0$ and is well defined for $c$ in the complex subsonic neighborhood of Hypothesis \ref{hyp:subsonic}, on which $\Rec(1-V^2/a^2)>0$.
As $\ha\to\infty$, $\Za^{\mathrm{KH}}\to a^2V^2\mu_-/(a^2-V^2)-g$.
As $a\to\infty$, $\Za^{\mathrm{KH}}\to-kV^2\coth(k\ha)-g$, the finite-air-depth incompressible uniform-wind response.
Taking in addition $\ha\to\infty$ gives $-kV^2-g$, the air-side term in the Kelvin--Helmholtz dispersion relation of \cite[Ex.~3.1]{BSWZ2016}.
\end{proposition}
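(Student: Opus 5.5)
The plan is to specialize the mass-weighted displacement equation of \cref{prop:SLform}\ref{it:X}, solve the resulting constant-coefficient ODE explicitly, and read off $\Za$ from $\qbar=\rhobar F X'$.

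\emph{Step 1: the ODE.} With $U\equiv U_0$ we have $V$ constant and $U'=0$. The reduction in \cref{prop:xiq,prop:SLform} used neither $U(0)=0$ nor monotonicity; it needed only $V\neq0$ and $a^2\neq V^2$. So \eqref{eq:Xform} holds in normalized form, $(FX')'=k^2(V^2/\rhobar)X$ with $F=a^2V^2/(\rhobar(a^2-V^2))$. Since $a$ and $V$ are constant, $F=\kappa\, e^{bz}$ with $\kappa=a^2V^2/(a^2-V^2)$. Dividing by $\kappa e^{bz}$ gives $X''+bX'=k^2(1-V^2/a^2)X$, which is \eqref{eq:KH-ode}.

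\emph{Step 2: the roots.} The characteristic roots are $\mu_\pm=-b/2\pm\nu$. On the complex subsonic neighborhood, $\Rec(1-V^2/a^2)>0$, so $b^2/4+k^2(1-V^2/a^2)$ has positive real part. Its principal square root $\nu$ therefore satisfies $\Rec\nu>0$ and is holomorphic in $c$. In particular $\mu_+\ne\mu_-$.

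\emph{Step 3: the boundary-value problem.} Impose $X(\ha)=0$, which follows from $w(\ha)=0$ with $V\neq0$. Write $X=A_-e^{\mu_-z}+A_+e^{\mu_+z}$. The top condition gives $A_+/A_-=-e^{(\mu_--\mu_+)\ha}=\Theta$. Normalize $X(0)=\xi(0)=1$, i.e.\ $A_-(1+\Theta)=1$; this requires $1+\Theta\neq0$. That holds because $|\Theta|=e^{-2\Rec\nu\,\ha}<1$, which I take to be the non-resonance here. Then $X'(0)=(\mu_-+\mu_+\Theta)/(1+\Theta)$. Since $\qbar(0)=\rhobar(0)F(0)X'(0)=\kappa X'(0)$ and $\Za=\qbar(0)-g$ at $\xi(0)=1$, we obtain \eqref{eq:KH-Z}.

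\emph{Step 4: the limits.} As $\ha\to\infty$, $\Theta\to0$ because $\Rec(\mu_--\mu_+)=-2\Rec\nu<0$. As $a\to\infty$, we have $b\to0$, $\nu\to k$, $\kappa\to V^2$ and $\mu_\pm\to\pm k$, with $\Theta\to-e^{-2k\ha}$. Then $(\mu_-+\mu_+\Theta)/(1+\Theta)\to-k(1+e^{-2k\ha})/(1-e^{-2k\ha})=-k\coth(k\ha)$, and letting $\ha\to\infty$ gives $-kV^2-g$.

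Every step is routine. The only care needed is branch bookkeeping: $\Rec\nu>0$ and $1+\Theta\ne0$ must hold uniformly on the complex neighborhood so that the formula is holomorphic there.
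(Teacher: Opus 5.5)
Your proposal is correct and takes the same route as the paper: specialize Equation~\eqref{eq:Xform}, which does not need Hypothesis~\ref{hyp:noslip}, divide by $F=\kappa e^{bz}$, impose $X(\ha)=0$, and read off $\Za$ from $\qbar=\rhobar FX'$ at $\xi(0)=1$, obtaining the limits by direct computation. You also write out what the paper leaves implicit, namely the two-exponential solution, the bound $|\Theta|=e^{-2\Rec\nu\,\ha}<1$ that gives $1+\Theta\neq0$, and the explicit limit calculations.
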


\begin{proof}
The derivation of Equation~\eqref{eq:Xform} in \cref{prop:SLform} and the definition of $\Za$ in Equation~\eqref{eq:Za-def} use only $V\neq0$ and $a^2-V^2\neq0$, not Hypothesis \ref{hyp:noslip}, and both conditions hold here for real and complex $c$ because $V$ is a nonzero constant and $|V|^2<a^2$.
Equation~\eqref{eq:KH-ode} is Equation~\eqref{eq:Xform} evaluated for this base state and divided by $F$, which is nonzero because $V\neq0$.
The boundary condition $w(\ha)=0$ is $X(\ha)=0$, and Equation~\eqref{eq:KH-Z} follows from $\qbar=\rhobar FX'$ and Equation~\eqref{eq:Za-def}.
The limits follow by direct computation.
\end{proof}

Uniform wind gives $U(0)=U_0\neq0$ and hence a vortex sheet, so the coupled reduction of \cref{prop:reduction} does not apply.
The kinematic conditions give the interface values $w_{\mathrm a}(0)=\im k(U_0-c)\hat\zeta$ and $w_{\mathrm w}(0)=-\im kc\hat\zeta$ of the vertical velocity in the air and in the water.
Nevertheless, the air-side displacement remains $\xi(0)=w(0)/(\im kV(0))=\hat\zeta$.
Thus Equation~\eqref{eq:Za-def} still defines the air-side response, whose closed form is given by Equation~\eqref{eq:KH-Z}.
The water side and the dynamic condition are the same as in \S\ref{sec:linearization}, so a normal mode with $\hat\zeta=1$ satisfies $\Dw(k,c)=\eps\Za^{\mathrm{KH}}(k,c)$, which has the form of Equation~\eqref{eq:dispersion} and is the compressible Kelvin--Helmholtz dispersion relation for this configuration.
The only change is that the vertical velocity jumps across the interface.
The operator $\Lop_k$ of \cref{thm:operator} and the results of \S\ref{sec:instability} and \S\ref{sec:lowmach} require this velocity to be continuous, which Hypothesis \ref{hyp:noslip} ensures, so we use this wind only for comparison.

The acoustic factor $(1-V^2/a^2)$ and the scale-height shift $-b/2$ both enter the vertical decay exponent.
Their combined effect is the full expression in Equation~\eqref{eq:KH-Z}.
The term $-b/2$ is the Lamb correction associated with the density scale height \cite{GossardHooke1975}.
Up to a nonzero factor, $1+\Theta=1-e^{-2\nu\ha}$ is the interface trace $w(0;k,c)$ from \cref{subsec:determinant}.
For real subsonic $c$ it is positive, so Hypothesis \ref{hyp:nonres} holds in this uniform-wind example and the vertical mode decays exponentially with height.

Since $|\Theta|=e^{-2\Rec\nu\,\ha}$, the finite-height correction in Equation~\eqref{eq:KH-Z} is $O(e^{-2\Rec\nu\,\ha})$.
If $\Rec(1-V^2/a^2)\ge\delta_0$, the principal branch satisfies $\Rec\nu\ge k\sqrt{\delta_0}$ because $\Rec\sqrt{\zeta}\ge\sqrt{\Rec\zeta}$ for $\Rec\zeta>0$.
For the ocean--atmosphere scales used above, $|V|^2/a^2\lesssim10^{-2}$, so the subsonic factor is at least about $0.99$.
A top boundary several decay lengths $1/\Rec\nu$ above the interface therefore has little effect in this example.
No estimate for removing the top boundary is proved here for variable shear.

\subsection{The Miles criterion for logarithmic and shear-layer winds}\label{subsec:polytropic-wind}

We evaluate $Q$ for logarithmic and bounded shear-layer winds in the hydrostatic atmosphere given by Equation~\eqref{eq:polytropic}.
For the logarithmic wind $U=(u_\ast/\kappa_{\mathrm K})\log(1+z/z_0)$, with friction velocity $u_\ast$, von K\'arm\'an constant $\kappa_{\mathrm K}$, and roughness length $z_0$,
\begin{equation}
Q(z)=\frac{u_\ast}{\kappa_{\mathrm K}(z+z_0)}
\Bigl[\frac{g}{a^2(z)}-\frac1{z+z_0}\Bigr],
\end{equation}
so $Q<0$, the unstable side of the criterion in Equation~\eqref{eq:criterion}, below the height $z+z_0=a^2(z)/g$.
For the polytropic atmosphere in Equation~\eqref{eq:polytropic}, the cutoff is $z_{\mathrm{cut}}=(a_0^2/g-z_0)/\gamma\approx8.4\,\mathrm{km}$ for air.
The constant-sound-speed value is $a_{\mathrm e}^2/g\approx11.8\,\mathrm{km}$, with $a_{\mathrm e}$ from \cref{ex:constant-sound-speed}.
At a critical level $z_c$, the compressible term $(g/a^2)U'$ is the fraction $(z_c+z_0)\,g/a^2(z_c)$ of $|U''|$, which is about $10^{-3}$ for $z_c=10\,\mathrm m$.
For the bounded shear-layer profile $U=U_\infty(1-e^{-z/L})$, with limiting speed $U_\infty$ and shear depth $L$,
\begin{equation}\label{eq:Q-exp}
Q(z)=\frac{U_\infty}{L^{2}}\,e^{-z/L}\Bigl[\frac{gL}{a^2(z)}-1\Bigr],
\end{equation}
so the sign of $Q$ at height $z$ is the sign of $gL-a^2(z)$, and the instability criterion compares the sound speed with the gravity speed $\sqrt{gL}$ of the shear layer.

\subsection{Failure of the Miles bifurcation under strong compressibility}
\label{subsec:suppression}

For the shear-layer wind, $-U''=U'/L$, so in the constant-sound-speed model the condition $a^2<gL$ means that the hydrostatic term $(g/a^2)U'$ in $Q$ dominates the curvature term $U''$ at every height.
This reverses the sign of the critical-layer residue and of the leading Miles coefficient.
The sign condition is independent of $k$, although the threshold $\eps_0$ in \cref{thm:main} may depend on $k$.

\begin{proposition}\label{prop:suppression}
Let the atmosphere have constant sound speed $a$ (\cref{ex:constant-sound-speed}) and wind $U=U_\infty(1-e^{-z/L})$, $U_\infty>0$, and let $k$ be such that $c_k\in U((0,\ha))$ with a single critical level and Hypothesis \ref{hyp:subsonic} holds.
If
\begin{equation}\label{eq:suppression-cond}
a^2<gL ,
\end{equation}
then $Q>0$ on all of $[0,\ha]$ although $U''<0$ everywhere.
The limit solution satisfies $w_\ast(z_c)\neq0$ and Hypothesis \ref{hyp:nonres} by \cref{lem:positivity} with the stated sign condition, and
\begin{equation}
\Imc\Za^{+}(k,c_k)
=-\pi c_k^2\,\frac{\rhobar(z_c)Q(z_c)}{|U'(z_c)|}\,|\hat y(z_c)|^2\;<\;0 .
\end{equation}
Consequently $\Imc c_1<0$ for the coefficient $c_1$ in Equation~\eqref{eq:c1}, and there are $\eps_0,r_0>0$ such that for $\eps\in(0,\eps_0)$ the dispersion relation has no root in the set $\{|c-c_k|\le r_0,\ \Imc c>0\}$, and none on the real interval $[c_k-r_0,c_k+r_0]$ in the sense of boundary values.
At this admissible wavenumber $k$, the incompressible criterion $U''(z_c)<0$ of \cite{Miles1957,BSWZ2016} predicts growth, whereas the compressible dispersion relation has no $O(\eps)$ unstable root bifurcating from $c_k$.
\end{proposition}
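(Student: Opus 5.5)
The plan is to mirror the proof of \cref{thm:main}, with the sign of $\Imc c_1$ reversed. First I verify the hypotheses. With $a$ constant, $b=g/a^2$ is constant, and Equation~\eqref{eq:Q-exp} gives $Q(z)=(U_\infty/L^2)e^{-z/L}(gL/a^2-1)>0$ on $[0,\ha]$ under Equation~\eqref{eq:suppression-cond}. In contrast, $U''=-(U_\infty/L^2)e^{-z/L}<0$. Since $U'>0$, the critical level $z_c$ is unique and regular, so Hypothesis \ref{hyp:critical} holds with $m=1$. \cref{lem:positivity} applies with all $Q(z_j)$ of one sign and $Q(z_m)\neq0$. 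It gives $u_1(z_c)=|w_\ast(z_c)|^2>0$ and $u_1(0)>0$, and hence Hypothesis \ref{hyp:nonres} at $c_k$ and, by \cref{rem:hyp-axis}, on a neighborhood. The residue formula of \cref{thm:limabs}\ref{it:limabs-residue} then gives the displayed $\Imc\Za^+(k,c_k)<0$, since $\rhobar(z_c)Q(z_c)|\hat y(z_c)|^2/|U'(z_c)|>0$. Hence $\Imc c_1=\Imc\Za^+(k,c_k)/d_1<0$.

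For the root exclusion I invoke \cref{prop:Zhalfdisc}. It supplies $r_0$ and $M_Z$, and the modulus $\omega$ on $\mathbb D^+_{r_0}$. I shrink $r_0\le c_k$ so that $\Imc\Za^+(k,c_r)\le\tfrac12\Imc\Za^+(k,c_k)<0$ on $[c_k-r_0,c_k+r_0]$. On this real interval $\Dw$ is real, so $\Dw(k,c_r)=\eps\Za^+(k,c_r)$ is impossible, and there are no boundary-value roots. Now take a root $c\in\mathbb D^+_{r_0}$ with $\Imc c>0$. The localization argument of \cref{thm:main}, $|\Dw|\ge\tfrac{d_1}2|c-c_k|$ together with $|\eps\Za|\le\eps M_Z$, gives $|c-c_k|\le(2M_Z/d_1)\eps$. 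Then Equation~\eqref{eq:D-ell} applies on $|c-c_k|\le R\eps$, and it gives $|d_1(c-c_k-\eps c_1)|\le\varrho(\eps)=C_R\eps^2(1+\log(1/\eps))$.

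Taking imaginary parts yields $\Imc c\le\eps\Imc c_1+\varrho(\eps)/d_1$. For $\eps<\eps_0$ chosen so that $\varrho(\eps)/(d_1\eps)<\tfrac12|\Imc c_1|$, the right side is negative, contradicting $\Imc c>0$. This also covers roots on the closed set with $\Imc c>0$, since the bound uses only continuity up to the boundary. Alternatively, one can apply \cref{lem:rouche} on $D^+_{R\eps}$ against $\ell$, whose zero $c_k+\eps c_1$ lies in the lower half-plane, so the count is zero. The margin $|\ell|\ge d_1\eps|\Imc c_1|$ on the real segment and $|\ell|\ge d_1c_k\eps$ on the arc is exactly as before.

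The main obstacle is confirming that \cref{thm:limabs} and \cref{prop:Zhalfdisc} need only Hypotheses \ref{hyp:reg}--\ref{hyp:nonres} and never the positivity of $\Imc\Za^+$. Reading those statements, they do not use it, so the argument reduces to the sign bookkeeping above. A final remark records that $U''(z_c)<0$, so the incompressible criterion predicts growth.
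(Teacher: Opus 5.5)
Your proposal is correct and follows the paper's proof. You verify $Q>0$ from Equation~\eqref{eq:Q-exp}, obtain $w_\ast(z_c)\neq0$ and Hypothesis~\ref{hyp:nonres} from \cref{lem:positivity}, read off $\Imc\Za^+(k,c_k)<0$ from the residue formula, exclude boundary-value roots on the shrunken real interval, and localize any upper-half-plane root within $(2M_Z/d_1)\eps$ of $c_k$. The only variation is your main exclusion step: you read $\Imc c\le\eps\Imc c_1+\varrho(\eps)/d_1<0$ directly off $|D-\ell|\le\varrho(\eps)$ at a putative root, which is slightly more elementary than the paper's zero count via \cref{lem:rouche} on $D^+_{R\eps}$ (you also give that as an alternative).
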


\begin{proof}
Equation~\eqref{eq:Q-exp} with $a^2<gL$ gives $Q>0$.
Since all critical-level values $c_kQ(z_j)$ have the same sign, \cref{lem:positivity} applies and yields $w_\ast(z_c)\neq0$.
The same argument holds for every regular value $c_r$ near $c_k$, because $Q>0$ on $[0,\ha]$.
Hence Hypothesis \ref{hyp:nonres} holds on a real interval about $c_k$, and \cref{thm:limabs} gives the displayed formula.
Choose $r_0$ and $M_Z$ from \cref{prop:Zhalfdisc}, then shrink $r_0$ so that $r_0\le c_k$ and $\Imc\Za^{+}(k,c_r)\le\tfrac12\Imc\Za^{+}(k,c_k)<0$ on the real interval $[c_k-r_0,c_k+r_0]$.
Since $\Dw$ is real there, there is no root in the sense of boundary values on this interval.
The localization estimate in the proof of \cref{thm:main} uses only $|\Za|\le M_Z$ and $r_0\le c_k$, which hold on the closed set $\mathbb D^+_{r_0}$ of \cref{prop:Zhalfdisc}.
It therefore places every root in $\{|c-c_k|\le r_0,\ \Imc c>0\}$ inside $\{|c-c_k|\le\tfrac12R\eps\}$, where $R=\max\{4M_Z/d_1,|c_1|+c_k\}$ is given by Equation~\eqref{eq:c1} and $R\eps_0\le r_0$.
On the set $D^{+}_{R\eps}$ defined after \cref{thm:main}, compare $D$ with the affine function $\ell(c)=d_1(c-c_k-\eps c_1)$.
Because $\Imc c_1<0$, the zero of $\ell$ lies in the lower half-plane.
The boundary estimate $|D-\ell|\le\varrho(\eps)<d_1\min\{c_k,|\Imc c_1|\}\eps\le|\ell|$ uses only $\Imc c_1\neq0$, so \cref{lem:rouche} gives no zero of $D$ in $D^{+}_{R\eps}$.
Localization then excludes roots from the larger set $\{|c-c_k|\le r_0,\ \Imc c>0\}$.
\end{proof}

\begin{remark}\label{rem:suppression}
\cref{prop:suppression} excludes an unstable root near $c_k$ but does not produce a decaying eigenvalue.
Because the coefficients are real, complex phase speeds with $\Imc c\neq0$ occur in conjugate pairs (\cref{lem:determinant}\ref{it:det-hol}).
An isolated lower-half-plane partner would require the excluded upper-half-plane root.
The negative value $\Imc\Za^{+}(k,c_k)$ is the signed pressure work on a prescribed neutral wave, not an eigenvalue decay rate.
\end{remark}

\appendix
\section{Thermodynamic identities and the general stratified system}
\label{app:thermo}

In this appendix, we derive the normal-mode equations for a base state whose entropy varies with height, to show where the neutrality assumption of the main text enters.
In the main text, the entropy is constant and the Brunt--V\"ais\"al\"a frequency vanishes (Equation~\eqref{eq:neutral}).
Neutrality enters in two places.
First, it removes the buoyancy term $N^2$ from these equations, so the equation near a critical level is of Rayleigh type and not of Taylor--Goldstein type (\cref{rem:general-strat}).
Second, it gives the density gradient $\rho_0'/\rho_0=-g/a^2$, and with this gradient the critical-layer coefficient is $Q=U''+(g/a^2)U'$ of Equation~\eqref{eq:Gc}.
\cref{rem:leeslin} compares $Q$ with the coefficient $(\rho_0U')'/\rho_0$ of compressible boundary-layer theory.
We also collect the thermodynamic identities used in \S\ref{sec:structure}.

We replace the constant entropy $S_\ast$ of the laminar state in Equation~\eqref{eq:laminar} by an entropy profile $S_0(z)$ and keep the wind $U$.
As in \S\ref{subsec:laminar}, this base state is steady if and only if its density $\rho_0$ and its pressure $p_0=P(\rho_0,S_0)$ satisfy hydrostatic balance $p_0'=-\rho_0g$, the first relation in Equation~\eqref{eq:hydrostatic}.
We write $a^2=P_\rho(\rho_0,S_0)$ and $P_S=\partial_SP(\rho_0,S_0)$ for the derivatives of the equation of state at the base state.
The chain rule $p_0'=a^2\rho_0'+P_SS_0'$ turns hydrostatic balance into
\begin{equation}\label{eq:app-hydro}
a^2\rho_0'+P_SS_0'=-\rho_0g .
\end{equation}
The Brunt--V\"ais\"al\"a frequency $N$ is defined by $N^2=-g\bigl(\rho_0'/\rho_0+g/a^2\bigr)$, as in Equation~\eqref{eq:neutral}.
Dividing Equation~\eqref{eq:app-hydro} by $\rho_0a^2$ gives $N^2=gP_SS_0'/(\rho_0a^2)$.
For an ideal gas, $P_S>0$, so $N^2>0$ exactly when the entropy increases with height.
A constant entropy, $S_0'\equiv0$, gives $\rho_0'/\rho_0=-g/a^2$ and $N^2\equiv0$, as in Equation~\eqref{eq:neutral}.

For this base state, the linearized equations are those of \cref{prop:linearized}, except that the vertical advection of the base entropy adds the term $S_0'w$ to the entropy equation in Equation~\eqref{eq:lin-ent}, which becomes $\partial_ts+U\partial_xs+S_0'w=0$.
For a normal mode with $\Imc c\neq0$, let $\xi=w/(\im kV)$ be the vertical displacement amplitude of \cref{prop:reduction}.
The entropy equation becomes $\im kVs+S_0'w=0$, so $s=-S_0'\xi$, and the Lagrangian entropy variation $\Delta S:=s+S_0'\xi$ vanishes, as noted in the proof of \cref{lem:entropy}.
For the constant-entropy states of the main text, this gives $s=0$, as in \cref{lem:entropy}.
The Lagrangian pressure and density variations $\Delta p=q+p_0'\xi$ and $\Delta\rho=r+\rho_0'\xi$ of Equation~\eqref{eq:lagrangian} therefore satisfy
\begin{equation}
\Delta p-a^2\Delta\rho=(q-a^2r)+(p_0'-a^2\rho_0')\,\xi=P_S\,\Delta S=0,
\end{equation}
by the equation of state $q=a^2r+P_Ss$ and the chain rule above.
Solving $\Delta p=a^2\Delta\rho$ for $r$ with $p_0'=-\rho_0g$ gives
\begin{equation}\label{eq:app-r}
r=\frac{q}{a^2}-\rho_0\Bigl(\frac{\rho_0'}{\rho_0}+\frac g{a^2}\Bigr)\xi
=\frac q{a^2}+\frac{\rho_0N^2}{g}\,\xi .
\end{equation}
The last term is the buoyancy term, and it vanishes for a neutral atmosphere.

The proof of \cref{prop:xiq} eliminates $u$ and uses this expression for $r$, which gives the general first-order system in Equation~\eqref{eq:xiq-general}.
In this system, the buoyancy enters only through the term $-\rho_0N^2\xi$ in the equation for $q'$.
Let $X_{\mathrm{gen}}=e^{-\int_0^zb}\xi$ and $F_{\mathrm{gen}}=e^{2\int_0^zb}\rho_0a^2V^2/(a^2-V^2)$ be as in \cref{rem:leeslin}, where $b=g/a^2$.
The first equation of the system can then be written as $F_{\mathrm{gen}}X_{\mathrm{gen}}'=e^{\int_0^zb}q$, and the second equation gives the self-adjoint form
\begin{equation}\label{eq:app-selfadjoint}
\bigl(F_{\mathrm{gen}}X_{\mathrm{gen}}'\bigr)'
=e^{2\int_0^zb}\,\rho_0\bigl(k^2V^2-N^2\bigr)\,X_{\mathrm{gen}} ,
\end{equation}
which is the equation in \cref{rem:general-strat}.
In the neutral case, $e^{-\int_0^zb}=\rho_0/\rho_0(0)$, so $X_{\mathrm{gen}}=X/\rho_0(0)$ and $F_{\mathrm{gen}}=\rho_0(0)^2F$, and Equation~\eqref{eq:app-selfadjoint} reduces to Equation~\eqref{eq:Xform}.

Now let $c$ be real, and let $z_c$ be a regular critical level, where $V(z_c)=0$ and $U'(z_c)\neq0$.
Away from the critical levels, the derivation above applies without change.
At $z_c$, the leading coefficient vanishes quadratically, $F_{\mathrm{gen}}\sim e^{2\int_0^{z_c}b}\rho_0(z_c)U'(z_c)^2(z-z_c)^2$, where $\sim$ means, as in \S\ref{sec:structure}, that the ratio of the two sides tends to $1$ as $z\to z_c$.
Substituting $X_{\mathrm{gen}}=(z-z_c)^{s}$ into the leading terms of Equation~\eqref{eq:app-selfadjoint} gives the Frobenius exponents $s$, which depend on how the coefficient on the right side behaves at $z_c$.
If $N^2(z_c)\neq0$, this coefficient does not vanish at $z_c$, and the exponents satisfy $s(s+1)=-\mathrm{Ri}_c$ with $\mathrm{Ri}_c=N^2(z_c)/U'(z_c)^2$.
The roots $s=-\tfrac12\pm\sqrt{\tfrac14-\mathrm{Ri}_c}$ are the Taylor--Goldstein exponents of \cref{rem:general-strat}.
If $N^2\equiv0$, this coefficient vanishes quadratically at $z_c$, like $F_{\mathrm{gen}}$, and the exponents satisfy $s(s+1)=0$, so they are $s=0$ and $s=-1$, as for Equation~\eqref{eq:Xform}.
In this case, the leading terms of a solution satisfy $F_{\mathrm{gen}}X_{\mathrm{gen}}'=\mathrm{const}$ near $z_c$.
Dividing by $F_{\mathrm{gen}}$ and integrating shows that the Frobenius solution with $X_{\mathrm{gen}}\sim(z-z_c)^{-1}$ also contains the term $\log|z-z_c|$ with the coefficient $\lim_{z\to z_c}\frac{\dd}{\dd z}\log\bigl(F_{\mathrm{gen}}/(z-z_c)^2\bigr)$.
This coefficient is the first-order coefficient in the expansion of $F_{\mathrm{gen}}/(z-z_c)^2$ about $z_c$ divided by its value at $z_c$.
It equals $Q_{\mathrm{gen}}(z_c)/U'(z_c)$, with $Q_{\mathrm{gen}}$ as in \cref{rem:leeslin}, because
\begin{equation}
U'(z_c)\lim_{z\to z_c}\frac{\dd}{\dd z}
\log\Bigl(\frac{F_{\mathrm{gen}}}{(z-z_c)^2}\Bigr)
=U''(z_c)+\Bigl(\frac{2g}{a^2(z_c)}+\frac{\rho_0'(z_c)}{\rho_0(z_c)}\Bigr)U'(z_c)
=Q_{\mathrm{gen}}(z_c),
\end{equation}
where the terms with $(a^2)'$ cancel because $(V^2)'(z_c)=0$.
For a neutral atmosphere, $\rho_0'/\rho_0=-g/a^2$, so $Q_{\mathrm{gen}}=U''+(g/a^2)U'$ is the coefficient $Q$ of Equation~\eqref{eq:Gc}.
This agrees with the logarithmic term in Equation~\eqref{eq:frobenius}, because $X_{\mathrm{gen}}$ is then proportional to $\rho_0w/V$.

\section{Technical estimates for limiting absorption and the
low-Mach limit}
\label{app:proofs}

In this appendix, we prove the estimates that were deferred from \S\ref{sec:criticallayer} and \S\ref{sec:lowmach}.
As in \S\ref{sec:criticallayer}, $C$ denotes a positive constant that may change from line to line.
The first three subsections complete the local analysis near a critical level in \S\ref{subsec:local}.
Section~\ref{subsec:B-equiv} proves that a solution of the integral representation of \cref{lem:representation} solves the differential system, \S\ref{subsec:logextract} proves the logarithmic decomposition of the singular integral, and \S\ref{subsec:log-compare} compares this decomposition for $\delta>0$ and $\delta=0$, which gives the convergence rate in \cref{prop:convergence}.
Section~\ref{subsec:coef-perturb} shows that the local solutions depend Lipschitz-continuously on the coefficients of the equations, which the low-Mach limit of \S\ref{sec:lowmach} requires.
Section~\ref{subsec:pullback} proves that the local solutions depend $C^1$ on the real part $c_r$ of the phase speed, as stated in \cref{prop:convergence,prop:limit-solutions}.
Section~\ref{subsec:uniformM} combines these results into the statements uniform in the Mach number that the proof of \cref{thm:lowmach} uses.

\subsection{Equivalence of the integral representation}
\label{subsec:B-equiv}

In this subsection, we verify in detail the converse part of \cref{lem:representation}, which states that a pair satisfying the integral representation in Equation~\eqref{eq:rep} solves the system in Equation~\eqref{eq:wchi-recall} wherever $V\neq0$.
By this converse, the fixed points of \cref{lem:fixedpoint} solve the differential system, also at $\delta=0$ away from the critical level.

Let $(w,\chibar)$ satisfy the system in Equation~\eqref{eq:rep}, and let $z\in I_j$ with $V(z)\neq0$, that is, any $z\in I_j$ when $\delta>0$ and any $z\neq z_j$ when $\delta=0$.
Differentiating Equation~\eqref{eq:rep-chi} gives the second equation of the system in Equation~\eqref{eq:wchi-recall}, $\chibar'=-b\chibar+\rhobar k^2Vw$, directly.
When we differentiate Equation~\eqref{eq:rep-w} and use $(VE)'=(V'/V+b)\,VE$, which follows from $E'=bE$, we obtain
\begin{equation}
w'=\Bigl(\frac{V'}V+b\Bigr)
\Bigl[\alpha VE+VE\!\int_{z_j+\varsigma}^{z}\bigl(\tfrac{g_1\chibar}{V}+g_2w\bigr)\Bigr]
-\Bigl(\frac{Ef\chibar}{U'}\Bigr)'
+E\,\bigl[g_1\chibar+Vg_2w\bigr].
\end{equation}
We substitute $\chibar'$ from the second equation into the term $-(Ef\chibar/U')'$ and compare the result with the first equation, $w'=(V'/V+b)w+\chibar/(\Abar V)$, in which we insert Equation~\eqref{eq:rep-w} for $w$.
The terms containing $w$ cancel because $-(Ef/U')\rhobar k^2V+VEg_2=0$ by the definition of $g_2$ in Equation~\eqref{eq:g-def}.
The terms containing $\chibar$ cancel because
\begin{equation}
-\Bigl(\frac{Ef}{U'}\Bigr)'+\frac{bEf}{U'}+Eg_1
+\Bigl(\frac{U'}{V}+b\Bigr)\frac{Ef}{U'}-\frac1{\Abar V}=0 .
\end{equation}
To verify this identity, we use $E'=bE$, $g_1=(f/U')'-bf/U'$, and $Ef=1/\Abar$.
The first three terms then add up to $-bEf/U'$, and the last two add up to $bEf/U'$.
Hence $(w,\chibar)$ solves the system in Equation~\eqref{eq:wchi-recall} wherever $V\neq0$.

\subsection{Logarithmic decomposition of the singular integral}
\label{subsec:logextract}

In this subsection, we prove the decomposition in Equation~\eqref{eq:logextract} and the regularity of its remainder in Equation~\eqref{eq:remainder-C1}.
We write $\dd s=\dd\tau/U'$ and use the function $\mathfrak g=(\psi/U')\circ\tau^{-1}$ of \S\ref{subsec:local}, which is $\psi/U'$ written in the variable $\tau$, so that $\mathfrak g(0)=\psi(z_j)/U'(z_j)$.
The idea is to split $\mathfrak g$ into its value $\mathfrak g(0)$ at the critical level, where $\tau=0$, and the difference $\mathfrak g-\mathfrak g(0)$.
The constant part integrates to an explicit logarithm, and the difference vanishes at the critical level, so its quotient by $\tau-\im\delta$ is bounded.

Let $\tau_+=\tau(z_j+\varsigma)$, as in the proof of \cref{prop:convergence}.
For $\delta>0$, the variable $\tau'-\im\delta$ stays in the open lower half-plane, where the principal logarithm is holomorphic, so
\begin{equation}
\begin{aligned}
\int_{z_j+\varsigma}^{z}\frac{\psi}{V}\,\dd s
&=\int_{\tau_+}^{\tau(z)}\frac{\mathfrak g(\tau')}{\tau'-\im\delta}\,\dd\tau'\\
&=\mathfrak g(0)\bigl[\operatorname{Log}(\tau(z)-\im\delta)
-\operatorname{Log}(\tau_+-\im\delta)\bigr]
+\int_{\tau_+}^{\tau(z)}
\frac{\mathfrak g(\tau')-\mathfrak g(0)}{\tau'-\im\delta}\,\dd\tau' .
\end{aligned}
\end{equation}
This is Equation~\eqref{eq:logextract}, and the last integral is the remainder $\mathcal R[\psi](z)$.
Since $|\mathfrak g(\tau')-\mathfrak g(0)|/|\tau'-\im\delta| \le|\mathfrak g(\tau')-\mathfrak g(0)|/|\tau'| \le\mathrm{Lip}(\mathfrak g)$ and the range of integration has length at most $C\varsigma$, the remainder is bounded by $C\varsigma\,\mathrm{Lip}(\mathfrak g)$, uniformly in $\delta\in[0,\delta_1]$.
At $\delta=0$, the logarithms are the boundary values from below given by Equation~\eqref{eq:branch}, which are their limits as $\delta\downarrow0$.
The remainder at $\delta=0$ is the limit of the remainders for $\delta>0$ by the dominated convergence theorem, because the integrands converge for $\tau'\neq0$ and are bounded by $\mathrm{Lip}(\mathfrak g)$.
Hence Equation~\eqref{eq:logextract} at $\delta=0$ is the limit of the identities for $\delta>0$.

For Equation~\eqref{eq:remainder-C1}, we differentiate $\mathcal R[\psi]$ at $\delta=0$ and $z\neq z_j$ and obtain
\begin{equation}
\mathcal R[\psi]'(z)
=U'(z)\,\frac{\mathfrak g(\tau(z))-\mathfrak g(0)}{\tau(z)}
=\frac{\psi(z)-\psi(z_j)\,U'(z)/U'(z_j)}{\tau(z)} .
\end{equation}
If $\psi\in C^1(I_j)$, then $\mathfrak g\in C^1$ by Hypothesis \ref{hyp:reg}, so the difference quotient $\bigl(\mathfrak g(\tau)-\mathfrak g(0)\bigr)/\tau$ converges to $\mathfrak g'(0)$ as $\tau\to0$ from either side.
Hence $\mathcal R[\psi]'(z)\to U'(z_j)\,\mathfrak g'(0) =[\psi'(z_j)U'(z_j)-\psi(z_j)U''(z_j)]/U'(z_j)^2$, the same limit from both sides.
Since $\mathcal R[\psi]$ is continuous at $z_j$, it is therefore continuously differentiable across $z_j$.

\subsection{Comparison of the logarithmic decompositions for \texorpdfstring{$\delta>0$}{delta>0} and \texorpdfstring{$\delta=0$}{delta=0}}
\label{subsec:log-compare}

In this subsection, we prove the two estimates that the proof of \cref{prop:convergence} uses to compare the local solutions for $\delta>0$ with their limits at $\delta=0$.
The kernels $1/V_\delta$ and $1/V_0$ of the singular integral cannot be compared directly, because their pointwise difference $|1/V_\delta-1/V_0|=\delta/(|\tau||\tau-\im\delta|)$ is not integrable across the critical level.
We therefore compare the two parts of the decomposition in Equation~\eqref{eq:logextract} separately.
The logarithmic terms, which appear multiplied by $V$ in Equation~\eqref{eq:rep-w}, are compared in Equation~\eqref{eq:log-model}, and the remainders in Equation~\eqref{eq:remainder-compare}.

The logarithmic terms satisfy the model bound
\begin{equation}\label{eq:log-model}
\sup_{z\in I_j}
\bigl|(\tau-\im\delta)\operatorname{Log}(\tau-\im\delta)
-\tau\operatorname{Log}(\tau-\im0)\bigr|
\;\le\;C\,\delta\bigl(1+\bigl|\log(\delta/V_\ast)\bigr|\bigr),
\qquad 0<\delta\le\delta_1 .
\end{equation}
To prove it, let $\tau$ be given.
The function $\delta'\mapsto(\tau-\im\delta')\operatorname{Log}(\tau-\im\delta')$ is continuous on $[0,\delta]$, with the boundary value given by Equation~\eqref{eq:branch} at $\delta'=0$, and differentiable for $\delta'>0$ with
\begin{equation}
\partial_{\delta'}
\bigl[(\tau-\im\delta')\operatorname{Log}(\tau-\im\delta')\bigr]
=-\im\bigl[\operatorname{Log}(\tau-\im\delta')+1\bigr].
\end{equation}
The modulus of this derivative is at most $|\log(\delta'/V_\ast)|+C$, because $\delta'\le|\tau-\im\delta'|\le CV_\ast$ on $I_j$ and $V_\ast=m_U\varsigma$.
This bound is integrable at $\delta'=0$, and for $\delta\le\delta_1\le V_\ast$ we have $\int_0^\delta|\log(\delta'/V_\ast)|\,\dd\delta' =\delta\bigl(1+|\log(\delta/V_\ast)|\bigr)$.
Integrating the derivative over $\delta'\in[0,\delta]$ therefore gives Equation~\eqref{eq:log-model}.

For the remainders, let $\psi$ be a Lipschitz function, the same for $\delta>0$ and $\delta=0$.
Subtracting the two decompositions, written in the variable $\tau$, gives
\begin{equation}\label{eq:remainder-compare}
\mathcal R_\delta[\psi](z)-\mathcal R_0[\psi](z)
=\int_{\tau_+}^{\tau(z)}
\bigl(\mathfrak g(t)-\mathfrak g(0)\bigr)
\Bigl(\frac1{t-\im\delta}-\frac1{t}\Bigr)\dd t ,
\end{equation}
where the integral at $\delta=0$ is the boundary value.
Since $|\mathfrak g(t)-\mathfrak g(0)|\le\mathrm{Lip}(\mathfrak g)\,|t|$ and $|1/(t-\im\delta)-1/t|=\delta/(|t|\sqrt{t^2+\delta^2})$, the integrand is bounded pointwise by $\mathrm{Lip}(\mathfrak g)\,\delta/\sqrt{t^2+\delta^2}$, which is integrable across $t=0$.
Since $|\tau|\le CV_\ast$ on $I_j$, it follows that
\begin{equation}
\bigl\|\mathcal R_\delta[\psi]-\mathcal R_0[\psi]\bigr\|_{C^0(I_j)}
\le\mathrm{Lip}(\mathfrak g)\,\delta
\int_{-CV_\ast}^{CV_\ast}\frac{\dd t}{\sqrt{t^2+\delta^2}}
\le C\,\mathrm{Lip}(\mathfrak g)\,\delta\bigl(1+|\log(\delta/V_\ast)|\bigr).
\end{equation}
Both estimates are uniform for $c_r$ in compact sets of regular values.

\subsection{A coefficient-perturbation lemma for the local solutions}
\label{subsec:coef-perturb}

In this subsection, we show that the local solutions and the maps built from them depend Lipschitz-continuously on the coefficient functions of the representation in Equation~\eqref{eq:rep} (\cref{lem:coef-perturb}).
The low-Mach limit of \S\ref{sec:lowmach} needs this estimate, because there the coefficients depend on the Mach number and we compare the local solutions of two atmospheres.
We therefore regard the coefficient functions as independent functions.

Let $I_j$ and $\delta\in[0,\delta_1]$ be given.
We consider tuples $\mathfrak a=(b,\rhobar,f,g_1,g_2)$ of functions on $I_j$ that share $U$ and $c$, and hence $V$, $\tau$, the base point $z_j$, and the branch of the logarithm.
We assume that each tuple satisfies the bounds of \S\ref{subsec:local} with a common constant, that $\varsigma$ is small in terms of this constant, as in \S\ref{subsec:local}, and that the map $\mathcal B$ of \cref{lem:fixedpoint}, built from the tuple, is a $\tfrac12$-contraction, that is, it reduces the distance between any two functions at least by the factor $\tfrac12$.
We also assume that each tuple belongs to a one-parameter family $(\mathfrak a_\delta)_{\delta\in[0,\delta_1]}$ that obeys the same bounds uniformly and depends on $\delta$ as in Equation~\eqref{eq:coef-delta}.
In \S\ref{subsec:local}, the tuple is computed from one atmosphere, with $f=1/(\Abar E)$ and with $g_1$ and $g_2$ given by Equation~\eqref{eq:g-def}.
We say that a tuple satisfies the first-order-system identities when $g_1$ and $g_2$ are obtained from $b$, $\rhobar$, and $f$ by Equation~\eqref{eq:g-def}, that is, $g_1=(f/U')'-bf/U'$ and $g_2=\rhobar k^2f/U'$.
For such a tuple, we compute $E$ from $b$ as in \S\ref{subsec:local} and set $\Abar=1/(fE)$.
Then, by \S\ref{subsec:B-equiv}, every solution of the system in Equation~\eqref{eq:rep} solves the system in Equation~\eqref{eq:wchi-recall} with coefficients $(\Abar,b,\rhobar)$ wherever $V\neq0$.
We measure the distance between two tuples $\mathfrak a$ and $\tilde{\mathfrak a}$ by
\begin{equation}
\|\mathfrak a-\tilde{\mathfrak a}\|_{\mathcal X}
:=\|b-\tilde b\|_{C^0}+\|\rhobar-\tilde\rhobar\|_{C^0}
+\|f-\tilde f\|_{C^1}+\|g_1-\tilde g_1\|_{C^{0,1}}
+\|g_2-\tilde g_2\|_{C^0},
\end{equation}
where all norms are taken on $I_j$.

\begin{lemma}\label{lem:coef-perturb}
In this setting, the following bounds hold uniformly in $\delta\in[0,\delta_1]$ and in $c_r$ on compact sets of regular values.
\begin{enumerate}[label=\textup{(\roman*)}]
\item\label{it:cp-fixed} The fixed points of \cref{lem:fixedpoint} with the same constants $(\alpha,\beta)$ satisfy
\begin{equation}
\|w_{\mathfrak a}-w_{\tilde{\mathfrak a}}\|_{C^0(I_j)}
+\|\chibar_{\mathfrak a}-\chibar_{\tilde{\mathfrak a}}\|_{C^0(I_j)}
\le C\,(|\alpha|+|\beta|)\,
\|\mathfrak a-\tilde{\mathfrak a}\|_{\mathcal X}\, .
\end{equation}
In particular, the critical values satisfy $|w_{\mathfrak a}(z_j)-w_{\tilde{\mathfrak a}}(z_j)|\le C(|\alpha|+|\beta|)\|\mathfrak a-\tilde{\mathfrak a}\|_{\mathcal X}$.
\item\label{it:cp-matrices} Suppose that both families $(\mathfrak a_\delta)$ and $(\tilde{\mathfrak a}_\delta)$ satisfy the first-order-system identities for every $\delta\in[0,\delta_1]$.
Then the endpoint evaluation matrices
\begin{equation}
\mathcal E^{\pm}_{\mathfrak a,\delta}:\
(\alpha,\beta)\longmapsto(w,\chibar)(z_j\pm\varsigma)
\end{equation}
satisfy
\begin{equation}
\|\mathcal E^{\pm}_{\mathfrak a,\delta}\|
+\|(\mathcal E^{\pm}_{\mathfrak a,\delta})^{-1}\|\le C,
\qquad
\|\mathcal E^{\pm}_{\mathfrak a,\delta}-\mathcal E^{\pm}_{\tilde{\mathfrak a},\delta}\|
+\|(\mathcal E^{\pm}_{\mathfrak a,\delta})^{-1}
-(\mathcal E^{\pm}_{\tilde{\mathfrak a},\delta})^{-1}\|
\le C\,\|\mathfrak a-\tilde{\mathfrak a}\|_{\mathcal X}\, .
\end{equation}
\item\label{it:cp-transfer} Under the assumption of \ref{it:cp-matrices}, the maps $T=\mathcal E^{-}\circ(\mathcal E^{+})^{-1}$ of \cref{prop:convergence}, which carry the values at $z_j+\varsigma$ to the values at $z_j-\varsigma$, and their inverses are bounded and satisfy the same Lipschitz bound in $\mathfrak a$.
\end{enumerate}
\end{lemma}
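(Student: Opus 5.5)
The plan is to redo the contraction argument of \cref{lem:fixedpoint} with two coefficient tuples in place of two values of $\delta$. The approach mirrors the proof of \cref{prop:convergence}\ref{it:conv-fixed}.

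For \ref{it:cp-fixed}, write $\mathcal B_{\mathfrak a}$ and $\mathcal B_{\tilde{\mathfrak a}}$ for the two affine maps. Since both are $\tfrac12$-contractions, the fixed points satisfy
\[
\|w_{\mathfrak a}-w_{\tilde{\mathfrak a}}\|_\infty\le 2\|(\mathcal B_{\mathfrak a}-\mathcal B_{\tilde{\mathfrak a}})[w_{\tilde{\mathfrak a}}]\|_\infty ,
\]
and $\|w_{\tilde{\mathfrak a}}\|_\infty\le C_\ast(|\alpha|+|\beta|)$ by \cref{lem:fixedpoint}. I then bound the difference term by term.

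\begin{itemize}
\item The map $w\mapsto\chibar[w]$ of Equation~\eqref{eq:rep-chi} depends on $b$ through $e^{B(s)-B(z)}$ and on $\rhobar$ linearly. Its difference is therefore $O(\|b-\tilde b\|_{C^0}+\|\rhobar-\tilde\rhobar\|_{C^0})$ times $(|\alpha|+|\beta|)$, in sup norm. It is also $O(\cdot)$ in the Lipschitz seminorm, via $\chibar'=-b\chibar+\rhobar k^2Vw$, and it vanishes at $z_j$ because both maps give $\chibar(z_j)=\beta$.
\item The factor $E=e^{B}$ differs by $O(\|b-\tilde b\|_{C^0})$.
\item The term $Ef\chibar/U'$ and the $g_2$-integral are regular and contribute $O(\|\mathfrak a-\tilde{\mathfrak a}\|_{\mathcal X})$.
\end{itemize}

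The singular term is where care is needed. I write $\psi=g_1\chibar_{\mathfrak a}[w]$ and $\tilde\psi=\tilde g_1\chibar_{\tilde{\mathfrak a}}[w]$, and apply Equation~\eqref{eq:logextract} to the difference $\psi-\tilde\psi$. This is legitimate because both tuples share $U$, $c$, $z_j$ and the branch, so the decomposition is linear in $\psi$. The Lipschitz norm of $\psi-\tilde\psi$ is bounded by $\|g_1-\tilde g_1\|_{C^{0,1}}$ plus the Lipschitz difference of the $\chibar$'s. This is why $g_1$ is measured in $C^{0,1}$. The residue part is then bounded by $|(\psi-\tilde\psi)(z_j)|\,\sup|V|(1+|\log|V||)$, and the remainder by $C\varsigma\,\mathrm{Lip}$. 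Both are uniform in $\delta\in[0,\delta_1]$. The $\chibar$-component of \ref{it:cp-fixed} follows from the $\chibar[w]$ bound, and the critical values follow by evaluating at $z_j$.

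For \ref{it:cp-matrices}, the columns of $\mathcal E^\pm$ are the endpoint values of the fixed points with $(\alpha,\beta)=(1,0)$ and $(0,1)$. Their boundedness and Lipschitz dependence therefore follow from \cref{lem:fixedpoint} and \ref{it:cp-fixed}. The inverse bound is the main obstacle, because it needs a lower bound on $\det\mathcal E^\pm$.

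\begin{itemize}
\item Here I use the first-order-system identities. By \S\ref{subsec:B-equiv}, the fixed points solve Equation~\eqref{eq:wchi-recall} with coefficients $(\Abar,b,\rhobar)$, $\Abar=1/(fE)$, wherever $V\neq0$.
\item The Wronskian argument of \cref{prop:local}\ref{it:basis-wronskian} and \cref{prop:convergence}\ref{it:conv-matrices} then gives $\det\mathcal E^\pm=\varkappa V(z_j\pm\varsigma)$ with $|\varkappa-1|\le C\varsigma^2$. The constant $C$ depends only on the common bounds.
\item This yields $|\det\mathcal E^\pm|\ge\tfrac12 m_U\varsigma$ for both tuples. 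The Lipschitz bound for the inverses follows from the resolvent identity $A^{-1}-B^{-1}=A^{-1}(B-A)B^{-1}$.
\end{itemize}

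Part \ref{it:cp-transfer} then follows by writing $T-\tilde T$ as in the proof of \cref{prop:convergence}\ref{it:conv-transfer}, and similarly for $T^{-1}=\mathcal E^+(\mathcal E^-)^{-1}$.
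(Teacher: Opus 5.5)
Your proposal is correct and follows essentially the same route as the paper. Both arguments apply the uniform $\tfrac12$-contraction at $w_{\tilde{\mathfrak a}}$ and compare the two maps term by term, treating the singular term by the logarithmic decomposition applied to $\psi-\tilde\psi$ (whose residue is $(g_1-\tilde g_1)(z_j)\beta$) after splitting off $V(E-\tilde E)\int\psi/V$. For the inverses, both use the Wronskian lower bound $|\det\mathcal E^{\pm}|\ge\tfrac12 m_U\varsigma$, which the first-order-system identities make available.
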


\begin{proof}
For \ref{it:cp-fixed}, let $\mathcal B_{\mathfrak a}$ denote the map of \cref{lem:fixedpoint} built from $\mathfrak a$, and set $w:=w_{\tilde{\mathfrak a}}$.
The uniform $\tfrac12$-contraction gives $\|w_{\mathfrak a}-w\|_\infty \le2\,\|\mathcal B_{\mathfrak a}[w] -\mathcal B_{\tilde{\mathfrak a}}[w]\|_\infty$, and $\|w\|_\infty\le C(|\alpha|+|\beta|)$ by \cref{lem:fixedpoint}.
We compare the terms of the two maps applied to this $w$.
The functions $E$ and $\tilde E$ computed from $b$ and $\tilde b$ obey $\|E-\tilde E\|_{C^1}\le C\|b-\tilde b\|_{C^0}$.
The functions $\chibar_{\mathfrak a}[w]$ and $\chibar_{\tilde{\mathfrak a}}[w]$ defined by Equation~\eqref{eq:rep-chi} take the same value $\beta$ at $z_j$ and obey $\|\chibar_{\mathfrak a}[w]-\chibar_{\tilde{\mathfrak a}}[w]\|_\infty +\mathrm{Lip}\bigl(\chibar_{\mathfrak a}[w] -\chibar_{\tilde{\mathfrak a}}[w]\bigr) \le C\,(|\beta|+\|w\|_\infty)\, \|\mathfrak a-\tilde{\mathfrak a}\|_{\mathcal X}$.
The regular terms of $\mathcal B$, namely the prefactor $\alpha VE$, the term $-Ef\chibar/U'$, and the $g_2$-integral, therefore contribute $C(|\alpha|+|\beta|)\|\mathfrak a-\tilde{\mathfrak a}\|_{\mathcal X}$.
A singular integral $V\int_{z_j+\varsigma}^{z}\varphi/V\,\dd s$ with a Lipschitz function $\varphi$ is bounded by $C\|\varphi\|_{C^{0,1}}$, by the decomposition in Equation~\eqref{eq:logextract} and $\sup_{I_j}|V|\bigl(1+\bigl|\log|V|\bigr|\bigr) \le C\varsigma\log(1/\varsigma)$.
The singular terms of the two maps differ by $V(E-\tilde E)\int g_1\chibar_{\mathfrak a}[w]/V$ and $V\tilde E\int\psi/V$, with the difference $\psi:=g_1\chibar_{\mathfrak a}[w] -\tilde g_1\chibar_{\tilde{\mathfrak a}}[w]$.
The first is bounded by $C\|b-\tilde b\|_{C^0}(|\alpha|+|\beta|)$.
The function $\psi$ is Lipschitz with $\|\psi\|_{C^{0,1}}\le C(|\alpha|+|\beta|) \|\mathfrak a-\tilde{\mathfrak a}\|_{\mathcal X}$, and its value at the critical level is $\psi(z_j)=(g_1-\tilde g_1)(z_j)\,\beta$, so the second is bounded by the same quantity.
This proves the bound for $w_{\mathfrak a}-w_{\tilde{\mathfrak a}}$.
A linear map between normed spaces is bounded if the norm of the image of each element is at most a constant times the norm of the element, and a bounded linear functional is a bounded linear map into $\C$.
The bound for $\chibar_{\mathfrak a}-\chibar_{\tilde{\mathfrak a}}$ follows, because the map $w\mapsto\chibar_{\mathfrak a}[w]$ is affine with a bounded linear part and the functions $\chibar_{\mathfrak a}[w]$ and $\chibar_{\tilde{\mathfrak a}}[w]$ obey the estimate above.
Evaluation at $z_j$ is a bounded linear functional on $C^0(I_j)$, which gives the bound for the critical values.
At $\delta=0$, the explicit formula in Equation~\eqref{eq:wc-beta} exhibits the same Lipschitz dependence through $(f,\rhobar)(z_j)$.

For \ref{it:cp-matrices}, the columns of $\mathcal E^{\pm}_{\mathfrak a,\delta}$ are the values at $z_j\pm\varsigma$ of the fixed points with $(\alpha,\beta)=(1,0)$ and $(0,1)$.
They are bounded by \cref{lem:fixedpoint} and Lipschitz in $\mathfrak a$ by \ref{it:cp-fixed}, so $\|\mathcal E^{\pm}_{\mathfrak a,\delta}\|\le C$ and $\|\mathcal E^{\pm}_{\mathfrak a,\delta} -\mathcal E^{\pm}_{\tilde{\mathfrak a},\delta}\|\le C\|\mathfrak a-\tilde{\mathfrak a}\|_{\mathcal X}$.
For tuples satisfying the first-order-system identities, the fixed points solve the system in Equation~\eqref{eq:wchi-recall} by \S\ref{subsec:B-equiv}, and the identity $\Xi'=(V'/V)\Xi$ for the modified Wronskian of \cref{prop:local}\ref{it:basis-wronskian} applies.
The proof of \cref{prop:convergence}\ref{it:conv-matrices}, with constants that depend only on the bounds of \S\ref{subsec:local}, then gives $\det\mathcal E^{\pm}_{\mathfrak a,\delta} =\varkappa_{\mathfrak a,\delta}V_\delta(z_j\pm\varsigma)$ with $|\varkappa_{\mathfrak a,\delta}|\ge\tfrac12$.
Hence $|\det\mathcal E^{\pm}_{\mathfrak a,\delta}|\ge\tfrac12m_U\varsigma$, and the formula for the inverse of a $2\times2$ matrix gives $\|(\mathcal E^{\pm}_{\mathfrak a,\delta})^{-1}\| \le C\|\mathcal E^{\pm}_{\mathfrak a,\delta}\|/|\det\mathcal E^{\pm}_{\mathfrak a,\delta}|\le C$.
The Lipschitz bound for the inverses follows from the identity $\mathcal E^{-1}-\tilde{\mathcal E}^{-1} =\mathcal E^{-1}(\tilde{\mathcal E}-\mathcal E)\tilde{\mathcal E}^{-1}$.

For \ref{it:cp-transfer}, the maps $T=\mathcal E^{-}\circ(\mathcal E^{+})^{-1}$ and $T^{-1}=\mathcal E^{+}\circ(\mathcal E^{-})^{-1}$ are products of matrices that are bounded and Lipschitz in $\mathfrak a$ by \ref{it:cp-matrices}, so they have both properties as well.
\end{proof}

\subsection{Parameter dependence in moving coordinates}
\label{subsec:pullback}

In this subsection, we prove that the local solutions depend $C^1$ on the real part $c_r$ of the phase speed, as stated in \cref{prop:convergence,prop:limit-solutions}.
Two features of the original coordinates prevent direct differentiation.
The base point $z_j(c_r)$ and the interval $I_j(c_r)=[z_j(c_r)-\varsigma,z_j(c_r)+\varsigma]$ move with $c_r$, and the derivative $\partial_{c_r}[\tau\operatorname{Log}(\tau-\im0)]$ at fixed $z$ is singular at the critical level.
We therefore pull each critical neighborhood back to a fixed interval, on which neither the base point nor the endpoints move.
In the new variable, the derivative of $\tau$ with respect to $c_r$ vanishes at the critical level, and this factor cancels the logarithmic singularity.

\begin{lemma}\label{lem:pullback}
Let $K$ be a compact set of regular values of $U$ with the same number of critical levels, and pull each critical neighborhood back to the common interval $[-\varsigma,\varsigma]$ by $z=z_j(c_r)+x$.
Assume that the coefficient tuple $\mathfrak a=(b,\rhobar,f,g_1,g_2)$ is, in the pulled-back variables, $C^1$ in $c_r$ with values in $C^1([-\varsigma,\varsigma])$, and that it ranges in a bounded set on which the map $\mathcal B$ of \cref{lem:fixedpoint} is a $\tfrac12$-contraction.
Then the following hold uniformly for $\delta\in[0,\delta_1]$ and $c_r\in K$.
\begin{enumerate}[label=\textup{(\roman*)}]
\item\label{it:pb-map} The pulled-back map $\mathcal B$ is $C^1$ in $c_r$ as a map into $C^0([-\varsigma,\varsigma])$, with uniformly bounded derivative.
\item\label{it:pb-fixed} The pulled-back fixed points, as maps into $C^0([-\varsigma,\varsigma])$, the endpoint evaluation matrices $\mathcal E^{\pm}$, and the critical values $w(z_j(c_r))$ are $C^1$ in $c_r$ with uniformly bounded derivatives.
\item\label{it:pb-inverse} If, in addition, the tuple satisfies the first-order-system identities for every $c_r\in K$ and every $\delta\in[0,\delta_1]$, then the endpoint matrices are uniformly invertible, and their inverses and the maps $T=\mathcal E^{-}(\mathcal E^{+})^{-1}$ are $C^1$ in $c_r$ with uniformly bounded derivatives.
\end{enumerate}
\end{lemma}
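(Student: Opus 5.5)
The plan is to work entirely in the pulled-back variable $x\in[-\varsigma,\varsigma]$ with $z=z_j(c_r)+x$. There I write the map $\mathcal B$ of \cref{lem:fixedpoint} with $\tilde\tau(x;c_r)=U(z_j(c_r)+x)-c_r$ and $\tilde V=\tilde\tau-\im\delta$. The base point is then $x=0$ and the endpoints are $\pm\varsigma$, all fixed. Since $\tilde\tau(0;c_r)=0$ for every $c_r$, we have
\[
\partial_{c_r}\tilde\tau=\frac{U'(z_j+x)}{U'(z_j)}-1=O(x),
\]
and this quantity is $C^1$ in $x$ because $U\in C^4$ and $z_j\in C^1$ by the implicit function theorem.

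For \ref{it:pb-map}, I would differentiate $\mathcal B[w]$ in $c_r$ for fixed $w\in C^0$, term by term. The regular terms ($\alpha\tilde VE$, $-Ef\chibar[w]/U'$, the $g_2$-integral, and $\chibar[w]$ itself from \eqref{eq:rep-chi}) are $C^1$ in $c_r$ by the hypothesis on the tuple and by differentiation under the integral, with bounds $C(|\alpha|+|\beta|+\|w\|_\infty)$.

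The singular term is handled through the decomposition \eqref{eq:logextract}. It equals $\tilde VE\,\mathfrak g(0)\operatorname{Log}(\tilde\tau-\im\delta)$, plus endpoint constants, plus $\tilde VE\,\mathcal R[\psi]$. Differentiating the logarithmic piece gives
\[
\partial_{c_r}\bigl[\tilde V\operatorname{Log}\tilde V\bigr]=\partial_{c_r}\tilde\tau\,\bigl(\operatorname{Log}\tilde V+1\bigr),
\]
and $|\partial_{c_r}\tilde\tau|\le C|x|\le C|\tilde\tau|/m_U$ while $|\tilde\tau\log|\tilde V||$ stays bounded. So this derivative is bounded uniformly in $\delta\in[0,\delta_1]$, including the boundary value at $\delta=0$. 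The residue $\mathfrak g(0)=\psi(z_j)/U'(z_j)$ is $C^1$ in $c_r$ because $\psi=g_1\chibar[w]$ is.

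For the remainder, written in $x$ as $\int(\psi(x')-\psi(0)\cdots)/\tilde V\,\dd x'$, I would differentiate under the integral. The key point is that $\partial_{c_r}(1/\tilde V)=-\partial_{c_r}\tilde\tau/\tilde V^2$ is bounded by $C/|\tilde V|$, and it is multiplied by a numerator that vanishes linearly at $x'=0$, so the integrand is $O(1)$. The numerator's $c_r$-derivative involves $\partial_{c_r}\psi$, which is Lipschitz in $x'$ with bounded Lipschitz constant. This needs $\chibar[w]$ and the tuple to be $C^1$ jointly, which I would extract from the hypothesis together with \eqref{eq:chi-of-w}.

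\textbf{Main obstacle.} Continuity of these derivatives in $c_r$ at $\delta=0$ is the step I expect to be hardest. I would get it by dominated convergence, using the uniform $O(1)$ integrand bound and pointwise convergence for $x'\neq0$.

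For \ref{it:pb-fixed}, I would use the implicit-function form of the contraction. The equation is $w=\mathcal B(c_r)[w]$, $\mathcal B$ is affine in $w$ with linear part of norm $\le\tfrac12$, and $c_r\mapsto\mathcal B(c_r)[w]$ is $C^1$ locally uniformly in bounded $w$. The derivative $\partial_{c_r}w$ then exists and solves $\partial_{c_r}w=L\,\partial_{c_r}w+(\partial_{c_r}\mathcal B)[w]$, so
\[
\|\partial_{c_r}w\|\le 2\,\|(\partial_{c_r}\mathcal B)[w]\|\le C(|\alpha|+|\beta|).
\]
Here I would check differentiability via difference quotients and the uniform contraction; this is standard once \ref{it:pb-map} gives continuity of $\partial_{c_r}\mathcal B$ in operator norm. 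Evaluation at $x=\pm\varsigma$ and $x=0$ is bounded and linear, which gives $C^1$ for $\mathcal E^\pm$ and for $w(z_j(c_r))$.

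For \ref{it:pb-inverse}, I would invoke \cref{prop:convergence}\ref{it:conv-matrices} (through \cref{lem:coef-perturb}\ref{it:cp-matrices}). This gives $\det\mathcal E^\pm=\varkappa\,\tilde V(\pm\varsigma)$ with $|\varkappa|\ge\tfrac12$ and $|\tilde V(\pm\varsigma)|\ge m_U\varsigma$, so the inverses are uniformly bounded. Their $C^1$ dependence follows from $\partial(\mathcal E^{-1})=-\mathcal E^{-1}(\partial\mathcal E)\mathcal E^{-1}$, and the product rule then gives the result for $T$.
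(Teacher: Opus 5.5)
Your proposal is correct and follows essentially the paper's proof: the pullback fixes the base point and the endpoints, the factor $\partial_{c_r}\tilde\tau=\tilde U'(x)/\tilde U'(0)-1=O(x)$ removes the logarithmic singularity in $\partial_{c_r}[\tilde V\operatorname{Log}\tilde V]$, the remainder with numerator $\mathfrak h=\psi(x')-\psi(0)\tilde U'(x')/\tilde U'(0)$ is differentiated under the integral using dominated convergence, and the fixed point is differentiated through $(\mathrm{Id}-\mathcal A)^{-1}$ using the uniform $\tfrac12$-contraction. The differences are minor: for (iii) you cite the determinant bound of \cref{prop:convergence}\ref{it:conv-matrices} via \cref{lem:coef-perturb}\ref{it:cp-matrices} (legitimate, since neither relies on this lemma), whereas the paper re-derives $\det\mathcal E^{\pm}=\varkappa\tilde V(\pm\varsigma)$ in the pulled-back variables; and for the piece $\partial_{c_r}\mathfrak h/\tilde V$ you should state explicitly that $\partial_{c_r}\mathfrak h(0,c_r)=0$, which holds automatically because $\mathfrak h(0,c_r)\equiv0$ in the fixed coordinate.
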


The assumptions hold for the coefficients of \S\ref{subsec:local} under Hypotheses \ref{hyp:reg} and \ref{hyp:subsonic}.
Under these hypotheses, $b$, $\rhobar$, $f$, and $g_2$ lie in $C^3$ and $g_1$ lies in $C^2$ as functions of $z$, with derivatives that depend continuously on $c$, because $c$ enters only through $V^2$ in $\Abar$.
Differentiating the pullback $\varphi(z_j(c_r)+x)$ of such a function $\varphi$ in $c_r$ uses one derivative in $z$ and the $C^1$ dependence of $z_j(c_r)$, so the pulled-back tuple is $C^1$ in $c_r$ with values in $C^1([-\varsigma,\varsigma])$.
The proof treats the three items in order.
For \ref{it:pb-map}, we differentiate the logarithmic term, the remainder integral, and the regular terms of the pulled-back map separately.
For \ref{it:pb-fixed}, we differentiate the fixed point of this affine map, and for \ref{it:pb-inverse}, we bound the determinants of the endpoint matrices from below with the Wronskian.

\begin{proof}
Set $I:=[-\varsigma,\varsigma]$ and write $z=z_j(c_r)+x$ with $x\in I$.
A tilde marks a pullback, $\tilde\varphi(x,c_r):=\varphi(z_j(c_r)+x)$.
Every bound is uniform in $(c_r,\delta)\in K\times[0,\delta_1]$.
Differentiating $U(z_j(c_r))=c_r$ gives $z_j'(c_r)=1/U'(z_j(c_r))$.

In the variable $x$, the interval, the base point, and both endpoints do not move.
Only the functions of $x$ in the formulas below depend on $c_r$.
We write $\tilde\tau(x,c_r):=U(z_j(c_r)+x)-c_r$, $\tilde V:=\tilde\tau-\im\delta$, and $\tilde B(x):=\int_0^x\tilde b$, and we denote by $\tilde\chi[w]$ the pullback of the map $w\mapsto\chibar[w]$ of the proof of \cref{lem:fixedpoint} and by $\mathcal B_{c_r,\delta}$ the pullback of the map $\mathcal B$.
On the common space $C^0(I)$, these maps read
\begin{align*}
\tilde\chi[w](x)&=e^{-\tilde B(x)}\beta
+\int_0^x e^{\tilde B(x')-\tilde B(x)}\,
\tilde\rhobar\,k^2\,\tilde V(x')\,w(x')\,\dd x',\\
\mathcal B_{c_r,\delta}[w](x)&=\alpha\,\tilde V\tilde E
-\frac{\tilde E\tilde f\,\tilde\chi[w]}{\tilde U'}
+\tilde V(x)\tilde E(x)\int_{\varsigma}^{x}
\Bigl[\frac{\tilde g_1\tilde\chi[w]}{\tilde V}
+\tilde g_2\,w\Bigr]\dd x' .
\end{align*}

For \ref{it:pb-map}, let $w\in C^0(I)$ and set $\psi:=\tilde g_1\tilde\chi[w]$.
When we change variables in Equation~\eqref{eq:logextract} back to $x$ through $t=\tilde\tau(x',c_r)$ and $\dd t=\tilde U'\dd x'$, the singular integral splits as
\begin{equation}
\int_\varsigma^x\frac{\psi(x')}{\tilde V(x')}\,\dd x'
=\frac{\psi(0)}{\tilde U'(0)}
\Bigl[\operatorname{Log}\bigl(\tilde\tau(x)-\im\delta\bigr)
-\operatorname{Log}\bigl(\tilde\tau(\varsigma)-\im\delta\bigr)\Bigr]
+\int_\varsigma^x\frac{\mathfrak h(x',c_r)}{\tilde V(x')}\,\dd x',
\end{equation}
where
\begin{equation}
\mathfrak h(x',c_r):=\psi(x')-\psi(0)\,
\frac{\tilde U'(x')}{\tilde U'(0)} .
\end{equation}
Then $\mathfrak h(0,c_r)=0$, and $|\mathfrak h(x')|\le C\,|x'|\,\bigl(|\beta|+\|w\|_\infty\bigr)$ because $\psi$ and $\tilde U'$ are Lipschitz, with $\mathrm{Lip}(\psi)\le C(|\beta|+\|w\|_\infty)$ by Equation~\eqref{eq:chi-of-w}.

Set $\theta(x,c_r):=\tilde U'(x)/\tilde U'(0)-1$.
Then $\partial_{c_r}\tilde\tau=\theta$, $\theta(0,c_r)=0$, $|\theta(x)|\le C|x|$, and $\theta$ is uniformly Lipschitz in $x$.
Moreover, $\partial_{c_r}\tilde V=\theta$ for every $\delta$.
The pulled-back coefficients, $\tilde E$, and $e^{\pm\tilde B}$ are $C^1$ in $c_r$ with the stated function-space regularity.
Differentiating the formula for $\tilde\chi[w]$ above, whose integrand has no singularity, shows that $\tilde\chi[w]$ is $C^1$ in $c_r$ and that $\partial_{c_r}\tilde\chi[w]$ is Lipschitz in $x$, with norm bounded by $C(|\beta|+\|w\|_\infty)$.
Consequently $\partial_{c_r}\psi$ is Lipschitz in $x$, and $\psi(0)/\tilde U'(0)=\tilde g_1(0)\beta/\tilde U'(0)$ is $C^1$ in $c_r$.

For the logarithmic term, we have, for every $\delta\in[0,\delta_1]$ and $x\neq0$,
\begin{equation}
\partial_{c_r}\bigl[\tilde V
\operatorname{Log}(\tilde\tau-\im\delta)\bigr]
=\theta\,\bigl[\operatorname{Log}(\tilde\tau-\im\delta)+1\bigr],
\qquad
\bigl|\theta\,[\operatorname{Log}(\tilde\tau-\im\delta)+1]\bigr|
\le C|x|\Bigl(1+\bigl|\log(m_U|x|/V_\ast)\bigr|\Bigr),
\end{equation}
which extends continuously by $0$ across $x=0$, uniformly in $\delta$.
Differentiation in $c_r$ at fixed $x$ thus produces the factor $\theta=O(x)$, which cancels the logarithmic singularity.
The endpoint value $\operatorname{Log}(\tilde\tau(\varsigma)-\im\delta)$ is $C^1$ in $c_r$ because $|\tilde\tau(\varsigma)|\ge m_U\varsigma$.
Since the factors $\tilde E$ and $\psi(0)/\tilde U'(0)$ are $C^1$ in $c_r$, the logarithmic term of $\mathcal B_{c_r,\delta}[w]$ is $C^1$ in $c_r$ as a map into $C^0(I)$, with uniformly bounded, uniformly continuous derivative.

For the remainder integral, we differentiate under the integral sign and use
\begin{equation}
\partial_{c_r}\Bigl[\frac{\mathfrak h}{\tilde V}\Bigr]
=\frac{\partial_{c_r}\mathfrak h}{\tilde V}
-\frac{\mathfrak h\,\theta}{\tilde V^{2}} .
\end{equation}
For the second piece, $|\mathfrak h|\le C|x'|$, $|\theta|\le C|x'|$, and $|\tilde V|\ge|\tilde\tau|\ge m_U|x'|$ give the bound $C/m_U^2$.
For the first piece,
\begin{equation}
\partial_{c_r}\mathfrak h
=\partial_{c_r}\psi
-\partial_{c_r}\psi(0)\,\frac{\tilde U'}{\tilde U'(0)}
-\psi(0)\,\partial_{c_r}\Bigl[\frac{\tilde U'(x')}{\tilde
U'(0)}\Bigr],
\end{equation}
which vanishes at $x'=0$, because the first two terms cancel there, and $\partial_{c_r}[\tilde U'(x')/\tilde U'(0)] =\bigl[U''(z_j+x')\,U'(z_j)-U'(z_j+x')\,U''(z_j)\bigr] z_j'/U'(z_j)^{2}$ vanishes at $x'=0$ as well.
Since $\partial_{c_r}\mathfrak h$ is Lipschitz in $x'$ uniformly, $|\partial_{c_r}\mathfrak h(x')|\le C|x'|$, and the first piece is bounded by $C/m_U$.
The mean value theorem therefore bounds the $c_r$ difference quotients by an integrable function that does not depend on the increment of $c_r$, uniformly in $\delta$.
The dominated convergence theorem yields
\begin{equation}
\partial_{c_r}\int_\varsigma^x\frac{\mathfrak h}{\tilde V}\,\dd x'
=\int_\varsigma^x\partial_{c_r}
\Bigl[\frac{\mathfrak h}{\tilde V}\Bigr]\dd x',
\end{equation}
which is continuous in $(x,c_r)$ and bounded by $C\varsigma$, uniformly in $\delta\in[0,\delta_1]$.
No endpoint or base-point terms arise because both are stationary in the $x$-coordinate.

The remaining terms of $\mathcal B_{c_r,\delta}[w]$, namely $\alpha\tilde V\tilde E$, $-\tilde E\tilde f\tilde\chi[w]/\tilde U'$, and the $\tilde g_2$-integral, are $C^1$ in $c_r$ as maps into $C^0(I)$ by the preceding coefficient estimates.
The logarithmic and remainder estimates show that, for each $w$, $c_r\mapsto\mathcal B_{c_r,\delta}[w]\in C^0(I)$ is $C^1$ and
\begin{equation}
\|\partial_{c_r}\mathcal B_{c_r,\delta}[w]\|_{C^0(I)}
\le C\bigl(|\alpha|+|\beta|+\|w\|_\infty\bigr),
\end{equation}
and $\partial_{c_r}\mathcal B_{c_r,\delta}[w]$ is continuous in $c_r$ uniformly in $\delta$.
Since $\mathcal B_{c_r,\delta}$ is affine in $w$, we write $\mathcal B_{c_r,\delta}[w]=\mathcal A_{c_r,\delta}w+\mathcal B_{c_r,\delta}[0]$, and the linear part $\mathcal A_{c_r,\delta}w$ is $\mathcal B_{c_r,\delta}[w]$ with $\alpha=\beta=0$.
In this case $\psi(0)=0$, so the logarithmic term is absent, and each term of the derivative computed above is an integral operator applied to $w$, whose integrand is bounded by the integrable functions above and depends continuously on $c_r$.
The dominated convergence theorem then shows that the same statements hold for $\mathcal A_{c_r,\delta}$ in the operator norm on $C^0(I)$.
Here the operator norm $\|\mathcal G\|_{C^0\to C^0}$ of a linear map $\mathcal G$ from $C^0(I)$ to itself is the supremum of $\|\mathcal Gw\|_{C^0(I)}$ over the $w$ with $\|w\|_{C^0(I)}\le1$.
The uniform continuity of the derivative on the compact set $K$ strengthens differentiability to the uniform Taylor estimate
\begin{equation}
\bigl\|\mathcal A_{c_r+h,\delta}-\mathcal A_{c_r,\delta}
-h\,\partial_{c_r}\mathcal A_{c_r,\delta}\bigr\|_{C^0\to C^0}=o(h),
\end{equation}
uniformly in $\delta\in[0,\delta_1]$ and $c_r\in K$, and likewise for the affine parts.
Here $o(h)$ denotes a quantity whose ratio to $h$ tends to zero as $h\to0$.
This proves \ref{it:pb-map}.

For \ref{it:pb-fixed}, we use the decomposition $\mathcal B_{c_r,\delta}[w]=\mathcal A_{c_r,\delta}w+\mathcal B_{c_r,\delta}[0]$ of the affine map.
Both $\mathcal A_{c_r,\delta}$ and $\mathcal B_{c_r,\delta}[0]$ are $C^1$ in $c_r$ by \ref{it:pb-map}, and we denote their derivatives with respect to $c_r$ by $\dot{\mathcal A}_{c_r,\delta}$ and $\dot{\mathcal B}_{c_r,\delta}[0]$.
The fixed point satisfies $w_{c_r,\delta}=(\mathrm{Id}-\mathcal A_{c_r,\delta})^{-1}\mathcal B_{c_r,\delta}[0]$, where $\mathrm{Id}$ is the identity map of $C^0(I)$, and $\|(\mathrm{Id}-\mathcal A_{c_r,\delta})^{-1}\|_{C^0\to C^0}\le2$ by the uniform $\tfrac12$-contraction.
Define the candidate derivative $\dot w:=(\mathrm{Id}-\mathcal A_{c_r,\delta})^{-1} \bigl(\dot{\mathcal A}_{c_r,\delta}\,w_{c_r,\delta}+\dot{\mathcal B}_{c_r,\delta}[0] \bigr)$.
We subtract the fixed-point identities at $c_r+h$ and $c_r$, write $\mathcal A_{c_r+h,\delta}w_{c_r+h,\delta}-\mathcal A_{c_r,\delta}w_{c_r,\delta} =(\mathcal A_{c_r+h,\delta}-\mathcal A_{c_r,\delta})w_{c_r+h,\delta} +\mathcal A_{c_r,\delta}(w_{c_r+h,\delta}-w_{c_r,\delta})$, and obtain
\begin{multline*}
(\mathrm{Id}-\mathcal A_{c_r,\delta})
\bigl(w_{c_r+h,\delta}-w_{c_r,\delta}-h\,\dot w\bigr)
=\bigl(\mathcal A_{c_r+h,\delta}-\mathcal A_{c_r,\delta}-h\dot{\mathcal A}_{c_r,\delta}\bigr)
w_{c_r+h,\delta}\\
+\bigl(\mathcal B_{c_r+h,\delta}[0]-\mathcal B_{c_r,\delta}[0]-h\dot{\mathcal B}_{c_r,\delta}[0]\bigr)
+h\,\dot{\mathcal A}_{c_r,\delta}\bigl(w_{c_r+h,\delta}-w_{c_r,\delta}\bigr).
\end{multline*}
Its norm in $C^0(I)$ is $o(h)$, the first two brackets by the uniform Taylor estimates above and the last because $\|\dot{\mathcal A}_{c_r,\delta}\|_{C^0\to C^0}$ is bounded and $\|w_{c_r+h,\delta}-w_{c_r,\delta}\|_{C^0(I)}\le Ch$, which follows from the same identity without the terms containing $\dot w$, $\dot{\mathcal A}$, and $\dot{\mathcal B}$.
Hence $\partial_{c_r}w_{c_r,\delta}=\dot w$ exists, is continuous in $c_r$, and is bounded uniformly in $\delta$.
Let $(w_1,\tilde\chi_1)$ and $(w_2,\tilde\chi_2)$ denote the fixed points with $(\alpha,\beta)=(1,0)$ and $(0,1)$.
The endpoint matrices $\mathcal E^{\pm}$ have the columns $(w_1,\tilde\chi_1)(\pm\varsigma)$ and $(w_2,\tilde\chi_2)(\pm\varsigma)$, and evaluation at the points $x=\pm\varsigma$ and $x=0$, which do not move, is a bounded linear functional on $C^0(I)$.
Hence $\mathcal E^{\pm}$ and the critical values $w(z_j(c_r))=w_{c_r,\delta}(0)$ are $C^1$ in $c_r$ with uniformly bounded derivatives, which proves \ref{it:pb-fixed}.

For \ref{it:pb-inverse}, we first bound $\det\mathcal E^{+}$ from below.
We evaluate the representation in Equation~\eqref{eq:rep} at the endpoint $x=\varsigma$, where the singular integral is empty.
The cross terms containing $\tilde E\tilde f/\tilde U'$ cancel in the determinant, and
\begin{equation}
\det\mathcal E^{+}
=\tilde V(\varsigma)\,\tilde E(\varsigma)\,
\tilde\chi_2(\varsigma),
\qquad
\tilde E(\varsigma)\,\tilde\chi_2(\varsigma)
=1+O(\varsigma^{2})
\end{equation}
by Equation~\eqref{eq:rep-chi} and the fixed-point bounds, so $|\det\mathcal E^{+}|\ge\tfrac12|\tilde V(\varsigma)| \ge\tfrac12m_U\varsigma$ for $\varsigma$ small, uniformly in $(c_r,\delta)$.
To transfer this bound to $\mathcal E^{-}$, we use the first-order-system identities, under which the fixed points solve the system in Equation~\eqref{eq:wchi-recall} by \S\ref{subsec:B-equiv}.
The coefficient matrix of this system has trace $V'/V$, so the modified Wronskian $\Xi=w_1\tilde\chi_2-w_2\tilde\chi_1$ of the two fixed points, as in \cref{prop:local}\ref{it:basis-wronskian}, satisfies $\Xi=\varkappa\tilde V$ with $\varkappa$ constant on each interval where $\tilde V\neq0$.
For $\delta>0$ the interval is connected and one constant applies.
At $\delta=0$, the representation in Equation~\eqref{eq:rep} identifies the two one-sided constants, since the cross terms containing $\tilde E\tilde f/\tilde U'$ cancel in $\Xi$, leaving
\begin{equation}
\frac{\Xi}{\tilde V}
=\tilde E\bigl[(1+\mathscr I_1)\tilde\chi_2
-\mathscr I_2\tilde\chi_1\bigr],
\qquad
\mathscr I_\ell(x):=\int_{\varsigma}^{x}
\Bigl(\frac{\tilde g_1\tilde\chi_\ell}{\tilde V}
+\tilde g_2w_\ell\Bigr)\dd x' ,
\end{equation}
in which $\tilde\chi_1=O(x^2)$ by Equation~\eqref{eq:rep-chi} with $\beta=0$ and $|\tilde V|=O(|x|)$.
Since also $|\tilde V|\ge m_U|x|$, the integrand of $\mathscr I_1$ is bounded and $\mathscr I_1$ is continuous across $x=0$, while $\mathscr I_2=O(|\log|x||)$ and $\mathscr I_2\tilde\chi_1=O(x^2|\log|x||)\to0$.
Hence, using $\tilde E(0)=\tilde\chi_2(0)=1$, $\Xi/\tilde V$ extends continuously through $x=0$ with the common value $1+\mathscr I_1(0)$.
Consequently $|\det\mathcal E^{-}| =|\det\mathcal E^{+}|\, |\tilde V(-\varsigma)|/|\tilde V(\varsigma)| \ge\tfrac12|\tilde V(-\varsigma)|\ge\tfrac12m_U\varsigma$ in either case.
With these lower bounds and the $C^1$ dependence of $\mathcal E^{\pm}$ from \ref{it:pb-fixed}, the identity $\partial_{c_r}(\mathcal E^{\pm})^{-1} =-(\mathcal E^{\pm})^{-1}(\partial_{c_r}\mathcal E^{\pm}) (\mathcal E^{\pm})^{-1}$ gives the claimed $C^1$ bounds for the inverses, and the maps $T=\mathcal E^{-}(\mathcal E^{+})^{-1}$ inherit them.
At $\delta=0$ this proves the $C^1$ statement in \cref{prop:convergence}.
All bounds are uniform for $\delta\in[0,\delta_1]$.
\end{proof}

\subsection{Uniformity in the Mach number}
\label{subsec:uniformM}

In this subsection, we collect the statements, uniform in the Mach number, that the proof of \cref{thm:lowmach} uses.
They follow from \cref{lem:coef-perturb}, which compares the local solutions of two atmospheres, from \cref{lem:pullback}, which gives their $C^1$ dependence on $c_r$, and from \cref{lem:uniformconst}, which makes the constants of the critical-layer theory uniform in $M$.

\begin{corollary}\label{cor:uniformM}
Assume the hypotheses of \cref{thm:lowmach}, and let $\mathfrak a_M$, $M\in[0,M_0]$, be the coefficient tuples of the atmospheres in Equation~\eqref{eq:M-family}, and write $\mathfrak a^{\mathrm{inc}}$ for the tuple with $M=0$, which belongs to the limiting problem, as in the proof of \cref{thm:lowmach}.
Since these tuples are generated by atmospheres, they satisfy the first-order-system identities, and $\delta$ enters them only through $V^2$.
By Equation~\eqref{eq:coef-lip}, $\|\mathfrak a_M-\mathfrak a^{\mathrm{inc}}\|_{\mathcal X}\le CM^2$.
Then there are $M_1\in(0,M_0]$, $r_0>0$, $\delta_1>0$, and $C$ such that the following hold uniformly in $M\le M_1$, with $\mathbb D^+_{r_0}=\{c\in\C:|c-c_k|\le r_0,\ \Imc c\ge0\}$ as in \cref{prop:Zhalfdisc}.
\begin{enumerate}[label=\textup{(\roman*)}]
\item\label{it:uM-transfer} For $\delta\in[0,\delta_1]$ and $c_r\in[c_k-r_0,c_k+r_0]$, the maps $T_\delta$ across the critical neighborhoods and their inverses are bounded and differ from their incompressible counterparts by $O(M^2)$.
\item\label{it:uM-trace} With $m_0:=\min_{|c_r-c_k|\le r_0} |w_\ast^{\mathrm{inc}}(0;c_r)|>0$,
  \begin{equation}
  \inf_{M\le M_1}\ \min_{|c_r-c_k|\le r_0}
  \bigl|w_\ast^{M}(0;c_r)\bigr|\ \ge\ \tfrac12 m_0,
  \qquad
  \inf_{M\le M_1}\ \min_{c\in\mathbb D^+_{r_0}}
  \bigl|w^{M}(0;k,c)\bigr|\ \ge\ \tfrac14 m_0 ,
  \end{equation}
  so each $\Za^{M}(k,\cdot)$ is holomorphic in the interior of $\mathbb D^+_{r_0}$, and the denominator in Equation~\eqref{eq:Za-formula} does not vanish on $\mathbb D^+_{r_0}$.
\item\label{it:uM-rate} For $|c_r-c_k|\le r_0$ and $0<\delta\le\delta_1$, $|\Za^{M}(k,c_r+\im\delta)-\Za^{+,M}(k,c_r)| \le C\delta(1+|\log(\delta/V_\ast)|)$, and $|\partial_{c_r}\Za^{+,M}(k,c_r)|\le C$.
\item\label{it:uM-modulus} Consequently, for all $c\in\mathbb D^+_{r_0}$,
  \begin{equation}
  \bigl|\Za^{M}(k,c)-\Za^{+,M}(k,c_k)\bigr|
  \le\omega\bigl(|c-c_k|\bigr),
  \qquad
  \omega(r):=C\,r\bigl(1+|\log(r/V_\ast)|\bigr),
  \end{equation}
  that is, \cref{prop:Zhalfdisc}\ref{it:Z-modulus} holds with constants uniform in $M\le M_1$.
  If, in addition, $\Imc c_1(M)\ge\tfrac12\Imc c_1(0)>0$, then the constants $(\eps_0,R,C_0)$ in the proof of \cref{thm:main} can be chosen uniformly in $M\le M_1$.
\item\label{it:uM-complex} The pressure response is $O(M^2)$-close to its incompressible counterpart,
  \begin{equation}
  \sup_{c\in\mathbb D^+_{r_0}}
  \bigl|\Za^{M}(k,c)-\Za^{\mathrm{inc}}(k,c)\bigr|\ \le\ C\,M^{2},
  \end{equation}
  where $\Za^{\mathrm{inc}}(k,\cdot)$ is the pressure response of the limiting problem in Equation~\eqref{eq:rayleigh-limit}.
  On the real interval $[c_k-r_0,c_k+r_0]$, $\Za^{M}$ and $\Za^{\mathrm{inc}}$ denote their boundary values.
\end{enumerate}
\end{corollary}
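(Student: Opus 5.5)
The proof assembles the global limiting solutions from local pieces, in the same way as \cref{prop:limit-solutions} and \cref{thm:limabs}, but now with the Mach number as an additional parameter. The uniformity in $M$ comes from \cref{lem:uniformconst}, the $O(M^2)$ comparison from \cref{lem:coef-perturb}, and the $C^1$ control in $c_r$ from \cref{lem:pullback}.

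For \ref{it:uM-transfer}, I apply \cref{lem:coef-perturb}\ref{it:cp-transfer} to the tuples $\mathfrak a_M$ and $\mathfrak a^{\mathrm{inc}}$. Both satisfy the first-order-system identities, and $\|\mathfrak a_M-\mathfrak a^{\mathrm{inc}}\|_{\mathcal X}\le CM^2$. The constants are uniform by \cref{lem:uniformconst}, and they hold for every $\delta\in[0,\delta_1]$ and every $c_r$ in $[c_k-r_0,c_k+r_0]$. On intervals free of critical levels, Gr\"onwall's inequality gives solution maps that are $O(M^2)$-close. The top data $(0,\Abar_M(\ha)V(\ha))$ also differ by $O(M^2)$. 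Composing these finitely many uniformly bounded maps from $\ha$ down to $0$, I get
\[
|(w^M,\chibar^M)(0;k,c)-(w^{\mathrm{inc}},\chibar^{\mathrm{inc}})(0;k,c)|\le C_1M^2
\]
uniformly for $c=c_r+\im\delta$ with $\delta\in[0,\delta_1]$.

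For \ref{it:uM-trace}, I first fix $r_0\le\delta_1$ using \cref{prop:Zhalfdisc} for the incompressible problem. That proposition gives $|w^{\mathrm{inc}}(0;k,c)|\ge m_0/2$ on $\mathbb D^+_{r_0}$ and $|w_\ast^{\mathrm{inc}}(0;c_r)|\ge m_0$ on the real segment. Choosing $M_1$ with $C_1M_1^2\le m_0/4$ then yields both stated lower bounds.

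For \ref{it:uM-rate}, I repeat the proof of \cref{prop:Zhalfdisc}\ref{it:Z-nopole} for each $M$. The rate $O(\delta\log(1/\delta))$ from \cref{prop:convergence} and \cref{prop:limit-solutions} has constants depending only on the bounds listed in \cref{lem:uniformconst}, so it is uniform in $M$. The bound on $\partial_{c_r}$ comes from \cref{lem:pullback} applied to the pulled-back $\mathfrak a_M$, whose $C^1$-in-$c_r$ norms are uniform in $M$ by \eqref{eq:coef-lip}. Together with the lower bound $\tfrac14 m_0$ on the denominator in \eqref{eq:Za-formula}, this proves \ref{it:uM-rate}.

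Item \ref{it:uM-modulus} then follows from the triangle-inequality split used for \cref{prop:Zhalfdisc}\ref{it:Z-modulus}. The uniformity of $(\eps_0,R,C_0)$ follows by inspecting the proof of \cref{thm:main}. Those constants depend only on $M_Z$, $\omega$, $d_1$, $c_k$, $r_0$ and a positive lower bound for $\Imc c_1$, and the last is supplied by the extra assumption $\Imc c_1(M)\ge\tfrac12\Imc c_1(0)>0$.

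For \ref{it:uM-complex}, I insert the $O(M^2)$ closeness of $(w,\chibar)(0)$ into the quotient \eqref{eq:Za-formula}. With the common lower bound $\tfrac14 m_0$ on the denominators and the common upper bounds on the numerators, the quotients differ by at most $CM^2$.

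The main obstacle is making sure \cref{lem:coef-perturb} and \cref{lem:pullback} are really uniform in $\delta$ up to $\delta=0$ and in $c_r$ at the same time. The critical level $z_j(c_r)$ is the same for all $M$, since it depends only on $U$, so I work in the pulled-back variable of \cref{lem:pullback} for every $M$ simultaneously. A further point to check is that $r_0$ does not depend on $M$. I first choose $r_0$ from the incompressible problem, then shrink $M_1$, and never shrink $r_0$ afterwards.
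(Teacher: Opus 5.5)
Your proposal is correct and follows the paper's proof in all essentials. You use \cref{lem:coef-perturb} with Gr\"onwall and composition from $z=\ha$ down to $z=0$ for \ref{it:uM-transfer} and \ref{it:uM-complex}, the $M$-uniform constants of \cref{lem:uniformconst} with \cref{lem:pullback} for \ref{it:uM-rate}, and the triangle split plus an inspection of the proof of \cref{thm:main} for \ref{it:uM-modulus}; the only variation is in \ref{it:uM-trace}, where you get the half-disc bound by perturbing \cref{prop:Zhalfdisc} for the incompressible problem with the $O(M^2)$ comparison uniform in $\delta\in[0,\delta_1]$ (so $r_0$ is fixed once), whereas the paper compares only at $\delta=0$ and then extends into $\mathbb D^+_{r_0}$ via the $M$-uniform rate $O(\delta\log(1/\delta))$ after shrinking $r_0$, and both routes are valid.
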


\begin{proof}
We first choose $r_0>0$ as in the proof of \cref{prop:Zhalfdisc}, so that every $c_r\in[c_k-r_0,c_k+r_0]$ is a regular value of $U$ with the same number of critical levels and the incompressible problem satisfies Hypothesis \ref{hyp:nonres} for these $c_r$.
The smaller values of $r_0$ chosen below keep these properties.
Part \ref{it:uM-transfer} follows from \cref{lem:coef-perturb}\ref{it:cp-transfer}, applied on each critical neighborhood to the tuples $\mathfrak a_M$ and $\mathfrak a^{\mathrm{inc}}$.
On the intervals between the critical neighborhoods, the coefficients and the values at $z=\ha$ given in Equation~\eqref{eq:canonical} differ by $O(M^2)$, so Gr\"onwall's inequality gives the same bound for the solution maps.

For \ref{it:uM-trace}, $m_0>0$ by Hypothesis \ref{hyp:nonres} for the incompressible problem and the continuity of $w_\ast^{\mathrm{inc}}(0;\cdot)$ on the compact interval $[c_k-r_0,c_k+r_0]$.
When we compose the maps of \ref{it:uM-transfer} and the solution maps between the critical neighborhoods from $z=\ha$ down to $z=0$, we obtain $|w_\ast^{M}(0;c_r)-w_\ast^{\mathrm{inc}}(0;c_r)|\le CM^2$ uniformly on this interval.
Choose $M_1$ with $CM_1^2\le\tfrac12 m_0$.
By \cref{lem:uniformconst}, the constants of \cref{prop:convergence,prop:limit-solutions} are uniform in $M$, and by \cref{lem:pullback} they are uniform in $c_r$, so $|w^{M}(0;k,c_r+\im\delta)-w_\ast^{M}(0;c_r)|\le C\delta(1+|\log(\delta/V_\ast)|)$ for $\delta\in(0,\delta_1]$, with a constant independent of $M$.
We shrink $r_0$ so that $r_0\le\delta_1$ and $Cr_0(1+|\log(r_0/V_\ast)|)\le\tfrac14 m_0$.
This gives the second bound, because every $c\in\mathbb D^+_{r_0}$ has $\Imc c\le r_0\le\delta_1\le V_\ast$ and $\delta\mapsto\delta(1+|\log(\delta/V_\ast)|)$ is increasing on $(0,V_\ast]$.
Since the poles of $\Za^{M}(k,\cdot)$ are the zeros of $w^{M}(0;k,\cdot)$ (\cref{lem:regular-solve}), this proves \ref{it:uM-trace}.

For \ref{it:uM-rate}, we compose the estimates of \cref{prop:convergence} from $z=\ha$ down to $z=0$.
\cref{lem:uniformconst} makes their constants uniform in $M$, and \ref{it:uM-trace} bounds the denominator of the quotient in Equation~\eqref{eq:Za-formula} from below.
The derivative estimate follows from \cref{lem:pullback}, applied uniformly to the tuples $\mathfrak a_M$, and the same lower bound.

For \ref{it:uM-modulus}, let $c=c_r+\im\delta\in\mathbb D^+_{r_0}$.
We split $|\Za^{M}(k,c)-\Za^{+,M}(k,c_k)| \le|\Za^{M}(k,c)-\Za^{+,M}(k,c_r)|+|\Za^{+,M}(k,c_r)-\Za^{+,M}(k,c_k)|$ and apply \ref{it:uM-rate} with $\delta\le|c-c_k|$ and $|c_r-c_k|\le|c-c_k|$.
The map $r\mapsto r(1+|\log(r/V_\ast)|)$ is increasing for $r\le V_\ast$, so both terms are dominated by $\omega(|c-c_k|)$ after shrinking $r_0\le V_\ast$.
In the proof of \cref{thm:main}, Equation~\eqref{eq:D-ell} uses this modulus of continuity, and the lower bounds for the affine function $\ell$ of that proof use $\Imc c_1(M)\ge\tfrac12\Imc c_1(0)$ and $c_k$.
The same composition as in \ref{it:uM-trace} bounds the values $(w^{M},\chibar^{M})(0;k,c)$ uniformly on $\mathbb D^+_{r_0}$, while \ref{it:uM-trace} bounds the denominator in Equation~\eqref{eq:Za-formula} away from zero.
Hence the localization bound and all constants in the proof are uniform for $M\le M_1$.

For \ref{it:uM-complex}, let $c=c_r+\im\delta\in\mathbb D^+_{r_0}$.
Then $\delta\le r_0\le\delta_1$, so \cref{lem:coef-perturb}\ref{it:cp-fixed} and~\ref{it:cp-transfer} apply at $(c_r,\delta)$ to the tuples $\mathfrak a_M$ and $\mathfrak a^{\mathrm{inc}}$.
Hence the maps $T_\delta$ across the critical neighborhoods and the critical values differ by $O(M^2)$, uniformly in $(c_r,\delta)$.
On the intervals containing no critical level, the coefficients of the system in Equation~\eqref{eq:wchi-recall} and the values at $z=\ha$ given in Equation~\eqref{eq:canonical} are $O(M^2)$-close uniformly on $\mathbb D^+_{r_0}$.
Gr\"onwall's inequality gives the same bound for the solution operators.
Composing from $z=\ha$ down to $z=0$ yields $|w^{M}(0;k,c)-w^{\mathrm{inc}}(0;k,c)| +|\chibar^{M}(0;k,c)-\chibar^{\mathrm{inc}}(0;k,c)|\le CM^2$ on $\mathbb D^+_{r_0}$.
With the denominators bounded below by \ref{it:uM-trace}, including its case $M=0$, the quotient formula in Equation~\eqref{eq:Za-formula} gives \ref{it:uM-complex}.
\end{proof}

\section{Fr\'echet derivative and spectrum of the linearized operator}
\label{app:operator}

In this appendix, we prove \cref{thm:operator}.
The theorem settles two points that the derivation in \S\ref{sec:linearization} leaves open.

The first point is the sense in which the system of \cref{prop:linearized} is the linearization of the nonlinear two-phase problem.
In \S\ref{sec:linearization}, we obtained this system by differentiating a one-parameter family of solutions.
No well-posedness theory is available for the two-phase problem (\cref{rem:spaces}), so such families are hypothetical.
The Fr\'echet derivative of the nonlinear equations at the laminar state does not require solutions, but it requires a map between fixed Banach spaces, that is, complete normed vector spaces, whereas the air and water domains move with the interface.
In \S\ref{subsec:flattening}, we therefore transform the equations to fixed reference domains and show that the transformed equations are continuously Fr\'echet differentiable (\cref{prop:residual}).
In \S\ref{subsec:conjugation}, we show that a change of variables to a new unknown, the good unknown, turns this derivative into the linearized system of \cref{prop:linearized} (\cref{prop:goodunknown}).
Hence that system is the linearization of the nonlinear problem even if the family of solutions used in \S\ref{sec:linearization} does not exist.

The second point is the space on which the linearized operator acts.
A differential operator on $L^2$ is defined only on a subspace of $L^2$, its domain.
Its spectrum depends on the choice of the space and of the domain, and so do the statements that an eigenvalue is isolated and algebraically simple.
The linearized problem also has two features that the domain must respect.
The water pressure has no evolution equation, and the interface and boundary conditions constrain the state at every time.
In \S\ref{subsec:modal-operator}, we define $\Lop_k$ on an $L^2$ space at one wavenumber, with the constraints in its domain and the water pressure recovered by solving Laplace's equation in the water.
We then prove that its eigenvalues with $\Rec\lambda\neq0$ correspond to the zeros of the holomorphic dispersion function (\cref{thm:pointspec}).
In \S\ref{subsec:realization}, we show that $\Lop_k$ is the graph closure of the linearized evolution on the $H^3$ core (\cref{prop:realization}), so that $\Lop_k$ is determined by the linearized equations and the constraints.
In \S\ref{subsec:resolvent}, we bound the resolvent and show that a simple zero of $\Edet$ gives an isolated, algebraically simple eigenvalue (\cref{prop:resolvent,cor:algsimple}).
\cref{cor:spectral} applies these results to the unstable eigenvalue.

In \S\ref{subsec:flattening}, \S\ref{subsec:conjugation}, and \cref{prop:realization}, we assume, in addition to Hypotheses \ref{hyp:reg} and \ref{hyp:noslip}, the thermodynamic regularity stated in \cref{thm:operator}.
It requires an open set $\mathscr U$ of states $(\rho,S)$ whose closure $\overline{\mathscr U}$ is a compact subset of $(0,\infty)\times\R$, with
\begin{equation}\label{eq:thermo-domain}
\bigl\{(\rho_0(z),S_\ast):0\le z\le\ha\bigr\}\subset\mathscr U,
\qquad
P\in C^5(\overline{\mathscr U}),
\qquad
\inf_{\overline{\mathscr U}}P_\rho>0 .
\end{equation}
The first condition states that $\mathscr U$ contains the base range, the set of thermodynamic states of the laminar state.
The second condition means that the partial derivatives of $P$ of order at most five are continuous on $\mathscr U$ and extend continuously to the closure $\overline{\mathscr U}$.
The constant-sound-speed atmosphere of \cref{ex:constant-sound-speed}, whose equation of state is smooth on $(0,\infty)\times\R$ with $P_\rho=a_{\mathrm e}^2$, satisfies Equation~\eqref{eq:thermo-domain} for every such $\mathscr U$.
The polytropic atmosphere of \cref{ex:polytropic} satisfies it for every such $\mathscr U$ in a small neighborhood of the base range, provided $K\in C^5$ near $S_\ast$, because $a^2(0)=\gamma K(S_\ast)\rho_0(0)^{\gamma-1}>0$ gives $K>0$ near $S_\ast$.
The $C^5$ regularity of $P$ is used only in the composition estimates for the Fr\'echet derivative (\cref{lem:composition}).
The definition of $\Lop_k$ and its spectral theory for $\Imc c\neq0$ use only Hypotheses \ref{hyp:reg} and \ref{hyp:noslip}.
Neutrality gives $\rho_0'=-(g/a^2)\rho_0$, and hence $\rho_0\in C^4([0,\ha])$ because $a\in C^3([0,\ha])$ by Hypothesis \ref{hyp:reg}.
The good-unknown map of \S\ref{subsec:conjugation} requires this regularity, because it adds the product of $\rho_0'$ with a function in $H^3$ of the air strip to a density variation in $H^3$, and this product lies in $H^3$ when $\rho_0'\in C^3([0,\ha])$.

\subsection{Fixed-domain formulation}
\label{subsec:flattening}

Let $E_1$ and $E_2$ be Banach spaces with the norms $\|\cdot\|_{E_1}$ and $\|\cdot\|_{E_2}$.
A linear map $\mathcal G:E_1\to E_2$ is bounded if its operator norm $\|\mathcal G\|_{E_1\to E_2}:=\sup\{\|\mathcal G\mathbb V\|_{E_2}:\mathbb V\in E_1,\ \|\mathbb V\|_{E_1}\le1\}$ is finite, and then $\|\mathcal G\mathbb V\|_{E_2}\le\|\mathcal G\|_{E_1\to E_2}\|\mathbb V\|_{E_1}$ for all $\mathbb V\in E_1$.
We write $\mathcal L(E_1,E_2)$ for the space of bounded linear maps from $E_1$ to $E_2$ with the operator norm, $\mathcal L(E_1):=\mathcal L(E_1,E_1)$, and $\|\mathcal G\|$ for $\|\mathcal G\|_{E_1\to E_2}$ when the spaces are clear from the context.
A map $\mathcal F$ from an open subset of $E_1$ into $E_2$ is Fr\'echet differentiable at a point $\mathbb U$ if there is $\mathrm d\mathcal F(\mathbb U)\in\mathcal L(E_1,E_2)$, whose value at $\mathbb V\in E_1$ we write $\mathrm d\mathcal F(\mathbb U)[\mathbb V]$, with $\|\mathcal F(\mathbb U+\mathbb V)-\mathcal F(\mathbb U)-\mathrm d\mathcal F(\mathbb U)[\mathbb V]\|_{E_2}=o(\|\mathbb V\|_{E_1})$ as $\mathbb V\to0$, that is, the ratio of the left side to $\|\mathbb V\|_{E_1}$ tends to zero.
We call $\mathrm d\mathcal F(\mathbb U)$ the Fr\'echet derivative of $\mathcal F$ at $\mathbb U$ and $\mathrm d\mathcal F(\mathbb U)[\mathbb V]$ the derivative of $\mathcal F$ at $\mathbb U$ in the direction $\mathbb V$.
The map $\mathcal F$ is continuously Fr\'echet differentiable, or $C^1$, if it is Fr\'echet differentiable at every point and $\mathrm d\mathcal F$ depends continuously on the point in the operator norm.
A Fr\'echet derivative thus requires a map between fixed Banach spaces, but the fluid domains move with $\eta$.
We therefore pull every field back to fixed reference domains, the horizontal strips $\mathcal S_{\mathrm a}$ and $\mathcal S_{\mathrm w}$ defined below, by a change of variables that depends on $\eta$, a Hanzawa-type transformation \cite{Hanzawa1981}.
The transformed equations then contain $\eta$ and its derivatives in their coefficients, and the interface conditions become conditions on a line that does not move.
However, a perturbation of $\eta$ alone then changes every pulled-back field, even when the physical fields do not change.
The good unknown of \cref{prop:goodunknown} removes this apparent perturbation.

The sets $\mathcal S_{\mathrm a}:=\T_{k_0}\times(0,\ha)$ and $\mathcal S_{\mathrm w}:=\T_{k_0}\times(-\hw,0)$ are the horizontal strips that the air and the water occupy when the interface is flat, and we call them the reference strips.
We write $(x,\tilde z)$ for the coordinates on them.
For a strip $\mathcal S$, the space $L^2(\mathcal S)$ consists of the square-integrable functions on $\mathcal S$, with the norm $\|f\|_{L^2(\mathcal S)}:=\bigl(\int_{\mathcal S}|f|^2\,\dd x\,\dd\tilde z\bigr)^{1/2}$.
For an integer $m\ge1$, the Sobolev space $H^m(\mathcal S)$ consists of the functions in $L^2(\mathcal S)$ whose weak partial derivatives of order at most $m$, defined as in \cref{rem:spaces} with the integrals taken over $\mathcal S$ and with smooth functions that vanish near the two boundary lines of $\mathcal S$ in place of those that vanish near the endpoints of an interval, lie in $L^2(\mathcal S)$, with the norm $\|f\|_{H^m(\mathcal S)}:=\bigl(\sum_{i+j\le m}\|\partial_x^i\partial_{\tilde z}^jf\|_{L^2(\mathcal S)}^2\bigr)^{1/2}$, where the sum runs over the integers $i,j\ge0$ with $i+j\le m$.
For $1\le p<\infty$, the space $L^p$ on a strip, on an interval, or on $\T_{k_0}$ consists of the functions $f$ with $\int|f|^p<\infty$, with the norm $\|f\|_{L^p}:=\bigl(\int|f|^p\bigr)^{1/p}$, where the integral on $\T_{k_0}$ is taken over one period.
The space $L^\infty$ consists of the functions $f$ that are bounded outside a set of zero area, or of zero length on an interval or on $\T_{k_0}$, and $\|f\|_{L^\infty}$ is the smallest bound for $|f|$ outside such a set.
For a continuous function on a closed strip, on a closed interval, or on $\T_{k_0}$, this norm is the maximum of $|f|$, which on an interval is the norm $\|f\|_{C^0}$ of Equation~\eqref{eq:norms}.
A function $f\in L^2(\T_{k_0})$ on the circle $\T_{k_0}$ has the Fourier series $f(x)=\sum_{n\in\Z}\hat f_ne^{\im nk_0x}$ with the coefficients $\hat f_n:=\frac{k_0}{2\pi}\int_0^{2\pi/k_0}f(x)e^{-\im nk_0x}\,\dd x$, and for real $s\ge0$ the space $H^s(\T_{k_0})$ consists of the functions with
\begin{equation}
\|f\|_{H^s(\T_{k_0})}^2:=\frac{2\pi}{k_0}\sum_{n\in\Z}\bigl(1+n^2k_0^2\bigr)^s\,|\hat f_n|^2<\infty .
\end{equation}
For an integer $s$, this norm is equivalent to the norm of \cref{rem:spaces} on one period, that is, each of the two norms is at most a constant times the other.
The fractional value $s=\frac52$ describes the traces of $H^3$ functions of a strip on the lines $\{\tilde z=\mathrm{const}\}$, that is, their restrictions to these lines, which lie in $H^{5/2}(\T_{k_0})$ with norms at most a constant times the $H^3$ norm \cite[Ch.~7]{AdamsFournier2003}.
A product of normed spaces, such as $H^3(\mathcal S_{\mathrm a})^4$, the product of four copies of $H^3(\mathcal S_{\mathrm a})$, carries the norm whose square is the sum of the squares of the norms of the components.
As in Appendix~\ref{app:proofs}, $C$ denotes a positive constant that may change from line to line.
Set $h_-:=\min\{\ha,\hw\}$ and let $\phi\in C^\infty(\R)$ be a cutoff function with $\phi\equiv1$ on $[-h_-/3,h_-/3]$, $\phi\equiv0$ outside $(-2h_-/3,2h_-/3)$, and $|\phi'|\le4/h_-$, where $C^\infty$ is the class of smooth functions, those in $C^j$ for every integer $j\ge0$.
For $\eta\in H^{4}(\T_{k_0})$ with $\|\eta\|_{L^\infty}\le h_-/16$ define
\begin{equation}\label{eq:hanzawa}
\Phi_\eta(x,\tilde z):=\bigl(x,\ \tilde z+\phi(\tilde z)\,\eta(x)\bigr),
\qquad
J:=1+\phi'(\tilde z)\,\eta(x)\ \ge\ \tfrac34 .
\end{equation}
The map $\tilde z\mapsto\tilde z+\phi(\tilde z)\eta(x)$ is strictly increasing because $J\ge\frac34$, and it fixes $\tilde z=\ha$ and $\tilde z=-\hw$, where $\phi$ vanishes.
Hence $\Phi_\eta$ is a change of variables that maps $\mathcal S_{\mathrm a}$ and $\mathcal S_{\mathrm w}$ bijectively onto the air and water domains $\Omega_{\mathrm a}(\eta)=\{\eta(x)<z<\ha\}$ and $\Omega_{\mathrm w}(\eta)=\{-\hw<z<\eta(x)\}$ of \S\ref{sec:formulation}, that is, every point of these domains is the image of exactly one point of the strips.
It leaves the top and the bottom in place and maps the line $\{\tilde z=0\}$ onto the graph of $\eta$.
For a function $f$ on a moving domain with pullback $\tilde f:=f\circ\Phi_\eta$,
\begin{equation}\label{eq:flatten-deriv}
(\partial_xf)\circ\Phi_\eta
=\partial_x\tilde f-\frac{\phi\,\eta_x}{J}\,\partial_{\tilde z}\tilde f,
\qquad
(\partial_zf)\circ\Phi_\eta=\frac1J\,\partial_{\tilde z}\tilde f,
\qquad
(\partial_tf)\circ\Phi_\eta
=\partial_t\tilde f-\frac{\phi\,\eta_t}{J}\,\partial_{\tilde z}\tilde f
\end{equation}
when $\eta$ depends on $t$.
The unknown is the tuple
\begin{equation}
\mathbb U:=\bigl(\tilde\rho,\tilde u,\tilde w,\tilde S;\
\tilde v_x,\tilde v_z,\tilde p_{\mathrm w};\ \eta\bigr),
\end{equation}
which consists of the pullbacks of the air fields $(\rho_{\mathrm a},u_{\mathrm a},w_{\mathrm a},S_{\mathrm a})$, the pullbacks of the water fields $\mathbf u_{\mathrm w}=(u_{\mathrm w},w_{\mathrm w})$ and $p_{\mathrm w}$, and the interface elevation.
We introduce the spaces
\begin{equation}\label{eq:XY-spaces}
\mathbb X:=H^{3}(\mathcal S_{\mathrm a})^{4}\times
H^{3}(\mathcal S_{\mathrm w})^{3}\times H^{4}(\T_{k_0}),
\qquad
\mathcal O:=\Bigl\{\mathbb U\in\mathbb X:\
\operatorname{ran}(\tilde\rho,\tilde S)\subset\mathscr U_{d_0},\
\|\eta\|_{L^\infty}<\tfrac{h_-}{16}\Bigr\},
\end{equation}
where $\operatorname{ran}(\tilde\rho,\tilde S)$, the range of $(\tilde\rho,\tilde S)$, is the set of values of $(\tilde\rho,\tilde S)$ on the closed strip, $d_0>0$ is smaller than the distance from the base range to the boundary $\partial\mathscr U$, and, with $\operatorname{dist}(y,\partial\mathscr U)$ the distance from a point $y$ to $\partial\mathscr U$,
\begin{equation}
\mathscr U_{d_0}:=\bigl\{y\in\mathscr U:\
\operatorname{dist}(y,\partial\mathscr U)>d_0\bigr\} .
\end{equation}
The set $\mathscr U_{d_0}$ is open, and its closure is contained in the set $\{y\in\overline{\mathscr U}:\operatorname{dist}(y,\partial\mathscr U)\ge d_0\}$, which is a compact subset of $\mathscr U$.
The set $\mathcal O$ is open in $\mathbb X$, for the following reason.
By the Sobolev embedding theorem in two dimensions \cite[Ch.~4]{AdamsFournier2003}, every function $f\in H^3$ of a strip is continuous on the closed strip, with $\|f\|_{L^\infty}\le C\|f\|_{H^3}$, and the Sobolev embedding theorem on $\T_{k_0}$ gives $\|\eta\|_{L^\infty}\le C\|\eta\|_{H^4(\T_{k_0})}$.
The range of a continuous field on the closed strip is compact, and both defining conditions of $\mathcal O$, the range in the open set $\mathscr U_{d_0}$ and the strict bound on $\|\eta\|_{L^\infty}$, are preserved under perturbations with small $L^\infty$ norms, hence under perturbations with small norms in $\mathbb X$.
On $\mathcal O$, the density satisfies $\tilde\rho\ge\rho_->0$, where $\rho_-$ is the minimum of $\rho$ over the closure of $\mathscr U_{d_0}$.
On this compact set, $P$, $P_\rho$, and $P_S$ are $C^4$ with bounded derivatives, and $P_\rho$ is bounded below by a positive constant.
The laminar state in Equation~\eqref{eq:laminar} is $\mathbb U_0:=(\rho_0,U,0,S_\ast;\,0,0,p_{\mathrm w,0};\,0)$ in these variables.
It lies in $\mathbb X$ by Hypothesis \ref{hyp:reg}, and in $\mathcal O$ by the choice of $d_0$.

Pulled back to $\tilde z=0$, the air kinematic condition in Equation~\eqref{eq:kinematic-nonlinear} reads
\begin{equation}\label{eq:etat-sub}
\eta_t=\mathcal K_\eta(\mathbb U)
:=\bigl(\tilde w-\tilde u\,\eta_x\bigr)\big|_{\tilde z=0} .
\end{equation}
When we substitute $\mathcal K_\eta(\mathbb U)$ for $\eta_t$ in the pulled-back time derivatives in Equation~\eqref{eq:flatten-deriv}, the two-phase system in Equations~\eqref{eq:air-nonlinear}--\eqref{eq:rigid} on the moving domains becomes equivalent, for curves $t\mapsto\mathbb U(t)\in\mathcal O$ that are $C^1$ in time, to the system
\begin{equation}\label{eq:flat-system}
\partial_t\bigl(\tilde\rho,\tilde u,\tilde w,\tilde S,
\tilde v_x,\tilde v_z,\eta\bigr)
=\mathcal N_{\mathrm{dyn}}(\mathbb U),
\qquad
\mathcal N_{\mathrm{con}}(\mathbb U)=0 ,
\end{equation}
with $\mathcal N_{\mathrm{dyn}}$ and $\mathcal N_{\mathrm{con}}$ given below.
No time derivative of $\tilde p_{\mathrm w}$ occurs in this system.
The water pressure has no evolution equation and is determined at each time by the incompressibility constraint.
A system in which some equations contain no time derivative is called differential-algebraic.
We call the equations with time derivatives the evolution equations and the others the constraint equations.
To write $\mathcal N_{\mathrm{dyn}}$ completely, we introduce the transformed differential operators and the pulled-back advection operator, which is the material derivative without its time derivative,
\begin{equation}\label{eq:eta-operators}
\partial_x^{\eta}:=\partial_x-\frac{\phi\eta_x}{J}
\partial_{\tilde z},
\qquad
\partial_z^{\eta}:=\frac1J\,\partial_{\tilde z},
\qquad
\mathfrak M^{\eta}_{(\tilde u,\tilde w)}
:=\tilde u\,\partial_x^{\eta}+\tilde w\,\partial_z^{\eta}
-\frac{\phi\,\mathcal K_\eta(\mathbb U)}{J}\,
\partial_{\tilde z} .
\end{equation}
By Equation~\eqref{eq:flatten-deriv}, the first two operators are the pullbacks of $\partial_x$ and $\partial_z$.
The last term of $\mathfrak M^{\eta}$ comes from the time derivative in Equation~\eqref{eq:flatten-deriv}.
The point $\Phi_\eta(x,\tilde z)$ moves vertically with the velocity $\phi\eta_t$, the mesh velocity of the flattening, and $\eta_t$ is replaced by $\mathcal K_\eta(\mathbb U)$.
The subscript of $\mathfrak M^{\eta}$ is the advecting velocity, and $\mathfrak M^{\eta}_{(\tilde v_x,\tilde v_z)}$ is defined by the same formula with $(\tilde v_x,\tilde v_z)$ in place of $(\tilde u,\tilde w)$.
With $\tilde p_{\mathrm a}:=P(\tilde\rho,\tilde S)$, the components of $\mathcal N_{\mathrm{dyn}}$ are
\begin{equation}\label{eq:Ndyn-mass}
\begin{aligned}
\bigl[\mathcal N_{\mathrm{dyn}}\bigr]_{\tilde\rho}
&=-\mathfrak M^{\eta}_{(\tilde u,\tilde w)}\tilde\rho
-\tilde\rho\bigl(\partial_x^{\eta}\tilde u
+\partial_z^{\eta}\tilde w\bigr),
&\bigl[\mathcal N_{\mathrm{dyn}}\bigr]_{\tilde u}
&=-\mathfrak M^{\eta}_{(\tilde u,\tilde w)}\tilde u
-\frac1{\tilde\rho}\,\partial_x^{\eta}\tilde p_{\mathrm a},\\
\bigl[\mathcal N_{\mathrm{dyn}}\bigr]_{\tilde w}
&=-\mathfrak M^{\eta}_{(\tilde u,\tilde w)}\tilde w
-\frac1{\tilde\rho}\,\partial_z^{\eta}\tilde p_{\mathrm a}-g,
&\bigl[\mathcal N_{\mathrm{dyn}}\bigr]_{\tilde S}
&=-\mathfrak M^{\eta}_{(\tilde u,\tilde w)}\tilde S,\\
\bigl[\mathcal N_{\mathrm{dyn}}\bigr]_{\tilde v_x}
&=-\mathfrak M^{\eta}_{(\tilde v_x,\tilde v_z)}\tilde v_x
-\frac1{\rhow}\,\partial_x^{\eta}\tilde p_{\mathrm w},
&\bigl[\mathcal N_{\mathrm{dyn}}\bigr]_{\tilde v_z}
&=-\mathfrak M^{\eta}_{(\tilde v_x,\tilde v_z)}\tilde v_z
-\frac1{\rhow}\,\partial_z^{\eta}\tilde p_{\mathrm w}-g,\\
\bigl[\mathcal N_{\mathrm{dyn}}\bigr]_{\eta}
&=\mathcal K_\eta(\mathbb U).&&
\end{aligned}
\end{equation}
Each component comes from the corresponding equation of Equations~\eqref{eq:air-nonlinear}, \eqref{eq:water-nonlinear}, and \eqref{eq:etat-sub}, pulled back by $\Phi_\eta$ and solved for the time derivative, as one checks term by term with Equation~\eqref{eq:flatten-deriv}.
The constraint equations are $\mathcal N_{\mathrm{con}}(\mathbb U)=0$, with
\begin{equation}\label{eq:Ncon}
\begin{aligned}
\mathcal N_{\mathrm{con}}(\mathbb U)
:=\Bigl(
&\partial_x\tilde v_x-\frac{\phi\eta_x}{J}\partial_{\tilde z}\tilde v_x
+\frac{\partial_{\tilde z}\tilde v_z}{J};\ \
\bigl(\tilde w-\tilde u\eta_x\bigr)\big|_{\tilde z=0}
-\bigl(\tilde v_z-\tilde v_x\eta_x\bigr)\big|_{\tilde z=0};\\
&\bigl(\tilde p_{\mathrm w}-P(\tilde\rho,\tilde S)\bigr)\big|_{\tilde z=0}
-\sigma\kappa(\eta);\ \
\tilde w\big|_{\tilde z=\ha};\ \
\tilde v_z\big|_{\tilde z=-\hw}
\Bigr).
\end{aligned}
\end{equation}
They are the divergence constraint, the kinematic compatibility of the two phases, the dynamic condition with $\kappa(\eta)=-\eta_{xx}(1+\eta_x^2)^{-3/2}$, and the impermeability conditions at the top and bottom.
Because $\Phi_\eta$ maps the line $\{\tilde z=0\}$ onto the interface, the interface conditions are evaluated exactly, without a Taylor expansion in $\eta$.
We call $\mathcal N:=(\mathcal N_{\mathrm{dyn}},\mathcal N_{\mathrm{con}})$ the residual of the flattened system.
Its target space is
\begin{equation}\label{eq:Yspace}
\mathbb Y:=
\underbrace{H^{2}(\mathcal S_{\mathrm a})^{4}
\times H^{2}(\mathcal S_{\mathrm w})^{2}
\times H^{5/2}(\T_{k_0})}_{\text{evolution equations}}
\ \times\
\underbrace{H^{2}(\mathcal S_{\mathrm w})
\times H^{5/2}(\T_{k_0})\times H^{2}(\T_{k_0})
\times H^{5/2}(\T_{k_0})^{2}}_{\text{constraint equations}} .
\end{equation}

The residual is built from products, quotients, and nonlinear functions of $H^3$ fields, namely $P(\tilde\rho,\tilde S)$, $1/\tilde\rho$, $1/J$, and the curvature of $\eta$.
Its differentiability in Sobolev spaces rests on the following standard composition estimates.
In the lemma, $C_{\mathrm{emb}}$ denotes the constant of the Sobolev embedding inequality $\|u\|_{L^\infty}\le C_{\mathrm{emb}}\|u\|_{H^m}$, which holds on a strip for $m\ge2$ and on $\T_{k_0}$ for $m\ge1$.
For a compact set $K_1$ in an open set $W$ of $\R$ or of $\R^2$, we set $r(K_1):=\min\{1,\operatorname{dist}(K_1,\partial W)\}/(2C_{\mathrm{emb}})$, where $\operatorname{dist}(K_1,\partial W)$ is the smallest distance between a point of $K_1$ and a point of the boundary $\partial W$ of $W$, and $+\infty$ when $\partial W$ is empty.

\begin{lemma}\label{lem:composition}
Let $\mathcal S$ be one of the strips above or the circle $\T_{k_0}$, and let $m=2$ or $m=3$.
\begin{enumerate}[label=\textup{(\roman*)}]
\item\label{it:comp-algebra} $H^m(\mathcal S)$ is a Banach algebra, that is, $\|uv\|_{H^m}\le C\|u\|_{H^m}\|v\|_{H^m}$, and the multiplier inequality $\|uv\|_{H^{m-1}}\le C\|u\|_{H^{m}}\|v\|_{H^{m-1}}$ holds.
On $\T_{k_0}$, the space $H^s(\T_{k_0})$ is a Banach algebra for every real $s>\frac12$.
\item\label{it:comp-nemytskii} Let $W$ be an open subset of $\R$ or of $\R^2$, and let $F\in C^{m+2}(W)$.
The Nemytskii map $u\mapsto F(u)$ is the composition with $F$, taken at each point, so that $F(u)(x,\tilde z):=F\bigl(u(x,\tilde z)\bigr)$ on a strip and $F(u)(x):=F\bigl(u(x)\bigr)$ on $\T_{k_0}$.
It is defined and $C^1$ on the set of the $u\in H^m(\mathcal S)$, or $u\in H^m(\mathcal S)^2$ when $W\subset\R^2$, with $\operatorname{ran}(u)\subset W$, where $\operatorname{ran}(u)$ is the set of values of $u$ on $\overline{\mathcal S}$.
This set of fields $u$ is open, because $\operatorname{ran}(u)$, the range of a continuous field on the compact set $\overline{\mathcal S}$, is a compact subset of $W$ and so has a positive distance to $\partial W$, and because the embedding inequality $\|h\|_{L^\infty}\le C_{\mathrm{emb}}\|h\|_{H^m}$ gives $\operatorname{ran}(u+h)\subset W$ when $C_{\mathrm{emb}}\|h\|_{H^m}$ is smaller than this distance.
For $y\in W$ and a vector $h$ with as many components as $y$, we write $\mathrm dF(y)[h]:=\sum_i\partial_iF(y)\,h_i$ and $\mathrm d^2F(y)[h,h]:=\sum_{i,j}\partial_{ij}F(y)\,h_ih_j$ for the first and second derivatives of $F$, where the sums run over the components $y_i$ of $y$, $\partial_i$ is the partial derivative with respect to $y_i$, and $\partial_{ij}:=\partial_i\partial_j$.
For fields $u$ and $h$ on $\mathcal S$, the functions $\mathrm dF(u)[h]$ and $\mathrm d^2F(u)[h,h]$ are given by these formulas at each point.
The Fr\'echet derivative of the Nemytskii map at $u$ is $h\mapsto \mathrm dF(u)[h]$, and in the multivariable Taylor formula
\begin{equation}
F(u+h)-F(u)-\mathrm dF(u)[h]
=\int_0^1(1-t)\,\mathrm d^2F(u+th)[h,h]\,\dd t
\end{equation}
the right side has $H^m$ norm at most $C\|h\|_{H^m}^{2}$.
For every compact set $K_1\subset W$ and every $R>0$, this constant is uniform over the $u$ with $\|u\|_{H^m}\le R$ and $\operatorname{ran}(u)\subset K_1$ and the $h$ with $\|h\|_{H^m}\le r(K_1)$.
For the same $u$, the norm $\|F(u)\|_{H^m}$ is bounded, and $\|F(u_1)-F(u_2)\|_{H^m}\le C\|u_1-u_2\|_{H^m}$ whenever $\|u_1-u_2\|_{H^m}\le r(K_1)$.
\item\label{it:comp-inverse} If, in addition, $F$ has no zero in $W$, then the same holds for $u\mapsto1/F(u)$.
In particular, $\tilde\rho\mapsto1/\tilde\rho$ is $C^1$ on $\{\operatorname{ran}(\tilde\rho,\tilde S)\subset\mathscr U_{d_0}\}$, where $\tilde\rho\ge\rho_->0$.
\item\label{it:comp-extension} Let $\mathcal S$ be a strip, and let $\operatorname{ext}$ denote the extension that is constant in $\tilde z$, $(\operatorname{ext}f)(x,\tilde z):=f(x)$.
Then
\begin{equation}
\|\operatorname{ext}f\|_{H^m(\mathcal S)}\le C\,\|f\|_{H^m(\T_{k_0})},
\qquad
\|(\operatorname{ext}f)\,u\|_{H^m(\mathcal S)}
\le C\,\|f\|_{H^m(\T_{k_0})}\,\|u\|_{H^m(\mathcal S)},
\end{equation}
the first because $\operatorname{ext}f$ has no vertical derivatives and the strip has finite height, the second by \ref{it:comp-algebra}.
For the use below, $\|\operatorname{ext}\mathcal K_\eta\|_{H^2(\mathcal S)}\le C\|\mathcal K_\eta\|_{H^2(\T_{k_0})}\le C\|\mathcal K_\eta\|_{H^{5/2}(\T_{k_0})}$.
\end{enumerate}
\end{lemma}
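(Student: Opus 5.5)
The plan is to prove the four items in order, using only standard Sobolev tools on the strips $\mathcal S_{\mathrm a},\mathcal S_{\mathrm w}$ and on $\T_{k_0}$. The central ingredient is the embedding $\|u\|_{L^\infty}\le C_{\mathrm{emb}}\|u\|_{H^m}$, which holds for $m\ge2$ in two dimensions and for $m\ge1$ on the circle. For item \ref{it:comp-algebra}, I would expand $\partial^\alpha(uv)$ by the Leibniz rule with $|\alpha|\le m$ and bound each term $\partial^\beta u\,\partial^{\alpha-\beta}v$ by H\"older's inequality. When one factor carries no derivatives it goes into $L^\infty$. When both carry derivatives (possible only for $m\ge2$), both go into $L^4$ via the two-dimensional embedding $H^1\subset L^4$ on the strip, which is valid since the strip is a Lipschitz domain with an extension operator. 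The multiplier bound $\|uv\|_{H^{m-1}}\le C\|u\|_{H^m}\|v\|_{H^{m-1}}$ is handled the same way: derivatives of $u$ of order up to $m-1$ lie in $H^1\subset L^4$ or in $L^\infty$, and those of $v$ lie in $L^2$ or $L^4$. For the statement on $\T_{k_0}$ with real $s>\tfrac12$, I would use the Fourier characterization together with $(1+|n|^2k_0^2)^{s/2}\le C\bigl[(1+|n-l|^2k_0^2)^{s/2}+(1+|l|^2k_0^2)^{s/2}\bigr]$ and Young's convolution inequality. This step needs $\sum_n(1+n^2k_0^2)^{-s}<\infty$, which is exactly the condition $s>\tfrac12$.

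For item \ref{it:comp-nemytskii}, the openness claim is already argued in the statement. To show $F(u)\in H^m$, I would expand $\partial^\alpha F(u)$ by the chain rule (Fa\`a di Bruno) into sums of terms of the form $\partial^\gamma F(u)\prod_i\partial^{\beta_i}u$ with $|\gamma|\le m$. The coefficients $\partial^\gamma F(u)$ are bounded on $\operatorname{ran}(u)\subset K_1$, and the products are estimated exactly as in item \ref{it:comp-algebra}. This gives $\|F(u)\|_{H^m}\le C(K_1,R)$, with the constant polynomial in $\|u\|_{H^m}$. For the Taylor remainder I would note that $u+th$ has range in the compact $r(K_1)$-neighbourhood $K_1'$ of $K_1$, because $C_{\mathrm{emb}}\|h\|\le\tfrac12\operatorname{dist}(K_1,\partial W)$, and $K_1'\subset W$. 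Applying the same Fa\`a di Bruno bound to $\partial_{ij}F\in C^m(W)$ shows $\|\mathrm d^2F(u+th)\|_{H^m}\le C$, and the algebra property then gives $\|\mathrm d^2F(u+th)[h,h]\|_{H^m}\le C\|h\|_{H^m}^2$. This is where $F\in C^{m+2}$ enters: $m$ derivatives of $\partial_{ij}F$ are needed. The quadratic remainder gives Fr\'echet differentiability with derivative $h\mapsto\mathrm dF(u)[h]$. Continuity of $u\mapsto\mathrm dF(u)$ in operator norm follows by applying the Lipschitz bound to $\partial_iF\in C^{m+1}$: the first-order Taylor formula $\partial_iF(u_1)-\partial_iF(u_2)=\int_0^1\mathrm d\partial_iF(u_2+t(u_1-u_2))[u_1-u_2]\,\dd t$ combined with the algebra property yields $\|\partial_iF(u_1)-\partial_iF(u_2)\|_{H^m}\le C\|u_1-u_2\|_{H^m}$. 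The same formula applied to $F$ itself gives the stated Lipschitz bound for $F(u)$.

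Item \ref{it:comp-inverse} follows by applying item \ref{it:comp-nemytskii} to $1/F$, which lies in $C^{m+2}(W)$ whenever $F$ does and has no zero. In particular, $(\rho,S)\mapsto1/\rho$ is smooth on a neighbourhood of the closure of $\mathscr U_{d_0}$, on which $\rho\ge\rho_->0$. For item \ref{it:comp-extension}, the function $\partial_x^i\partial_{\tilde z}^j\operatorname{ext}f$ vanishes for $j>0$ and equals $\operatorname{ext}(f^{(i)})$ otherwise. Its $L^2$ norm over the strip is $\sqrt{h}\,\|f^{(i)}\|_{L^2(\T_{k_0})}$, where $h$ is the strip height, which gives the first bound. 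The second bound follows from the first and item \ref{it:comp-algebra}, and the final chain uses $H^{5/2}\subset H^2$ on the circle.

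The main obstacle is the bookkeeping in the Fa\`a di Bruno estimates. In particular, one must make the constants in the remainder and Lipschitz bounds uniform over $\|u\|\le R$, $\operatorname{ran}(u)\subset K_1$ and $\|h\|\le r(K_1)$. This is handled by always working on the slightly enlarged compact set $K_1'$ described above.
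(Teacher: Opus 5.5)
Your proposal is correct and follows essentially the same route as the paper: Leibniz and H\"older with $H^2\subset L^\infty$ and $H^1\subset L^4$ for (i); the Fourier-convolution and Young argument for $s>\tfrac12$; the chain-rule term count for $F(u)\in H^m$; the integral Taylor formula with $\partial_{ij}F\in C^m$ and $\partial_iF\in C^{m+1}$ for the remainder, Lipschitz and continuity claims; (iii) by passing to $1/F$; and the direct computation for (iv). One small slip: $\operatorname{ran}(u+th)$ lies in the closed $C_{\mathrm{emb}}r(K_1)=\tfrac12\min\{1,\operatorname{dist}(K_1,\partial W)\}$-neighbourhood of $K_1$, not in the $r(K_1)$-neighbourhood you name, and the $\min\{1,\cdot\}$ is what keeps this set bounded, hence compact, when $\partial W=\emptyset$.
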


\begin{proof}
For \ref{it:comp-algebra}, we use the Sobolev embedding inequalities $\|u\|_{L^\infty}\le C\|u\|_{H^2}$ and $\|u\|_{L^4}\le C\|u\|_{H^1}$, which hold on the two-dimensional strips and on $\T_{k_0}$.
We write $\partial^j$ for any partial derivative of order $j$ with respect to $(x,\tilde z)$, or with respect to $x$ on $\T_{k_0}$, and $\partial:=\partial^1$.
For $m=2$, expand $\partial^2(uv)$ by the Leibniz rule and estimate the three types of terms by $\|u\|_{L^\infty}\|\partial^2v\|_{L^2}$, $\|\partial u\|_{L^4}\|\partial v\|_{L^4}$, and $\|\partial^2u\|_{L^2}\|v\|_{L^\infty}$.
For $m=3$, the additional terms $\partial^3u\,v$ and $\partial^2u\,\partial v$, and those with $u$ and $v$ exchanged, are estimated in the same way by $\|\partial^3u\|_{L^2}\|v\|_{L^\infty}$ and $\|\partial^2u\|_{L^4}\|\partial v\|_{L^4}$.
The multiplier inequality follows the same way, placing the factor from $H^{m-1}$ in $L^2$ or $L^4$ and the factor from $H^m$ in $L^\infty$ or $L^4$, and for $m=3$ it uses that the first derivative of the factor in $H^3$ lies in $H^2$ and is therefore bounded.
On $\T_{k_0}$, if $u$ and $v$ have the Fourier coefficients $\hat u_n$ and $\hat v_n$, then $uv$ has the coefficients $\sum_{n_1+n_2=n}\hat u_{n_1}\hat v_{n_2}$, the convolution of the two sequences.
The algebra property for real $s>\frac12$ then follows from the Fourier norm, the bound $(1+n^2k_0^2)^{s/2}\le2^s\bigl[(1+n_1^2k_0^2)^{s/2}+(1+n_2^2k_0^2)^{s/2}\bigr]$ for $n=n_1+n_2$, and Young's inequality for sequences, which states that the sum of the squared absolute values of a convolution is at most the sum of the squared absolute values of one sequence times the square of the sum of the absolute values of the other.
The last sum is finite for the Fourier coefficients of a function in $H^s(\T_{k_0})$ by the Cauchy--Schwarz inequality, because $\sum_{n\in\Z}(1+n^2k_0^2)^{-s}<\infty$.

For \ref{it:comp-nemytskii}, the function $F(u)$ is bounded, and bounded functions lie in $L^2$ because $\mathcal S$ has finite area or length.
By the chain rule, every term of $\partial^{j}[F(u)]$, $1\le j\le m$, is a partial derivative of $F$ of some order $i$ with $1\le i\le j$, evaluated at $u$ and bounded since $F\in C^m$ on a compact set, times a product of $i$ derivatives of components of $u$ whose orders add up to $j$.
For $m=2$, this product is a single derivative of order at most two, which lies in $L^2$, or the product of two first derivatives, which lies in $L^2$ because first derivatives lie in $L^4$ by the embedding inequality $\|u\|_{L^4}\le C\|u\|_{H^1}$.
For $m=3$, the first derivatives lie in $H^2$ and are therefore bounded.
Since the orders add up to at most three, at most one factor has order two or three, and this factor lies in $L^2$, so the product is bounded or lies in $L^2$.
This uses only $F\in C^m$.
Let $u_1$ and $u_2$ have ranges in $K_1$ and $H^m$ norms at most $R$, with $\|u_1-u_2\|_{H^m}\le r(K_1)$.
Then $\|u_1-u_2\|_{L^\infty}\le\frac12\min\{1,\operatorname{dist}(K_1,\partial W)\}$, so the ranges of $u_2+t(u_1-u_2)$, $t\in[0,1]$, lie in the compact set $K_2$ of points whose distance to $K_1$ is at most this number, and $K_2\subset W$.
The Lipschitz bound follows by writing $F(u_1)-F(u_2)=\int_0^1\mathrm dF\bigl(u_2+t(u_1-u_2)\bigr)[u_1-u_2]\,\dd t$ and applying \ref{it:comp-algebra} together with the boundedness statement for the partial derivatives $\partial_iF\in C^{m+1}$ on $K_2$.
This uses only $F\in C^{m+1}$.
For the $C^1$ statement, the multivariable Taylor formula displayed in the statement holds pointwise.
By definition, $\mathrm d^2F(u+th)[h,h]$ is the sum of the products $h_ih_j\,\partial_{ij}F(u+th)$, and the ranges of $u+th$ lie in $K_2$ for $\|h\|_{H^m}\le r(K_1)$ by the same argument.
Hence \ref{it:comp-algebra} and the boundedness statement applied to $\partial_{ij}F\in C^{m}$, which is where $F\in C^{m+2}$ enters, bound the remainder by $C\|h\|_{H^m}^2$, uniformly for $t\in[0,1]$.
Continuity of $u\mapsto\mathrm dF(u)$ in the operator norm of linear maps from $H^m(\mathcal S)$, or from $H^m(\mathcal S)^2$, to $H^m(\mathcal S)$ follows from the Lipschitz bound for the partial derivatives $\partial_iF$ and \ref{it:comp-algebra}.
The openness of the domain was shown in the statement.
Item \ref{it:comp-inverse} is \ref{it:comp-nemytskii} applied to $1/F\in C^{m+2}(W)$.
Item \ref{it:comp-extension} was proved in the statement.
\end{proof}

\begin{proposition}\label{prop:residual}
The residual $\mathcal N=(\mathcal N_{\mathrm{dyn}},\mathcal N_{\mathrm{con}})$ is a well-defined map $\mathcal O\to\mathbb Y$, and it is continuously Fr\'echet differentiable.
\end{proposition}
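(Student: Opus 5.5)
The plan is to verify well-definedness and $C^1$ regularity component by component. Each component of $\mathcal N$ is a finite sum of products of four kinds of factors: Nemytskii-type functions of the unknowns, first derivatives of $H^3$ fields, traces on the lines $\tilde z=0,\ha,-\hw$, and functions of $\eta$ extended constantly in $\tilde z$. The chain and product rules for Fréchet derivatives then reduce everything to \cref{lem:composition}. Throughout I work on $\mathcal O$, where $\tilde\rho\ge\rho_-$, $\operatorname{ran}(\tilde\rho,\tilde S)$ lies in the compact closure of $\mathscr U_{d_0}$, and $J\ge\tfrac34$.

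\emph{Step 1: the coefficient maps.} I first show that the following maps are $C^1$ on $\mathcal O$.
\begin{itemize}
\item $\eta\mapsto\eta_x\in H^3(\T_{k_0})$ and $\eta\mapsto\eta_{xx}\in H^2(\T_{k_0})$; these are bounded linear.
\item $\eta\mapsto 1/J$ and $\eta\mapsto\phi\eta_x/J$ into $H^3(\mathcal S)$. Here $J=1+\phi'(\tilde z)\operatorname{ext}\eta$, and I apply \cref{lem:composition}\ref{it:comp-inverse} to $y\mapsto1/(1+y)$ on $W=(-\tfrac14,\infty)$, with $u=\phi'\operatorname{ext}\eta\in H^3$ by \ref{it:comp-extension}. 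The smooth factor $\phi$ is a bounded multiplier.
\item $(\tilde\rho,\tilde S)\mapsto P(\tilde\rho,\tilde S)$ and $\tilde\rho\mapsto1/\tilde\rho$ into $H^3(\mathcal S_{\mathrm a})$, by \ref{it:comp-nemytskii}--\ref{it:comp-inverse} with $m=3$. This is the place where $P\in C^5$ is used.
\item $\kappa(\eta)=-\eta_{xx}(1+\eta_x^2)^{-3/2}$ into $H^2(\T_{k_0})$. Here $(1+\eta_x^2)^{-3/2}$ is a smooth Nemytskii function of $\eta_x\in H^3(\T_{k_0})$, and I multiply by $\eta_{xx}\in H^2$ using the algebra property.
\end{itemize}

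\emph{Step 2: the evolution components.} The operators $\partial^\eta_x$ and $\partial^\eta_z$ map $H^3\to H^2$. They are built from the bounded maps $\partial_x,\partial_{\tilde z}:H^3\to H^2$ followed by multiplication with $H^3$ coefficients, which is continuous by the multiplier inequality in \ref{it:comp-algebra}. Since multiplication is bilinear and bounded, $(\text{coefficient},\text{field})\mapsto\text{coefficient}\cdot\partial\text{field}$ is $C^1$.

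For $\mathcal K_\eta(\mathbb U)=(\tilde w-\tilde u\eta_x)|_{\tilde z=0}$, the trace of $H^3$ fields lies in $H^{5/2}(\T_{k_0})$, and $\eta_x\in H^3\subset H^{5/2}$. The product of the traces lies in $H^{5/2}$ because $H^{5/2}(\T_{k_0})$ is an algebra. The mesh term $\phi\,\operatorname{ext}\mathcal K_\eta/J\cdot\partial_{\tilde z}(\cdot)$ lands in $H^2$ by \ref{it:comp-extension}, since $\operatorname{ext}\mathcal K_\eta\in H^2$, combined with the $H^2$ algebra property.

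The pressure terms $\tilde\rho^{-1}\partial^\eta\tilde p_{\mathrm a}$ are handled by the same product rule. The $\eta$-component is $\mathcal K_\eta\in H^{5/2}$.

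\emph{Step 3: the constraint components.} The divergence constraint lands in $H^2(\mathcal S_{\mathrm w})$ exactly as in Step 2. The kinematic compatibility and the impermeability conditions are traces, hence land in $H^{5/2}$. The dynamic condition is a trace of an $H^3$ function, which lies in $H^{5/2}\subset H^2$, minus $\sigma\kappa(\eta)\in H^2$.

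\emph{Conclusion and main obstacle.} Continuity of $\mathrm d\mathcal N$ in the operator norm follows because each derivative is a finite sum of bounded multilinear expressions in factors that depend continuously on $\mathbb U$. The step I expect to be the main obstacle is the regularity bookkeeping for the mesh-velocity term. It pairs a boundary quantity with only $H^{5/2}$ regularity, extended constantly in $\tilde z$, with $\partial_{\tilde z}$ of an $H^3$ field, so the product must stay in $H^2$. I would handle this by using the weaker $H^2$ algebra and the extension bound of \ref{it:comp-extension}, and by checking that no factor of regularity below $H^2$ multiplies another such factor. I would also verify the domain requirements for the Nemytskii maps, namely the compact range condition on $\mathcal O$, uniformly on small balls, so that the constants of \cref{lem:composition} apply.
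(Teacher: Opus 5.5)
Your proposal is correct and follows the same route as the paper. Both proofs split each component into factors (unknowns and their first derivatives, $\phi\eta_x$ and $\phi'\eta$, $1/J$ and $1/\tilde\rho$, the thermodynamic compositions, and $\operatorname{ext}\mathcal K_\eta$), place each factor in $H^2$ or $H^3$ via \cref{lem:composition}, handle the mesh term exactly as you describe, and conclude by the product rule in the Banach algebras. The only difference is cosmetic. You put $P(\tilde\rho,\tilde S)$, $1/\tilde\rho$, and $1/J$ in $H^3$ with $m=3$ and then differentiate, whereas the paper applies the chain rule to $\tilde p_{\mathrm a}$ and puts $P_\rho(\tilde\rho,\tilde S)$, $P_S(\tilde\rho,\tilde S)$, $1/\tilde\rho$, and $1/J$ in $H^2$ with $m=2$; both versions need exactly $P\in C^5$.
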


\begin{proof}
By Equations~\eqref{eq:eta-operators} and \eqref{eq:Ndyn-mass}, each component of $\mathcal N_{\mathrm{dyn}}$ on a strip is a finite sum of products of factors of the following kinds.
The first kind is an unknown or a first derivative of an unknown, which lies in $H^2$.
The second kind is one of the functions $\phi\eta_x$ and $\phi'\eta$, which lie in $H^3$ of the strip, since $\eta\in H^4(\T_{k_0})$ and $\phi$ is smooth.
The third kind is $1/J$ with $J=1+\phi'\eta$, or the inverse density $1/\tilde\rho$ from the momentum components.
Both lie in $H^2$ by \cref{lem:composition}\ref{it:comp-inverse} with $m=2$, since $J\ge\tfrac34$ and $\tilde\rho\ge\rho_->0$ on $\mathcal O$.
The fourth kind is one of the compositions $P_\rho(\tilde\rho,\tilde S)$ and $P_S(\tilde\rho,\tilde S)$, which appear when the chain rule is applied to $\partial_x^{\eta}\tilde p_{\mathrm a}$ and $\partial_z^{\eta}\tilde p_{\mathrm a}$.
They lie in $H^2$ by \cref{lem:composition}\ref{it:comp-nemytskii} with $m=2$ and $W=\mathscr U$, because $P_\rho,P_S\in C^4$ by $P\in C^5$.
The fifth kind is $\operatorname{ext}\mathcal K_\eta(\mathbb U)$, from the last term of $\mathfrak M^{\eta}$.
The function $\mathcal K_\eta(\mathbb U)$ of Equation~\eqref{eq:etat-sub} is built from the traces of $\tilde u$ and $\tilde w$ at $\tilde z=0$, which lie in $H^{5/2}(\T_{k_0})$ because the trace map $H^3(\mathcal S_{\mathrm a})\to H^{5/2}(\T_{k_0})$ is bounded, and from $\eta_x\in H^3(\T_{k_0})$, multiplied in the algebra $H^{5/2}(\T_{k_0})$.
By \cref{lem:composition}\ref{it:comp-extension}, multiplication by its extension is a bounded map on $H^2$ of the strip.
Every product of two $H^2$ factors lies in $H^2$ by \cref{lem:composition}\ref{it:comp-algebra}, so each component of $\mathcal N_{\mathrm{dyn}}$ on a strip maps $\mathcal O$ into $H^2$, and the component $[\mathcal N_{\mathrm{dyn}}]_\eta$ maps $\mathcal O$ into $H^{5/2}(\T_{k_0})$.
Each factor is a $C^1$ function of $\mathbb U$ with values in $H^2$, either because it is linear in $\mathbb U$ or by \cref{lem:composition}, and finite sums and products of $C^1$ maps into a Banach algebra are $C^1$.
The components of $\mathcal N_{\mathrm{con}}$ are treated in the same way, since the traces are bounded linear maps $H^3(\mathcal S)\to H^{5/2}(\T_{k_0})$ and products on $\T_{k_0}$ use the algebra $H^{5/2}(\T_{k_0})$.
In the dynamic condition, $P(\tilde\rho,\tilde S)$ lies in $H^3(\mathcal S_{\mathrm a})$ by \cref{lem:composition}\ref{it:comp-nemytskii} with $m=3$, since $P\in C^5$.
The curvature $\kappa(\eta)=-\eta_{xx}(1+\eta_x^2)^{-3/2}$ is the product of $\eta_{xx}\in H^2(\T_{k_0})$ and $(1+\eta_x^2)^{-3/2}$.
The second factor lies in $H^3(\T_{k_0})$ by \cref{lem:composition}\ref{it:comp-nemytskii} with $m=3$ and the smooth function $t\mapsto(1+t^2)^{-3/2}$, since $\eta_x\in H^3(\T_{k_0})$.
Hence $\kappa(\eta)\in H^2(\T_{k_0})$ by the multiplier inequality of \cref{lem:composition}\ref{it:comp-algebra}, and $\kappa$ is $C^1$ in $\eta$ for the same reasons.
Therefore $\mathcal N$ maps $\mathcal O$ into the space $\mathbb Y$ of Equation~\eqref{eq:Yspace} and is $C^1$.
\end{proof}

\subsection{The Fr\'echet derivative in terms of the good unknown}
\label{subsec:conjugation}

In this subsection, we compute the derivative $\mathrm d\mathcal N(\mathbb U_0)$ of the residual at the laminar state and compare it with the linearized system of \cref{prop:linearized}.
The variation of a pulled-back field at a fixed point of the reference strip differs from the Eulerian variation at the corresponding physical point.
The reason is that a displacement $\zeta$ of the interface moves the physical point $\Phi_\eta(x,\tilde z)$ vertically by $\phi(\tilde z)\zeta(x)$, so the pulled-back field changes in addition by $\phi\zeta$ times the base-state gradient.
The good-unknown map below adds exactly this term, as in Alinhac's good unknown for free-boundary problems \cite{Alinhac1989}, which is used for compressible free boundaries in \cite{Trakhinin2009}.
After this change of variables, the derivative of the flattened equations is the Eulerian linearized system of \S\ref{sec:linearization}.

Let $\mathbb V=(r,u,w,s;\,v_x,v_z,q_{\mathrm w};\,\zeta)\in\mathbb X$ be a variation of the Eulerian fields and of the interface, where we identify the reference strips with the undisturbed domains $\{0<z<\ha\}$ and $\{-\hw<z<0\}$ of \S\ref{sec:linearization}.
We write $\mathbb A\mathbb V$ for the tuple of right-hand sides of Equations~\eqref{eq:linear-air}, \eqref{eq:linear-water}, and \eqref{eq:lin-kinematic}, solved for the time derivatives of $(r,u,w,s,v_x,v_z,\zeta)$, with $q=a^2r+P_Ss$.
We denote its components by $[\mathbb A\mathbb V]_r,\dots,[\mathbb A\mathbb V]_\zeta$, so that, for example, $[\mathbb A\mathbb V]_\zeta=w|_{z=0}$.
For $\mathbb V\in\mathbb X$, the tuple $\mathbb A\mathbb V$ lies in $H^2(\mathcal S_{\mathrm a})^4\times H^2(\mathcal S_{\mathrm w})^2\times H^{5/2}(\T_{k_0})$, the part of the space $\mathbb Y$ in Equation~\eqref{eq:Yspace} that belongs to the evolution equations.

\begin{proposition}\label{prop:goodunknown}
Let $\mathbb U_0\in\mathcal O$ be the laminar state of \S\ref{subsec:flattening}.
Define the \emph{good-unknown map} $\mathcal T\in\mathcal L(\mathbb X)$,
\begin{equation}\label{eq:goodunknown}
\mathcal T\,(r,u,w,s;\,v_x,v_z,q_{\mathrm w};\,\zeta)
:=\bigl(r+\phi\zeta\rho_0',\ u+\phi\zeta U',\ w,\ s;\
v_x,\ v_z,\ q_{\mathrm w}+\phi\zeta p_{\mathrm w,0}';\ \zeta\bigr),
\end{equation}
a bounded linear map of $\mathbb X$ onto itself with a bounded inverse, which replaces $\zeta$ by $-\zeta$ in the correction terms.
Let $\mathcal T_{\mathrm{ev}}$ be the map induced by $\mathcal T$ on the components $(r,u,w,s,v_x,v_z,\zeta)$ with time derivatives, which adds $\phi\zeta\rho_0'$ to $r$ and $\phi\zeta U'$ to $u$.
Then the following hold for every $\mathbb V\in\mathbb X$.
\begin{enumerate}[label=\textup{(\roman*)}]
\item\label{it:gu-dyn} For the evolution equations,
\begin{equation}
\mathrm d\mathcal N_{\mathrm{dyn}}(\mathbb U_0)\bigl[\mathcal T\mathbb
V\bigr]
=\mathcal T_{\mathrm{ev}}\bigl[\mathbb A\mathbb V\bigr].
\end{equation}
Thus the right side adds $\phi\,[\mathbb A\mathbb V]_\zeta\,\rho_0'$ to the $r$-component and $\phi\,[\mathbb A\mathbb V]_\zeta\,U'$ to the $u$-component of $\mathbb A\mathbb V$.
\item\label{it:gu-con} For the constraint equations,
\begin{multline*}
\mathrm d\mathcal N_{\mathrm{con}}(\mathbb U_0)\bigl[\mathcal T\mathbb
V\bigr]
=\Bigl(\nabla\cdot\mathbf v;\ \
w|_{z=0}-v_z|_{z=0};\\
\bigl(q_{\mathrm w}|_{z=0}-\rhow g\zeta\bigr)
-\bigl(q|_{z=0}-\rho_0(0)g\zeta\bigr)+\sigma\zeta_{xx};\ \
w|_{z=\ha};\ v_z|_{z=-\hw}\Bigr),
\end{multline*}
with $\mathbf v=(v_x,v_z)$ and $q=a^2r+P_Ss$ the Eulerian pressure variation of \S\ref{sec:linearization}.
\end{enumerate}
Consequently a $C^1$ curve $t\mapsto\mathbb V(t)\in\mathbb X$ solves the linearized system of \cref{prop:linearized} if and only if $t\mapsto\mathcal T\mathbb V(t)$ solves the linearization at $\mathbb U_0$ of the flattened system in Equation~\eqref{eq:flat-system}.
In other words, after the change of variables $\mathcal T$, the Fr\'echet derivative $\mathrm d\mathcal N(\mathbb U_0)$ is the linearized system of \cref{prop:linearized}.
\end{proposition}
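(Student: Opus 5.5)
The plan is to compute both derivatives directly at the laminar state $\mathbb U_0=(\rho_0,U,0,S_\ast;0,0,p_{\mathrm w,0};0)$. The key identity is a chain rule for the pullback. If a pulled-back field $\tilde f=f\circ\Phi_\eta$ is varied together with $\eta$, then at $\eta=0$ its variation at a fixed reference point equals the Eulerian variation plus $\phi\zeta\,\partial_zf_0$. Since $\Phi_0=\mathrm{Id}$, every base field depends on $\tilde z$ only. Hence the base gradients are $\rho_0'$, $U'$, $0$ for $w$, $0$ for $S$ (constant entropy), $0$ for $\mathbf v$, and $p_{\mathrm w,0}'=-\rhow g$. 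This is exactly the correction encoded in \eqref{eq:goodunknown}. Boundedness of $\mathcal T$ and of its inverse on $\mathbb X$ is routine. It needs $\phi\zeta\in H^3$ of each strip, which follows from \cref{lem:composition}\ref{it:comp-extension} with $\zeta\in H^4(\T_{k_0})$, and $\rho_0'$, $U'$, $p_{\mathrm w,0}'$ of class $C^3$; the remark before \S\ref{subsec:flattening} gives $\rho_0\in C^4$. Composition then shows $\mathcal T^{-1}$ is $\mathcal T$ with $\zeta\mapsto-\zeta$ in the corrections.

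For \ref{it:gu-dyn}, I would differentiate each component of \eqref{eq:Ndyn-mass} at $\mathbb U_0$ in a direction $\mathbb W=\mathcal T\mathbb V$. At $\eta=0$ the operators satisfy $\partial_x^0=\partial_x$, $\partial_z^0=\partial_{\tilde z}$, and their derivatives in $\eta$ are
\[
\mathrm d\partial_x^\eta[\zeta]=-\phi\zeta_x\partial_{\tilde z},\qquad \mathrm d\partial_z^\eta[\zeta]=-\phi'\zeta\,\partial_{\tilde z}.
\]
The mesh term $-\phi\mathcal K_\eta/J\,\partial_{\tilde z}$ linearizes to $-\phi\,(w|_0)\,\partial_{\tilde z}$, because $\mathcal K_{\eta}(\mathbb U_0)=0$ and $\mathrm d\mathcal K[\mathbb W]=w|_{0}-U(0)\zeta_x=w|_0$ by Hypothesis \ref{hyp:noslip}. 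Substituting $\tilde r=r+\phi\zeta\rho_0'$ and $\tilde u=u+\phi\zeta U'$, I expect the terms in $\zeta$ to regroup as $\phi\zeta$ times the $\tilde z$-derivative of the base equation, plus $\phi\zeta_x U$-type terms. The former vanish because the base state is steady and hydrostatic. The latter cancel against the $\eta_x$ terms of $\partial_x^\eta$ acting on base fields, using $(\phi\zeta)_x=\phi\zeta_x$. The only leftover is the mesh-velocity term $\phi\,(w|_0)\,\partial_{\tilde z}(\rho_0,U)$, which yields exactly the $\mathcal T_{\mathrm{ev}}$ correction $\phi[\mathbb A\mathbb V]_\zeta(\rho_0',U')$. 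In the vertical momentum and pressure terms I would use $q=a^2r+P_Ss$ and the hydrostatic balance \eqref{eq:hydrostatic}, so that $\phi\zeta(a^2\rho_0')'$ combines with the $\phi'\zeta$ term from $\partial_z^\eta$ acting on $p_0$. The water components are analogous, with zero base velocity and hydrostatic $p_{\mathrm w,0}$.

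For \ref{it:gu-con}, the calculation is simpler. Since $\phi\equiv1$ near $\tilde z=0$ and $\phi=0$ at the top and bottom, the traces of the corrections are $\zeta\rho_0'(0)$ and $\zeta p_{\mathrm w,0}'(0)$ at the interface, and zero at the top and bottom. Linearizing $P(\tilde\rho,\tilde S)|_0$ gives $a^2(r+\zeta\rho_0')+P_Ss=q+p_0'(0)\zeta=q-\rho_0(0)g\zeta$. The water trace gives $q_{\mathrm w}-\rhow g\zeta$, and $-\sigma\,\mathrm d\kappa[\zeta]=\sigma\zeta_{xx}$. The divergence and kinematic constraints pick up no corrections, because $\mathbf v$ and $w$ are not shifted, the base water is at rest, and $U(0)=0$. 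The final equivalence of curves follows by applying $\mathcal T_{\mathrm{ev}}^{-1}$, which commutes with $\partial_t$.

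The main obstacle is the bookkeeping in \ref{it:gu-dyn}: verifying that all $\zeta$-dependent terms cancel except the mesh term. I would organize it as a single lemma. For a base field $f_0(\tilde z)$ and a nonlinear operator $\mathcal F$ with $\mathcal F(\mathbb U_0)=0$, the derivative $\mathrm d(\mathcal F\circ\text{pullback})$ in the direction $(\phi\zeta\,\partial f_0,\zeta)$ equals the pullback-of-a-vertical-translation derivative. This vanishes by translation invariance of the Eulerian equations applied to a steady state, up to the time-derivative mesh term. Proving this lemma cleanly will avoid redoing the cancellation component by component.
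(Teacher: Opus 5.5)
Your proposal is correct and is essentially the paper's proof. Both differentiate each component of $\mathcal N$ at $\mathbb U_0$ in the direction $\mathcal T\mathbb V$: the $\zeta_x$-terms cancel against $\partial_x(\phi\zeta f_0')$, the $\phi\zeta$-terms in the vertical momentum cancel by $p_0''+g\rho_0'=0$, the surviving mesh term $\phi\,w|_{z=0}\,(\rho_0',U')$ gives the $\mathcal T_{\mathrm{ev}}$ correction, and the constraints follow from $\phi(0)=1$, $U(0)=0$, and the interface traces. If you package the cancellations as your lemma, justify it by the identity $\mathrm d\partial_i^\eta[\zeta]f_0+\partial_i(\phi\zeta f_0')=\phi\zeta\,\partial_{\tilde z}\partial_if_0$ rather than by translation invariance, which does not apply because $\phi\zeta$ is not a rigid shift; this identity turns the $\zeta$-terms into $\phi\zeta\,\partial_{\tilde z}$ of the base residual, which vanishes identically.
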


We call $\mathbb V$ the good unknown.
It is the Eulerian variation, whereas $\mathcal T\mathbb V$ is the variation of the flattened fields.

\begin{proof}
Write $(\tilde r,\tilde u^1,\tilde w^1,\tilde s;\,\tilde{\mathbf v}^1;\,\tilde q_{\mathrm w};\,\zeta):=\mathcal T\mathbb V$, so that $\tilde r=r+\phi\zeta\rho_0'$, $\tilde u^1=u+\phi\zeta U'$, $\tilde w^1=w$, $\tilde s=s$, $\tilde{\mathbf v}^1=\mathbf v$, and $\tilde q_{\mathrm w}=q_{\mathrm w}+\phi\zeta p_{\mathrm w,0}'$.
These are the chain-rule relations between the variation of a pullback and the Eulerian variation at fixed points.
For example, for a family of densities $\rho(\vartheta)$ with $\rho(0)=\rho_0$ and $\partial_\vartheta\rho|_{\vartheta=0}=r$, one has $\partial_\vartheta\bigl[\rho(\vartheta)\circ\Phi_{\vartheta\zeta}\bigr] \big|_{\vartheta=0}=r+\phi\zeta\,\rho_0'$.
The superscript $1$ marks a variation whose letter alone would denote a field of $\mathbb U$, as for $\tilde u^1$, $\tilde w^1$, and $\tilde{\mathbf v}^1$, while $\tilde r$, $\tilde s$, and $\tilde q_{\mathrm w}$ need no mark because the corresponding fields of $\mathbb U$ are $\tilde\rho$, $\tilde S$, and $\tilde p_{\mathrm w}$.
Since $\mathcal N$ is $C^1$ by \cref{prop:residual}, its derivative in the direction $\mathcal T\mathbb V$ can be computed term by term.
We display the mass and momentum components of the air, in which the cancellations occur, and indicate the remaining components.

At $\mathbb U_0$, we have $\tilde\rho=\rho_0(\tilde z)$, $\tilde u=U(\tilde z)$, $\tilde w=0$, $\eta=0$, $J=1$, and $\mathcal K_\eta(\mathbb U_0)=0$.
The derivative of $\mathcal K_\eta(\mathbb U)$ at $\mathbb U_0$ in the direction $\mathcal T\mathbb V$, which we denote by $\mathcal K^1$, is $\mathcal K^1=\tilde w^1|_{\tilde z=0}-U(0)\zeta_x=w|_{z=0}$, because $U(0)=0$.
For the mass equation, the derivative of the component $[\mathcal N_{\mathrm{dyn}}]_{\tilde\rho}$ of Equation~\eqref{eq:Ndyn-mass} at $\mathbb U_0$ in the direction $\mathcal T\mathbb V$ is
\begin{equation}
\mathrm d\bigl[\mathcal N_{\mathrm{dyn}}\bigr]_{\tilde\rho}
\bigl[\mathcal T\mathbb V\bigr]
=\phi\rho_0'\,w|_{z=0}
-U\partial_x\tilde r+U\phi\zeta_x\rho_0'
-\tilde w^1\rho_0'
-\rho_0\bigl(\partial_x\tilde u^1-\phi\zeta_xU'
+\partial_{\tilde z}\tilde w^1\bigr).
\end{equation}
When we substitute $\tilde r=r+\phi\zeta\rho_0'$ and $\tilde u^1=u+\phi\zeta U'$, the terms $-U\partial_x(\phi\zeta\rho_0')=-U\phi\zeta_x\rho_0'$ and $-\rho_0\partial_x(\phi\zeta U')=-\rho_0\phi\zeta_xU'$ cancel the two terms containing $\zeta_x$, and we obtain
\begin{equation}
\mathrm d\bigl[\mathcal N_{\mathrm{dyn}}\bigr]_{\tilde\rho}
\bigl[\mathcal T\mathbb V\bigr]
=\phi\rho_0'\,w|_{z=0}
+\Bigl[-U\partial_xr-\rho_0'w-\rho_0(\partial_xu+\partial_zw)\Bigr].
\end{equation}
The bracket is $[\mathbb A\mathbb V]_r$, the right-hand side of Equation~\eqref{eq:lin-cont} solved for $\partial_tr$, and the first term is $\phi\rho_0'[\mathbb A\mathbb V]_\zeta$.
Together they form the $r$-component of $\mathcal T_{\mathrm{ev}}[\mathbb A\mathbb V]$ in \ref{it:gu-dyn}.

For vertical momentum, we differentiate the component $[\mathcal N_{\mathrm{dyn}}]_{\tilde w}$ of Equation~\eqref{eq:Ndyn-mass}.
At $\mathbb U_0$, the factor $\tilde w=0$ removes every coupling of $\mathfrak M^{\eta}\tilde w$ to $\eta$, so this term contributes $-U\partial_x\tilde w^1$.
The variation of the pressure $\tilde p_{\mathrm a}=P(\tilde\rho,\tilde S)$ in the direction $\mathcal T\mathbb V$ is the flattened counterpart $\tilde q:=a^2\tilde r+P_S\tilde s$ of $q=a^2r+P_Ss$, and $\tilde q=q+\phi\zeta\,a^2\rho_0'=q+\phi\zeta\,p_0'$ by $S_0'=0$ and Equation~\eqref{eq:hydrostatic}.
Hence
\begin{equation}
\mathrm d\bigl[\mathcal N_{\mathrm{dyn}}\bigr]_{\tilde w}
\bigl[\mathcal T\mathbb V\bigr]
=-U\partial_x\tilde w^1
-\frac1{\rho_0}\Bigl[\partial_{\tilde z}\bigl(q+\phi\zeta p_0'\bigr)
-\phi'\zeta\,p_0'
+\tilde r\,g\Bigr].
\end{equation}
The term $-\phi'\zeta p_0'$ is the variation of $1/J$ against the base-state gradient $\partial_{\tilde z}p_0=p_0'$, and the term $\tilde rg$ is the variation of $1/\tilde\rho$ against $\partial_{\tilde z}p_0=-\rho_0g$.
Now $\partial_{\tilde z}(\phi\zeta p_0')-\phi'\zeta p_0' =\phi\zeta p_0''$ and $\tilde rg=rg+\phi\zeta\rho_0'g$, and hydrostatic balance gives $p_0''+g\rho_0'=(-\rho_0g)'+g\rho_0'=0$.
So the corrections containing $\zeta$ cancel because the base state solves the equations, and this component equals $-U\partial_xw-(\partial_zq+gr)/\rho_0=[\mathbb A\mathbb V]_w$, the right-hand side of Equation~\eqref{eq:lin-zmom} solved for $\partial_tw$.
Since $\mathcal T$ leaves the $\tilde w$-component unchanged, this is the corresponding component of the identity in \ref{it:gu-dyn}.

For horizontal momentum, we differentiate the component $[\mathcal N_{\mathrm{dyn}}]_{\tilde u}$ of Equation~\eqref{eq:Ndyn-mass}.
The variation of $1/\tilde\rho$ multiplies $\partial_x^{\eta}p_0$, which vanishes at the laminar state because the base pressure $p_0$ does not depend on $x$ and $\eta=0$, so no terms with $1/\rho_0^2$ remain.
This gives
\begin{equation}
\mathrm d\bigl[\mathcal N_{\mathrm{dyn}}\bigr]_{\tilde u}
\bigl[\mathcal T\mathbb V\bigr]
=\phi\,U'\,\mathcal K^1
-U\partial_x\tilde u^1+U\phi\zeta_xU'
-\tilde w^1U'
-\frac1{\rho_0}\bigl[\partial_x(q+\phi\zeta p_0')
-\phi\zeta_x\,p_0'\bigr],
\end{equation}
with $\mathcal K^1=w|_{z=0}$.
The mesh term of $\mathfrak M^{\eta}$ enters with the plus sign, since it is subtracted inside $\mathfrak M^{\eta}$.
When we substitute $\tilde u^1=u+\phi\zeta U'$, the term $-U\partial_x(\phi\zeta U')=-U\phi\zeta_xU'$ cancels $+U\phi\zeta_xU'$, and $\partial_x(\phi\zeta p_0')=\phi\zeta_xp_0'$ cancels the term $-\phi\zeta_xp_0'$ in the last bracket, which leaves
\begin{equation}
\mathrm d\bigl[\mathcal N_{\mathrm{dyn}}\bigr]_{\tilde u}
\bigl[\mathcal T\mathbb V\bigr]
=\Bigl[-U\partial_xu-U'w-\frac1{\rho_0}\partial_xq\Bigr]
+\phi\,U'\,w|_{z=0}
=[\mathbb A\mathbb V]_u+\phi\,U'\,[\mathbb A\mathbb V]_\zeta .
\end{equation}
This is the $u$-component of $\mathcal T_{\mathrm{ev}}[\mathbb A\mathbb V]$, and the bracket is Equation~\eqref{eq:lin-xmom} solved for $\partial_tu$.
Thus the mesh velocity supplies the good-unknown correction to the time derivative of $u$.
The entropy component is pure transport, because $S_0'=0$.
The water momentum components repeat these computations with $\rhow$ in place of $\rho_0$, zero base velocity, and $p_{\mathrm w,0}''=0$ in place of hydrostatic balance, which gives Equation~\eqref{eq:linear-water}.
The divergence constraint has no terms containing $\eta$ at $\mathbb U_0$, because these terms multiply derivatives of the base water velocity, which is zero.

For the constraint equations in Equation~\eqref{eq:Ncon}, the kinematic compatibility condition linearizes to $w|_{z=0}-U(0)\zeta_x-v_z|_{z=0}=w|_{z=0}-v_z|_{z=0}$ by Hypothesis \ref{hyp:noslip}.
The component $[\mathcal N_{\mathrm{dyn}}]_\eta$ linearizes to $\mathcal K^1=w|_{z=0}$, which together with the compatibility condition reproduces Equation~\eqref{eq:lin-kinematic}.
The dynamic condition linearizes to $\tilde q_{\mathrm w}|_{\tilde z=0}-\tilde q|_{\tilde z=0}-\sigma \mathrm d\kappa(0)[\zeta]$, where $\mathrm d\kappa(0)$ is the Fr\'echet derivative of the curvature at $\eta=0$.
Here $\tilde q_{\mathrm w}|_{\tilde z=0}$ and $\tilde q|_{\tilde z=0}$ are the traces at $\tilde z=0$ of the variations of $\tilde p_{\mathrm w}$ and $P(\tilde\rho,\tilde S)$, which equal the variations of their traces because the trace map is linear and bounded.
Since $\phi(0)=1$, these traces are $\tilde q_{\mathrm w}|_{\tilde z=0}=q_{\mathrm w}|_{z=0}+\zeta p_{\mathrm w,0}'(0)=q_{\mathrm w}|_{z=0}-\rhow g\zeta$ and $\tilde q|_{\tilde z=0}=q|_{z=0}+\zeta p_0'(0) =q|_{z=0}-\rho_0(0)g\zeta$, and $\mathrm d\kappa(0)[\zeta]=-\zeta_{xx}$, which gives the Lagrangian combination of Equation~\eqref{eq:lin-dynamic}.
Thus, in terms of the good unknown, the variations of the flattened pressures at $\tilde z=0$ are the Lagrangian pressure variations.
The impermeability conditions at the top and bottom are unchanged by $\mathcal T$ because $\phi$ vanishes there.
The final equivalence statement follows because $\mathcal T$ is invertible, with a bounded inverse, and does not depend on time.
\end{proof}

\subsection{The closed linearized operator and its eigenvalues}
\label{subsec:modal-operator}

In this subsection, we define the operator $\Lop_k$ and show that its eigenvalues with $\Rec\lambda\neq0$ are the values $\lambda=-\im kc$ at the zeros of $\Edet(k,\cdot,\eps)$.
The coefficients depend only on $z$, so the linearized evolution decomposes into spatial Fourier components, and we consider one wavenumber $k>0$.
Negative wavenumbers are recovered by the reflection symmetry of \cref{lem:determinant}\ref{it:det-hol}.
We define $\Lop_k$ on the Eulerian Fourier amplitudes and use pressure rather than density as the thermodynamic variable, because the vertical momentum equation and the dynamic condition involve the pressure and its derivative.
The water pressure is not a component of the state, because it is determined by the other components.

\begin{definition}\label{def:modal-operator}
Let $k>0$ and $\eps>0$, with the density scaling of \S\ref{subsec:eps-family}, so that $\rho_0(0)=\eps\rhow$.
Set
\begin{equation}
\mathcal V_k:=\bigl\{\hat{\mathbf v}=(\hat v_x,\hat v_z)\in
L^2(-\hw,0)^2:\ \im k\hat v_x+\hat v_z'=0,\ \hat v_z(-\hw)=0\bigr\},
\end{equation}
where the divergence condition holds in the sense of distributions, that is, $\int_{-\hw}^0(\im k\hat v_x\psi-\hat v_z\psi')\,\dd z=0$ for every $\psi\in C_c^\infty(-\hw,0)$.
Here $C_c^\infty(-\hw,0)$ is the space of smooth functions with compact support in $(-\hw,0)$, the smooth functions on $(-\hw,0)$ that vanish near $-\hw$ and near $0$.
For such $\hat{\mathbf v}$, $\hat v_z'=-\im k\hat v_x\in L^2$, so $\hat v_z\in H^1$ and its value at $z=-\hw$ is defined.
The set $\mathcal V_k$ is a closed subspace of $L^2(-\hw,0)^2$, because the divergence condition passes to limits in $L^2$, and then $\hat v_z$ converges in $H^1$.
Set
\begin{gather*}
\mathcal H_k:=L^2(0,\ha)^{4}\times\mathcal V_k\times\C\ \ni\
\hat{\mathbb U}=(\hat u,\hat w,\hat q,\hat s,\hat{\mathbf v},\hat\zeta),\\
\mathcal D(\Lop_k):=\bigl\{\hat{\mathbb U}\in\mathcal H_k:\
\hat w,\hat q\in H^1(0,\ha),\ \hat w(\ha)=0,\
\hat w(0)=\hat v_z(0)\bigr\},
\end{gather*}
where $\mathcal H_k$ carries the norm given by $\|\hat{\mathbb U}\|_{\mathcal H_k}^2:=\|\hat u\|_{L^2}^2+\|\hat w\|_{L^2}^2+\|\hat q\|_{L^2}^2+\|\hat s\|_{L^2}^2+\|\hat{\mathbf v}\|_{L^2}^2+|\hat\zeta|^2$.
For $\hat{\mathbb U}\in\mathcal D(\Lop_k)$, define $\hat r:=(\hat q-P_S\hat s)/a^2$ and the water pressure amplitude
\begin{equation}\label{eq:qw-solve}
\hat q_{\mathrm w}(z):=\bigl[\hat q(0)+G_\sigma\hat\zeta\bigr]\,
\frac{\cosh k(z+\hw)}{\cosh k\hw},
\qquad
G_\sigma:=\bigl(\rhow-\rho_0(0)\bigr)g+\sigma k^2 .
\end{equation}
It is the unique solution of $\hat q_{\mathrm w}''=k^2\hat q_{\mathrm w}$ on $(-\hw,0)$ with $\hat q_{\mathrm w}'(-\hw)=0$ and $\hat q_{\mathrm w}(0)=\hat q(0)+G_\sigma\hat\zeta$, which is the linearized dynamic condition in Equation~\eqref{eq:lin-dynamic} for these amplitudes.
Define
\begin{equation}\label{eq:Lop-def}
\Lop_k\hat{\mathbb U}:=
\begin{pmatrix}
-\im kU\hat u-U'\hat w-\im k\hat q/\rho_0\\[2pt]
-\im kU\hat w-\bigl(\hat q'+g\hat r\bigr)/\rho_0\\[2pt]
-\im kU\hat q+\rho_0g\hat w-\rho_0a^2\bigl(\im k\hat u+\hat w'\bigr)\\[2pt]
-\im kU\hat s\\[2pt]
-\bigl(\im k\hat q_{\mathrm w},\ \hat q_{\mathrm w}'\bigr)/\rhow\\[2pt]
\hat v_z(0)
\end{pmatrix}.
\end{equation}
\end{definition}

The six components of $\Lop_k\hat{\mathbb U}$ in Equation~\eqref{eq:Lop-def} are the amplitudes of the parts proportional to $e^{\im kx}$ of Equations~\eqref{eq:linear-air}--\eqref{eq:lin-kinematic}, solved for the time derivatives.
From top to bottom, they come from the horizontal and vertical momentum equations, the equation for the pressure, the entropy equation, the water momentum equation, and the kinematic condition.
The $\hat q$-component is $a^2$ times the mass equation plus $P_S$ times the entropy equation, where we used $a^2\rho_0'=p_0'=-\rho_0g$.
The water component lies in $\mathcal V_k$, because $\im k(\im k\hat q_{\mathrm w})+(\hat q_{\mathrm w}')'=0$ and $\hat q_{\mathrm w}'(-\hw)=0$.
The dynamic condition is built into Equation~\eqref{eq:qw-solve}, and the two kinematic conditions appear as the $\hat\zeta$-component and as the constraint $\hat w(0)=\hat v_z(0)$ in the domain.
Unlike the condition $w(0)=\im kV(0)\hat\zeta$ of a normal mode in Equation~\eqref{eq:mode-kinematic}, this constraint does not involve the phase speed, so the domain of $\Lop_k$ is the same for all eigenvalues.

\begin{lemma}\label{lem:closed}
The operator $\Lop_k$ is densely defined and closed on $\mathcal H_k$.
That is, $\mathcal D(\Lop_k)$ is dense in $\mathcal H_k$, and the graph $\{(\hat{\mathbb U},\Lop_k\hat{\mathbb U}):\hat{\mathbb U}\in\mathcal D(\Lop_k)\}$ is closed in $\mathcal H_k\times\mathcal H_k$.
\end{lemma}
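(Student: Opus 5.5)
Density and closedness are handled separately. In both parts the constrained components $(\hat w,\hat q)$ are the only place where work is needed, because the water subspace $\mathcal V_k$ is already closed and the remaining components $\hat u,\hat s,\hat\zeta$ carry no constraint.

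For density, it suffices to approximate a given $\hat{\mathbb U}=(\hat u,\hat w,\hat q,\hat s,\hat{\mathbf v},\hat\zeta)\in\mathcal H_k$ in $\mathcal H_k$. I keep $\hat u,\hat s,\hat{\mathbf v},\hat\zeta$ fixed. I approximate $\hat q$ in $L^2(0,\ha)$ by smooth functions, which lie in $H^1$ and have no boundary condition to meet. For $\hat w$, I first approximate it in $L^2$ by $\psi_n\in C_c^\infty(0,\ha)$. I then add a correction $\theta_n(z)\hat v_z(0)$, where $\theta_n$ is smooth, $\theta_n(0)=1$, $\theta_n$ vanishes for $z\ge 1/n$, and $\|\theta_n\|_{L^2}\to0$. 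Here $\hat v_z(0)$ is well defined because $\hat v_z\in H^1(-\hw,0)$. The resulting $\hat w_n$ lies in $H^1$, satisfies $\hat w_n(\ha)=0$ and $\hat w_n(0)=\hat v_z(0)$, and converges to $\hat w$ in $L^2$. This proves density.

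For closedness, take $\hat{\mathbb U}_n\in\mathcal D(\Lop_k)$ with $\hat{\mathbb U}_n\to\hat{\mathbb U}$ and $\Lop_k\hat{\mathbb U}_n\to\mathbb F$ in $\mathcal H_k$. The key observation is that the derivatives $\hat w_n'$ and $\hat q_n'$ can be solved for from the second and third components of $\Lop_k\hat{\mathbb U}_n$. From the second component,
\[
\hat q_n'=-\rho_0\bigl([\Lop_k\hat{\mathbb U}_n]_{2}+\im kU\hat w_n\bigr)-g\hat r_n .
\]
From the third component,
\[
\hat w_n'=-\im k\hat u_n+\bigl(\rho_0g\hat w_n-\im kU\hat q_n-[\Lop_k\hat{\mathbb U}_n]_{3}\bigr)/(\rho_0a^2).
\]
This uses that $\rho_0a^2$ is bounded below and all coefficients are continuous on $[0,\ha]$ by Hypothesis \ref{hyp:reg}. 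Both right-hand sides converge in $L^2$, so $\hat w_n$ and $\hat q_n$ are Cauchy in $H^1(0,\ha)$. Their limits are $\hat w,\hat q\in H^1$, and the distributional derivatives pass to the limit.

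The step I expect to need the most care is preserving the boundary conditions in this limit. Traces at $z=0$ and $z=\ha$ are continuous on $H^1$, so $\hat w(\ha)=0$ holds and $\hat q_n(0)\to\hat q(0)$. For $\hat w(0)=\hat v_z(0)$, I need $\hat v_{z,n}(0)\to\hat v_z(0)$. This follows from closedness of $\mathcal V_k$: the divergence condition gives $\hat v_{z,n}'=-\im k\hat v_{x,n}$, so $\hat v_{z,n}\to\hat v_z$ in $H^1(-\hw,0)$ and the traces converge. Hence $\hat{\mathbb U}\in\mathcal D(\Lop_k)$.

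It remains to check $\Lop_k\hat{\mathbb U}=\mathbb F$ component by component. The air components are continuous from $H^1\times L^2$ into $L^2$. The water pressure $\hat q_{\mathrm w}$ of Equation~\eqref{eq:qw-solve} depends only on the scalars $\hat q(0)$ and $\hat\zeta$, multiplied by the fixed profile $\cosh k(z+\hw)/\cosh k\hw$, so it converges in $H^1(-\hw,0)$. The last component $\hat v_z(0)$ converges by the trace convergence above. This gives $\Lop_k\hat{\mathbb U}=\mathbb F$ and completes the proof that the graph is closed.
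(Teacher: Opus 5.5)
Your proposal is correct and follows essentially the same route as the paper. For density you keep $\hat{\mathbf v},\hat\zeta$, smooth $\hat q$, and correct a $C_c^\infty$ approximant of $\hat w$ by $\hat v_z(0)\theta_n$. For closedness you solve for $\hat w_n'$ and $\hat q_n'$ from the $\hat q$- and $\hat w$-components, then pass the traces and the explicit water-pressure profile to the limit. Your only deviations are cosmetic: you leave $\hat u,\hat s$ unapproximated, which is legitimate since the domain imposes nothing on them, and you use a smooth rather than piecewise-linear cutoff.
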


\begin{proof}
To prove density, let $\hat{\mathbb U}\in\mathcal H_k$ and approximate $\hat u,\hat q,\hat s$ in $L^2$ by smooth functions.
We keep $\hat{\mathbf v}$ and $\hat\zeta$, and we approximate $\hat w$ by $\hat w_n:=\check w_n+\hat v_z(0)\,\theta_n$.
Here $\check w_n\in C_c^\infty(0,\ha)$ with $\check w_n\to\hat w$ in $L^2$, which is possible because $C_c^\infty(0,\ha)$ is dense in $L^2(0,\ha)$, and $\theta_n(z):=\max\{0,1-nz/\ha\}$ is a piecewise linear function with $\theta_n(0)=1$, $\theta_n=0$ on $[\ha/n,\ha]$, and $\|\theta_n\|_{L^2}\to0$.
Then $\hat w_n\in H^1$, $\hat w_n(0)=\hat v_z(0)$, $\hat w_n(\ha)=0$, and $\hat w_n\to\hat w$ in $L^2$, so the approximants lie in $\mathcal D(\Lop_k)$ and converge to $\hat{\mathbb U}$ in $\mathcal H_k$.

For closedness, let $\hat{\mathbb U}_n\in\mathcal D(\Lop_k)$ with $\hat{\mathbb U}_n\to\hat{\mathbb U}$ and $\Lop_k\hat{\mathbb U}_n\to\mathbb F$ in $\mathcal H_k$.
By the $\hat q$-component of Equation~\eqref{eq:Lop-def}, $\rho_0a^2\hat w_n'=-[\Lop_k\hat{\mathbb U}_n]_{\hat q} -\im kU\hat q_n+\rho_0g\hat w_n-\im k\rho_0a^2\hat u_n$ converges in $L^2$.
Since $\rho_0a^2$ is continuous and positive on $[0,\ha]$, $\hat w_n'$ converges in $L^2$, so $\hat w\in H^1$, $\hat w_n\to\hat w$ in $H^1$, and the values $\hat w_n(0)$ and $\hat w_n(\ha)$ converge.
By the $\hat w$-component, $\hat q_n'=-\rho_0[\Lop_k\hat{\mathbb U}_n]_{\hat w} -\im k\rho_0U\hat w_n-g\hat r_n$ converges in $L^2$, because $\hat r_n=(\hat q_n-P_S\hat s_n)/a^2$ converges in $L^2$, as $1/a^2$ and $P_S/a^2$ are bounded.
Hence $\hat q\in H^1$ and $\hat q_n(0)\to\hat q(0)$.
On the water side, $\hat v_{z,n}'=-\im k\hat v_{x,n}$ converges in $L^2$, so $\hat v_{z,n}\to\hat v_z$ in $H^1$ and $\hat v_{z,n}(0)\to\hat v_z(0)$.
The boundary conditions $\hat w(\ha)=0$ and $\hat w(0)=\hat v_z(0)$ pass to the limit, so $\hat{\mathbb U}\in\mathcal D(\Lop_k)$.
The water pressure in Equation~\eqref{eq:qw-solve} is $\hat q(0)+G_\sigma\hat\zeta$ times a smooth function of $z$ that does not depend on $n$, so it converges with all its derivatives, uniformly on $[-\hw,0]$, when $(\hat q_n(0),\hat\zeta_n)\to(\hat q(0),\hat\zeta)$.
Hence $\Lop_k\hat{\mathbb U}_n$ converges to $\Lop_k\hat{\mathbb U}$ in $\mathcal H_k$, component by component, and $\mathbb F=\Lop_k\hat{\mathbb U}$.
\end{proof}

\begin{theorem}\label{thm:pointspec}
Let $k>0$, $\eps>0$, and $c\in\C$ with $\Imc c\neq0$, and set $\lambda:=-\im kc$.
Then $\lambda$ is an eigenvalue of $\Lop_k$ if and only if $\Edet(k,c,\eps)=0$.
In that case the eigenspace $\ker(\Lop_k-\lambda)$, the set of $\hat{\mathbb U}\in\mathcal D(\Lop_k)$ with $\Lop_k\hat{\mathbb U}=\lambda\hat{\mathbb U}$, is one-dimensional, and it is spanned by the amplitudes $(\hat u,\hat w,\hat q,\hat s,\hat{\mathbf v},\hat\zeta)$ of the normal mode of \cref{prop:reduction}.
In particular every root $c$ of the dispersion relation in Equation~\eqref{eq:dispersion} with $\Imc c>0$ gives the eigenvalue $\lambda=-\im kc$ of $\Lop_k$ with $\Rec\lambda=k\Imc c>0$.
\end{theorem}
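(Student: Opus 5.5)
The plan is to write out the eigenvalue equation $\Lop_k\hat{\mathbb U}=\lambda\hat{\mathbb U}$ with $\lambda=-\im kc$ componentwise from \eqref{eq:Lop-def}. Then we show that, for $\Imc c\neq0$, it is equivalent to the normal-mode system \eqref{eq:mode-air}, \eqref{eq:water-mode-eqs}, \eqref{eq:lin-kinematic}, \eqref{eq:lin-dynamic}. After that, \cref{lem:determinant}\ref{it:det-modes} converts existence of a nontrivial mode into $\Edet(k,c,\eps)=0$.

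\emph{Necessity.} Let $\hat{\mathbb U}\in\mathcal D(\Lop_k)$ be an eigenvector.
\begin{enumerate}[label=(\alph*)]
\item The entropy row gives $\im kV\hat s=0$, so $\hat s=0$ since $V\neq0$.
\item The $\hat u$ and $\hat w$ rows are \eqref{eq:m-xmom} and \eqref{eq:m-zmom}, with $\hat r=\hat q/a^2$.
\item The $\hat q$ row, divided by $a^2$ and combined with $a^2\rho_0'=-\rho_0g$, gives \eqref{eq:m-cont}.
\item The water rows say $-\im kc\,\rhow\hat{\mathbf v}=-(\im k\hat q_{\mathrm w},\hat q_{\mathrm w}')$. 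Here $\hat q_{\mathrm w}$ is smooth by \eqref{eq:qw-solve}, and $\hat{\mathbf v}\in\mathcal V_k$ supplies incompressibility and the bottom condition.
\item The last row gives $\hat v_z(0)=-\im kc\hat\zeta$. Together with the domain constraint $\hat w(0)=\hat v_z(0)$, this yields both kinematic conditions.
\item The dynamic condition is built into the boundary value of $\hat q_{\mathrm w}$ in \eqref{eq:qw-solve}.
\end{enumerate}
A regularity point must be checked. Elements of the domain only have $\hat w,\hat q\in H^1$, while the remaining amplitudes are in $L^2$. The eigen-equations express $\hat u$ algebraically through $\hat w,\hat q$ (divide by $\im kV$), and $\hat w',\hat q'$ through continuous coefficients times $(\hat w,\hat q,\hat u)$. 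Bootstrapping therefore makes $(\hat w,\hat q)$ a $C^1$ solution of the first-order system of \cref{prop:xiq}. Hence this is a genuine normal mode, nontrivial because the eigenvector is nonzero. Then \cref{prop:reduction} and \cref{lem:determinant}\ref{it:det-modes} give $\Edet=0$.

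\emph{Sufficiency.} If $\Edet(k,c,\eps)=0$, \cref{lem:determinant}\ref{it:det-modes} and \cref{prop:reduction} produce a mode with $\hat\zeta=1$. We collect its amplitudes, with $\hat s=0$ and the water fields from \eqref{eq:water-mode}, and check membership in $\mathcal D(\Lop_k)$: $\hat w(\ha)=0$, $\hat w(0)=\hat v_z(0)$, and $\hat{\mathbf v}\in\mathcal V_k$. We also check that $\hat q_{\mathrm w}$ from \eqref{eq:water-profile} agrees with \eqref{eq:qw-solve}. Both solve $\hat q''=k^2\hat q$ with zero derivative at $-\hw$, and their values at $0$ match by \eqref{eq:lin-dynamic}, using $\rho_0(0)=\eps\rhow$ and the sign conventions for $G_\sigma$. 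Reversing the componentwise computation then gives $\Lop_k\hat{\mathbb U}=\lambda\hat{\mathbb U}$.

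\emph{One-dimensionality.} By \cref{prop:reduction}, every eigenvector has $\hat\zeta\neq0$. Two eigenvectors can be combined linearly to produce one with $\hat\zeta=0$, which must then be trivial. Hence the eigenspace is spanned by the single normalized mode. The final sentence of the statement follows from \cref{lem:determinant}\ref{it:det-nonvanish}, since every root of $D$ with $\Imc c>0$ is a zero of $\Edet$.

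The main obstacle is the regularity and bookkeeping step. We must confirm that $L^2$/$H^1$ eigenvectors are classical solutions, which works because $V\neq0$ everywhere when $\Imc c\neq0$. We must also verify carefully that the water-pressure reconstruction \eqref{eq:qw-solve}, with $G_\sigma=(\rhow-\rho_0(0))g+\sigma k^2$, reproduces exactly the Lagrangian dynamic condition \eqref{eq:lin-dynamic} with the correct signs.
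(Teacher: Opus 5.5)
Your proposal is correct and follows essentially the paper's argument. The paper also reads off $\hat s=0$, eliminates $\hat u$ to get a $C^1$ solution of \eqref{eq:wchi-recall}, excludes $\hat\zeta=0$, and identifies the interface relation with $\Edet=0$; it simply carries out these eliminations inline instead of delegating them to \cref{prop:reduction} and \cref{lem:determinant}\ref{it:det-modes} as you do, and your dimension count via injectivity of $\hat{\mathbb U}\mapsto\hat\zeta$ on the eigenspace is an equally valid substitute for the paper's observation that the air component is a multiple $\mu$ of the top-normalized solution and determines all other components.
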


\begin{proof}
Let $\hat{\mathbb U}\in\mathcal D(\Lop_k)$, $\hat{\mathbb U}\neq0$, satisfy $\Lop_k\hat{\mathbb U}=\lambda\hat{\mathbb U}$.
The $\hat s$-component of this equation gives $\im k(c-U)\hat s=0$, and $V=U-c\neq0$, so $\hat s\equiv0$ and $\hat r=\hat q/a^2$.
The $\hat u$-component contains no derivative, $\im kV\hat u=-(U'\hat w+\im k\hat q/\rho_0)$, and determines $\hat u\in L^2$ from $(\hat w,\hat q)$.
When we substitute this expression into the $\hat q$-component, multiply by $V$, and rearrange, we obtain
\begin{equation}
\im k\hat q
=\frac{\rho_0a^2}{a^2-V^2}\bigl[V\hat w'-(U'+bV)\hat w\bigr]
=A\bigl[V\hat w'-(U'+bV)\hat w\bigr],
\end{equation}
where the division by $a^2-V^2$ is permitted for $\Imc c\neq0$ by \cref{lem:regular-solve}, and $A$ is defined in Equation~\eqref{eq:A-def}.
Thus $\im k\hat q=\chi$ with $\chi$ as in Equation~\eqref{eq:A-def}, and the $\hat w$-component with $\hat r=\hat q/a^2$ becomes $\chi'=-b\chi+\rho_0k^2V\hat w$.
Hence the pair $(\hat w,\chibar)$ with $\chibar=\im k\hat q/(\eps\rhow)$ solves the system in Equation~\eqref{eq:wchi-recall}.
The coefficients of this system are continuous, so $\hat w,\hat q\in C^1$.
The condition $\hat w(\ha)=0$ makes the air component a constant multiple $\mu\,(w,\chibar)(\cdot;k,c)$ of the top-normalized solution (\cref{lem:regular-solve}).
On the water side, $\lambda\neq0$, and the water component gives $\hat{\mathbf v}=-\nabla_k\hat q_{\mathrm w}/(\lambda\rhow)$ with $\nabla_k:=(\im k,\partial_z)$.
Hence $\hat v_z(0)=-\hat q_{\mathrm w}'(0)/(\lambda\rhow) =-k\tanh(k\hw)\,\hat q_{\mathrm w}(0)/(\lambda\rhow)$ by Equation~\eqref{eq:qw-solve}.
The $\hat\zeta$-component $\lambda\hat\zeta=\hat v_z(0)$ and the domain constraint $\hat w(0)=\hat v_z(0)$ then give $\hat w(0)=\lambda\hat\zeta=\im kV(0)\hat\zeta$ and
\begin{equation}
\lambda^2\rhow\,\hat\zeta=-k\tanh(k\hw)\,\hat q_{\mathrm w}(0)
\quad\Longleftrightarrow\quad
\rhow\Bigl[kc^2\coth(k\hw)-g-\frac{\sigma k^2}{\rhow}\Bigr]\hat\zeta
=\hat q(0)-\rho_0(0)g\,\hat\zeta ,
\end{equation}
where the equivalence uses $\lambda^2=-k^2c^2$ and Equation~\eqref{eq:qw-solve}.
The second relation is the dispersion relation in Equation~\eqref{eq:dispersion0}, written for general $\hat\zeta$.
If $\hat\zeta=0$, this relation and Equation~\eqref{eq:qw-solve} force $\hat q_{\mathrm w}(0)=\hat q(0)=0$, and $\hat w(0)=\lambda\hat\zeta=0$.
Then $(\hat w,\chibar)(0)=(0,0)$, so $\mu=0$ by uniqueness for the system in Equation~\eqref{eq:wchi-recall}, since the top-normalized solution is not identically zero.
Consequently $\hat q_{\mathrm w}\equiv0$, $\hat{\mathbf v}=0$, and $\hat{\mathbb U}=0$, a contradiction.
This is the form at a single wavenumber of the exclusion of modes with $\hat\zeta=0$ in \cref{prop:reduction}.
So $\hat\zeta\neq0$, and, as in the proof of \cref{lem:determinant}\ref{it:det-modes}, the displayed dispersion relation with $\hat\zeta=\mu w(0;k,c)/(\im kV(0))$ and $\hat q(0)=\eps\rhow\,\mu\,\chibar(0;k,c)/(\im k)$ is equivalent to $\Edet(k,c,\eps)=0$.

Conversely, if $\Edet(k,c,\eps)=0$, then $w(0;k,c)\neq0$ by \cref{lem:determinant}\ref{it:det-nonvanish}.
Let $\hat w:=w(\cdot;k,c)$ and $\hat q:=\eps\rhow\chibar(\cdot;k,c)/(\im k)$, and set $\hat s:=0$, $\hat u:=-(U'\hat w+\im k\hat q/\rho_0)/(\im kV)$, $\hat\zeta:=\hat w(0)/(\im kV(0))\neq0$, $\hat q_{\mathrm w}$ by Equation~\eqref{eq:qw-solve}, and $\hat{\mathbf v}:=-\nabla_k\hat q_{\mathrm w}/(\lambda\rhow)$.
All regularity and domain conditions hold.
Indeed, $\hat w,\hat q\in C^1$, the field $\hat{\mathbf v}$ is smooth and divergence-free with $\hat v_z(-\hw)=0$, and $\hat w(0)=\hat v_z(0)$ and the $\hat\zeta$-component of $\Lop_k\hat{\mathbb U}=\lambda\hat{\mathbb U}$ hold because $\Edet=0$ gives the displayed relation, read in the reverse direction.
Finally $(\Lop_k-\lambda)\hat{\mathbb U}=0$ by construction.

For the dimension, the argument above shows that every eigenfunction has the air component $\mu\,(w,\chibar)(\cdot;k,c)$ for some $\mu\in\C$, and that its other components are then determined by the formulas above.
Hence the eigenspace is spanned by the eigenfunction with $\mu=1$.
\end{proof}

\subsection{The linearized evolution on the constraint space and its closure}
\label{subsec:realization}

In this subsection, we show that $\Lop_k$ is the operator on $L^2$ that the linearized equations of \S\ref{subsec:conjugation} define at wavenumber $k$, and not only an operator with the same eigenvalues.
The two descriptions differ in three ways.
The linearized equations act on the good unknown, which contains the density and the water pressure, whereas $\Lop_k$ acts on the pressure and does not contain the water pressure.
The linearized equations include the constraint equations of Equation~\eqref{eq:Ncon}, whereas $\Lop_k$ builds them into its space, its domain, and the formula for the water pressure in Equation~\eqref{eq:qw-solve}.
In addition, the linearized equations act on $H^3$ states, whereas $\Lop_k$ acts on $L^2$ states.
\cref{prop:realization} connects the two descriptions.
A curve of $H^3$ states solves the linearized equations and satisfies the constraints exactly when the corresponding curve of pressure-based states is a classical trajectory of $\Lop_k$ and the water pressure is given by Equation~\eqref{eq:qw-solve}.
A classical trajectory of $\Lop_k$ is a curve $\hat{\mathbb U}(t)$ in the domain of $\Lop_k$ that is continuous in the graph norm $\|\hat{\mathbb U}\|_{\mathcal H_k}+\|\Lop_k\hat{\mathbb U}\|_{\mathcal H_k}$ of \S\ref{subsec:operator}, continuously differentiable in $\mathcal H_k$, and satisfies $\frac{\dd}{\dd t}\hat{\mathbb U}=\Lop_k\hat{\mathbb U}$, as in \cref{prop:realization}\ref{it:real-traj}.
The operator $\Lop_k$ is also the graph closure of the linearized evolution restricted to the $H^3$ states that satisfy the constraints.
We first define the spaces and maps that connect the two descriptions.

Let $\Pi_{\mathrm{ev}}$ be the map that removes the water-pressure component of a tuple, so that $\Pi_{\mathrm{ev}}\mathbb V=(r,u,w,s,v_x,v_z,\zeta)$ consists of the components with time derivatives.
A $C^1$ curve $t\mapsto\mathcal T\mathbb V(t)$ of flattened variations satisfies the evolution equations of the linearized flattened system when $\frac{\dd}{\dd t}\Pi_{\mathrm{ev}}(\mathcal T\mathbb V)=\mathrm d\mathcal N_{\mathrm{dyn}}(\mathbb U_0)[\mathcal T\mathbb V]$.
Since $\mathcal T$ does not depend on time, $\frac{\dd}{\dd t}\Pi_{\mathrm{ev}}(\mathcal T\mathbb V)=\mathcal T_{\mathrm{ev}}\bigl[\frac{\dd}{\dd t}\Pi_{\mathrm{ev}}\mathbb V\bigr]$, and $\mathrm d\mathcal N_{\mathrm{dyn}}(\mathbb U_0)[\mathcal T\mathbb V]=\mathcal T_{\mathrm{ev}}[\mathbb A\mathbb V]$ by \cref{prop:goodunknown}\ref{it:gu-dyn}.
Since $\mathcal T_{\mathrm{ev}}$ is invertible, because its inverse subtracts the terms $\phi\zeta\rho_0'$ and $\phi\zeta U'$ that it adds, the curve satisfies these equations if and only if the good unknown solves the Eulerian evolution
\begin{equation}\label{eq:euler-evolution}
\frac{\dd}{\dd t}\,\Pi_{\mathrm{ev}}\mathbb V=\mathbb A\mathbb V,
\end{equation}
in which $\mathbb A\mathbb V$ depends on the water pressure $q_{\mathrm w}$ contained in $\mathbb V$.
By \cref{prop:goodunknown}\ref{it:gu-con}, the constraint equations of the flattened system for $\mathcal T\mathbb V$ are the linearized constraints for $\mathbb V$.
We therefore compare $\Lop_k$ with the Eulerian evolution in Equation~\eqref{eq:euler-evolution}.

At wavenumber $k$, the space $\mathbb X_k$ consists of the tuples $\hat{\mathbb V}=(\hat r,\hat u,\hat w,\hat s,\hat v_x,\hat v_z,\hat q_{\mathrm w},\hat\zeta)$ with amplitudes in $H^3$ of their intervals, $[0,\ha]$ in the air and $[-\hw,0]$ in the water, and with $\hat\zeta\in\C$.
Such a tuple corresponds to the tuple of its components multiplied by $e^{\im kx}$.
These functions have complex values and have the period $2\pi/k_0$ in $x$ because $k=nk_0$ with a positive integer $n$, so their real and imaginary parts are elements of $\mathbb X$.
The map $\mathbb A$ has real coefficients and acts on such a tuple through its real and imaginary parts, and $\mathbb A_k$ denotes the corresponding restriction of $\mathbb A$, in which $\partial_x$ becomes multiplication by $\im k$.
The maps $\Pi_{\mathrm{ev}}$ and $\mathcal T$ act on $\mathbb X_k$ by the same formulas.
The operator $\Lop_k$ uses the pressure $\hat q=a^2\hat r+P_S\hat s$ instead of the density, and we define the corresponding change of variables
\begin{equation}\label{eq:Jk-def}
\mathcal Q_k(\hat r,\hat u,\hat w,\hat s,\hat{\mathbf v},\hat\zeta)
:=(\hat u,\hat w,\ a^2\hat r+P_S\hat s,\ \hat s,\hat{\mathbf v},
\hat\zeta).
\end{equation}
It is a bounded linear map of $\Pi_{\mathrm{ev}}\mathbb X_k$ onto $H^3(0,\ha)^4\times H^3(-\hw,0)^2\times\C$ with a bounded inverse, and it extends to a bounded linear map of $L^2(0,\ha)^4\times L^2(-\hw,0)^2\times\C$, a space that contains $\mathcal H_k$, onto itself with a bounded inverse.
Both statements hold because multiplication by $a^2$, by $P_S$, or by $1/a^2$ is bounded on $H^3$ and on $L^2$, and the inverse is given by $\hat r=(\hat q-P_S\hat s)/a^2$.

The water pressure is recovered from its value at the interface by the Poisson operator $\mathcal P_k$.
For $(\mathfrak q,\hat\zeta)\in\C^2$, $\mathcal P_k(\mathfrak q,\hat\zeta)$ is the solution of $\hat q_{\mathrm w}''=k^2\hat q_{\mathrm w}$ on $(-\hw,0)$ with $\hat q_{\mathrm w}'(-\hw)=0$ and $\hat q_{\mathrm w}(0)=\mathfrak q+G_\sigma\hat\zeta$, with $G_\sigma$ as in Equation~\eqref{eq:qw-solve}.
The equation states that $\hat q_{\mathrm w}(z)e^{\im kx}$ is harmonic, that is, it solves Laplace's equation, and in general a Poisson operator maps the boundary values of a harmonic function to the function itself.
In this notation, Equation~\eqref{eq:qw-solve} states that $\hat q_{\mathrm w}=\mathcal P_k(\hat q(0),\hat\zeta)$.
We write $\mathbb A_k^{\mathrm{red}}$ for the evolution in which the water pressure is replaced by this solution,
\begin{equation}\label{eq:Ared-def}
\mathbb A_k^{\mathrm{red}}\hat{\mathbb V}
:=\mathbb A_k\bigl(\hat r,\hat u,\hat w,\hat s,\hat v_x,\hat v_z,\
\mathcal P_k(\hat q(0),\hat\zeta),\ \hat\zeta\bigr),
\end{equation}
where $\hat q(0)$ is the value at $z=0$ of $\hat q=a^2\hat r+P_S\hat s$.
Thus $\mathbb A_k^{\mathrm{red}}\hat{\mathbb V}$ does not depend on the water-pressure component of $\hat{\mathbb V}$.

The linearized constraints at wavenumber $k$ are given by the constraint map $\mathcal C_k$, the restriction to the Fourier component $e^{\im kx}$ of the linearized constraint map $\mathrm d\mathcal N_{\mathrm{con}}(\mathbb U_0)\circ\mathcal T$ of \cref{prop:goodunknown}\ref{it:gu-con},
\begin{multline*}
\mathcal C_k\hat{\mathbb V}
:=\Bigl(\im k\hat v_x+\hat v_z';\ \
\hat w(0)-\hat v_z(0);\ \
\hat q_{\mathrm w}(0)-\hat q(0)-G_\sigma\hat\zeta;\ \
\hat w(\ha);\ \hat v_z(-\hw)\Bigr)\\
\in\ H^2(-\hw,0)\times\C^4
=:\mathbb Y_{\mathrm{con},k} .
\end{multline*}
Its null space $\mathfrak T_k:=\ker\mathcal C_k$, the set of $\hat{\mathbb V}\in\mathbb X_k$ with $\mathcal C_k\hat{\mathbb V}=0$, is the linearized constraint space at wavenumber $k$.
The $H^3$ core is its subspace
\begin{equation}\label{eq:core-def}
\mathfrak D_k:=\bigl\{\hat{\mathbb V}\in\mathfrak T_k:\
\hat q_{\mathrm w}=\mathcal P_k(\hat q(0),\hat\zeta)\bigr\} .
\end{equation}
The elements of $\mathfrak T_k$ have the correct interface value of the water pressure, because the dynamic condition is one of the constraint equations.
The elements of $\mathfrak D_k$ also have the harmonic profile of the water pressure in the interior.
This distinction comes from the differential-algebraic structure of Equation~\eqref{eq:flat-system}.
The constraint equations determine only the interface value of $\hat q_{\mathrm w}$, and the interior profile is determined by the requirement that the water evolution preserve the divergence and bottom constraints, as part \ref{it:real-traj} of \cref{prop:realization} shows.

\begin{proposition}
\label{prop:realization}
Let $k>0$.
Then the following hold.
\begin{enumerate}[label=\textup{(\roman*)}]
\item\label{it:real-split} The map $\mathcal C_k:\mathbb X_k\to\mathbb Y_{\mathrm{con},k}$ is split surjective, that is, it is bounded, its range is all of $\mathbb Y_{\mathrm{con},k}$, and it has a bounded linear right inverse $\mathcal C_k^{\dagger}:\mathbb Y_{\mathrm{con},k}\to\mathbb X_k$, so that $\mathcal C_k\mathcal C_k^{\dagger}=\mathrm{Id}$, the identity map of $\mathbb Y_{\mathrm{con},k}$.
Hence $\mathfrak T_k=\ker\mathcal C_k$ is a closed complemented subspace of $\mathbb X_k$, that is, every element of $\mathbb X_k$ is, in exactly one way, the sum of an element of $\mathfrak T_k$ and an element of the closed subspace $\operatorname{ran}\mathcal C_k^{\dagger}$, the range of $\mathcal C_k^{\dagger}$.
A right inverse is obtained as follows.
Given $(\hat f,\mathfrak c_1,\mathfrak c_2,\mathfrak c_3,\mathfrak c_4)\in\mathbb Y_{\mathrm{con},k}$, set $\hat\zeta:=0$, $\hat r:=\hat s:=\hat u:=0$, and
\begin{equation}
\hat v_z(z):=\mathfrak c_4+\int_{-\hw}^{z}\hat f(z')\,\dd z',
\qquad
\hat v_x:=0,
\end{equation}
so that $\im k\hat v_x+\hat v_z'=\hat f$ and $\hat v_z(-\hw)=\mathfrak c_4$ exactly.
Choose $\hat w\in H^3$ with $\hat w(0)=\mathfrak c_1+\hat v_z(0)$ and $\hat w(\ha)=\mathfrak c_3$, and $\hat q_{\mathrm w}\in H^3$ with $\hat q_{\mathrm w}(0)=\mathfrak c_2$.
Every step is a bounded linear map of $(\hat f,\mathfrak c_1,\dots,\mathfrak c_4)$.
No compatibility condition arises, because the value $\hat v_z(0)$ is not prescribed.
\item\label{it:real-traj} Let $t_1>0$, and let $t\mapsto\hat{\mathbb V}(t)\in\mathbb X_k$ be continuously differentiable on $[0,t_1]$.
Then the following two statements are equivalent.
\begin{enumerate}[label=\textup{(\alph*)}]
\item $\hat{\mathbb V}(t)\in\mathfrak T_k$ for all $t$, and $\hat{\mathbb V}$ solves the Eulerian evolution $\frac{\dd}{\dd t}\Pi_{\mathrm{ev}}\hat{\mathbb V} =\mathbb A_k\hat{\mathbb V}$ of Equation~\eqref{eq:euler-evolution}.
Equivalently, by \cref{prop:goodunknown}, the flattened variation $\mathcal T\hat{\mathbb V}$ solves the linearized flattened system.
\item The curve $\hat{\mathbb U}(t):=\mathcal Q_k\Pi_{\mathrm{ev}} \hat{\mathbb V}(t)$ satisfies $\hat{\mathbb U}\in C([0,t_1];\mathcal D(\Lop_k))\cap C^1([0,t_1];\mathcal H_k)$ and $\frac{\dd}{\dd t}\hat{\mathbb U}=\Lop_k\hat{\mathbb U}$, and the water pressure of $\hat{\mathbb V}(t)$ is $\mathcal P_k(\hat q(0),\hat\zeta)$ for every $t$.
Here $C([0,t_1];\mathcal D(\Lop_k))$ is the space of maps from $[0,t_1]$ into $\mathcal D(\Lop_k)$ that are continuous in the graph norm, and $C^1([0,t_1];\mathcal H_k)$ is the space of continuously differentiable maps from $[0,t_1]$ into $\mathcal H_k$, so $\hat{\mathbb U}$ is a classical trajectory of $\Lop_k$ exactly when it lies in both spaces and satisfies $\frac{\dd}{\dd t}\hat{\mathbb U}=\Lop_k\hat{\mathbb U}$.
\end{enumerate}
\item\label{it:real-core} On the $H^3$ core,
\begin{equation}\label{eq:realization-identity}
\Lop_k\,\mathcal Q_k\Pi_{\mathrm{ev}}\hat{\mathbb V}
=\mathcal Q_k\,
\mathbb A_k^{\mathrm{red}}\hat{\mathbb V},
\qquad
\hat{\mathbb V}\in\mathfrak D_k,
\qquad
\mathcal Q_k\Pi_{\mathrm{ev}}\mathfrak D_k\subset\mathcal D(\Lop_k),
\end{equation}
and $\mathcal Q_k\Pi_{\mathrm{ev}}\mathfrak D_k$ is a core for $\Lop_k$, that is, a subspace of the domain on which the closure of the restriction of $\Lop_k$ is $\Lop_k$ itself.
In other words, $\Lop_k$ is the graph closure of $\mathbb A_k^{\mathrm{red}}$ on the $H^3$ core $\mathfrak D_k$, after the change of variables $\mathcal Q_k$.
\end{enumerate}
\end{proposition}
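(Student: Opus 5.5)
The plan is to prove the three parts in order. Each reduces to bookkeeping with the explicit formulas of \cref{def:modal-operator}, plus one density argument for the core.

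For \ref{it:real-split}, I would check that the displayed right inverse is well defined and bounded. The function $\hat v_z=\mathfrak c_4+\int_{-\hw}^z\hat f$ lies in $H^3$ because $\hat f\in H^2$, and its $H^3$ norm is bounded by $|\mathfrak c_4|+\|\hat f\|_{H^2}$. The trace $\hat v_z(0)$ is a bounded functional. For $\hat w$ and $\hat q_{\mathrm w}$ I would fix smooth lifts, namely the affine function $\hat w(z)=\bigl[(\ha-z)(\mathfrak c_1+\hat v_z(0))+z\,\mathfrak c_3\bigr]/\ha$ and the constant $\hat q_{\mathrm w}\equiv\mathfrak c_2$. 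Then $\hat q(0)=0$ and $\hat\zeta=0$, so all five constraint components are reproduced exactly. Complementation follows from $P:=\mathrm{Id}-\mathcal C_k^\dagger\mathcal C_k$, which is a bounded projection onto $\ker\mathcal C_k$.

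For \ref{it:real-traj}, I would compare the components one at a time. The air rows of $\mathbb A_k$ in the $(r,u,w,s)$ variables become the first four rows of \eqref{eq:Lop-def} after applying $\mathcal Q_k$. For the pressure row, $\partial_t\hat q=a^2\partial_t\hat r+P_S\partial_t\hat s$, and the mass equation contributes $-a^2\rho_0'\hat w=\rho_0g\hat w$ by hydrostatic balance. The constraints on $\mathfrak T_k$ give the domain conditions $\hat w(\ha)=0$ and $\hat w(0)=\hat v_z(0)$, and $H^3\subset H^1$. The $\hat\zeta$ row is the kinematic condition. The nontrivial point is the water pressure, and here I expect the main obstacle. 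In (a), only the interface value $\hat q_{\mathrm w}(0)=\hat q(0)+G_\sigma\hat\zeta$ is prescribed. Differentiating the constraints $\im k\hat v_x+\hat v_z'=0$ and $\hat v_z(-\hw)=0$ in time and inserting the water momentum rows gives $-(\im k\cdot\im k\hat q_{\mathrm w}+\hat q_{\mathrm w}'')/\rhow=0$ and $\hat q_{\mathrm w}'(-\hw)=0$. Hence $\hat q_{\mathrm w}=\mathcal P_k(\hat q(0),\hat\zeta)$, and the water rows coincide with those of $\Lop_k$. Continuity in the graph norm follows from $\hat{\mathbb V}\in C^1([0,t_1];\mathbb X_k)$ and the boundedness of $\Lop_k\mathcal Q_k\Pi_{\mathrm{ev}}$ from $H^3$ into $L^2$.

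For the converse, I would reconstruct $\hat r$ by $\mathcal Q_k^{-1}$ and use the same identities backwards. Two points need care. First, the membership $\hat{\mathbf v}(t)\in\mathcal V_k$ gives the divergence and bottom constraints. Second, the prescribed water pressure gives the dynamic constraint at every $t$.

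For \ref{it:real-core}, the identity \eqref{eq:realization-identity} is the computation in \ref{it:real-traj} at a fixed time. What remains is to show that $\mathcal Q_k\Pi_{\mathrm{ev}}\mathfrak D_k$ is a core, which means that every $\hat{\mathbb U}\in\mathcal D(\Lop_k)$ is a graph-norm limit of such states. I would approximate as follows.
\begin{itemize}
\item Approximate $\hat u$ and $\hat s$ in $L^2$ by smooth functions.
\item Approximate $\hat w$ and $\hat q$ in $H^1$ by $H^3$ functions that keep the boundary values $\hat w(\ha)=0$, $\hat w(0)$ and $\hat q(0)$. This can be done by smoothing and then correcting with a fixed affine lift.
\item Approximate $\hat{\mathbf v}\in\mathcal V_k$ in $L^2$ by $H^3$ divergence-free fields with the same trace $\hat v_z(0)$. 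To do this, approximate $\hat v_x$ by smooth $\hat v_{x,n}$, set $\hat v_{z,n}=-\im k\int_{-\hw}^z\hat v_{x,n}$, and correct $\hat v_{x,n}$ by a small multiple of a fixed smooth bump so that $\hat v_{z,n}(0)=\hat v_z(0)$.
\item Keep $\hat\zeta$, and set the water pressure by $\mathcal P_k$.
\end{itemize}
In the graph norm, $\Lop_k$ involves only $\hat w'$, $\hat q'$, $L^2$ terms, and the traces $\hat q(0)$, $\hat v_z(0)$, all of which converge. Graph-norm convergence follows, and combined with \cref{lem:closed} this shows that the closure of the restriction equals $\Lop_k$.
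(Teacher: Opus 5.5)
Your proposal is correct and follows the paper's proof essentially step for step. It uses the explicit right inverse with polynomial lifts and the projection $\mathrm{Id}-\mathcal C_k^{\dagger}\mathcal C_k$, then differentiates the divergence and bottom constraints in time to force the harmonic water pressure, and finally runs a graph-norm density argument with boundary-value corrections. The only cosmetic difference is in the core argument: you build the water velocity from a smoothed $\hat v_x$ and fix the trace $\hat v_z(0)$ with a bump, whereas the paper smooths $\hat v_z$ in $H^1$, corrects its boundary values, and sets $\hat v_x=\im\hat v_z'/k$; both give exactly divergence-free $H^3$ approximants.
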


\begin{proof}
For \ref{it:real-split}, the displayed choices give $\im k\hat v_x+\hat v_z'=\hat f$, and $\hat v_z\in H^3$ because an antiderivative of an $H^2$ function on an interval lies in $H^3$.
The functions $\hat w$ and $\hat q_{\mathrm w}$ can be taken polynomial in $z$, with coefficients linear in the prescribed values.
Applying $\mathcal C_k$ to the constructed tuple returns $(\hat f,\mathfrak c_1,\mathfrak c_2,\mathfrak c_3,\mathfrak c_4)$, the third component because $\hat q=0$ and $\hat\zeta=0$, and each step is bounded and linear.
With $\mathrm{Id}$ now the identity map of $\mathbb X_k$, the map $\mathrm{Id}-\mathcal C_k^{\dagger}\mathcal C_k$ is a projection, that is, it equals its own square, because $\mathcal C_k\mathcal C_k^{\dagger}$ is the identity map of $\mathbb Y_{\mathrm{con},k}$.
It maps $\mathbb X_k$ onto $\ker\mathcal C_k$ and vanishes on $\operatorname{ran}\mathcal C_k^{\dagger}$, which is closed because $\mathcal C_k$ is a bounded left inverse of $\mathcal C_k^{\dagger}$.

For \ref{it:real-traj}, we first prove that (a) implies (b).
Membership in $\mathfrak T_k$ gives, for every $t$, that $\im k\hat v_x+\hat v_z'=0$ and $\hat v_z(-\hw)=0$, so $(\hat v_x,\hat v_z)\in\mathcal V_k$, that $\hat w(\ha)=0$ and $\hat w(0)=\hat v_z(0)$, so $\mathcal Q_k\Pi_{\mathrm{ev}}\hat{\mathbb V}(t)\in \mathcal D(\Lop_k)$, and that the dynamic condition gives the interface value of the water pressure in Equation~\eqref{eq:qw-solve}.
For the interior profile, we differentiate the divergence and bottom constraints in time and insert the water components of $\mathbb A_k$, which gives
\begin{equation}
0=\frac{\dd}{\dd t}\bigl(\im k\hat v_x+\hat v_z'\bigr)
=-\frac1{\rhow}\bigl(\im k(\im k\hat q_{\mathrm w})
+\hat q_{\mathrm w}''\bigr)
=\frac1{\rhow}\bigl(k^2\hat q_{\mathrm w}
-\hat q_{\mathrm w}''\bigr),
\qquad
0=\frac{\dd}{\dd t}\hat v_z(-\hw)
=-\frac{\hat q_{\mathrm w}'(-\hw)}{\rhow}.
\end{equation}
So $\hat q_{\mathrm w}$ solves $\hat q_{\mathrm w}''=k^2\hat q_{\mathrm w}$ with $\hat q_{\mathrm w}'(-\hw)=0$ and the interface value above, and therefore $\hat q_{\mathrm w}=\mathcal P_k(\hat q(0),\hat\zeta)$.
It remains to compare the components of $\mathbb A_k$ with those of $\Lop_k$ in Equation~\eqref{eq:Lop-def}.
Under $\mathcal Q_k$, the $\hat q$-component is $a^2$ times the $\hat r$-component plus $P_S$ times the $\hat s$-component of $\mathbb A_k$, which is the third component of Equation~\eqref{eq:Lop-def} by $p_0'=a^2\rho_0'$, as in the computation after \cref{def:modal-operator}.
The $\hat u$-, $\hat w$-, and $\hat s$-components are identical.
The water and $\hat\zeta$-components are identical once $\hat q_{\mathrm w}=\mathcal P_k(\hat q(0),\hat\zeta)$.
Hence $\frac{\dd}{\dd t}\hat{\mathbb U}=\Lop_k\hat{\mathbb U}$.
Since $\hat{\mathbb V}$ is $C^1$ with values in $\mathbb X_k$, the curve $\hat{\mathbb U}$ is $C^1$ with values in $\mathcal H_k$, and $\Lop_k\hat{\mathbb U}$ is continuous in $\mathcal H_k$ by the formulas in Equation~\eqref{eq:Lop-def}, so $\hat{\mathbb U}$ is continuous in the graph norm.
This is (b).
For the converse, let (b) hold.
The constraint equations hold for every $t$, because the divergence and bottom conditions are built into $\mathcal V_k$, the top and kinematic conditions into $\mathcal D(\Lop_k)$, and the dynamic condition is the interface value in Equation~\eqref{eq:qw-solve}.
Since $\hat r=(\hat q-P_S\hat s)/a^2$ by the definition of $\mathcal Q_k$, the evolution equations in Equation~\eqref{eq:euler-evolution} are Equation~\eqref{eq:Lop-def} read backward through $\mathcal Q_k$, which gives (a).

For \ref{it:real-core}, the identity in Equation~\eqref{eq:realization-identity} is the comparison of the components just performed, now stated for single states, so that no time derivative enters.
For $\hat{\mathbb V}\in\mathfrak D_k$, we have $\hat q_{\mathrm w}=\mathcal P_k(\hat q(0),\hat\zeta)$ by definition.
Hence $\mathbb A_k^{\mathrm{red}}\hat{\mathbb V}=\mathbb A_k\hat{\mathbb V}$, and each component of $\mathcal Q_k\mathbb A_k\hat{\mathbb V}$ equals the corresponding component of $\Lop_k\mathcal Q_k\Pi_{\mathrm{ev}}\hat{\mathbb V}$.
The domain membership was checked in the proof of \ref{it:real-traj}.

For the core property, let $\hat{\mathbb U}=(\hat u,\hat w,\hat q,\hat s,\hat{\mathbf v}, \hat\zeta)\in\mathcal D(\Lop_k)$.
We approximate $\hat{\mathbb U}$ in the graph norm by elements of $\mathcal Q_k\Pi_{\mathrm{ev}}\mathfrak D_k$.
Since $\hat v_x=\im\hat v_z'/k$ on $\mathcal V_k$, the water component is determined by $\hat v_z\in H^1$ with $\hat v_z(-\hw)=0$.
Choose smooth $\check u_n\to\hat u$ and $\check s_n\to\hat s$ in $L^2$, and smooth $\check q_n\to\hat q$, $\check w_n\to\hat w$, and $\check v_{z,n}\to\hat v_z$ in $H^1$.
Choose smooth functions $\psi_0,\psi_1$ on $[0,\ha]$ with $\psi_0(0)=1$, $\psi_0(\ha)=0$, $\psi_1(0)=0$, $\psi_1(\ha)=1$, and $\psi_2$ on $[-\hw,0]$ with $\psi_2(-\hw)=1$, $\psi_2(0)=0$.
We correct the boundary values by adding multiples of these functions,
\begin{equation}
\hat v_{z,n}:=\check v_{z,n}-\check v_{z,n}(-\hw)\,\psi_2,
\qquad
\hat w_n:=\check w_n-\check w_n(\ha)\,\psi_1
-\bigl[\check w_n(0)-\hat v_{z,n}(0)\bigr]\,\psi_0 .
\end{equation}
Then $\hat w_n(\ha)=0$, $\hat v_{z,n}(-\hw)=0$, and $\hat w_n(0)=\hat v_{z,n}(0)$ hold exactly.
Every coefficient of $\psi_0$, $\psi_1$, and $\psi_2$ tends to $0$, because values at the endpoints converge under $H^1$ convergence and the limits satisfy $\hat v_z(-\hw)=0$, $\hat w(\ha)=0$, and $\hat w(0)=\hat v_z(0)$.
Hence $\hat w_n\to\hat w$ and $\hat v_{z,n}\to\hat v_z$ in $H^1$.
We complete the water component by
\begin{equation}
\hat v_{x,n}:=\frac{\im}{k}\,\hat v_{z,n}',
\qquad
\hat{\mathbf v}_n:=(\hat v_{x,n},\hat v_{z,n}),
\end{equation}
so that $\im k\hat v_{x,n}+\hat v_{z,n}'=0$ exactly and $\hat v_{x,n}\to\hat v_x=\im\hat v_z'/k$ in $L^2$.
The approximating tuple is then
\begin{equation}
\hat{\mathbb V}_n
:=\Bigl(\frac{\check q_n-P_S\check s_n}{a^2},\
\check u_n,\ \hat w_n,\ \check s_n,\
\frac{\im}{k}\hat v_{z,n}',\ \hat v_{z,n},\
\mathcal P_k\bigl(\check q_n(0),\hat\zeta\bigr),\
\hat\zeta\Bigr) .
\end{equation}
It lies in $\mathfrak D_k$, because the divergence and bottom conditions hold exactly by the construction of $\hat{\mathbf v}_n$, the kinematic and top conditions by the corrected boundary values, and the dynamic condition and the harmonic profile by the definition of $\mathcal P_k$.
Its image $\hat{\mathbb U}_n:=\mathcal Q_k\Pi_{\mathrm{ev}}\hat{\mathbb V}_n=(\check u_n,\hat w_n,\check q_n,\check s_n,\hat{\mathbf v}_n,\hat\zeta)$ lies in $\mathcal D(\Lop_k)$.
The components of Equation~\eqref{eq:Lop-def} involve $\hat w$ and $\hat q$ through at most one derivative and the values at the endpoints, $\hat u$ and $\hat s$ through multiplication by bounded coefficients, and $\hat{\mathbf v}$ through the water pressure and the value $\hat v_z(0)$.
The water pressure $\mathcal P_k(\check q_n(0),\hat\zeta)$ converges to $\mathcal P_k(\hat q(0),\hat\zeta)$ with all its derivatives, uniformly on $[-\hw,0]$, because $\check q_n(0)\to\hat q(0)$.
Since $\hat w_n$, $\check q_n$, and $\hat v_{z,n}$ converge in $H^1$ and $\check u_n$ and $\check s_n$ converge in $L^2$, the images $\hat{\mathbb U}_n$ converge to $\hat{\mathbb U}$ in the graph norm of $\Lop_k$.
\end{proof}

\begin{remark}
\label{rem:realisation-scope}
\cref{prop:realization} identifies $\Lop_k$ as the graph closure of $\mathbb A_k^{\mathrm{red}}$ on the $H^3$ core, not of the full evolution $\mathbb A_k$.
On a state that satisfies the constraints, the constraint equations determine only the interface value of the water pressure, and the harmonic profile is selected by the requirement that the evolution preserve the divergence and bottom constraints, as in the differential-algebraic structure of Equation~\eqref{eq:flat-system}.
The image $\mathbb A_k^{\mathrm{red}}\hat{\mathbb V}$ of a state $\hat{\mathbb V}\in\mathfrak D_k$ need not itself satisfy the constraints.
The conditions that the image also satisfies the constraints are called higher compatibility conditions, and they concern the time derivative along a classical trajectory.
They are part of the definition of the domains of the powers of $\Lop_k$, since a state lies in the domain of $\Lop_k^2$ exactly when it lies in $\mathcal D(\Lop_k)$ and its image under $\Lop_k$ lies in $\mathcal D(\Lop_k)$ again.
Together with \cref{prop:goodunknown}, \cref{prop:realization} proves \cref{thm:operator}\ref{it:op-realized}.
\end{remark}

\subsection{The resolvent and the algebraic simplicity of the unstable
eigenvalue}
\label{subsec:resolvent}

In this subsection, we solve the equation $(\lambda-\Lop_k)\hat{\mathbb U}=\mathbb F$ for $\lambda=-\im kc$ with $\Imc c\neq0$ and $\Edet(k,c,\eps)\neq0$, and we bound the solution by $|\Edet(k,c,\eps)|^{-1}$ (\cref{prop:resolvent}).
Near a simple zero of $\Edet(k,\cdot,\eps)$, this bound shows that the eigenvalue is isolated and algebraically simple (\cref{cor:algsimple}).
We use $\lambda=-\im kc$, so $\Rec\lambda=k\Imc c$.
The resolvent set of $\Lop_k$ is the set of $\lambda\in\C$ for which $\lambda-\Lop_k$ is a bijection of $\mathcal D(\Lop_k)$ onto $\mathcal H_k$, that is, for which the equation $(\lambda-\Lop_k)\hat{\mathbb U}=\mathbb F$ has exactly one solution $\hat{\mathbb U}\in\mathcal D(\Lop_k)$ for every $\mathbb F\in\mathcal H_k$.
The spectrum of $\Lop_k$ is the complement of the resolvent set.
For $\lambda$ in the resolvent set, the inverse $(\lambda-\Lop_k)^{-1}$ is the resolvent.
It is bounded on $\mathcal H_k$ by the closed graph theorem, which states that a linear map from a Banach space into a Banach space that is defined on the whole first space is bounded when its graph is closed.
The graph of $(\lambda-\Lop_k)^{-1}$ is the graph of $\lambda-\Lop_k$ with the two entries of each pair exchanged, and this graph is closed because $\Lop_k$ is closed.

\begin{proposition}\label{prop:resolvent}
Let $k>0$, $\eps>0$, and a compact set $\Sigma\subset\{c\in\C:\Imc c\neq0\}$ be given.
There is a constant $C$, which depends on $\Sigma$, such that for every $c\in\Sigma$ with $\Edet(k,c,\eps)\neq0$, the operator $\lambda-\Lop_k$ with $\lambda=-\im kc$ is a bijection of $\mathcal D(\Lop_k)$ onto $\mathcal H_k$, so $\lambda$ lies in the resolvent set of $\Lop_k$, and
\begin{equation}
\bigl\|(\lambda-\Lop_k)^{-1}\bigr\|_{\mathcal H_k\to\mathcal H_k}
\ \le\ C\Bigl(1+\frac1{|\Edet(k,c,\eps)|}\Bigr).
\end{equation}
\end{proposition}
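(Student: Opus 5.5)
We solve $(\lambda-\Lop_k)\hat{\mathbb U}=\mathbb F$ explicitly, following the elimination in the proof of \cref{thm:pointspec}. The homogeneous ODE is then replaced by an inhomogeneous one, solved by variation of constants with the top-normalized solution and a second solution. Write $\mathbb F=(F_u,F_w,F_q,F_s,\mathbf F_{\mathbf v},F_\zeta)$ and $\lambda=-\im kc$, so that $\lambda+\im kU=\im kV$ with $|V|\ge|\Imc c|\ge\delta_\Sigma>0$ uniformly on $\Sigma$. We also use the bound $|a^2-V^2|\ge m_\Sigma>0$ on $[0,\ha]\times\Sigma$, which holds by compactness and \cref{lem:regular-solve}.

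\emph{Air side.} The entropy row gives $\hat s=F_s/(\im kV)$, and the $\hat u$-row gives $\hat u$ algebraically in terms of $(\hat w,\hat q)$ and $F_u$. Substituting these into the $\hat q$- and $\hat w$-rows reproduces the computation of \cref{prop:xiq,prop:SLform}, now with sources. The result is a first-order system
\[
Y'=\mathcal M(z;c)Y+\mathcal F,\qquad Y=(\hat w,\chibar),
\]
where $\mathcal M$ is the matrix of Equation~\eqref{eq:wchi-recall} and $\mathcal F\in L^2(0,\ha)^2$ is a linear combination of $F_u,F_w,F_q,F_s$ whose coefficients are continuous and bounded on $[0,\ha]\times\Sigma$. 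Let $\Phi(z;c)$ be the fundamental matrix normalized at $\ha$. It is continuous in $(z,c)$, so $\Phi$ and $\Phi^{-1}$ are uniformly bounded on $[0,\ha]\times\Sigma$. The general solution with $\hat w(\ha)=0$ is
\[
Y=\mu\,(w,\chibar)(\cdot;k,c)+Y_p,\qquad Y_p(z)=\Phi(z)\int_{\ha}^z\Phi^{-1}\mathcal F,
\]
with $\|Y_p\|_{C^0}\le C\|\mathbb F\|$ and one free constant $\mu$. The remaining fields $\hat u$, $\hat q$ and $\hat s$ are then bounded in $L^2$ by $C(|\mu|+\|\mathbb F\|)$, and $\hat w,\hat q\in H^1$.

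\emph{Water side and interface.} The water row reads $\lambda\hat{\mathbf v}+\nabla_k\hat q_{\mathrm w}/\rhow=\mathbf F_{\mathbf v}$, with $\hat q_{\mathrm w}=\mathcal P_k(\hat q(0),\hat\zeta)$ from \eqref{eq:qw-solve}. Since $\mathbf F_{\mathbf v}\in\mathcal V_k$ and $\nabla_k\hat q_{\mathrm w}/\rhow\in\mathcal V_k$, the solution $\hat{\mathbf v}=(\mathbf F_{\mathbf v}-\nabla_k\hat q_{\mathrm w}/\rhow)/\lambda$ lies in $\mathcal V_k$, and
\[
\hat v_z(0)=\frac{F_{v_z}(0)-k\tanh(k\hw)\,\hat q_{\mathrm w}(0)/\rhow}{\lambda}.
\]
Here $F_{v_z}\in H^1$ because $\mathbf F_{\mathbf v}\in\mathcal V_k$, so its trace is bounded by $C\|\mathbb F\|$. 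Two conditions remain: the $\hat\zeta$-row $\lambda\hat\zeta-\hat v_z(0)=F_\zeta$ and the domain constraint $\hat w(0)=\hat v_z(0)$. Both are affine in the two unknowns $(\mu,\hat\zeta)$. Written out, they form a $2\times2$ linear system whose inhomogeneity is bounded by $C\|\mathbb F\|$ and whose coefficient matrix is built from $w(0;k,c)$, $\chibar(0;k,c)$, $\lambda$ and $G_\sigma$.

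\emph{Main step.} The key computation is that the determinant of this $2\times2$ matrix equals $\Edet(k,c,\eps)$ times a factor that is nonvanishing and bounded above and below on $\Sigma$, for instance a constant multiple of $\lambda/V(0)$. This is the same elimination that turns the dispersion relation into \eqref{eq:Edet-def} in the proofs of \cref{lem:determinant} and \cref{thm:pointspec}, carried out now for a general right-hand side. The consistency check is that at $\mathbb F=0$ the system has a nontrivial solution exactly when $\Edet=0$. Getting the bookkeeping of $\eps\rhow$, $G_\sigma$ and $\rho_0(0)g$ right, so that the determinant matches $\Edet$ and not some unrelated holomorphic function with the same zeros, is the step I expect to be delicate.

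Once the determinant is identified, Cramer's rule gives
\[
|\mu|+|\hat\zeta|\le C\,(1+|\Edet|^{-1})\,\|\mathbb F\|.
\]
The cofactors are bounded by continuity on the compact set $\Sigma$, and the additive $1$ absorbs contributions that do not pass through the determinant. Substituting back gives the stated $\mathcal H_k$ bound, and the solution lies in $\mathcal D(\Lop_k)$.

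Uniqueness follows from the $\mathbb F=0$ case, which is \cref{thm:pointspec}: for $\Edet\ne0$ the only eigenvector is zero. Hence $\lambda-\Lop_k$ is a bijection from $\mathcal D(\Lop_k)$ onto $\mathcal H_k$. Boundedness of the inverse also follows from the closed graph theorem and \cref{lem:closed}, but the explicit bound above already provides it.
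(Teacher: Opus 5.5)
Your proposal is correct and follows the paper's proof essentially step for step. You use the same elimination to an inhomogeneous $(\hat w,\chibar)$ system, variation of constants with $\mu$ as the single free constant, uniqueness from \cref{thm:pointspec}, and an interface condition whose coefficient is $\Edet$; the paper simply eliminates $\hat\zeta$ first, obtaining one scalar relation $\mathcal I(\hat w,\hat q)=\mathfrak d(\mathbb F)$ instead of your $2\times2$ Cramer system, and the step you flagged as delicate does close, since the determinant of your system is $k\tanh(k\hw)\,\Edet(k,c,\eps)$ times a power of $\lambda$ fixed by how the rows are scaled, hence bounded above and below on $\Sigma$.
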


\begin{proof}
Since $\Edet(k,c,\eps)\neq0$, \cref{thm:pointspec} shows that $\lambda$ is not an eigenvalue of $\Lop_k$, so $(\lambda-\Lop_k)\hat{\mathbb U}=0$ only for $\hat{\mathbb U}=0$, and the equation $(\lambda-\Lop_k)\hat{\mathbb U}=\mathbb F$ has at most one solution.
To construct a solution and bound it, let $\mathbb F\in\mathcal H_k$ be given, with the components $\hat F_u$, $\hat F_w$, $\hat F_q$, $\hat F_s$, $\hat{\mathbf F}_v=(\hat F_{v,x},\hat F_{v,z})\in\mathcal V_k$, and $\hat F_\zeta$, in the order of the components of $\hat{\mathbb U}$.
We solve $(\lambda-\Lop_k)\hat{\mathbb U}=\mathbb F$ by the elimination in the proof of \cref{thm:pointspec}, now with these source terms.
The $\hat s$-component of this equation is $\im kV\hat s=\hat F_s$, so $\hat s=\hat F_s/(\im kV)\in L^2$.
The $\hat u$-component gives $\im kV\hat u=\hat F_u-U'\hat w-\im k\hat q/\rho_0$.
When we substitute these expressions into the $\hat q$- and $\hat w$-components as in the proof of \cref{thm:pointspec}, the pair $(\hat w,\chibar)$ with $\chibar=\im k\hat q/(\eps\rhow)$ satisfies the inhomogeneous system
\begin{equation}\label{eq:inhomog}
\hat w'=\Bigl(\frac{U'}{V}+b\Bigr)\hat w
+\frac{\chibar}{\Abar V}+\mathfrak s_w,
\qquad
\chibar'=-b\,\chibar+\rhobar k^2V\hat w+\mathfrak s_\chi,
\end{equation}
with the source terms
\begin{equation}
\mathfrak s_w
:=\frac{\hat F_q}{\rho_0a^2}-\frac{\hat F_u}{V},
\qquad
\mathfrak s_\chi
:=\frac{\im k\,\rho_0\hat F_w}{\eps\rhow}
+\frac{b\,P_S\hat F_s}{\eps\rhow\,V},
\end{equation}
which are linear in $\mathbb F$.
The first comes from the elimination of $\hat u$ in the $\hat q$-component, and the second from the $\hat w$-component with $\hat r=(\hat q-P_S\hat s)/a^2$ and the value of $\hat s$.
On $\Sigma$, all denominators are bounded below, because $|V(z)|=|U(z)-c|\ge|\Imc c|\ge\min_{c'\in\Sigma}|\Imc c'|>0$, and $|a^2-V^2|$ is bounded below on the compact set $\Sigma$ by the argument of \cref{lem:regular-solve}.
Hence $\|\mathfrak s_w\|_{L^2}+\|\mathfrak s_\chi\|_{L^2}\le C\|\mathbb F\|_{\mathcal H_k}$ uniformly on $\Sigma$.
Let $Y:=(w,\chibar)(\cdot;k,c)$ be the top-normalized solution, and let $\breve Y=(\breve w,\breve\chi)$ be the solution of the homogeneous system with the values $(\breve w,\breve\chi)(\ha)=(1,0)$, where $\breve\chi$ is the component that corresponds to $\chibar$.
Their modified Wronskian, as in \cref{prop:local}\ref{it:basis-wronskian}, is $\Xi=w\breve\chi-\breve w\chibar=\varkappa_{\mathrm{top}}V$ with the constant $\varkappa_{\mathrm{top}}=\Xi(\ha)/V(\ha)=-\Abar(\ha)\neq0$, by the computation in the proof of \cref{prop:convergence}\ref{it:conv-matrices}, which is valid for $\Imc c\neq0$.
Variation of constants gives the particular solution that vanishes at $z=\ha$,
\begin{equation}\label{eq:vop}
Y_p(z):=Y(z)\int_{\ha}^{z}
\frac{\breve\chi\,\mathfrak s_w-\breve w\,\mathfrak s_\chi}
{\varkappa_{\mathrm{top}}V}\,\dd z'
\;+\;\breve Y(z)\int_{\ha}^{z}
\frac{w\,\mathfrak s_\chi-\chibar\,\mathfrak s_w}
{\varkappa_{\mathrm{top}}V}\,\dd z' .
\end{equation}
Every solution of Equation~\eqref{eq:inhomog} has the form $\mu Y+\breve\mu\breve Y+Y_p$ with constants $\mu$ and $\breve\mu$, and the condition $\hat w(\ha)=0$ gives $\breve\mu=0$, because $w(\ha)=0$, $\breve w(\ha)=1$, and $Y_p(\ha)=0$.
On $\Sigma$, the particular solution obeys
\begin{equation}\label{eq:Xp-bound}
\|Y_p\|_{H^1(0,\ha)\times H^1(0,\ha)}\ \le\
C\,\|\mathbb F\|_{\mathcal H_k},
\end{equation}
since the homogeneous solutions and $1/(\varkappa_{\mathrm{top}}V)$ are bounded on the compact set $\Sigma$, the integrals are bounded by the $L^1$ norms of the source terms, which are at most $C\|\mathbb F\|_{\mathcal H_k}$, and the derivatives of $Y_p$ are recovered from Equation~\eqref{eq:inhomog} itself.

On the water side, the water component gives $\hat{\mathbf v}=(\hat{\mathbf F}_v -\nabla_k\hat q_{\mathrm w}/\rhow)/\lambda$, which lies in $\mathcal V_k$ because $\hat{\mathbf F}_v$ does and $\hat q_{\mathrm w}$ solves $\hat q_{\mathrm w}''=k^2\hat q_{\mathrm w}$ with $\hat q_{\mathrm w}'(-\hw)=0$.
The $\hat\zeta$-component and the domain constraint $\hat w(0)=\hat v_z(0)$ give $\hat\zeta=(\hat w(0)+\hat F_\zeta)/\lambda$.
When we combine these two components, the domain constraint, and the interface value $\hat q_{\mathrm w}(0)=\hat q(0)+G_\sigma\hat\zeta$ from Equation~\eqref{eq:qw-solve}, as in the proof of \cref{thm:pointspec}, we obtain the single relation
\begin{equation}\label{eq:interface-functional}
\mathcal I(\hat w,\hat q)
:=\bigl[\lambda^2\rhow+k\tanh(k\hw)\,G_\sigma\bigr]\hat w(0)
+\lambda\,k\tanh(k\hw)\,\hat q(0)
=\mathfrak d(\mathbb F),
\end{equation}
where
\begin{equation}
\mathfrak d(\mathbb F)
:=\lambda\rhow\,\hat F_{v,z}(0)-k\tanh(k\hw)\,G_\sigma\,
\hat F_\zeta
\end{equation}
is a bounded linear functional of $\mathbb F$, that is, a bounded linear map from $\mathcal H_k$ to $\C$.
It is bounded because $\hat F_{v,z}'=-\im k\hat F_{v,x}\in L^2$ on $\mathcal V_k$, so $\hat F_{v,z}\in H^1$ and its value at $z=0$ is bounded by its $H^1$ norm.
The same substitutions as in the proof of \cref{lem:determinant} give the identity
\begin{equation}\label{eq:ell-Edet}
\mathcal I\bigl(w(\cdot;k,c),\ \eps\rhow\chibar(\cdot;k,c)/(\im k)\bigr)
=-k\tanh(k\hw)\,\rhow\;\Edet(k,c,\eps).
\end{equation}
Let $\mathcal I(Y_p)$ denote the value of $\mathcal I$ at the first component of $Y_p$ and at $\eps\rhow/(\im k)$ times its second component.
Imposing Equation~\eqref{eq:interface-functional} on $\mu Y+Y_p$ then determines
\begin{equation}
\mu=\frac{\mathfrak d(\mathbb F)-\mathcal I(Y_p)}
{-k\tanh(k\hw)\,\rhow\,\Edet(k,c,\eps)} .
\end{equation}
The bounds now follow in order,
\begin{equation}
|\mathcal I(Y_p)|+|\mathfrak d(\mathbb F)|\le C\,\|\mathbb F\|_{\mathcal H_k},
\qquad
|\mu|\le\frac{C}{|\Edet(k,c,\eps)|}\,\|\mathbb F\|_{\mathcal H_k} ,
\end{equation}
the first by Equation~\eqref{eq:Xp-bound}, the bound on the value at $z=0$ of an $H^1$ function, and the boundedness of the coefficients.
We reconstruct $\hat q,\hat u,\hat s,\hat\zeta,\hat q_{\mathrm w},\hat{\mathbf v}$ from $\mu Y+Y_p$ and the formulas above, and obtain
\begin{equation}
\|\hat w\|_{H^1}+\|\hat q\|_{H^1}
+\|\hat u\|_{L^2}+\|\hat s\|_{L^2}
+\|\hat{\mathbf v}\|_{L^2}+|\hat\zeta|
\ \le\ C\Bigl(1+\frac1{|\Edet(k,c,\eps)|}\Bigr)\|\mathbb F\|_{\mathcal H_k} .
\end{equation}
The bounds for $\hat w$ and $\hat q$ come from $\mu Y+Y_p$ and Equation~\eqref{eq:Xp-bound}, those for $\hat u$ and $\hat s$ from the formulas for them above, with $|V|$ bounded below, and those for $\hat\zeta$, $\hat q_{\mathrm w}$, and $\hat{\mathbf v}$ from the interface formulas with $|\lambda|=k|c|\ge k|\Imc c|$ bounded below on $\Sigma$.
This yields $\hat{\mathbb U}\in\mathcal D(\Lop_k)$ with $(\lambda-\Lop_k)\hat{\mathbb U}=\mathbb F$, each component holding by construction, and the stated bound on the resolvent.
\end{proof}

By \cref{lem:determinant}\ref{it:det-modes}, a zero $c$ of $\Edet(k,\cdot,\eps)$ with $\Imc c>0$ is the phase speed of a growing normal mode, whose amplitude grows like $e^{k\Imc c\,t}$.
In the next corollary, we mark such a zero and its eigenvalue with the subscript $\mathrm u$, for unstable.

\begin{corollary}\label{cor:algsimple}
Let $c_{\mathrm u}$ with $\Imc c_{\mathrm u}>0$ be a zero of $\Edet(k,\cdot,\eps)$ with $\partial_c\Edet(k,c_{\mathrm u},\eps)\neq0$, equivalently, by Equation~\eqref{eq:Edet-factor}, a simple root of $D(k,\cdot,\eps)$ at which $w(0;k,c_{\mathrm u})\neq0$.
Then $\lambda_{\mathrm u}=-\im kc_{\mathrm u}$ is an isolated point of the spectrum of $\Lop_k$, and the resolvent has a pole of first order there, that is, $\|(\lambda-\Lop_k)^{-1}\|_{\mathcal H_k\to\mathcal H_k}\le C/|\lambda-\lambda_{\mathrm u}|$ for $\lambda\neq\lambda_{\mathrm u}$ near $\lambda_{\mathrm u}$.
For every sufficiently small $r>0$, the Riesz projection
\begin{equation}
\mathbb P:=\frac1{2\pi\im}\oint_{|\lambda-\lambda_{\mathrm u}|=r}
(\lambda-\Lop_k)^{-1}\,\dd\lambda ,
\end{equation}
which is the spectral projection onto the eigenvalue $\lambda_{\mathrm u}$, has rank one.
Hence the eigenvalue is algebraically simple, and $\ker\bigl((\Lop_k-\lambda_{\mathrm u})^{m}\bigr)=\ker(\Lop_k-\lambda_{\mathrm u})$ is one-dimensional for every integer $m\ge1$.
In particular, under the assumptions of \cref{thm:main}, the unstable root $c_\eps$ of \cref{thm:main} is a simple zero of $\Edet(k,\cdot,\eps)$ for every $\eps\in(0,\eps_0)$ by \cref{thm:main}\ref{it:main-exist} and Equation~\eqref{eq:Edet-factor}, so the unstable eigenvalue $\lambda_\eps=-\im kc_\eps$ is algebraically simple for every $\eps\in(0,\eps_0)$.
\end{corollary}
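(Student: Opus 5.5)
The plan is to combine the resolvent bound of \cref{prop:resolvent} with the analytic structure of $\Edet(k,\cdot,\eps)$ near $c_{\mathrm u}$. Since $\Edet$ is holomorphic on the upper half-plane and has a simple zero at $c_{\mathrm u}$, I will choose a small closed disc $\Sigma=\{|c-c_{\mathrm u}|\le\rho\}$ inside $\{\Imc c>0\}$ on which $c_{\mathrm u}$ is the only zero and $|\Edet(k,c,\eps)|\ge\tfrac12|\partial_c\Edet(k,c_{\mathrm u},\eps)|\,|c-c_{\mathrm u}|$. Applying \cref{prop:resolvent} on $\Sigma$ then shows that every $\lambda=-\im kc$ with $0<|c-c_{\mathrm u}|\le\rho$ lies in the resolvent set. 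Because $|\lambda-\lambda_{\mathrm u}|=k|c-c_{\mathrm u}|$, the resolvent bound becomes $\|(\lambda-\Lop_k)^{-1}\|\le C/|\lambda-\lambda_{\mathrm u}|$. So $\lambda_{\mathrm u}$ is isolated, and the resolvent has at most a first-order pole there.

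Next I will use standard Laurent-expansion facts for the resolvent of a closed operator at an isolated spectral point. If $\|(\lambda-\Lop_k)^{-1}\|=O(|\lambda-\lambda_{\mathrm u}|^{-1})$, then all negative Laurent coefficients of order two and higher vanish. These coefficients are $(\Lop_k-\lambda_{\mathrm u})^{j}\mathbb P$ for $j\ge1$, so $(\Lop_k-\lambda_{\mathrm u})\mathbb P=0$. The Riesz projection $\mathbb P$ is a bounded projection commuting with $\Lop_k$, with range contained in $\mathcal D(\Lop_k)$. Hence $\operatorname{ran}\mathbb P\subset\ker(\Lop_k-\lambda_{\mathrm u})$. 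The reverse inclusion holds because for an eigenvector $\hat{\mathbb U}$ one has $(\lambda-\Lop_k)^{-1}\hat{\mathbb U}=\hat{\mathbb U}/(\lambda-\lambda_{\mathrm u})$, and the contour integral returns $\hat{\mathbb U}$. By \cref{thm:pointspec} the eigenspace is one-dimensional, so $\mathbb P$ has rank one.

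Finally, $\ker(\Lop_k-\lambda_{\mathrm u})^m\subset\operatorname{ran}\mathbb P$. I will prove this inclusion with the identity $\mathbb P\hat{\mathbb U}=\hat{\mathbb U}$ for generalized eigenvectors. This identity follows by expanding $(\lambda-\Lop_k)^{-1}\hat{\mathbb U}$ as the finite sum $\sum_{j<m}(\Lop_k-\lambda_{\mathrm u})^j\hat{\mathbb U}/(\lambda-\lambda_{\mathrm u})^{j+1}$, whose validity I check by applying $\lambda-\Lop_k$ and using the domain conventions for powers. Together with the previous step, this gives equality of all these kernels with the one-dimensional eigenspace.

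The application to $c_\eps$ is \cref{cor:spectral}'s argument: simplicity of the root of $D$, together with $w(0;k,c_\eps)\neq0$ and \eqref{eq:Edet-factor}, gives $\partial_c\Edet\neq0$.

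I expect the main obstacle to be the precise passage from the $O(|\lambda-\lambda_{\mathrm u}|^{-1})$ bound to $(\Lop_k-\lambda_{\mathrm u})\mathbb P=0$ for a closed unbounded operator. I would handle it by using the Laurent coefficient formula $\frac1{2\pi\im}\oint(\lambda-\lambda_{\mathrm u})(\lambda-\Lop_k)^{-1}\dd\lambda$ and shrinking the contour radius to zero. The constant from \cref{prop:resolvent} must stay uniform on $\Sigma$, which holds since $\Sigma$ is compact in the open upper half-plane.
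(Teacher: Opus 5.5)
Your proposal is correct and follows the paper's proof step by step. The steps are the Taylor lower bound $|\Edet(k,c,\eps)|\ge\tfrac12|\partial_c\Edet(k,c_{\mathrm u},\eps)|\,|c-c_{\mathrm u}|$ on a small disc in the upper half-plane, \cref{prop:resolvent} for the first-order pole, vanishing of the second Laurent coefficient by shrinking the contour, and $\operatorname{ran}\mathbb P=\ker(\Lop_k-\lambda_{\mathrm u})$ with \cref{thm:pointspec} for rank one. The only difference is in the last step. You show $\mathbb P\hat{\mathbb U}=\hat{\mathbb U}$ for a generalized eigenvector through the finite expansion $(\lambda-\Lop_k)^{-1}\hat{\mathbb U}=\sum_{j<m}(\Lop_k-\lambda_{\mathrm u})^j\hat{\mathbb U}/(\lambda-\lambda_{\mathrm u})^{j+1}$. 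The paper instead sets $\hat{\mathbb W}=(\Lop_k-\lambda_{\mathrm u})\hat{\mathbb U}$, uses $\hat{\mathbb W}=\mathbb P\hat{\mathbb W}=(\Lop_k-\lambda_{\mathrm u})\mathbb P\hat{\mathbb U}=0$, and inducts on $m$. Both are valid, and yours treats all $m$ at once without the commutation of $\mathbb P$ with $\Lop_k$.
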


\begin{proof}
The function $\Edet(k,\cdot,\eps)$ is holomorphic near $c_{\mathrm u}$ (\cref{lem:determinant}) and has a simple zero there.
By Taylor's theorem, there is $r_1\in(0,\Imc c_{\mathrm u})$ with $|\Edet(k,c,\eps)|\ge\tfrac12|\partial_c\Edet(k,c_{\mathrm u},\eps)|\,|c-c_{\mathrm u}|$ for $|c-c_{\mathrm u}|\le r_1$, and these $c$ lie in the upper half-plane.
We apply \cref{prop:resolvent} with $\Sigma=\{c\in\C:|c-c_{\mathrm u}|\le r_1\}$ and use $|\lambda-\lambda_{\mathrm u}|=k|c-c_{\mathrm u}|$.
Then every $\lambda\neq\lambda_{\mathrm u}$ with $|\lambda-\lambda_{\mathrm u}|\le kr_1$ lies in the resolvent set of $\Lop_k$, and $\|(\lambda-\Lop_k)^{-1}\|\le C/|\lambda-\lambda_{\mathrm u}|$.
Since $\lambda_{\mathrm u}$ is an eigenvalue by \cref{thm:pointspec}, it is an isolated point of the spectrum.
The resolvent is holomorphic for $0<|\lambda-\lambda_{\mathrm u}|\le kr_1$, and the coefficients of the negative powers $(\lambda-\lambda_{\mathrm u})^{-n}$, $n\ge1$, in its Laurent expansion about $\lambda_{\mathrm u}$, the expansion in positive and negative powers of $\lambda-\lambda_{\mathrm u}$, are
\begin{equation}
\mathbb P_n:=\frac1{2\pi\im}\oint_{|\lambda-\lambda_{\mathrm u}|=r}
(\lambda-\lambda_{\mathrm u})^{n-1}(\lambda-\Lop_k)^{-1}\,\dd\lambda,
\qquad 0<r\le kr_1,
\end{equation}
so that $\mathbb P=\mathbb P_1$.
The circle has length $2\pi r$, the factor $|\lambda-\lambda_{\mathrm u}|^{n-1}$ equals $r^{n-1}$ on it, and the operator norm of the resolvent is at most $C/r$ on it, so $\|\mathbb P_n\|\le Cr^{\,n-1}$ for every such $r$.
Letting $r\to0$ gives $\mathbb P_n=0$ for $n\ge2$, so the pole has order one.
For a closed operator with an isolated spectral point, $\mathbb P$ is a projection that commutes with $\Lop_k$ on $\mathcal D(\Lop_k)$ \cite[Ch.~III, \S6.5]{Kato1995}.
A first-order pole makes $\lambda_{\mathrm u}$ semisimple, that is, every generalized eigenvector, a vector $\hat{\mathbb U}$ with $(\Lop_k-\lambda_{\mathrm u})^m\hat{\mathbb U}=0$ for some $m\ge1$, is an eigenvector.
We verify this and the rank directly.
For the resolvent $(\lambda-\Lop_k)^{-1}$, the Laurent coefficients satisfy $(\Lop_k-\lambda_{\mathrm u})\mathbb P_n=\mathbb P_{n+1}$, so $(\Lop_k-\lambda_{\mathrm u})\mathbb P=\mathbb P_2=0$ \cite[Ch.~III, \S6.5]{Kato1995}.
Hence the range $\operatorname{ran}\mathbb P$ of $\mathbb P$ is contained in $\ker(\Lop_k-\lambda_{\mathrm u})$, which is one-dimensional by \cref{thm:pointspec}.
The projection $\mathbb P$ acts as the identity on this null space, because every $\hat{\mathbb U}\in\ker(\Lop_k-\lambda_{\mathrm u})$ satisfies $(\lambda-\Lop_k)^{-1}\hat{\mathbb U}=(\lambda-\lambda_{\mathrm u})^{-1}\hat{\mathbb U}$, so $\mathbb P\hat{\mathbb U}=\hat{\mathbb U}$.
Hence $\operatorname{ran}\mathbb P=\ker(\Lop_k-\lambda_{\mathrm u})$, and $\mathbb P$ has rank one.
It remains to exclude generalized eigenvectors that are not eigenvectors.
In particular, the domain $\mathcal D\bigl((\Lop_k-\lambda_{\mathrm u})^2\bigr)$ of the square is the set of $\hat{\mathbb U}\in\mathcal D(\Lop_k)$ with $(\Lop_k-\lambda_{\mathrm u})\hat{\mathbb U}\in\mathcal D(\Lop_k)$.
If $\hat{\mathbb U}$ lies in this set and $(\Lop_k-\lambda_{\mathrm u})^2\hat{\mathbb U}=0$, put $\hat{\mathbb W}:=(\Lop_k-\lambda_{\mathrm u})\hat{\mathbb U}\in\ker(\Lop_k-\lambda_{\mathrm u})$.
Then $\hat{\mathbb W}=\mathbb P\hat{\mathbb W}=\mathbb P(\Lop_k-\lambda_{\mathrm u})\hat{\mathbb U}=(\Lop_k-\lambda_{\mathrm u})\mathbb P\hat{\mathbb U}=0$, because $\mathbb P$ commutes with $\Lop_k$ on its domain and $(\Lop_k-\lambda_{\mathrm u})\mathbb P=0$.
So $\ker\bigl((\Lop_k-\lambda_{\mathrm u})^2\bigr)=\ker(\Lop_k-\lambda_{\mathrm u})$, and by induction the same holds for every power.
\end{proof}

The same argument applies at a simple zero $c$ of $\Edet(k,\cdot,\eps)$ with $\Imc c<0$.
Alternatively, such zeros are the complex conjugates of zeros in the upper half-plane, by the reflection symmetry of \cref{lem:determinant}\ref{it:det-hol}.

This completes the proof of \cref{thm:operator}.
Part \ref{it:op-frechet} is \cref{prop:residual,prop:goodunknown}, part \ref{it:op-realized} is \cref{lem:closed,prop:realization} with \cref{prop:goodunknown}, and part \ref{it:op-spectrum} is \cref{thm:pointspec,prop:resolvent,cor:algsimple}.

\begin{remark}\label{rem:fulloperator}
The linearized system on $\T_{k_0}$ decomposes over the Fourier modes $k=nk_0$, $n\in\Z$, and the components with $n<0$ are given by the same formulas.
The mean-flow component $n=0$ is not constructed in this paper.
An operator for the full linearized system, which acts on each Fourier mode separately, can be assembled from these components once the component $n=0$ is defined on its space of states that satisfy the constraints.
For every such operator, the eigenfunction of $\Lop_k$ constructed above, multiplied by $e^{\im kx}$, is an $L^2$ eigenfunction, and under the assumptions of \cref{thm:main} it has the eigenvalue $\lambda_\eps=-\im kc_\eps$ with $\Rec\lambda_\eps>0$ (\cref{cor:spectral}).
\end{remark}

\begin{remark}\label{rem:generation}
The resolvent is constructed only for $\Imc c\neq0$, that is, for $\Rec\lambda\neq0$.
The set $\{-\im kU(z):0\le z\le\ha\}$ of values of the coefficient $-\im kU$ in $\Lop_k$, which corresponds to the real phase speeds in the range of $U$, is the candidate for the continuous spectrum, the set of $\lambda$ that are not eigenvalues and for which the equation $(\lambda-\Lop_k)\hat{\mathbb U}=\mathbb F$ can be solved for a dense set of $\mathbb F$ but not for every $\mathbb F$.
A limiting-absorption estimate for $(\lambda-\Lop_k)^{-1}$ as $\lambda$ approaches this set would require an inhomogeneous version of \cref{thm:limabs} and is not established here.
Semigroup generation is not addressed (\cref{rem:spaces}).
\end{remark}

\section*{Acknowledgments}
This work was performed under the auspices of the U.S.\ Department of Energy by Lawrence Livermore National Laboratory under Contract DE-AC52-07NA27344, and was supported by the Laboratory Directed Research and Development Program under Project No.\ 25-LW-087. The LLNL release number is \mbox{LLNL-JRNL-2024377}.

\section*{Disclosure of generative AI use}
The author used the Anthropic models Claude Fable 5, Claude Fable 5.1, and Claude Opus 5.5 and the OpenAI models GPT-5.6 Sol and GPT-6 Astra to assist with research development and manuscript preparation.
The models also guided the Lean 4 formalization used to verify identities and theorem-level deductions.
The formalization is available from the author on request.
The author takes full responsibility for the content of this paper.

\bibliographystyle{abbrv}
\bibliography{references}

\end{document}